\documentclass[11pt]{amsart}
\usepackage{graphicx}
\usepackage[pagebackref, colorlinks=true, linkcolor=black, citecolor=black, urlcolor=black]{hyperref}

\usepackage{xargs}                      
\usepackage[colorinlistoftodos,prependcaption 
]{todonotes}                                                                                                                 

\newcommandx{\todoN}[2][1=]{\todo[linecolor=red,backgroundcolor=white,bordercolor=red,#1]{#2}}			

\usepackage{comment}
\usepackage{calc}

\usepackage{cite}
\makeatletter
\newcommand{\citecomment}[2][]{\citen{#2}#1\citevar}
\newcommand{\citeone}[1]{\citecomment{#1}}
\newcommand{\citetwo}[2][]{\citecomment[,~#1]{#2}}
\newcommand{\citevar}{\@ifnextchar\bgroup{;~\citeone}{\@ifnextchar[{;~\citetwo}{]}}}
\newcommand{\citefirst}{\@ifnextchar\bgroup{\citeone}{\@ifnextchar[{\citetwo}{]}}}
\newcommand{\cites}{[\citefirst}
\makeatother

\usepackage{amsmath, amsthm, amsfonts, amssymb}
\usepackage{enumerate}
\usepackage{bbm}
\usepackage{mathrsfs}
\usepackage{amssymb}
\usepackage{mathtools}
\usepackage[all]{xy}
\usepackage{tikz}
\usepackage{tikz-cd}
\usetikzlibrary{matrix,arrows,decorations.pathmorphing}
\usepackage{xcolor}
\usepackage{bm}
	\usetikzlibrary{calc}
	\usetikzlibrary{trees}
	\tikzstyle{every picture}=[scale=.35,inner sep=0]
	\usepackage{color}


\newtheorem{thm}{Theorem}

\newtheorem{cor}{Corollary}
\theoremstyle{definition}
\newtheorem{defi}{Definition}

\theoremstyle{theorem}
\newtheorem{theorem}{Theorem}[section]
\newtheorem{lemma}[theorem]{Lemma}
\newtheorem{proposition}[theorem]{Proposition}
\newtheorem{corollary}[theorem]{Corollary}

\theoremstyle{definition}
\newtheorem{definition}[theorem]{Definition}

\theoremstyle{remark}
\newtheorem{remark}[theorem]{Remark}
\newtheorem{claim}[theorem]{Claim}

\numberwithin{equation}{section}

\newcommand{\M}{\mathcal{M}}

\newcommand{\ra}{\rightarrow}
\renewcommand{\P}{\mathbb{P}}

\newcommand{\E}{\mathbb{E}}

\renewcommand{\H}{\mathbb{H}}

\newcommand{\ov}{\overline}

\makeatletter
\@namedef{subjclassname@2020}{\textup{MSC2020}}
\makeatother

\setcounter{section}{-1}

\begin{document}

\title[Incidence varieties in the projectivized $k$-th Hodge bundle]{Incidence varieties\\ in the projectivized $k$-th Hodge bundle\\ over curves with rational tails}

\author[I.~Gheorghita]{Iulia Gheorghita}
\address{Iulia Gheorghita  
\newline \indent Department of Mathematics
\newline \indent Boston College, Chestnut Hill, MA 02467}
\email{iuliaisaia@gmail.com}

\author[N.~Tarasca]{Nicola Tarasca}
\address{Nicola Tarasca 
\newline \indent Department of Mathematics \& Applied Mathematics
\newline \indent Virginia Commonwealth University, Richmond, VA 23284}
\email{tarascan@vcu.edu}

\subjclass[2020]{14H10, 14C25 (primary), 30F30 (secondary)}
\keywords{Moduli spaces of algebraic curves with differentials, the strata algebra, relations in the tautological ring of moduli spaces of curves.}

\begin{abstract}
Over the moduli space of pointed smooth algebraic curves, the projectivized $k$-th Hodge bundle is the space of $k$-canonical divisors.
The incidence loci are defined by requiring the $k$-canonical divisors to have prescribed multiplicities at the marked points.
We compute the classes of the closure of the incidence loci in the projectivized $k$-th Hodge bundle over the moduli space of curves with rational tails.
The classes are expressed as a linear combination of tautological classes indexed by decorated stable graphs with coefficients enumerating appropriate weightings. As a consequence, we obtain an explicit expression for some relations in tautological rings of moduli  of curves with rational tails.
\end{abstract}

\vspace*{-2pc}

\maketitle



\vspace{-2pc}

\section{Introduction}

This study revolves around  families of algebraic curves with differentials. For an algebraic curve $C$ with at worst simple nodal singularities,
the \textit{stable (holomorphic) abelian differentials} are the global sections of the dualizing sheaf $\omega_C$. Similarly, for $k\geq 1$, the \textit{stable (holomorphic) $k$-differentials} are the global sections of $\omega_C^{\otimes k}$. 
The \textit{\mbox{$k$-th} Hodge bundle} $\mathbb{E}^k_{g,n}$ is the vector bundle of stable $k$-differentials over the moduli space $\ov{\M}_{g,n}$ of stable curves of genus $g$ with $n$ marked points. Its projectivization is here denoted $\mathbb{PE}^k_{g,n}$. 
A point of $\mathbb{PE}^k_{g,n}$ consists of a stable $n$-pointed genus $g$ curve together with the class of a nonzero stable $k$-differential modulo scaling by units.
The fibers of $\mathbb{PE}^k_{g,n}$ over the locus of smooth curves  $\M_{g,n}$  consist of \textit{$k$-canonical divisors}.

The closure of loci of $k$-canonical divisors has been the focus of several recent studies. 
Pointed curves admitting $k$-canonical divisors with prescribed multiplicities at the marked points define natural loci in $\M_{g,n}$.
Similar loci can be obtained by requiring the existence of  a \textit{meromorphic} $k$-differential with prescribed zero and pole orders at the marked points.
The closure of such loci has been studied by Farkas and Pandharipande \cite{farkastwisted} by means of their moduli space of {twisted canonical divisors}. This is a proper  space inside $\ov{\M}_{g,n}$ and contains the space of canonical divisors together with some additional components supported on singular curves. 
A conjecture for the weighted sum of the fundamental classes of all such components in the meromorphic case was proposed by Janda, Pandharipande, Pixton,  Zvonkine \cite[Appendix]{farkastwisted}, together with an algorithm to determine the class of the closure of loci of canonical divisors in $\ov{\M}_{g,n}$ in both the holomorphic and meromorphic case. Extended to $k\geq 1$ by Schmitt \cite{schmitttwisted}, this conjecture has  been proved by Bae, Holmes, Pandharipande, Schmitt,  Schwarz \cite{baepixton}.

It follows that the class of the closure of loci of $k$-canonical divisors in $\ov{\M}_{g,n}$ is determined by the algorithm given in \cite[Appendix]{farkastwisted} and \cite{schmitttwisted}. 
The problem of computing a \textit{closed formula} for such classes remains open.
We make  progress on this problem by computing a closed formula for the classes of the closure of loci of $k$-canonical divisors in the moduli space ${\M}_{g,n}^{\rm rt}$ of curves with rational tails. The space ${\M}_{g,n}^{\rm rt}$ is a partial compactification of ${\M}_{g,n}$ and consists of stable pointed curves with a component of geometric genus $g$ and possibly additional rational components.

More generally, we  obtain a  stronger result by computing a closed formula for the class of the closure of incidence varieties inside the restriction of $\mathbb{PE}^k_{g,n}$ over ${\M}_{g,n}^{\rm rt}$. Specifically, define the \textit{incidence locus}
\begin{equation}
\label{eq:Hkgndef}
{\H}^k_{g,n} \subset \mathbb{PE}^k_{g,n}
\end{equation}
as the locus consisting of  smooth $n$-pointed genus $g$ curves together with the class of a stable $k$-differential having a zero at each marked point. This locus has codimension $n$ in $\mathbb{PE}^k_{g,n}$.
Similarly, we consider the incidence loci obtained by imposing the vanishing of the stable $k$-differential at the marked points with \textit{prescribed multiplicities}, see \eqref{eq:Hkgm}. 

Starting from  a recursive description  of the closure of the incidence loci in $\mathbb{PE}^k_{g,n}$ established by Sauvaget \cite{sauvagetcohomology, sauvaget2020volumes},
 we solve the recursion by finding a closed formula (Definition \ref{def:graphfor}) in the Chow ring of $\mathbb{PE}^k_{g,n}$ which expresses the class of such loci in the restriction of $\mathbb{PE}^k_{g,n}$ over ${\M}_{g,n}^{\rm rt}$ in all cases when $k=1$ and with few exceptions when $k\geq 2$ (Theorems \ref{thm:graphFor} and~\ref{thm:graphFor-m}).

Via pull-back, $A^*\left( \ov{\M}_{g,n} \right)$ injects in the Chow ring of $\mathbb{PE}^k_{g,n}$, and one has
\begin{equation}
\label{eq:APE}
A^*\left( \mathbb{PE}^k_{g,n} \right) \cong A^*\left( \ov{\M}_{g,n} \right) \left[\eta \right] \left/ \left(\sum_{i=0}^r (-\eta)^i c_{r-i}\left( \mathbb{E}^k_{g,n} \right) \right) \right.
\end{equation}
where $r=\mathrm{rank}\left(\mathbb{E}^k_{g,n}\right)$ and
$\eta:=c_1\left( \mathscr{O}_{\mathbb{PE}^k_{g,n}}(-1)\right)$.

Our formula (Definitions \ref{def:graphfor} and \ref{def:Fm}) lies in the \textit{tautological ring} $R^*\left(\mathbb{PE}^k_{g,n}\right)$ which is the subring of \eqref{eq:APE} generated by the tautological ring $R^*\left( \ov{\M}_{g,n} \right)$ and $\eta$. The formula is expressed as a linear combination of tautological classes indexed by \textit{decorated stable graphs}, with coefficients enumerating appropriate \textit{weightings}, which we describe next.

\subsection{Dual graphs of pointed curves with rational tails}
For $g\geq 0$,
let $\mathsf{G}^\mathsf{rt}_{g,n}$ be the set of trees that are dual to the locally closed strata in  $\M_{g,n}^\mathsf{rt}$ consisting of  pointed nodal curves with the same topological type. In particular, an element in $\mathsf{G}^\mathsf{rt}_{g,n}$ is a tree consisting of a \textit{genus $g$ vertex} and possibly additional \textit{rational} (i.e., \textit{genus~$0$}) \textit{vertices} dual to curve components, edges dual to nodes, and $n$ legs dual to the $n$ marked points.  

For a tree $\Gamma$ in $\mathsf{G}^\mathsf{rt}_{g,n}$, let $\mathrm{V}(\Gamma)$ be the set of vertices, $\mathrm{L}(\Gamma)$ be the set of legs, and $\mathrm{E}(\Gamma)$ be the set of edges of $\Gamma$. The set $\mathrm{L}(\Gamma)$ is often identified with a set of labels, e.g., $\mathrm{L}(\Gamma) \cong \{1,\dots, n\}$.

\subsection{Rooted trees}
Let $g>0$ and $\Gamma\in \mathsf{G}^\mathsf{rt}_{g,n}$. By identifying the genus $g$ vertex as \textit{root}, the graph $\Gamma$ will be considered as a \textit{rooted tree} with all edges oriented away from the root.
An edge $e\in \mathrm{E}(\Gamma)$ consists of a pair of two half-edges $e=(h^+,h^-)$: we denote by $h^+$ (or \textit{head}) the half-edge which is further away from the genus $g$ vertex, and by $h^-$ (or \textit{tail}) the half-edge which is closer to the genus $g$ vertex. 

The set $\mathrm{H}(\Gamma)$ of half-edges of $\Gamma$ is partitioned as
\[
\mathrm{H}(\Gamma)=\mathrm{H}^+(\Gamma) \sqcup \mathrm{H}^-(\Gamma)
\]
where $\mathrm{H}^+(\Gamma)$ (respectively, $\mathrm{H}^-(\Gamma)$) is the set of head (resp., tail) half-edges.

\subsection{Decorations and weightings}
\label{sec:fordec}
Select $g>0$ and $\Gamma\in \mathsf{G}^\mathsf{rt}_{g,n}$. 
Let $v_g$ be the genus $g$ vertex of $\Gamma$, and $\mathrm{V}_0(\Gamma):= \mathrm{V}(\Gamma)\setminus \{v_g\}$ be the set of rational vertices of $\Gamma$.
Consider the moduli space 
\begin{equation}
\label{eq:MGamma}
\ov{\M}_\Gamma := \ov{\M}_{g,n(v_g)}  \times \prod_{v\in \mathrm{V}_0(\Gamma)} \ov{\M}_{0, n(v)} 
\end{equation}
where $n(v)$ is the valence of a vertex $v$, i.e., the number of legs and half-edges incident to $v$.

The \textit{set of decorations} $\Psi(\Gamma)$ of the graph $\Gamma$ is  defined here as
\[
\Psi(\Gamma):=\left\{\bm{\psi}=\prod_{h\in \mathrm{H}(\Gamma)} \psi_h^{d_h} \,: \, d_h\geq 0 \mbox{ for all }h
\right\} \subset A^*\left( \ov{\M}_\Gamma \right).
\]
For each $h$, the class $\psi_h$ is the first Chern class of the cotangent line bundle at the marked point corresponding to $h$ \cite{graber2003constructions}.

Select $\bm{\psi}=\prod_{h\in \mathrm{H}(\Gamma)} \psi_h^{d_h}$ in $\Psi(\Gamma)$, and
define the following set of pairs: 
\begin{eqnarray}
\label{eq:EHTpsi}
\begin{split}
\mathrm{H}(\Gamma, \bm{\psi}) &:=&& \{ (h, e) \, | \, h\in \mathrm{H}^+(\Gamma), \, 0\leq e \leq d_h  \}\\
&&& \sqcup \{ (h, e) \, | \, h\in \mathrm{H}^-(\Gamma), \, 1\leq e \leq d_h  \}.
\end{split}
\end{eqnarray}
One has 
\[
|\mathrm{H}(\Gamma, \bm{\psi})|=|\mathrm{E}(\Gamma)|+\deg \bm{\psi}.
\]

For a half-edge $h$ of $\Gamma$, let $\iota(h)$ be the half-edge such that $(h, \iota(h))\in \mathrm{E}(\Gamma)$.
Define the \textit{capacity} of a head half-edge $h^+$ in $\mathrm{H}^+(\Gamma)$ as
\begin{equation}
\label{eq:lenheads}
\ell_{h^+} := 
\left\{
\begin{array}{l}
\mbox{number of legs of $\Gamma$ in the maximal}\\
\mbox{connected subtree containing $h^+$, but not $\iota(h^+)$.}
\end{array}
\right.
\end{equation}
One has $\ell_{h^+}\geq 2$ for all $h^+$.

A \textit{weighting} of the decorated stable  graph  $(\Gamma, \bm{\psi})$ is a function
\[
w \colon  \mathrm{H}(\Gamma, \bm{\psi}) \longrightarrow \mathbb{N}
\]
satisfying the following conditions:
\begin{align}
\label{eq:conditionsiweight}
\left\{
\begin{array}{l}
\ell_{h^+} -1 \geq  w(h^+,0) \geq w(h^+,1) \geq \cdots \geq w(h^+,d_{h^+})   \\ [.2cm]
 \qquad\mbox{for all heads $h^+$,}\\ [.2cm]
1 \leq w(h^-,1) < \cdots <  w(h^-,d_{h^-}) < w(\iota(h^-),0) \\ [.2cm]
 \qquad\mbox{for all tails $h^-$.}
\end{array}
\right.
\end{align}
The set of {weightings} of the decorated stable  graph  $(\Gamma, \bm{\psi})$ 
\begin{equation}
\label{eq:Witipsi}
\mathsf{W}_{\Gamma, \bm{\psi}} := \{ \mbox{{weightings} of $(\Gamma, \bm{\psi})$}\}
\end{equation}
 is a finite set.
For  $w\in \mathsf{W}_{\Gamma, \bm{\psi}}$,  the corresponding \textit{weight} of $(\Gamma, \bm{\psi})$ is
\begin{equation}
\label{eq:iweighttipsi}
w(\Gamma, \bm{\psi}) := \prod_{(h,e)\in \mathrm{H}(\Gamma, \bm{\psi})} w(h,e).
\end{equation}
Define 
\begin{equation}
\label{eq:citipsi}
{c}_{\Gamma, \bm{\psi}} := \sum_{w\in \mathsf{W}_{\Gamma, \bm{\psi}}} w(\Gamma, \bm{\psi}).
\end{equation}
For instance, one has
\begin{align*}
(\Gamma, \bm{\psi}) &=
\begin{tikzpicture}[baseline={([yshift=-.5ex]current bounding box.center)}]
      \path(0,0) ellipse (2 and 1.7);
      \tikzstyle{level 1}=[counterclockwise from=-90,level distance=15mm,sibling angle=120]
      \node [draw,circle,inner sep=2.5] (A0) at (180:2) {$\scriptstyle{g}$};
      \tikzstyle{level 1}=[counterclockwise from=-60,level distance=12mm,sibling angle=60]
      \node [draw,circle,fill] (A1) at (0:2) {}
            child {node [label=-60: {}]{}}
	    child {node [label=0: {}]{}}
    	    child {node [label=60: {}]{}};
      \path (A0) edge []  node[auto, above=0.5, label={180:}]{}
      				  node[near end, below=0.1, label={-90:$\psi$}]{} (A1);
    \end{tikzpicture}
& \mapsto && {c}_{\Gamma, \bm{\psi}} &=7,\\
(\Gamma, \bm{\psi}) &=
\begin{tikzpicture}[baseline={([yshift=-.5ex]current bounding box.center)}]
      \path(0,0) ellipse (2 and 1.7);
      \tikzstyle{level 1}=[counterclockwise from=-90,level distance=15mm,sibling angle=120]
      \node [draw,circle,inner sep=2.5] (A0) at (180:2) {$\scriptstyle{g}$};
      \tikzstyle{level 1}=[counterclockwise from=-60,level distance=12mm,sibling angle=60]
      \node [draw,circle,fill] (A1) at (0:2) {}
            child {node [label=-60: {}]{}}
	    child {node [label=0: {}]{}}
    	    child {node [label=60: {}]{}};
      \path (A0) edge []  node[near start, above=0.1, label={90:$\psi$}]{}
      				  node[near end, below=0.1, label={-90:$\psi$}]{} (A1);
    \end{tikzpicture}
& \mapsto && {c}_{\Gamma, \bm{\psi}} &=6,\\
(\Gamma, \bm{\psi}) &=
\begin{tikzpicture}[baseline={([yshift=-.5ex]current bounding box.center)}]
      \path(0,0) ellipse (2 and 1.7);
      \tikzstyle{level 1}=[counterclockwise from=-60, level distance=12mm, sibling angle=60]
      \node [draw,circle,fill] (A0) at (0:2.5) {}
	    child  {node [label=-60: {$\scriptstyle{}$}]{}}
	    child  {node [label=0: {$\scriptstyle{}$}]{}}
    	    child  {node [label=60: {$\scriptstyle{}$}]{}};
       \tikzstyle{level 1}=[counterclockwise from=-90,level distance=12mm,sibling angle=60]
        \node [draw,circle,fill] (A1) at (0:0) {}
    	    child {node [label=-90: {$\scriptstyle{}$}]{}};
       \tikzstyle{level 1}=[counterclockwise from=120,level distance=15mm,sibling angle=120]
             \node [draw,circle,inner sep=2.5] (A2) at (180:3) {$\scriptstyle{g}$};
        \path (A0) edge [] node[near start, above=0.1, label={90:$\psi$}]{} (A1);
        \path (A1) edge [] node[]{} (A2);        
    \end{tikzpicture}
    & \mapsto && {c}_{\Gamma, \bm{\psi}} &= 42.
\end{align*}
Here and throughout, rational vertices are contracted into points for convenience, and  $\psi$-classes decorate their corresponding half-edges (later in the paper, $\psi$-classes will decorate legs too).

\subsection{The graph formula}
For $g>0$, a graph $\Gamma\in \mathsf{G}^\mathsf{rt}_{g,n}$ defines the moduli space $\ov{\M}_\Gamma$ from \eqref{eq:MGamma} together with a gluing map of degree one
$\ov{\M}_\Gamma \rightarrow \ov{\M}_{g,n}$.
Similarly, define $\mathbb{P}\mathbb{E}^k_\Gamma$ as the fiber product
\begin{equation}
\label{eq:xiGamma}
\begin{tikzcd}
\mathbb{P}\mathbb{E}^k_\Gamma \arrow[rightarrow]{r}{\xi_\Gamma} \arrow[rightarrow]{d}& \mathbb{P}\mathbb{E}^k_{g,n} \arrow[rightarrow]{d}\\
\ov{\M}_\Gamma \arrow[rightarrow]{r} & \ov{\M}_{g,n}.
\end{tikzcd}
\end{equation}
The image of $\xi_\Gamma$ has codimension equal to $|\mathrm{E}(\Gamma)|$ in $\mathbb{P}\mathbb{E}^k_{g,n}$. 
We set $\eta:=\xi_\Gamma^*(\eta)$ in $A^*\left(\mathbb{P}\mathbb{E}^k_\Gamma \right)$, so that for any cycle $\alpha$, one has \mbox{$\xi_{\Gamma*}(\alpha\cdot \eta)=\xi_{\Gamma*}(\alpha)\cdot \eta$}  by the projection formula. We will use few more classes, introduced below.

For a marked point $\ell$, define $\omega_\ell:= \rho_\ell^*(\psi)$ in $R^*\left(\ov{\M}_{g,n}\right)$, where $\psi$ is the $\psi$-class in~$R^*\left(\ov{\M}_{g,1}\right)$ and $\rho_\ell\colon \ov{\M}_{g,n}\rightarrow \ov{\M}_{g,1}$ is the map obtained by forgetting all but the \mbox{$\ell$-th} marked point. Define $\omega_\ell:=\xi_\Gamma^*(\omega_\ell)$ in $A^*\left(\ov{\M}_\Gamma\right)$.

For $h\in \mathrm{H}(\Gamma)$, define $\omega_h:=\xi_\Gamma^*(\omega_\ell)$ in~$A^*\left(\ov{\M}_\Gamma\right)$  for a leg $\ell$ which is further away from the root than $h$. This definition is independent of the choice of such a leg $\ell$ (see  \cite[\S 1.2]{CaT}).

Finally, for $\bm{\psi}=\prod_{h\in \mathrm{H}(\Gamma)} \psi_h^{d_h}$ in $\Psi(\Gamma)$, let
\[
\bm{\beta}_{\Gamma, \bm{\psi}}:=
\prod_{h\in \mathrm{H}^+(\Gamma)} (k \,\omega_{h}-\eta)^{1+d_{h}}
\prod_{h\in \mathrm{H}^-(\Gamma)} (k \,\omega_{h}-\eta)^{d_{h}}\in A^*\left(\mathbb{P}\mathbb{E}^k_\Gamma \right).
\]
One has $\deg \bm{\beta}_{\Gamma, \bm{\psi}} = |\mathrm{E}(\Gamma)|+\deg \bm{\psi}$. 
We are now ready for the graph formula:

\begin{defi}
\label{def:graphfor}
Define $\mathsf{F}^k_{g,n}\in R^n\left(\mathbb{P}\mathbb{E}^k_{g,n} \right)$ as
\[
\mathsf{F}^k_{g,n}:=
\sum_{\Gamma\in \mathsf{G}^\mathsf{rt}_{g,n}} \, (-1)^{|\mathrm{E}(\Gamma)|}  \, {\xi_{\Gamma}}_* \left[ \prod_{\ell\in\mathrm{L}(\Gamma)}  (k \,\omega_\ell-\eta) 
\sum_{\bm{\psi}\in \Psi(\Gamma)} {c}_{\Gamma, \bm{\psi}}\, \bm{\psi} \, \bm{\beta}_{\Gamma, \bm{\psi}}^{-1}
\right].
\]
\end{defi}

To verify that  $\mathsf{F}^k_{g,n}$ has degree $n$, observe that the product of the divisor classes $k \,\omega_\ell-\eta$ for $\ell=1,\dots,n$ has degree $n$, and for each $(\Gamma, \bm{\psi})$, the factor $\bm{\beta}_{\Gamma, \bm{\psi}}^{-1}$ cancels out some of the factors $k \,\omega_\ell-\eta$
to offset the codimension  of the image of $\xi_\Gamma$ and the degree of $\bm{\psi}$.
Remarkably, the sum of the contributions given by $(\Gamma, \bm{\psi})$ such that $\deg \, \bm{\beta}_{\Gamma, \bm{\psi}} > n$ vanishes. This is nontrivial, and follows from Theorem \ref{thm:van}.
For example, when $n\in \{1,2\}$, one has
\begin{align*}
\mathsf{F}^k_{g,1} & =
\mbox{
\begin{tikzpicture}[baseline={([yshift=-.5ex]current bounding box.center)}]
      \path(0,0) ellipse (2 and 2);
      \tikzstyle{level 1}=[counterclockwise from=30,level distance=15mm,sibling angle=120]
      \node [draw,circle,inner sep=2.5] (A0) at (0:0) {$\scriptstyle{g}$}
	    child {node [label=30: {$k\,\omega_1 - \eta$}]{}};
    \end{tikzpicture}
} && \in R^1\left(\P\E^k_{g,1} \right), \\
\mathsf{F}^k_{g,2} & = 
\mbox{
\begin{tikzpicture}[baseline={([yshift=-.5ex]current bounding box.center)}]
      \path(0,0) ellipse (2 and 2);
      \tikzstyle{level 1}=[counterclockwise from=30,level distance=15mm,sibling angle=120]
      \node [draw,circle,inner sep=2.5] (A0) at (0:0) {$\scriptstyle{g}$}
            child {node [label=30: {$k\,\omega_2 - \eta$}]{}}
	    child {node [label=150: {$k\,\omega_1 - \eta$}]{}};
    \end{tikzpicture}
}
-
\mbox{
\begin{tikzpicture}[baseline={([yshift=-.5ex]current bounding box.center)}]
      \path(0,0) ellipse (2 and 2);
      \tikzstyle{level 1}=[counterclockwise from=-60,level distance=12mm,sibling angle=120]
      \node [draw,circle,inner sep=2.5] (A0) at (180:2) {$\scriptstyle{g}$};
      \tikzstyle{level 1}=[counterclockwise from=-60,level distance=12mm,sibling angle=120]
      \node [draw,circle,fill] (A1) at (0:2) {$\scriptstyle{}$}
            child {node [label=-60: {$\scriptstyle{2}$}]{}}
	    child {node [label=60: {$\scriptstyle{1}$}]{}};
      \path (A0) edge [] node[auto, above=0.5, label={180:${k\,\omega-\eta}$}]{} (A1);
    \end{tikzpicture}
} && \in R^2\left(\P\E^k_{g,2} \right). 
\end{align*}

Already for $n=3$, the expansion of the cycle $\mathsf{F}^k_{g,n}$ is lengthy. We include this and more examples in \S\ref{sec:ex}. 
Our first result about abelian differentials is:

\begin{thm}
\label{thm:graphFork1}
For $k=1$ and $g\geq 2$, one has
\[
\left[\ov{\H}^1_{g,n} \right] = \mathsf{F}^1_{g,n} \qquad \in A^{n}\left( \P\E^1_{g,n}\big|_{\M_{g,n}^\mathsf{rt}}  \right).
\]
\end{thm}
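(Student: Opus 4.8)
The plan is to prove the formula by induction on $n$ using Sauvaget's recursion for $\overline{\H}^1_{g,n}$ and verifying that $\mathsf{F}^1_{g,n}$ satisfies the same recursion with the same initial condition. The base case $n=1$ is the classical fact that $\overline{\H}^1_{g,1}$, the locus where the abelian differential vanishes at the marked point, is cut out by the section $c_1\bigl(\mathscr{O}(-1)\otimes \omega_{g,1}^{\vee}\bigr)$ up to the relevant twist, giving the divisor class $\omega_1 - \eta$ on $\P\E^1_{g,1}$; this matches the one-vertex graph in the displayed expansion of $\mathsf{F}^1_{g,1}$. The inductive step is where the work lies: Sauvaget's recursion expresses $\bigl[\overline{\H}^1_{g,n}\bigr]$ in terms of a pullback of $\bigl[\overline{\H}^1_{g,n-1}\bigr]$ along a forgetful-type map, corrected by boundary terms supported on loci where the last marked point bubbles off onto a rational tail. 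First I would set up the forgetful map $\pi\colon \P\E^1_{g,n}|_{\M^{\rm rt}_{g,n}} \to \P\E^1_{g,n-1}|_{\M^{\rm rt}_{g,n-1}}$ (forgetting the $n$-th point and stabilizing), record how $\eta$, the $\omega_\ell$, and the $\psi$-classes on each $\overline{\M}_\Gamma$ pull back, and then match terms.

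Concretely, the main step is a bookkeeping argument on decorated stable graphs. Every $\Gamma \in \mathsf{G}^{\mathsf{rt}}_{g,n}$ either has leg $n$ attached to the same vertex as some other leg or half-edge, in which case it comes from a unique $\Gamma' \in \mathsf{G}^{\mathsf{rt}}_{g,n-1}$ by adding the leg, or leg $n$ sits on a rational vertex that becomes unstable after forgetting it, in which case $\Gamma$ contributes a genuinely new boundary term. I would show that the sum over the first type of graph, weighted by $c_{\Gamma,\bm\psi}$ and the factors $k\,\omega_\ell - \eta$ and $\bm\beta_{\Gamma,\bm\psi}^{-1}$, reassembles exactly $\pi^*\mathsf{F}^1_{g,n-1}$ times the new divisor factor $k\,\omega_n - \eta$; the key identity here is a recursion for the coefficients $c_{\Gamma,\bm\psi}$ under adding a leg, which should follow from the definition of weightings \eqref{eq:conditionsiweight} by tracking how the capacities $\ell_{h^+}$ change and how a new admissible value can be inserted into each decreasing/increasing chain. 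The sum over the second type of graph must then match the boundary correction in Sauvaget's recursion; this is where the signs $(-1)^{|\mathrm{E}(\Gamma)|}$ and the precise form of $\bm\beta_{\Gamma,\bm\psi}$ on the new edge get used, and it will require the self-intersection (excess) formula for the gluing map $\xi_\Gamma$ together with the projection formula, expressing $\xi_{\Gamma*}$ of a power of $\eta$ and $\omega_h$ in terms of lower strata.

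The hard part will be the combinatorial identity showing that the apparently overdetermined sum — in which, for a fixed $\Gamma$, many pairs $(\Gamma,\bm\psi)$ with $\deg\bm\beta_{\Gamma,\bm\psi} > n$ appear — collapses correctly; this is exactly the phenomenon flagged in the text after Definition \ref{def:graphfor} and attributed to Theorem \ref{thm:van}. I would therefore isolate that vanishing as a separate lemma (invoking Theorem \ref{thm:van} as permitted), so that in the inductive step I only need to compare the ``leading'' parts, i.e.\ the contributions with $\deg\bm\beta_{\Gamma,\bm\psi} \le n$, on both sides. A secondary subtlety is checking that the identification $\omega_h = \xi_\Gamma^*(\omega_\ell)$ is compatible with the forgetful map and with stabilization of rational tails, so that pulling back a summand of $\mathsf{F}^1_{g,n-1}$ along $\pi$ lands in the span of the summands of $\mathsf{F}^1_{g,n}$ as expected; here I would lean on the cited properties \cite[\S 1.2]{CaT}. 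Once the recursion and the base case match, the theorem follows. I expect the restriction to $\M^{\rm rt}_{g,n}$ (rather than all of $\overline{\M}_{g,n}$) to be essential precisely because it makes all dual graphs trees with a single positive-genus vertex, which is what makes the weighting combinatorics tractable and the excess-intersection terms manageable.
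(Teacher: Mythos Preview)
Your overall strategy---match the recursions satisfied by $[\overline{\H}^1_{g,n}]$ and by $\mathsf{F}^1_{g,n}$, with the common base case $n=1$---is exactly the paper's approach, and you correctly identify that the vanishing in Theorem~\ref{thm:van} is what makes the overdetermined sum collapse. However, there is a genuine gap in how you handle the boundary correction.

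Sauvaget's recursion (Theorem~\ref{thm:pinZnrhonZ1plus}) reads
\[
\pi_n^*\bigl[\overline{\H}^1_{g,n-1}\bigr]\cdot \rho_n^*\bigl[\overline{\H}^1_{g,1}\bigr]
= \bigl[\overline{\H}^1_{g,n}\bigr] + \sum_{\varnothing\neq \bm{I}\subseteq\{1,\dots,n-1\}} |\bm{I}|\,[E_{\bm{I}}],
\]
and the extra loci $E_{\bm{I}}$ are \emph{not} purely graph-combinatorial boundary strata: by definition $E_{\bm{I}}=\gamma_*\,\overline{\H}^1_{g,\bm{m}}$ with $\bm{m}=(|\bm{I}|,1^{n-|\bm{I}|-1})$, i.e.\ they involve incidence loci where the differential is required to vanish to order $|\bm{I}|$ at the node. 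Your proposal treats the ``second type of graph'' as something that can be matched by excess intersection and the projection formula alone, but to compare $[E_{\bm{I}}]$ with the corresponding piece $\mathsf{E}_{\bm{I}}=\gamma_*\,\mathsf{F}^1_{g,\bm{m}}$ of the graph formula you must already know $\bigl[\overline{\H}^1_{g,\bm{m}}\bigr]=\mathsf{F}^1_{g,\bm{m}}$ for these weighted multiplicities. This is not covered by the plain induction hypothesis on $n$ with $\bm{m}=(1,\dots,1)$.

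The paper closes this gap with a \emph{colliding points} argument (Theorems~\ref{thm:collidinggenus0} and~\ref{thm:collidinggenusg}): the cycle $\mathsf{F}^1_{g,\bm{m}}$ with $|\bm{m}|=n-1$ is obtained from $\mathsf{F}^1_{g,n-1}$ by colliding marked points, and the same is true geometrically for $\overline{\H}^1_{g,\bm{m}}$; hence the induction hypothesis at step $n-1$ for simple zeros propagates to all $\bm{m}$ with $|\bm{m}|=n-1$, which is exactly what is needed to identify $[E_{\bm{I}}]$ with $\mathsf{E}_{\bm{I}}$. Once that is in place, the recursion for $\mathsf{F}^1_{g,n}$ (Theorem~\ref{thm:Frec}) is established by localizing to each maximal rational subtree containing the leg $n$ and invoking the rational-tree recursion of Theorem~\ref{thm:Decrec}, whose proof is where most of the combinatorial work (and the vanishing of Theorem~\ref{thm:van2}) actually lives. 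Your sketch of ``tracking how capacities change when a leg is added'' is morally this step, but it underestimates the difficulty: the argument is not a single leg-addition identity but an inductive analysis on rational trees carried out in \S\S\ref{sec:cycleswrt}--\ref{sec:recid}.
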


The statement holds for all $n$. Since abelian differentials of genus $g$ curves have degree $2g-2$, the loci $\ov{\H}^1_{g,n}$ are empty for $n>2g-2$. In this case, Theorem \ref{thm:graphFork1} gives relations in the tautological ring, as discussed in the next Corollary \ref{cor:tautrel}.
More generally, for $k\geq 1$, we prove:

\begin{thm}
\label{thm:graphFor}
For $g\geq 2$, one has
$
\left[\ov{\H}^k_{g,n} \right] = \mathsf{F}^k_{g,n}
$
in: 
\begin{enumerate}[(i)]
\item $A^{n}\left( \P\E^k_{g,n}\big|_{\M_{g,n}^\mathsf{rt}}  \right)$ when
\[
\left\{
\begin{array}{l}
\mbox{$k=1$; or}\\[.1cm]
\mbox{$k\geq 2$ and $n \leq k(2g-2)-1$;}
\end{array}
\right.
\]
\item $A^{n}\left( \P\E^k_{g,n} \right)$ when $n\leq k$.
\end{enumerate}
\end{thm}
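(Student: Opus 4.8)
The plan is to exhibit $\mathsf{F}^k_{g,n}$ as the solution of the recursion for $\bigl[\ov{\H}^k_{g,n}\bigr]$ in $A^*\bigl(\P\E^k_{g,n}\bigr)$ established by Sauvaget \cite{sauvagetcohomology, sauvaget2020volumes}. That recursion computes $\bigl[\ov{\H}^k_{g,n}\bigr]$ as the pull-back of $\bigl[\ov{\H}^k_{g,n-1}\bigr]$ along the map forgetting the marked point $p_n$, multiplied by the divisor class $k\psi_n-\eta$ of the locus where the tautological $k$-differential vanishes at $p_n$, and corrected by a sum of classes pushed forward from boundary divisors along which $p_n$ lies on a rational component — where the stable $k$-differential vanishes identically — the corrections being governed by incidence loci with fewer marked points (and possibly higher prescribed multiplicities, cf.\ Theorem \ref{thm:graphFor-m}) carrying $\psi$-decorations. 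Since, together with a base case, such a recursion has a unique solution, it suffices to check that $\mathsf{F}^k_{g,n}$ (a) reproduces the base case and (b) satisfies the recursion after restriction to $\M^\mathsf{rt}_{g,n}$.

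For the base case $n=1$ one has $\M^\mathsf{rt}_{g,1}=\M_{g,1}$, and $\ov{\H}^k_{g,1}\big|_{\M_{g,1}}$ is the zero scheme of the evaluation at $p_1$ of the tautological $k$-differential on $\P\E^k_{g,1}$, a section of a line bundle with first Chern class $k\psi_1-\eta$; since $h^0\bigl(\omega_C^{\otimes k}(-p_1)\bigr)<h^0\bigl(\omega_C^{\otimes k}\bigr)$ for $g\ge2$, $k\ge1$ (Riemann--Roch), this section is not identically zero, whence $\bigl[\ov{\H}^k_{g,1}\bigr]=k\psi_1-\eta=k\omega_1-\eta=\mathsf{F}^k_{g,1}$ on $\P\E^k_{g,1}\big|_{\M_{g,1}}$. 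For part (ii) I would instead use the whole range $n\le k$ as a base, checking directly that the correction terms supported outside $\M^\mathsf{rt}_{g,n}$ then vanish already in $\P\E^k_{g,n}$, so that no restriction is needed.

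Verifying (b) is the heart of the argument and the expected main obstacle. Substituting the closed formula of Definition \ref{def:graphfor} for $\bigl[\ov{\H}^k_{g,n-1}\bigr]$ into the recursion, one must recognize the right-hand side as $\mathsf{F}^k_{g,n}$. Pulling back the graph sum along the forgetful map $\pi_n$ expands, using $\pi_n^*\omega_\ell=\omega_\ell$, $\pi_n^*\eta=\eta$, the standard comparison $\pi_n^*\psi_h=\psi_h-[D_{h,n}]$ for the decorations, and the flat pull-back of boundary strata, as a sum over graphs $\Gamma\in\mathsf{G}^\mathsf{rt}_{g,n}$ obtained from $\Gamma'\in\mathsf{G}^\mathsf{rt}_{g,n-1}$ by attaching the $n$-th leg (to an existing vertex, or to a newly created rational vertex, with the extra $\psi$-decorations produced by the comparison formula); multiplying by $k\omega_n-\eta$ supplies the missing factor of $\prod_{\ell\in\mathrm{L}(\Gamma)}(k\omega_\ell-\eta)$; and the boundary corrections of the recursion account for the graphs with a new rational vertex carrying $p_n$. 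What remains is a purely combinatorial identity: the coefficients $c_{\Gamma,\bm{\psi}}$ of \eqref{eq:citipsi} satisfy the recursion on pairs $(\Gamma,\bm{\psi})$ inherited from Sauvaget's formula. I expect this to reduce, via the chains of inequalities in \eqref{eq:conditionsiweight} and the capacities of \eqref{eq:lenheads}, to a telescoping, Vandermonde-type identity: a weighting of $(\Gamma,\bm{\psi})$ is reconstructed from a weighting of the smaller datum together with a choice at the new half-edges bounded precisely by the relevant capacity, and the sum of the weights $w(\Gamma,\bm{\psi})$ over that choice reproduces the arithmetic contribution of the added leg. Carrying this out, together with the bookkeeping of the formal factors $\bm{\beta}_{\Gamma,\bm{\psi}}^{-1}$ — in which each factor $k\omega_h-\eta$ cancels against a leg factor $k\omega_\ell-\eta$ for a leg $\ell$ below $h$, so that the whole expression is a genuine polynomial of degree $n$ — is where the real work lies.

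Two remaining points close the argument. The raw recursion may produce pairs $(\Gamma,\bm{\psi})$ with $|\mathrm{E}(\Gamma)|+\deg\bm{\psi}>n$, hence contributions of degree exceeding $n$; Theorem \ref{thm:van} guarantees that their total vanishes, so $\mathsf{F}^k_{g,n}$ is well defined in $R^n\bigl(\P\E^k_{g,n}\bigr)$ and the inductive identity is consistent there. The restriction to $\M^\mathsf{rt}_{g,n}$ is used precisely to discard the correction terms supported on boundary divisors of non-compact type — a vanishing cycle, or a splitting of the genus between two positive-genus components — which involve meromorphic $k$-differentials on lower-genus moduli and are not expressible through $\mathsf{F}$; for $k=1$ these are the only obstruction, so (i) holds for every $n$ and recovers Theorem \ref{thm:graphFork1}, while for $k\ge2$ the recursion can be iterated down to a valid base only when $n\le k(2g-2)-1$, which accounts for that hypothesis. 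For (ii), the direct vanishing check available in the range $n\le k$ removes the need to restrict.
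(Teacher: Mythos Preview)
Your overall strategy is exactly the paper's: match $\mathsf{F}^k_{g,n}$ against Sauvaget's recursion (Theorem~\ref{thm:pinZnrhonZ1plus}) and induct on $n$, with the base case $n=1$ given by $k\omega-\eta$. The genuine gap is your expectation that step~(b) reduces to ``a purely combinatorial identity'' on the coefficients $c_{\Gamma,\bm\psi}$, provable by a telescoping or Vandermonde-type argument. This is not what happens: the recursion for $\mathsf{F}^k_{g,n}$ (Theorem~\ref{thm:Frec}) does \emph{not} hold term by term in the strata algebra. Already in the base case of the underlying rational identity (part~(a) of Theorem~\ref{thm:mainthmgenuszeroij}, $n=3$, $j=0$), one must use the relation $\psi_{h_0}=\delta$ in $A^1(\ov{\M}_{0,4})$; and the vanishing $\mathsf{Z}(n,i,j)=0$ for $i,j\ge1$ (Theorem~\ref{thm:van2}), on which the recursion relies inductively, is a genuine Chow-ring statement whose seed case (Lemma~\ref{lemma:imaxj1van}) is the averaged boundary expression for $\psi_{h_0}$ in $\ov{\M}_{0,n+1}$. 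So the vanishing of Theorem~\ref{thm:van} is not merely a consistency check ensuring $\mathsf{F}^k_{g,n}$ has degree~$n$; it is logically intertwined with the recursion itself.

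Concretely, the paper localizes the problem to the maximal rational subtree containing the leg $n$, reformulating the recursion there as an identity for auxiliary cycles $\mathsf{Dec}^i_n(D)$ in $A^*(\ov{\M}_{0,n+1})[D^{-1}]$ (Theorem~\ref{thm:Decrec}). This identity is proven by a simultaneous induction on~$n$ establishing \emph{both} the recursive relation (part~(a)) and the vanishing (part~(b)) of Theorem~\ref{thm:mainthmgenuszeroij}: part~(a) for $n$ needs parts~(a) and~(b) for all smaller values, and part~(b) for $n$ is then deduced from part~(a) via pushforwards of the cycles $\mathsf{E}_{\bm I}(i,j)$ (Lemma~\ref{lemma:EIisgammapfwd}) and a separate recursion in~$j$ (Lemma~\ref{lemma:vanimaxrec}). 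A second ingredient your sketch leaves implicit is that the correction terms $E_{\bm I}$ in Sauvaget's recursion involve the classes $[\ov{\H}^k_{g,\bm m}]$ with one heavy point, not just $[\ov{\H}^k_{g,n-1}]$; to identify these with the corresponding $\mathsf{F}^k_{g,\bm m}$ inductively, the paper proves a ``colliding points'' statement (Theorems~\ref{thm:collidinggenus0} and~\ref{thm:collidinggenusg}) showing that $\mathsf{F}^k_{g,\bm m}$ is literally obtained from $\mathsf{F}^k_{g,n^+}$ by collapsing marked points. Without these two mechanisms --- the intertwined recursion/vanishing on $\ov{\M}_{0,n+1}$, and the collision compatibility --- the inductive step cannot be closed.
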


In the case $k\geq 2$ and $n=k(2g-2)$ left out by the statement, the cycle $\mathsf{F}^k_{g,n}$ decomposes as the sum of the classes of $\ov{\H}^k_{g,n}$ and an additional locus. We discuss this case in Remarks \ref{rmk:1} and \ref{rmk:2}.

Our most general result is described in \S\ref{sec:proofgraphFor} and concerns the study of incidence loci with prescribed multiplicities.
Namely, for an $n$-tuple $\bm{m}=(m_1,\dots,m_n)$ of positive integers, let ${\H}^k_{g,\bm{m}} \subset \mathbb{PE}^k_{g,n}$ be the incidence locus consisting of  smooth $n$-pointed genus $g$ curves  together with the class of a  $k$-differential having a zero of order at least $m_\ell$ at the $\ell$-th marked point for each $\ell=1,\dots, n$. 
Theorem \ref{thm:graphFor-m} shows that the class of $\ov{\H}^k_{g,\bm{m}}$ in the appropriate Chow ring determined by $k$, $g$,  $\bm{m}$ is given by the cycle $\mathsf{F}^k_{g,\bm{m}}$ from Definition \ref{def:Fm} with few exceptions. Theorem~\ref{thm:graphFor} is the specialization of Theorem \ref{thm:graphFor-m} at $\bm{m}=(1,\dots,1)$.

\subsection{Tautological relations}
For $k=1$ and $n>2g-2$, Theorem \ref{thm:graphFor} gives a  vanishing statement:

\begin{cor}
\label{cor:tautrel}
For $g\geq 2$ and  $n>2g-2$, one has
\[
\mathsf{F}^1_{g,n} =0 \qquad \mbox{in } R^n\left( \P\E^1_{g,n}\big|_{\M_{g,n}^\mathsf{rt}} \right).
\]
\end{cor}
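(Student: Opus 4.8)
The plan is to deduce this immediately from Theorem~\ref{thm:graphFor} together with the elementary fact that the incidence locus is empty in this range. Recall that clause~(i) of Theorem~\ref{thm:graphFor} applies to $k=1$ with \emph{no} restriction on $n$, so it already identifies $\mathsf{F}^1_{g,n}$ with the class $\left[\ov{\H}^1_{g,n}\right]$ in $A^{n}\left(\P\E^1_{g,n}\big|_{\M_{g,n}^\mathsf{rt}}\right)$ for every $n$. Moreover, by Definition~\ref{def:graphfor} the cycle $\mathsf{F}^1_{g,n}$ lies in the tautological ring $R^{n}\left(\P\E^1_{g,n}\right)$, hence its restriction lies in $R^{n}\left(\P\E^1_{g,n}\big|_{\M_{g,n}^\mathsf{rt}}\right)$. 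It therefore suffices to prove that $\left[\ov{\H}^1_{g,n}\right]=0$ when $n>2g-2$.

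For this I would show that already the open incidence locus $\H^1_{g,n}\subset\mathbb{PE}^1_{g,n}$ over $\M_{g,n}$ is empty in this range. A point of $\H^1_{g,n}$ consists of a smooth $n$-pointed genus $g$ curve $(C,p_1,\dots,p_n)$ together with the class of a nonzero stable abelian differential $\omega$ on $C$ vanishing at every marked point, i.e.\ with $p_1,\dots,p_n\in\mathrm{div}(\omega)$. The divisor $\mathrm{div}(\omega)$ is effective of degree $2g-2$, so it is supported on at most $2g-2$ distinct points; since $p_1,\dots,p_n$ are pairwise distinct, this forces $n\leq 2g-2$. Hence $\H^1_{g,n}=\emptyset$ for $n>2g-2$, and consequently its closure $\ov{\H}^1_{g,n}$ in $\mathbb{PE}^1_{g,n}$ is empty as well, so $\left[\ov{\H}^1_{g,n}\right]=0$.

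Putting the two steps together yields $\mathsf{F}^1_{g,n}=\left[\ov{\H}^1_{g,n}\right]=0$ in $R^{n}\left(\P\E^1_{g,n}\big|_{\M_{g,n}^\mathsf{rt}}\right)$, which is the assertion. There is no real obstacle here; the only points worth stating explicitly are that Theorem~\ref{thm:graphFor} must be invoked in the form valid for all $n$ (the $k=1$ case), and that $\mathsf{F}^1_{g,n}$ genuinely lands in the tautological subring, so that the resulting identity is a \emph{tautological} relation and not merely a relation among arbitrary Chow classes. The same degree count explains the analogous vanishing of the cycle $\mathsf{F}^k_{g,\bm{m}}$ of Definition~\ref{def:Fm} whenever $\sum_\ell m_\ell$ exceeds $k(2g-2)$, within the range covered by Theorem~\ref{thm:graphFor-m}.
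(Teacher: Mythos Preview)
Your proof is correct and matches the paper's own argument: the paper deduces the corollary directly from Theorem~\ref{thm:graphFor} (equivalently Theorem~\ref{thm:graphFork1}) together with the observation that abelian differentials on a genus $g$ curve have degree $2g-2$, so $\ov{\H}^1_{g,n}=\varnothing$ for $n>2g-2$. Your additional remarks about $\mathsf{F}^1_{g,n}$ lying in the tautological ring and the analogous statement for $\mathsf{F}^k_{g,\bm{m}}$ are welcome clarifications but not needed for the bare corollary.
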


This statement provides explicit expressions for tautological  relations.
By construction, these are  restrictions of  relations in $R^n\left( \P\E^1_{g,n} \right)$.
Moreover, Corollary \ref{cor:tautrel} induces relations in  $R^*\left( {\M_{g,n}^\mathsf{rt}} \right)$. 
E.g., for $g=2$ and $n=3$, one recovers  the restriction to $R^2\left( {\M}_{2,3}^\mathsf{rt} \right)$ of the relation  from~\cite{belorousski2000descendent}, see \S\ref{sec:BP}.

Determining the complete space of relations in $R^*\left( \ov{\M}_{g,n} \right)$ is an  open problem \cite{MR1722541, pixton2012conjectural, MR3264769, janda2015relations, pandharipande2015calculus}. 
In the  rational tails case, the tautological ring has been studied in \cite{zbMATH06493559, MR2120989, tavakol2014tautological}.
It is nontrivial to verify whether  all the relations from Corollary \ref{cor:tautrel} are  in the space of known relations, or 
whether Corollary \ref{cor:tautrel}  contributes any new relations. 
For this, it would be desirable to undertake a numerical study of the relations from Corollary \ref{cor:tautrel}.

\subsection{Structure of the paper}
We start with  a recursive identity (Theorem \ref{thm:pinZnrhonZ1plus}) for the class of the closure of the incidence locus \eqref{eq:Hkgndef}  over ${\M}_{g,n}^{\rm rt}$ in \S\ref{sec:recidincinit}  
following \cite{sauvagetcohomology, sauvaget2020volumes} (see also \S\S\ref{sec:Hkg2}--\ref{sec:recidinc}).
Next, we show how the cycle $\mathsf{F}^k_{g,n}$ from Definition \ref{def:graphfor} solves the recursion.
For this, we first analyze certain cycles on moduli spaces of pointed rational curves related to $\mathsf{F}^k_{g,n}$
 in \S\S\ref{sec:cycleswrt}--\ref{sec:recid}, and show how they satisfy a recursive identity (Theorem \ref{thm:Decrec}). This will be used to conclude that the cycles $\mathsf{F}^k_{g,n}$ satisfy the same recursion as the classes of the closure of the incidence loci  over ${\M}_{g,n}^{\rm rt}$ (Theorem \ref{thm:Frec}). 
The proof of Theorem \ref{thm:graphFor} is given in \S\ref{sec:proofgraphFor} together with the proof of the more general statement about the incidence loci $\ov{\H}^k_{g,\bm{m}}$ with prescribed multiplicities.  We conclude with some examples and checks~in~\S \ref{sec:ex}.

\subsection*{Acknowledgements}
Conversations with Dawei Chen and Renzo Cavalieri and their collaborations on related projects \cite{CT, CT2, CaT} provided the inspiration and the groundwork that made our results possible. Also, we would like to thank Rahul Pandharipande, Georg Oberdieck, and Adrien Sauvaget for comments and questions on a preliminary announcement of this work, and Johannes Schmitt for providing some numerical verification of our formula.


\section{A recursive identity for incidence loci} 
\label{sec:recidincinit}
We present here a recursive identity for the class of the incidence locus $\ov{\H}_{g, n}^k$ over curves with rational tails following \cite{sauvagetcohomology, sauvaget2020volumes}. 

First we define the extra loci $E_{\bm{I}}$. 
For $m\leq n-1$, consider the locus
\begin{equation}
\label{eq:H1heavypt}
{\H}^k_{g,\bm{m}} \subset \mathbb{PE}^k_{g,n-m} \qquad \mbox{where} \qquad \bm{m}=\left(m, 1^{n-m-1}\right)
\end{equation}
consisting of  smooth genus $g$ curves with $n-m$ marked points together with the class of a stable $k$-differential having a zero of order at least $m$ at the first marked point and  zeros at the other marked points.
This locus has codimension $n-1$ in $\mathbb{PE}^k_{g,n-m}$.

For a non-empty $\bm{I}\subseteq \{1,\dots, n-1\}$ of size $|\bm{I}|=m$, let
\begin{equation}
\label{eq:gamma0}
\gamma\colon \mathbb{PE}^k_{g,n-m} \times \ov{\M}_{0,\bm{I}\sqcup\{n,h_n\}} \longrightarrow \mathbb{PE}^k_{g,n}
\end{equation}
be the gluing map of degree one obtained by  identifying the first marked point on elements of $\mathbb{PE}^k_{g,n-m}$ with the marked point $h_n$ of elements in $\ov{\M}_{0,\bm{I}\sqcup\{n,h_n\}}$ (and relabeling the other marked points of elements in $\mathbb{PE}^k_{g,n-m}$ by elements of $\{1,\dots, n-1\}\setminus \bm{I}$ in case $n-m>1$, i.e., $|\bm{I}|<n-1$) and extending stable $k$-differentials by zero on the attached rational tail.

The \textit{extra locus} $E_{\bm{I}}$ is defined as
\begin{equation}
\label{eq:EI}
E_{\bm{I}}:= \gamma_* \, \ov{\H}^k_{g,\bm{m}}  \subset \P\E^k_{g,n}.
\end{equation}
A general element of $E_{\bm{I}}$ consists of a stable pointed curve with a rational tail containing the marked points with labels in $\bm{I}\sqcup\{n\}$ and a genus $g$ component containing the remaining marked points, together with a stable $k$-differential vanishing with order $m$ at the preimage of the node in the genus $g$ component and vanishing at all marked points in the genus $g$ component.

\smallskip

Let $\pi_n\colon \P\E^k_{g,n} \rightarrow \P\E^k_{g,n-1}$ be the map obtained by forgetting the last marked point, and let $\rho_n\colon \P\E^k_{g,n} \rightarrow\P\E^k_{g,1}$ be the map obtained by forgetting all but the last marked point (and relabeling it as $P_1$).

\begin{theorem}[{\cite{sauvagetcohomology, sauvaget2020volumes}}]
\label{thm:pinZnrhonZ1plus}
For $g\geq 2$ and $n\geq 2$, the identity
\[
\pi_n^* \left[\ov{\H}_{g, n-1}^k \right] \cdot \rho_n^* \left[\ov{\H}_{g, 1}^k \right] = \left[\ov{\H}_{g, n}^k \right] + \sum_{\bm{I}} |\bm{I}|[E_{\bm{I}}]
\] 
where the sum is over all non-empty $\bm{I}\subseteq \{1,\dots, n-1\}$, holds in:
\begin{enumerate}[(i)]
\item $A^{n}\left( \P\E^k_{g,n}\big|_{\M_{g,n}^\mathsf{rt}}  \right)$ when
\begin{equation*}
\left\{
\begin{array}{l}
\mbox{$k=1$; or}\\[.1cm]
\mbox{$k\geq 2$ and $n \leq k(2g-2)-1$;}
\end{array}
\right.
\end{equation*}
\item $A^{n}\left( \P\E^k_{g,n} \right)$ when $n\leq k$.
\end{enumerate}
\end{theorem}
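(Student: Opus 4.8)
The plan is to reduce the identity to a geometric analysis of the image of the locus $\ov{\H}^k_{g,n-1} \times_{\ov{\M}_{g,1}} \ov{\H}^k_{g,1}$ (the scheme-theoretic intersection computing the left-hand side) under the forgetful picture, by tracking how a stable $k$-differential on a curve with rational tails behaves when a marked point is added or when all but one are forgotten. First I would set up the basic square: $\pi_n$ and $\rho_n$ both factor through the universal curve over $\P\E^k_{g,1}$, so $\pi_n^*[\ov{\H}^k_{g,n-1}]\cdot\rho_n^*[\ov{\H}^k_{g,1}]$ is represented by the locus of $(C, x_1,\dots,x_n, \omega)$ in (the rational-tails part of) $\P\E^k_{g,n}$ such that $\omega$ vanishes at $x_n$ (the $\rho_n^*$ condition) and, after forgetting $x_n$ and stabilizing, $\omega$ still vanishes at $x_1,\dots,x_{n-1}$ (the $\pi_n^*$ condition). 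The content is that this intersection is dimensionally transverse in the stated range and decomposes into exactly the main component $\ov{\H}^k_{g,n}$ plus the boundary pieces $E_{\bm I}$, each appearing with the asserted multiplicity $|\bm I|$.

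The key steps, in order. \emph{Step 1:} Restrict to the open locus $\M_{g,n}$ of smooth curves, where $\P\E^k_{g,n}\to\M_{g,n}$ is a projective bundle and both pullback conditions just cut out $\{x_1,\dots,x_{n-1}\}\cup\{x_n\}$ as zeros of the section; here the intersection is transverse and equals $\H^k_{g,n}$ on the nose, with no extra components — this identifies the main term. \emph{Step 2:} Analyze the boundary of $\M^\mathsf{rt}_{g,n}$, i.e.\ the locus where a rational tail has sprouted carrying some subset of marked points. When $x_n$ lies on a rational tail $T$ together with a subset $\bm I$ of the other legs, a stable $k$-differential on the whole curve restricts to zero on $T$ (a genus-$0$ component meeting the rest in one node cannot carry a nonzero holomorphic $k$-differential vanishing as required, and in the limit the $k$-differential is supported on the genus $g$ component), so the $\pi_n$-stabilization sends this point to the locus where $\omega$ has a zero of order $\geq|\bm I|+1$ at the node — this produces precisely the locus $\gamma_*\ov{\H}^k_{g,\bm m}=E_{\bm I}$ with $\bm m=(|\bm I|,1^{\cdots})$. \emph{Step 3:} Compute the multiplicity with which $E_{\bm I}$ appears: this is a local intersection-multiplicity calculation at a generic point of $E_{\bm I}$, comparing the order of vanishing forced by the $\rho_n$-condition (order $\geq 1$ at $x_n$ on $T$) against the deformation of the node; the answer is $|\bm I|$ because smoothing the node trades one unit of zero order at the node for $|\bm I|$ units distributed among the limiting positions — this is exactly the classical "rational tail" multiplicity bookkeeping (cf.\ the incidence-variety recursions of Sauvaget \cite{sauvagetcohomology, sauvaget2020volumes}, which is the reference being followed). \emph{Step 4:} Verify that no component of the intersection is lost or gained in positive codimension, i.e.\ the intersection has pure the expected codimension $n$; this is where the numerical hypotheses enter. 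The inequality $n\leq k(2g-2)-1$ (resp.\ $n\leq k$ for the statement on all of $\P\E^k_{g,n}$) guarantees $\rho_n^*[\ov{\H}^k_{g,1}]$ and $\pi_n^*[\ov{\H}^k_{g,n-1}]$ meet properly — beyond that range the $k$-differential can be forced to vanish identically on components of too-small genus, or the excess-dimension phenomenon at $n=k(2g-2)$ flagged in Remarks \ref{rmk:1}--\ref{rmk:2} appears, breaking transversality.

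I expect \emph{Step 3}, the multiplicity computation, to be the main obstacle: one must argue that $|\bm I|$ is the exact local multiplicity, not merely a lower bound, which requires a careful local model of the total space $\P\E^k_{g,n}$ near a generic point of $E_{\bm I}$ — a plumbing/clutching coordinate on the curve together with the induced coordinates on the Hodge bundle, tracking how a $k$-differential with a prescribed zero at the node extends over the smoothing family. A secondary subtlety is \emph{Step 4}: ruling out extra components supported where the rational-tails curve degenerates further (several tails, or a tail carrying $x_n$ alone with multiplicity forced by the $\psi$-decorations) and confirming these all already lie inside the listed $E_{\bm I}$ or contribute zero. Both of these are precisely the points addressed by the transversality and clutching analyses in \cite{sauvagetcohomology, sauvaget2020volumes}, so the proof will largely consist of importing those arguments and checking they apply verbatim over $\M^\mathsf{rt}_{g,n}$ under the stated hypotheses on $k,g,n$.
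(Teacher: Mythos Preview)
Your outline is broadly correct and matches the paper's set-theoretic analysis (your Steps 1, 2, 4), but your Step 3 --- the multiplicity computation --- takes a genuinely different route from the paper. You propose a direct local computation via plumbing/clutching coordinates near a generic point of $E_{\bm I}$. The paper does \emph{not} do this. Instead, it determines the multiplicities indirectly by an inductive argument on $n$: writing the intersection as $[\ov{\H}_{g,n}^k] + \sum_{\bm I} a_{n,\bm I}[E_{\bm I}]$ with unknown $a_{n,\bm I}$, then pushing forward along the forgetful maps $\pi_i$ for $1\leq i\leq n-1$ (Lemma~\ref{lemma:pushforwardEn}) to reduce to the $(n-1)$-st identity, and invoking linear independence of the classes $[\ov{\H}_{g,n-1}^k]$ and $[E_{\ov{\bm I}}]$ (Lemma~\ref{lemma:ZnandEIindependent}) to read off all $a_{n,\bm I}$ except the maximal one with $\bm I=\{1,\dots,n-1\}$; that last coefficient is fixed by intersecting with an explicit $n$-dimensional test family $T\cong\P^{n-1}\times C$ (Lemma~\ref{lemma:claimtestndimspace}). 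Your local approach is closer in spirit to Sauvaget's original; the paper's approach trades the delicate local analysis for global bookkeeping, at the cost of needing the independence lemma.

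Two smaller points. First, your Step 2 has an off-by-one: the vanishing order forced at the node on the genus-$g$ component is $\geq |\bm I|$, not $|\bm I|+1$ (this is exactly the $m=|\bm I|$ in $\bm m=(m,1^{n-m-1})$ defining $E_{\bm I}$). Second, the paper's set-theoretic identification of components (your Steps 2 and 4) is carried out systematically via the incidence-variety compactification of \cite{bcggm1, bcggm} and its twisted $k$-differentials with level structures and global $k$-residue conditions, not by ad hoc analysis of the rational tail; this is where the hypothesis \eqref{eq:hypkgn} enters, via Lemma~\ref{lemma:hypkgn} ensuring that $k$-th powers of abelian differentials do not form a full component of the relevant $\ov{\H}^k_{g,\bm m}$.
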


The loci ${\H}^k_{g,\bm{m}} \subset \mathbb{PE}^k_{g,n-m}$ from \eqref{eq:H1heavypt} with $m\leq n-1$ are of pure codimension $n-1$ and are obtained from ${\H}^k_{g,n-1} \subset \mathbb{PE}^k_{g,n-1}$ by colliding the first $m$ marked points. Hence, from the class of ${\H}^k_{g,n-1}$, one computes 
 the classes of the loci from \eqref{eq:H1heavypt} by colliding points, and the classes of the extra loci $E_{\bm{I}}$ from \eqref{eq:EI} by push-forward via the corresponding map~$\gamma$. 
It follows that Theorem \ref{thm:pinZnrhonZ1plus} expresses the class of the incidence locus $\ov{\H}_{g, n}^k$ recursively in $n$. The base case $n=1$ is computed in \cites[\S 1.6]{sauvagetcohomology}[\S4]{korotkin2019tau}:
For $k\geq 1$ and $g\geq 2$, one has 
\begin{equation}
\label{eq:Z1class}
\ov{\H}_{g, 1}^k \equiv k \,\omega -\eta \quad \in \mathrm{Pic}\left(\P\E^k_{g,1} \right).
\end{equation}

\begin{remark}
\label{rmk:1}
In the case $k\geq 2$ and $n=k(2g-2)$ left out by Theorem \ref{thm:pinZnrhonZ1plus}, one has
\begin{equation}
\label{eq:k2plusnmax}
\pi_{n}^* \left[\ov{\H}_{g, n-1}^k \right] \cdot \rho_{n}^* \left[\ov{\H}_{g, 1}^k \right] = \left[\ov{\H}_{g, n}^k \right] + \left[ E^{\textrm{ab}} \right] + \sum_{\bm{I}} |\bm{I}|[E_{\bm{I}}]  
\end{equation}
in $A^{n}\left( \P\E^k_{g,n}\big|_{\M_{g,n}^\mathsf{rt}}  \right)$,
where $E^{\textrm{ab}}\subset \P\E^k_{g,n}$ is the locus whose
  general element   has a genus $g$ component, a rational tail containing all marked points, and the $k$-th power of an abelian differential vanishing with order $k(2g-2)$ at the preimage of the node on the genus $g$ component.
\end{remark}

For $k=1$, Theorem \ref{thm:pinZnrhonZ1plus} and Remark \ref{rmk:1} were obtained by Sauvaget \cite{sauvagetcohomology} over $\ov{\M}_{g,n}$ (but no graph formula was given there). Sauvaget informed us that the techniques of \cite{sauvaget2020volumes} could  be used to extend the argument for $k\geq 1$. 
We provide an alternative proof of Theorem \ref{thm:pinZnrhonZ1plus} and Remark \ref{rmk:1} in \S\ref{sec:recidinc} by  applying the incidence variety compactification from \cite{bcggm1, bcggm}.


\section{Cycles from weighted rational trees}
\label{sec:cycleswrt}

Theorem \ref{thm:pinZnrhonZ1plus} gives a recursion satisfied by the classes of the loci $\ov{\H}_{g, n}^k$.
As a preliminary study towards the solution of the recursion, we consider here cycles on moduli spaces of stable pointed rational  curves.  
After introducing the set of  rational rooted trees, we define the cycles $\mathsf{Z}^m(n,i,j)$ and $\mathsf{E}_{\bm{I}}(i,j)$. These will be shown to satisfy a recursive identity in \S\ref{sec:recid}.

\subsection{Rational rooted trees}
\label{sec:RRT}
Let $\mathsf{G}_{0,n+1}= \mathsf{G}^\mathsf{rt}_{0,n+1}$ be the set of trees dual to strata in $\ov{\M}_{0,n+1}$. Relabel the $(n+1)$-th leg of each tree as $h_0$. 

Let $\mathrm{T}\in \mathsf{G}_{0,n+1}$. By identifying the vertex incident to $h_0$ as \textit{root}, the tree $\mathrm{T}$ will be considered as a rooted tree.
An edge $e\in \mathrm{E}(\mathrm{T})$ consists of a pair of two half-edges $e=(h^+, h^-)$, where $h^+$ (or \textit{head}) is the half-edge which is further away from the leg $h_0$, and  $h^-$ (or \textit{tail}) is the half-edge which is closer to the leg $h_0$. Let 
\begin{align*}
\mathrm{H}^+(\mathrm{T}) &:= \{ \mbox{head half-edges $h^+$ of $\mathrm{T}$}\} \sqcup \{h_0\},\\
\mathrm{H}^-(\mathrm{T}) &:= \{ \mbox{tail half-edges $h^-$ of $\mathrm{T}$}\},\\
\mathrm{H}(\mathrm{T}) &:= \mathrm{H}^+(\mathrm{T}) \sqcup \mathrm{H}^-(\mathrm{T}).
\end{align*}

A vertex $v$ of $\mathrm{T}$ is said to be \textit{external}  if there is no tail half-edge of $\mathrm{T}$  incident to $v$.

These definitions are compatible with the ones  in the introduction when $\mathrm{T}$ arises as a rational subtree 
of a graph $\Gamma\in \mathsf{G}^\mathsf{rt}_{g,n^+}$ with $g>0$ and $n^+\geq n$.

\subsection{Trees decorations}
Next, we extend the decorations from \S\ref{sec:fordec} to rational rooted trees. The decorations will account for the special role played by $h_0$. Also, we will allow a heavier weight on one leg. 

Let $\mathrm{T}\in \mathsf{G}_{0,n+1}$.
Given $m\geq 1$,  assign weight $m$ to the first leg of $\mathrm{T}$, and weight $1$ to all other legs. 

For a half-edge $h$ of $\mathrm{T}$, let $\iota(h)$ be the half-edge such that $(h, \iota(h))\in \mathrm{E}(\mathrm{T})$.

For $h\in \mathrm{H}^+(\mathrm{T})$, define the \textit{weighted capacity} of  $h$ as
\[
\ell^m_{h} := \left\{
\begin{array}{ll}
\begin{array}{l}
\mbox{sum of the weights of the legs in the maximal}\\
\mbox{ connected subtree containing $h$, but not $\iota(h)$}
\end{array}
& \mbox{if $h\not= h_0$,}\\
\,\,\, n-1+m & \mbox{if $h= h_0$.}
\end{array}
\right.
\]

Consider the moduli space 
\[
\ov{\M}_ \mathrm{T}  := \prod_{v\in \mathrm{V}(\mathrm{T})} \ov{\M}_{0, n(v)}, 
\]
and define the set of decorations $\Psi(\mathrm{T})$ of the graph $\mathrm{T}$ as
\[
\Psi(\mathrm{T}):=\left\{\bm{\psi}= \psi_1^{d_1} \prod_{h\in \mathrm{H}(\mathrm{T})} \psi_h^{d_h} \,\Bigg| \, 
\begin{array}{l}
d_h\geq 0 \mbox{ for all }h,\\[.2cm]
d_1 \geq 0
\end{array}
\right\} \subset A^*\left(\ov{\M}_ \mathrm{T} \right).
\]
Let $\bm{\psi}=\psi_1^{d_1}\prod_{h\in \mathrm{H}(\mathrm{T})} \psi_h^{d_h}$ in $\Psi(\mathrm{T})$, and define accordingly the set 
\begin{eqnarray*}
\begin{split}
\mathrm{H}(\mathrm{T},\bm{\psi}) &:= && \{ (h, e) \, | \, h\in \mathrm{H}^+(\mathrm{T}), \, 0\leq e \leq d_h  \}\\
&&& \sqcup \{ (h, e) \, | \, h\in \mathrm{H}^-(\mathrm{T}), \, 1\leq e \leq d_h  \}
\sqcup \{ (1, e) \, | \,  1\leq e \leq d_1  \}.
\end{split}
\end{eqnarray*}
One has 
\begin{align}
\label{eq:Htipsisize}
\begin{split}
|\mathrm{H}(\mathrm{T},\bm{\psi})| &= |\mathrm{H}^+(\mathrm{T})| + \deg \bm{\psi} = 1+ |\mathrm{E}(\mathrm{T})| + \deg \bm{\psi}. 
\end{split}
\end{align}
This uses that $\mathrm{H}^+(\mathrm{T})$ contains the head half-edges together with $h_0$,
hence $|\mathrm{H}^+(\mathrm{T})| = 1+ |\mathrm{H}^-(\mathrm{T})| = 1 + |\mathrm{E}(\mathrm{T})|$.

The set of \textit{$i$-weightings of $(\mathrm{T},\bm{\psi})$ compatible with the legs weight} is
\begin{align}
\label{eq:Wimtipsi}
\mathsf{W}^{i,m}_{\mathrm{T},\bm{\psi}} := \left\{ 
\begin{array}{l}
w \colon \mathrm{H}(\mathrm{T},\bm{\psi}) \rightarrow \mathbb{N}  \,: \, \\[.2cm]
\quad w (h_0,0) = i, \\[.2cm]
\quad \ell^m_{h} -1 \geq w(h,0) \geq w(h,1)  \geq \cdots \geq w(h,d_{h})  \\ [.2cm]
\qquad\qquad\mbox{for all heads $h$ or $h=h_0$,}\\ [.2cm]
\quad 1 \leq w(h^-,1) < \cdots <  w(h^-,d_{h^-}) < w(\iota(h^-),0) \\ [.2cm]
\qquad\qquad\mbox{for all tails $h^-$,}\\ [.2cm]
\quad  1\leq w(1,1) < \cdots <  w(1,d_1) < m
\end{array}
\right\}.
\end{align}
For $m=1$, we set
\begin{equation}
\label{eq:Wi1tipsi}
\mathsf{W}^{i}_{\mathrm{T},\bm{\psi}} := \mathsf{W}^{i,1}_{\mathrm{T},\bm{\psi}}.
\end{equation}
One has
\begin{equation}
\label{eq:Wimnonempty}
\mbox{if} \quad \mathsf{W}^{i,m}_{\mathrm{T},\bm{\psi}} \neq \varnothing, \quad \mbox{then} \quad
 i<n-1+m \quad \mbox{and} \quad
 d_1 <m.
\end{equation}
The inequality $ i<n-1+m$ follows from the conditions $i = w (h_0,0)$ and $w (h_0,0) \leq \ell^m_{h_0} -1$. In particular, there are only finitely many non-empty $\mathsf{W}^{i,m}_{\mathrm{T},\bm{\psi}}$ for given $m$ and $\mathrm{T}\in \mathsf{G}_{0,n+1}$, $\bm{\psi} \in \Psi(\mathrm{T})$. The inequality $d_1< m$ follows from the conditions on $w(1,e)$ for $e=1, \dots, d_1$.

For $w\in \mathsf{W}^{i,m}_{\mathrm{T},\bm{\psi}}$, the corresponding \textit{$i$-weight} of $(\mathrm{T},\bm{\psi})$ is 
\begin{equation}
\label{eq:imweighttipsi}
w(\mathrm{T}, \bm{\psi}) := \prod_{(h,e)\in \mathrm{H}(\mathrm{T}, \bm{\psi})} w(h,e).
\end{equation}
Define 
\begin{equation}
\label{eq:cimtipsi}
{c}^{i,m}_{\mathrm{T}, \bm{\psi}} := \sum_{w\in \mathsf{W}^{i,m}_{\mathrm{T}, \bm{\psi}}} w(\mathrm{T}, \bm{\psi}).
\end{equation}
Following \eqref{eq:Wimnonempty}, if this is non-zero, then $i<n-1+m$ and $d_1 <m$.

\subsection{The cycles $\mathsf{Dec}_n^{i,m}(D)$ and $\mathsf{Z}^m(n,i,j)$}
Let $\mathrm{T}\in \mathsf{G}_{0,n+1}$.
Consider the gluing map of degree one defined by $\mathrm{T}$:
\begin{equation}
\label{eq:xiT}
\xi_{\mathrm{T}}\colon  \ov{\M}_ \mathrm{T} \longrightarrow \ov{\M}_{0,n+1}.
\end{equation}
For each $\bm{\psi}\in \Psi(\mathrm{T})$, one has
\begin{equation}
\label{eq:mu1minus1}
\begin{split}
\deg \xi_{\mathrm{T} *} (\bm{\psi})
&= |\mathrm{E}(\mathrm{T})| + \deg \bm{\psi}
= -1+|\mathrm{H}^+(\mathrm{T})| + \deg \bm{\psi}
= -1+ |\mathrm{H}(\mathrm{T}, \bm{\psi})|.
\end{split}
\end{equation}
The last two equalities follow from \eqref{eq:Htipsisize}.

For a formal variable $D$, the cycle $\mathsf{Dec}_n^{i,m}(D)$ in $A^*\left(\ov{\M}_{0,n+1}\right) \left[D^{-1}\right]$ is defined as
\begin{equation}
\label{eq:Decimdef}
\mathsf{Dec}_n^{i,m}(D):=\sum_{{\mathrm{T}}\in \mathsf{G}_{0,n+1}} \sum_{\bm{\psi}\in \Psi(\mathrm{T})} (-1)^{|\mathrm{H}^+(\mathrm{T})|} \,\,
{c}^{i,m}_{\mathrm{T}, \bm{\psi}}\,\, \xi_{\mathrm{T} *} \left( \bm{\psi}  \right)\, D^{-|\mathrm{H}(\mathrm{T}, \bm{\psi})|}.
\end{equation}
This is a  polynomial in $D^{-1}$ with coefficients in $A^*\left(\ov{\M}_{0,n+1}\right)$. It will be convenient to name also its coefficients. For this, define 
\begin{equation}
\label{eq:Znijdef-m}
\mathsf{Z}^m(n,i,j) := \left[ \mathsf{Dec}_n^{i,m}(D)\right]_{1-n-m-j+i} \qquad \mbox{in $A^{*} \left(\ov{\M}_{0,n+1}\right)$}
\end{equation}
where $[X]_{t}$ denotes the coefficient of $D^t$ in $X$.
One has
\begin{equation}
\label{eq:Znij-m}
\mathsf{Z}^m(n,i,j) = \sum_{(\mathrm{T}, \bm{\psi})}  (-1)^{|\mathrm{H}^+(\mathrm{T})|} \,\, c^{i,m}_{\mathrm{T}, \bm{\psi}} \,\, \xi_{\mathrm{T} *}\left( \bm{\psi} \right) \qquad \mbox{in $A^{n+m-2+j-i} \left(\ov{\M}_{0,n+1}\right)$}
\end{equation}
where the sum is over decorated trees $(\mathrm{T}, \bm{\psi})$ 
such that $\xi_{\mathrm{T} *}\left( \bm{\psi} \right)$
has  degree $n+m-2+j-i$ in $A^* \left(\ov{\M}_{0,n+1}\right)$.
When $m=1$, we simply write 
\begin{equation*}
\mathsf{Dec}_n^{i}(D):=\mathsf{Dec}_n^{i,1}(D) \qquad \mbox{and} \qquad \mathsf{Z}(n,i,j):= \mathsf{Z}^1(n,i,j).
\end{equation*}
In Theorem \ref{thm:van2}, we will show that the cycles $\mathsf{Z}(n,i,j)$ vanish for $i,j\geq 1$.

\subsection{Colliding points on rational trees}
Here we show  that the cycles $\mathsf{Z}^m(n,i,j)$ for arbitrary $m$ (and arbitrary $i,j$) are obtained from the case $m=1$ by colliding points. This is a prelude to Theorem \ref{thm:collidinggenusg}.

\begin{theorem}
\label{thm:collidinggenus0}
For $1\leq m< n$, let $n^-:=n-m+1$.
The cycle 
\[
\mathsf{Z}^m(n^-,i,j)\in A^{n-1+j-i} \left(\ov{\M}_{0,n^- +1}\right)
\]
 is obtained  from $\mathsf{Z}(n,i,j)$ in $A^{n-1+j-i} \left(\ov{\M}_{0,n+1}\right)$ by colliding the first $m$ marked points into the marked point $P_1$ --- and relabeling the other marked points as $P_2,\dots,P_{n^-}$.
\end{theorem}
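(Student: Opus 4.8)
The plan is to compare the two sides term by term under the collision map. Let $\pi\colon \ov{\M}_{0,n+1}\to \ov{\M}_{0,n^-+1}$ denote the map that collides the first $m$ marked points $P_1,\dots,P_m$ into a single marked point $P_1$ (equivalently, the composition of $m-1$ boundary-to-interior forgetful operations, or more precisely the pushforward along the inclusion of the boundary stratum where $P_1,\dots,P_m$ lie on a single rational bubble, followed by the stabilization). The statement to prove is $\mathsf{Z}^m(n^-,i,j) = \pi_*\big(\text{something}\big)$; I need to unwind what ``colliding the first $m$ marked points'' means on the level of the defining sum \eqref{eq:Znij-m}, i.e.\ that $\mathsf{Z}^m(n^-,i,j)$ equals the image of $\mathsf{Z}(n,i,j)$ under the explicit colliding operator used earlier in the paper (cf.\ the analogous colliding statements that feed into Theorem~\ref{thm:pinZnrhonZ1plus}). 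So the first step is to record precisely the colliding operation on tautological classes, and in particular its effect on $\xi_{\mathrm{T}*}(\bm\psi)$.

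Second, I would set up a combinatorial correspondence between decorated trees. Starting from a decorated tree $(\mathrm{T},\bm\psi)\in\mathsf{G}_{0,n+1}$ with $\deg\bm\psi$ in the correct range, colliding the legs $1,\dots,m$ produces a decorated tree $(\mathrm{T}^-,\bm\psi^-)\in\mathsf{G}_{0,n^-+1}$ by contracting the minimal subtree spanned by those legs down to the leg $P_1$ (and transporting any $\psi$-decorations and the $\psi_1$-decoration along). The key bookkeeping point is that this operation changes neither $|\mathrm{E}|$ (hence neither $|\mathrm{H}^+|$, so the sign $(-1)^{|\mathrm{H}^+(\mathrm{T})|}$ is preserved once one accounts for which edges actually get contracted) nor the total degree, in the sense that $\xi_{\mathrm{T}*}(\bm\psi)$ pushes forward to a sum of $\xi_{\mathrm{T}^-*}(\bm\psi^-)$-type classes of the right codimension $n-1+j-i = (n^--1+j-i) + (m-1)$ — wait, that's the wrong count; the codimension of $\mathsf{Z}^m(n^-,i,j)$ is $n^--1+j-i = n-m+j-i$, while $\mathsf{Z}(n,i,j)$ lives in codimension $n-1+j-i$, and $\pi$ drops dimension by $m-1$, so pushforward of a codimension-$(n-1+j-i)$ class lands in codimension $(n-1+j-i)-(m-1) = n-m+j-i$, which matches. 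Good — so the degrees are consistent with pushforward, and I must check the terms match.

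Third, and this is where the heavy lifting is, I would match the coefficients: on the source side a term carries $c^{i,1}_{\mathrm{T},\bm\psi}$ (the $m=1$ weightings summed as in \eqref{eq:cimtipsi}, with all legs weight $1$), and on the target side $c^{i,m}_{\mathrm{T}^-,\bm\psi^-}$ (weightings where the first leg of $\mathrm{T}^-$ has weight $m$). The claim amounts to: summing $c^{i,1}_{\mathrm{T},\bm\psi}$ over all $(\mathrm{T},\bm\psi)$ mapping to a fixed $(\mathrm{T}^-,\bm\psi^-)$ — together with the combinatorial multiplicities of the colliding operation on tautological classes (the ways a stratum class $\xi_{\mathrm{T}^-*}(\bm\psi^-)$ arises as a pushforward) — reproduces exactly $c^{i,m}_{\mathrm{T}^-,\bm\psi^-}$. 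Concretely, I expect this to reduce to a bijection between (a) weightings of $(\mathrm{T},\bm\psi)$ on the contracted subtree $\mathrm{T}_{\mathrm{coll}}$ spanned by legs $1,\dots,m$, whose product of head-values is tracked, and (b) pairs consisting of a strictly increasing chain $1\le w(1,1)<\dots<w(1,d_1)<m$ (the ``heavier leg'' chain appearing in \eqref{eq:Wimtipsi}) together with a weighting of the part of $\mathrm{T}^-$ above $P_1$ — this is precisely the kind of identity proved in the genus-$0$ combinatorics of \S\ref{sec:fordec}. The main obstacle, and the step I would spend the most care on, is this coefficient identity: verifying that the product-of-weights enumeration on the ``fan'' of subtrees collapsing into $P_1$ matches the single weight-$m$ leg enumeration, with the signs $(-1)^{|\mathrm{H}^+|}$ and the colliding multiplicities all lining up. I would prove it by induction on $m$ (or on the number of edges in the collapsing subtree), using the one-step colliding identity as the base case and the telescoping structure of strictly increasing chains $w(1,1)<\dots<w(1,d_1)<m$ to carry the induction. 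The rest — degree counting, compatibility of $\omega_h$/$\psi_h$ classes under pullback, the range of $\bm\psi$ contributing — is routine given the earlier sections.
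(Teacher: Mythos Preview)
Your eventual strategy --- induction on $m$, matching weightings on the two sides --- is exactly what the paper does. Two things need fixing before it goes through.

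First, the degree bookkeeping is off, and the error traces to a wrong picture of what ``colliding'' is. By \eqref{eq:Znij-m}, $\mathsf{Z}^m(n^-,i,j)$ lies in $A^{n^-+m-2+j-i}\big(\ov{\M}_{0,n^-+1}\big)=A^{n-1+j-i}\big(\ov{\M}_{0,n^-+1}\big)$, the \emph{same} codimension as $\mathsf{Z}(n,i,j)$ --- not $n-m+j-i$. Correspondingly, colliding two points is not plain pushforward but the codimension-preserving operation $X\mapsto \pi_{2*}(\delta_{1,2}\cdot X)$: intersect with the divisor where $P_1,P_2$ bubble off, then forget $P_2$. Colliding $m$ points is this iterated $m-1$ times. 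Your two mistakes cancel in the dimension count, but the underlying picture would mislead the term-by-term analysis; in particular the ``contract the minimal subtree spanned by $P_1,\dots,P_m$'' description is not what the operation does to a general $\xi_{\mathrm{T}*}(\bm\psi)$.

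Second, the all-at-once matching you sketch in paragraphs two and three (summing $c^{i,1}$ over a ``fan'' collapsing to $P_1$) is unnecessary and is where the vagueness lives. The paper sidesteps it entirely: the inductive step is a \emph{single} collision $\mathsf{Z}^{m-1}(n^-\!+1,i,j)\leadsto \mathsf{Z}^m(n^-,i,j)$, and the contributing decorated trees $(\widehat{\mathrm{T}},\bm\psi)$ split into exactly two cases according to the valence of the vertex $v_1$ carrying legs $1,2$. If $v_1$ is trivalent, the collision produces an extra $\psi_1$ (from $\delta_{1,2}$), and there is an explicit bijection $\mathsf{W}^{i,m-1}_{\widehat{\mathrm{T}},\bm\psi}\xrightarrow{\;\cong\;}\mathsf{W}^{i,m}_{\mathrm{T},\bm\psi\cdot\psi_1}$ sending $\widehat{w}(h_1^+,0)$ to the new top entry $w(1,d_1)$ of the leg-$1$ chain. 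If $v_1$ has valence $\geq 4$, legs $1,2$ merge, all weighted capacities $\ell^{m-1}=\ell^{m}$ agree, and $c^{i,m-1}_{\widehat{\mathrm{T}},\bm\psi}=c^{i,m}_{\mathrm{T},\bm\psi}$ on the nose. No sum over a collapsing subtree is ever needed.
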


\begin{proof}
The case $m=1$ is trivial. Assume the statement holds for $m-1$ such that $1\leq m-1 <n-1$. It is enough to show that the cycle 
$\mathsf{Z}^{m}(n^-,i,j)$ in $A^{n-1+j-i} \left(\ov{\M}_{0,n^- +1}\right)$
 is obtained  from 
\begin{equation}
\label{eq:Zmminus1}
 \mathsf{Z}^{m-1}(n^- +1,i,j) \in A^{n-1+j-i} \left(\ov{\M}_{0,n^- +2}\right)
\end{equation}
 by colliding the first two marked points into the marked point $P_1$  --- and relabeling the other marked points as $P_2,\dots,P_{n^-}$.

To collide the points, one proceeds as follows. Let $\delta_{1,2}$ be the class of the divisor of stable pointed rational curves whose general element has two rational components with one of them containing only two marked points, namely $P_1$ and $P_2$. The cycle obtained from \eqref{eq:Zmminus1} by colliding the first two marked points is
\begin{equation}
\label{eq:collidingfrommminus1}
\pi_{2 *}\left(  \delta_{1,2} \cdot \mathsf{Z}^{m-1}(n^- +1,i,j) \right) \in A^{n-1+j-i} \left(\ov{\M}_{0,n^- +1}\right)
\end{equation}
where $\pi_2$ is the map obtained by forgetting the second marked point. We need to show that 
\begin{equation}
\label{eq:diffcoll}
\pi_{2 *}\left(  \delta_{1,2} \cdot \mathsf{Z}^{m-1}(n^- +1,i,j) \right) = \mathsf{Z}^m(n^-,i,j)  \in A^{n-1+j-i} \left(\ov{\M}_{0,n^- +1}\right).
\end{equation}
For this, we analyze the various terms arising in the expansion of \eqref{eq:collidingfrommminus1} and show that they match the terms in $\mathsf{Z}^m(n^-,i,j)$.
The non-zero terms arising in the expansion of \eqref{eq:collidingfrommminus1} are obtained by colliding the first two marked points in the terms of \eqref{eq:Zmminus1}
contributed by decorated trees where the first two legs are incident to the same vertex, say $v_1$. We distinguish two cases depending on the valence of $v_1$.

\smallskip

\noindent{\textit{Case 1.}}
Let  $(\widehat{\mathrm{T}}, \bm{\psi})$ be a decorated tree contributing to \eqref{eq:Zmminus1} such that the first two  legs  are incident to a \textit{trivalent} vertex $v_1$. We show that the contribution of $(\widehat{\mathrm{T}}, \bm{\psi})$ to \eqref{eq:collidingfrommminus1} matches the contribution of $(\mathrm{T}, \bm{\psi}\cdot \psi_1)$ to $\mathsf{Z}^m(n^-,i,j)$, where $\mathrm{T}$ is the tree obtained from $\widehat{\mathrm{T}}$ by removing the second leg and contracting the edge incident to the  vertex $v_1$.

Using \eqref{eq:Znij-m}, 
 the contribution of $(\widehat{\mathrm{T}}, \bm{\psi})$ to \eqref{eq:Zmminus1} is given by
\[
(-1)^{|\mathrm{H}^+(\widehat{\mathrm{T}})|} \,\, c^{i,m-1}_{\widehat{\mathrm{T}}, \bm{\psi}} \,\, \xi_{\widehat{\mathrm{T}} *}\left( \bm{\psi} \right).
\]
By  \eqref{eq:cimtipsi}, the coefficient $c^{i,m-1}_{\widehat{\mathrm{T}}, \bm{\psi}}$ is the sum of $\widehat{w}(\widehat{\mathrm{T}}, \bm{\psi})$ for $\widehat{w}$ in $\mathsf{W}^{i,m-1}_{\widehat{\mathrm{T}}, \bm{\psi}}$. 
By linearity, for $\widehat{w}\in \mathsf{W}^{i,m-1}_{\widehat{\mathrm{T}}, \bm{\psi}}$, the contribution of $(\widehat{\mathrm{T}}, \bm{\psi}, \widehat{w})$ to \eqref{eq:Zmminus1} is
\[
(-1)^{|\mathrm{H}^+(\widehat{\mathrm{T}})|} \,\, \widehat{w}(\widehat{\mathrm{T}}, \bm{\psi}) \,\, \xi_{\widehat{\mathrm{T}} *}\left( \bm{\psi} \right).
\]
Colliding the first two marked points gives rise to the following contribution of $(\widehat{\mathrm{T}}, \bm{\psi}, \widehat{w})$ to \eqref{eq:collidingfrommminus1}:
\begin{equation}
\label{eq:contdiffcolltipsipsi1}
(-1)^{|\mathrm{H}^+(\mathrm{T})|} \,\, \widehat{w}(\widehat{\mathrm{T}}, \bm{\psi}) \,\, \xi_{\mathrm{T} *}\left( \bm{\psi}\cdot \psi_1 \right).
\end{equation}
The factor $-\psi_1$ decorating the first leg arises from the intersection with $\delta_{1,2}$ in \eqref{eq:collidingfrommminus1}. 
Next, we show that $\widehat{w}(\widehat{\mathrm{T}}, \bm{\psi}) = w(\mathrm{T}, \bm{\psi}\cdot \psi_1)$ for an appropriate $w\in \mathsf{W}^{i,m}_{\mathrm{T}, \bm{\psi}\cdot \psi_1}$.

Let $h_1^+$ be the head half-edge in $\widehat{\mathrm{T}}$ incident to the  trivalent vertex $v_1$, and let $h_1^-$ be the tail half-edge such that $(h_1^+, h_1^-)\in \mathrm{E}(\widehat{\mathrm{T}})$. A factor $\psi_h$ with $h=h_1^-$ in the decoration $\bm{\psi}$  of $\widehat{\mathrm{T}}$ induces a factor $\psi_1$ decorating  $\mathrm{T}$.
Let $d_1$ be the resulting degree of $\psi_1$ in $\bm{\psi}\cdot \psi_1$ decorating  $\mathrm{T}$. Then the degree of $\psi_h$ with $h=h_1^-$ in the decoration $\bm{\psi}$ of $\widehat{\mathrm{T}}$ is $d_1-1$. 
For $\widehat{w}$ in $\mathsf{W}^{i,m-1}_{\widehat{\mathrm{T}}, \bm{\psi}}$, one has 
\[
\widehat{w}(h_1^+,0) \leq \ell^{m-1}_{h_1^+} -1=m-1.
\]
The conditions on $\widehat{w}$ for tails give
\[
1\leq \widehat{w}(h_1^-,1) < \cdots < \widehat{w}(h_1^-,d_1-1) < \widehat{w}(h_1^+,0).
\]
It follows that $d_1<m$. 
Consider the bijection
\begin{align*}
\mathrm{H}(\widehat{\mathrm{T}}, \bm{\psi}) &\xrightarrow{\cong} \mathrm{H}(\mathrm{T}, \bm{\psi}\cdot \psi_1)\\
(h_1^-,e)&\mapsto (1,e), && \mbox{for  $1\leq e < d_1$,}\\
(h_1^+,0) &\mapsto (1,d_1), \\
(h,e)& \mapsto (h,e), && \mbox{for $h\not\in\{h_1^-, h_1^+\}$ and all $e$.}
\end{align*}
For all heads $h^+\neq h_1^+$ or $h^+=h_0$ in $\widehat{\mathrm{T}}$,
the weighted capacity $\ell^{m-1}_{h^+}$ of $h^+$ in $\widehat{\mathrm{T}}$ is equal to the weighted capacity $\ell^{m}_{h^+}$ of the corresponding $h^+$ in $\mathrm{T}$.
Consider the map
\[
\mathsf{W}^{i,m-1}_{\widehat{\mathrm{T}}, \bm{\psi}} \rightarrow \mathsf{W}^{i,m}_{\mathrm{T}, \bm{\psi}\cdot \psi_1}, \,\, \widehat{w} \mapsto w,
\,\, \mbox{with} \,\,
\left\{
\begin{array}{rll}
w(1,e) \!\!\!\!&:= \widehat{w}(h_1^-,e), &\mbox{if $1\leq e< d_1$,}\\
w(1,d_1) \!\!\!\!&:= \widehat{w}(h_1^+,0), \\
w(h,e) \!\!\!\!&:= \widehat{w}(h,e), &\mbox{otherwise.}
\end{array}
\!\!\!
\right.
\]
This map is a bijection and preserves the weights, i.e., 
\[
\mbox{if}\quad \widehat{w} \mapsto w, \quad\mbox{then}\quad \widehat{w}(\widehat{\mathrm{T}}, \bm{\psi}) = w(\mathrm{T}, \bm{\psi}\cdot \psi_1).
\]
It follows that for $\widehat{w}\in \mathsf{W}^{i,m-1}_{\widehat{\mathrm{T}}, \bm{\psi}}$, the contribution of $(\widehat{\mathrm{T}}, \bm{\psi}, \widehat{w})$ to \eqref{eq:collidingfrommminus1}
given by \eqref{eq:contdiffcolltipsipsi1} is equal to
\[
(-1)^{|\mathrm{H}^+(\mathrm{T})|} \,\, w(\mathrm{T}, \bm{\psi}\cdot \psi_1) \,\, \xi_{\mathrm{T} *}\left( \bm{\psi}\cdot \psi_1 \right)
\]
where $\widehat{w} \mapsto w$. This matches the contribution of $(\mathrm{T}, \bm{\psi}\cdot \psi_1,w)$ to $\mathsf{Z}^m(n^-,i,j)$. 

Vice versa, a decorated tree where $\psi_1$ has positive degree can arise in the expansion of \eqref{eq:collidingfrommminus1} only by colliding the first two marked points  on
 a term of \eqref{eq:Zmminus1} contributed by a decorated tree where the first two legs are incident to a trivalent vertex.

It follows that the contributions to \eqref{eq:collidingfrommminus1} obtained by colliding the first two marked points on decorated trees where the first two legs are incident to a trivalent vertex match the contributions to $\mathsf{Z}^m(n^-,i,j)$ from decorated trees where $\psi_1$ has positive degree.

\smallskip

\noindent{\textit{Case 2.}}
Finally, consider a decorated tree $(\widehat{\mathrm{T}}, \bm{\psi})$ contributing to \eqref{eq:Zmminus1} such that the first two legs  are incident to a  vertex $v_1$ of valence at least four. We can assume that $\psi_1$ has degree zero in $\bm{\psi}$, since otherwise the contribution of $(\widehat{\mathrm{T}}, \bm{\psi})$ to \eqref{eq:collidingfrommminus1} is zero, due to the vanishing $\psi_1\cdot \delta_{1,2}=0$. We show that the contribution of $(\widehat{\mathrm{T}}, \bm{\psi})$ to \eqref{eq:collidingfrommminus1} matches the contribution of $(\mathrm{T}, \bm{\psi})$ to $\mathsf{Z}^m(n^-,i,j)$, where $\mathrm{T}$ is the tree obtained from $\widehat{\mathrm{T}}$ by merging the first two legs.
In this case, the sets of edges of the trees $\widehat{\mathrm{T}}$ and ${\mathrm{T}}$ are isomorphic, and moreover the weighted capacity $\ell^{m-1}_{h^+}$ of $h^+$ in $\widehat{\mathrm{T}}$ is equal to the weighted capacity $\ell^{m}_{h^+}$ of the corresponding $h^+$ in $\mathrm{T}$, for all heads $h^+$ or $h^+=h_0$.
It follows that 
\[
c^{i,m-1}_{\widehat{\mathrm{T}}, \bm{\psi}} = c^{i,m}_{\mathrm{T}, \bm{\psi}}.
\]
Hence, one has
\[
\pi_{2 *}\left( \delta_{1,2}\cdot (-1)^{|\mathrm{H}^+(\widehat{\mathrm{T}})|} \,\, c^{i,m-1}_{\widehat{\mathrm{T}}, \bm{\psi}} \,\, \xi_{\widehat{\mathrm{T}} *}\left( \bm{\psi} \right) \right)
=
(-1)^{|\mathrm{H}^+(\mathrm{T})|} \,\, c^{i,m}_{\mathrm{T}, \bm{\psi}} \,\, \xi_{\mathrm{T} *}\left( \bm{\psi} \right).
\]
The LHS is the contribution of $(\widehat{\mathrm{T}}, \bm{\psi})$ to \eqref{eq:collidingfrommminus1}, and the RHS is the contribution of $(\mathrm{T}, \bm{\psi})$ to $\mathsf{Z}^m(n^-,i,j)$.

We conclude that the contributions to \eqref{eq:collidingfrommminus1} obtained by colliding the first two marked points on decorated trees where the first two legs are incident to a vertex of valence at least four match the contributions to $\mathsf{Z}^m(n^-,i,j)$ from decorated trees where $\psi_1$ has degree zero.
This ends the proof.
\end{proof}

\subsection{The extra cycles \texorpdfstring{$\mathsf{E}_{\bm{I}}$}{E}}
We define here the cycles $\mathsf{E}_{\bm{I}}^i(D)$ and $ \mathsf{E}_{\bm{I}}(i,j)$. These cycles are obtained as push-forward of  $\mathsf{Dec}_n^{i,m}(D)$ and $\mathsf{Z}^m(n,i,j)$, and will be used to express the recursive identities  in Theorems \ref{thm:mainthmgenuszeroij} and \ref{thm:mainthmgenuszero}.

For $\mathrm{T} \in \mathsf{G}_{0,n+1}$, let $v_n$  be the vertex incident to the leg $n$, and let $h_n^+$ be the element in $\mathrm{H}^+(\mathrm{T})$ incident to $v_n$.
If $v_n$ is external, let $\mathrm{T}\setminus v_n$ be the graph obtained from $\mathrm{T}$ by removing the vertex $v_n$ together with the legs incident to it and the element $h_n^+$. 

For $\varnothing \neq \bm{I}\subseteq \{1,\dots,n-1\}$,
define the    \textit{set of decorated codas} as 
\begin{equation}
\label{eq:DCI}
\mathsf{DC}_{\bm{I}}:=\left\{ 
(\mathrm{T}, \bm{\psi}) \, \left|\, 
\begin{array}{l}
\mathrm{T}\in \mathsf{G}_{0,n+1},\\[4pt]
\mbox{$v_n$ is external with set of incident legs $\bm{I}\sqcup \{n\}$,}\\[4pt]
\mbox{and the degree of $\psi_h$ with $h=h_n^+$ in $\bm{\psi}$ is zero}
\end{array}
\right.
\!
\right\} .
\end{equation}
For $\bm{I}= \{1,\dots,n-1\}$, the set $\mathsf{DC}_{\bm{I}}$ consists of a single decorated tree $(\mathrm{T}, 0)$ where $\mathrm{T}$ has only one vertex --- in this case, $\mathrm{T}\setminus v_n=\varnothing$ and $\bm{\psi}=0$.
For $\bm{I}\subsetneq \{1,\dots,n-1\}$, one has $\mathrm{T}\setminus v_n\neq \varnothing$ for $(\mathrm{T},\bm{\psi})\in \mathsf{DC}_{\bm{I}}$. For such decorated trees, 
by relabeling the tail half-edge of the edge connecting $v_n$ to $\mathrm{T}\setminus v_n$ as leg $1$ of $\mathrm{T}\setminus v_n$, and relabeling the other legs of $\mathrm{T}\setminus v_n$ accordingly, one has $\mathrm{T}\setminus v_n\in \mathsf{G}_{0,n-|\bm{I}|+1}$.

Let $(\mathrm{T},\bm{\psi})\in \mathsf{DC}_{\bm{I}}$ for some $\varnothing \neq \bm{I}\subseteq \{1,\dots,n-1\}$. An element $h^+$ in $\mathrm{H}^+(\mathrm{T})$  is called a \textit{predecessor} of $h_n^+$ if  
$h^+$ is incident to a vertex $v\neq v_n$ closer to $h_0$ than $v_n$. 
Consider the subset of $i$-weightings
\[
\mathsf{W}^{i,\bm{I}}_{\mathrm{T},\bm{\psi}} := \left\{ w \in \mathsf{W}^{i}_{\mathrm{T},\bm{\psi}} \, \Bigg| \,
\begin{array}{l}
w \left( h^+, 0 \right)\leq \ell_{h^+}-2  \\[4pt]
\mbox{for all predecessors $h^+$ of $h_n^+$}\\[4pt]
\mbox{and}\,\, w \left( h_n^+, 0 \right)= |\bm{I}| 
\end{array}
\right\}.
\]
Here $\mathsf{W}^{i}_{\mathrm{T},\bm{\psi}}$ is as in \eqref{eq:Wi1tipsi}.
One has
\begin{equation}
\label{eq:WiIneqempty}
\mbox{if} \quad \mathsf{W}^{i,\bm{I}}_{\mathrm{T},\bm{\psi}} \neq \varnothing, \quad \mbox{then} \quad
\left\{
\begin{array}{l}
\mbox{either $i\leq  n-2$ and $|\bm{I}|\leq n-2$,}\\
\mbox{or $i=|\bm{I}|= n-1$.}
\end{array}
\right.
\end{equation}

For $(\mathrm{T},\bm{\psi})\in \mathsf{DC}_{\bm{I}}$,
define 
\begin{equation}
\label{eq:ditipsi}
d^i_{\mathrm{T},\bm{\psi}} := 
\frac{1}{|\bm{I}|} \sum_{w\in \mathsf{W}^{i,\bm{I}}_{\mathrm{T}, \bm{\psi}}} w(\mathrm{T}, \bm{\psi}).
\end{equation}
Following \eqref{eq:WiIneqempty}, one has
\begin{equation}
\label{eq:ditipsineq0}
\mbox{if}\,\, d^i_{\mathrm{T},\bm{\psi}} \neq 0, \quad \mbox{then} \,\,
\left\{
\begin{array}{l}
\mbox{either $i \leq n-2$ and $|\bm{I}|\leq n-2$,}\\
\mbox{or $i=|\bm{I}|= n-1$.}
\end{array}
\right.
\end{equation}
In the case $i=n-1$ and $\bm{I}=\{1,\,\dots, n-1\}$, one has $d^i_{\mathrm{T},0}=1$.

Consider the cycle
\[
\mathsf{E}_{\bm{I}}^i(D)  := 
\sum_{(\mathrm{T}, \bm{\psi})\in \mathsf{DC}_{\bm{I}}}  (-1)^{|\mathrm{H}^+(\mathrm{T})|-1}  \,\, d^i_{\mathrm{T},\bm{\psi}} \,\, \xi_{\mathrm{T} *}\left( \bm{\psi} \right) \, D^{-|\mathrm{H}(\mathrm{T}, \bm{\psi})| }
\]
{in $A^* \left(\ov{\M}_{0,n+1}\right)[D^{-1}]$.}
Similarly to \eqref{eq:Znijdef-m},  let 
\begin{equation}
\label{eq:Eijdef}
 \mathsf{E}_{\bm{I}}(i,j) := \left[ \mathsf{E}_{\bm{I}}^i(D)  \right]_{-n-j+i} \qquad \mbox{in $A^{n-1+j-i} \left(\ov{\M}_{0,n+1}\right)$}
\end{equation}
be its coefficients. One has
\begin{equation}
\label{eq:EIij}
 \mathsf{E}_{\bm{I}}(i,j) = \sum_{(\mathrm{T}, \bm{\psi})}  (-1)^{|\mathrm{H}^+(\mathrm{T})|-1}  \,\, d^i_{\mathrm{T},\bm{\psi}} \,\, \xi_{\mathrm{T} *}\left( \bm{\psi} \right) 
\end{equation}
in $A^{n-1+j-i}\left( \ov{\M}_{0,n+1}\right)$, where the sum is over decorated trees $(\mathrm{T}, \bm{\psi})$ in $\mathsf{DC}_{\bm{I}}$ such that $\xi_{\mathrm{T} *}\left( \bm{\psi} \right)$ has degree $n-1+j-i$. 

\smallskip

Here we show that when $|\bm{I}|\leq n-2$, the cycles $ \mathsf{E}_{\bm{I}}(i,j)$ are obtained as pushforward of cycles $\mathsf{Z}^{m}(n-m,i,j)$ with $m=|\bm{I}|$. Let
\[
\gamma\colon \ov{\M}_{0,n-m+1} \times \ov{\M}_{0,\bm{I}\sqcup\{n,h_n^+\}} \longrightarrow \ov{\M}_{0,n+1}
\]
be the gluing map of degree one obtained by identifying the marked point $P_1$ on elements of $\ov{\M}_{0,n-m+1}$ with the marked point $h_n^+$ of elements in $\ov{\M}_{0,\bm{I}\sqcup\{n,h_n^+\}}$, and relabeling the other marked points of elements in $\ov{\M}_{0,n-m+1}$ by elements of $\{h_0, 1,\dots, n-1\}\setminus \bm{I}$.

\begin{lemma}
\label{lemma:EIisgammapfwd}
Let $\varnothing \neq \bm{I}\subsetneq \{1,\dots,n-1\}$ and $m:=|\bm{I}|\leq n-2$. 
One has
\[
 \mathsf{E}_{\bm{I}}^i(D) = \gamma_*  \, \mathsf{Dec}_{n-m}^{i,m}(D) \qquad \in A^*\left(\ov{\M}_{0,n+1} \right)\left[ D^{-1}\right].
\]
\end{lemma}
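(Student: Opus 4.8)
The plan is to compare the two sides of the claimed identity term by term, indexed by decorated trees. Unwinding the definitions, $\gamma_* \, \mathsf{Dec}_{n-m}^{i,m}(D)$ is a sum over decorated trees $(\mathrm{S},\bm{\psi})$ with $\mathrm{S}\in \mathsf{G}_{0,n-m+1}$ together with the weighting data governed by the heavier weight $m$ on the first leg, pushed into $\ov{\M}_{0,n+1}$ by $\gamma$; while $\mathsf{E}_{\bm{I}}^i(D)$ is a sum over decorated codas $(\mathrm{T},\bm{\psi})\in \mathsf{DC}_{\bm{I}}$. The first step is to set up a bijection between these two index sets: given $(\mathrm{T},\bm{\psi})\in \mathsf{DC}_{\bm{I}}$, since $v_n$ is external with incident legs exactly $\bm{I}\sqcup\{n\}$ and $d_{h_n^+}=0$, the tree $\mathrm{T}\setminus v_n$ (with the tail of the edge to $v_n$ relabeled as leg $1$) lies in $\mathsf{G}_{0,n-m+1}$, and conversely gluing on the genus-$0$ component carrying $\bm{I}\sqcup\{n,h_n^+\}$ recovers $\mathrm{T}$; this is precisely the combinatorial content of the map $\gamma$. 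Under this bijection $\xi_{\mathrm{T}*}(\bm\psi) = \gamma_*\,\xi_{\mathrm{S}*}(\bm\psi)$, where $\mathrm{S}=\mathrm{T}\setminus v_n$, because $\gamma$ and the two gluing maps $\xi_{\mathrm{T}}$, $\xi_{\mathrm{S}}$ fit into a commutative diagram (the $\ov{\M}_{0,\bm{I}\sqcup\{n,h_n^+\}}$ factor contributes a point class since $d_{h_n^+}=0$ and no leg in $\bm{I}\sqcup\{n\}$ is decorated). One also checks that $|\mathrm{H}(\mathrm{T},\bm\psi)| = |\mathrm{H}(\mathrm{S},\bm\psi)|$ and $(-1)^{|\mathrm{H}^+(\mathrm{T})|-1} = (-1)^{|\mathrm{H}^+(\mathrm{S})|}$: removing $v_n$ deletes exactly one head half-edge, namely $h_n^+$, so $|\mathrm{H}^+(\mathrm{T})| = |\mathrm{H}^+(\mathrm{S})|+1$, and the heavier leg $1$ of $\mathrm{S}$ standing in for the edge to $v_n$ keeps the half-edge count balanced.

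The second step, which I expect to be the crux, is matching the coefficients: one must show
\[
d^i_{\mathrm{T},\bm{\psi}} = \frac{1}{m}\sum_{w\in \mathsf{W}^{i,\bm{I}}_{\mathrm{T},\bm{\psi}}} w(\mathrm{T},\bm{\psi}) \;=\; {c}^{i,m}_{\mathrm{S},\bm{\psi}} = \sum_{w\in \mathsf{W}^{i,m}_{\mathrm{S},\bm{\psi}}} w(\mathrm{S},\bm{\psi}).
\]
For this I would construct a weight-preserving correspondence between $\mathsf{W}^{i,\bm{I}}_{\mathrm{T},\bm{\psi}}$ and $\mathsf{W}^{i,m}_{\mathrm{S},\bm{\psi}}$. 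A weighting $w$ on $(\mathrm{T},\bm{\psi})$ in the subset $\mathsf{W}^{i,\bm{I}}_{\mathrm{T},\bm{\psi}}$ satisfies $w(h_n^+,0)=|\bm I|=m$ and $w(h^+,0)\le \ell_{h^+}-2$ for all predecessors $h^+$ of $h_n^+$; restricting $w$ to the half-edges of $\mathrm{S}$ (with the half-edge $h_n^-$ of $\mathrm{T}$ — the tail of the edge to $v_n$ — reinterpreted as the first leg $1$ of $\mathrm{S}$) produces a function on $\mathrm{H}(\mathrm{S},\bm\psi)$. The condition $w(h^-_n,e) < w(h_n^+,0) = m$ on the tail half-edge of the removed edge becomes exactly the condition $w(1,e)<m$ defining the heavier-leg weightings in $\mathsf{W}^{i,m}_{\mathrm{S},\bm{\psi}}$, and the bound $w(h^+,0)\le \ell_{h^+}-2$ for predecessors translates into $w(h^+,0)\le \ell^{m}_{h^+}-1$ on $\mathrm{S}$ because the weighted capacity $\ell^m_{h^+}$ in $\mathrm{S}$ equals $\ell_{h^+}-|\bm I| = \ell_{h^+}-m$ minus... — more carefully, the legs of $\bm I\sqcup\{n\}$ below $h^+$ in $\mathrm{T}$ are replaced in $\mathrm{S}$ by the single weight-$m$ leg, so $\ell^m_{h^+}$ in $\mathrm{S}$ equals $\ell_{h^+}$ in $\mathrm{T}$ minus $(|\bm I|+1)$ plus $m = \ell_{h^+}-1$; hence $w(h^+,0)\le \ell_{h^+}-2$ is the same as $w(h^+,0)\le \ell^m_{h^+}-1$. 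Finally the factor $1/m$ in $d^i_{\mathrm{T},\bm\psi}$ is absorbed: under the restriction map, each $w\in\mathsf{W}^{i,m}_{\mathrm{S},\bm\psi}$ is the image of exactly $m$ weightings in $\mathsf{W}^{i,\bm{I}}_{\mathrm{T},\bm\psi}$ — because the value $w(h_n^+,0)$ is forced to be $m$ but... no: rather, the product $w(\mathrm{T},\bm\psi)$ contains the extra factor $w(h_n^+,0)=m$ not present in $w(\mathrm{S},\bm\psi)$, which is precisely why dividing by $m=|\bm I|$ makes the two coefficients equal. Extracting the appropriate $D$-coefficient (degree $-n-j+i$ on the left matches $1-(n-m)-m-j+i = 1-n-j+i$ shifted by the codimension-one jump from the gluing, consistent with \eqref{eq:Eijdef} and \eqref{eq:Znijdef-m}) then yields the statement at the level of $\mathsf{E}_{\bm I}(i,j)$ as well, though the lemma as stated is the generating-function identity.

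The remaining bookkeeping — verifying that the external-vertex condition in $\mathsf{DC}_{\bm I}$ together with $d_{h_n^+}=0$ exactly matches "no decoration lost" under $\mathrm{S}=\mathrm{T}\setminus v_n$, that the gluing map $\gamma$ has degree one so no multiplicity is introduced, and that edge-contraction/insertion is compatible with the orientation away from the root (so heads stay heads) — is routine and parallels the case analysis already carried out in the proof of Theorem \ref{thm:collidinggenus0}. The genuine obstacle is the precise matching of the weighted-capacity bounds and the role of the $1/|\bm I|$ normalization; everything else is a careful but mechanical translation of definitions.
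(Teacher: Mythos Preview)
Your approach is essentially the paper's own: set up the bijection $(\mathrm{T},\bm\psi)\in\mathsf{DC}_{\bm I}\leftrightarrow(\mathrm{S},\bm\psi)$ with $\mathrm{S}=\mathrm{T}\setminus v_n\in\mathsf{G}_{0,n-m+1}$, factor $\xi_{\mathrm{T}}=\gamma\circ\xi_{\mathrm{S}}$, and then match $d^i_{\mathrm{T},\bm\psi}=c^{i,m}_{\mathrm{S},\bm\psi}$ via a bijection $\mathsf{W}^{i,\bm I}_{\mathrm{T},\bm\psi}\to\mathsf{W}^{i,m}_{\mathrm{S},\bm\psi}$, using exactly the capacity computation you carry out (for predecessors $h^+$ one has $\ell^m_{h^+}=\ell_{h^+}-1$, and for the remaining heads the capacities agree) together with the observation that the extra factor $w(h_n^+,0)=m$ absorbs the $1/|\bm I|$.

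One bookkeeping slip: your claim $|\mathrm{H}(\mathrm{T},\bm\psi)|=|\mathrm{H}(\mathrm{S},\bm\psi)|$ is off by one. Removing $v_n$ deletes the pair $(h_n^+,0)\in\mathrm{H}(\mathrm{T},\bm\psi)$, and the heavier leg $1$ of $\mathrm{S}$ contributes only pairs $(1,e)$ with $1\le e\le d_1$ (no $e=0$ term), which already correspond to the pairs $(h_n^-,e)$; hence $|\mathrm{H}(\mathrm{T},\bm\psi)|=|\mathrm{H}(\mathrm{S},\bm\psi)|+1$. The paper makes this explicit by writing the bijection $\mathrm{H}(\mathrm{T},\bm\psi)\setminus\{(h_n^+,0)\}\xrightarrow{\cong}\mathrm{H}(\mathrm{S},\bm\psi)$ and then phrasing the result at the level of the individual coefficients $\mathsf{E}_{\bm I}(i,j)=\gamma_*\,\mathsf{Z}^m(n-m,i,j)$ rather than tracking the $D$-exponent directly. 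Once you correct this count, your argument goes through verbatim.
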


\begin{proof}
We show that the coefficients of the two polynomials $ \mathsf{E}_{\bm{I}}^i(D)$ and $\gamma_*  \, \mathsf{Dec}_{n-m}^{i,m}(D)$ in $D^{-1}$ match. Namely, using \eqref{eq:Znijdef-m} and \eqref{eq:Eijdef}, we show 
\begin{equation}
\label{eq:EIijsarepfwds}
 \mathsf{E}_{\bm{I}}(i,j) = \gamma_*  \, \mathsf{Z}^{m}(n-m,i,j) \qquad \in A^{n-1+j-i}\left(\ov{\M}_{0,n+1} \right).
\end{equation}

For $(\mathrm{T}, \bm{\psi})\in \mathsf{DC}_{\bm{I}}$, the gluing map $\xi_{\mathrm{T}}$ factors as $\xi_{\mathrm{T}}=\gamma \circ \xi_{\mathrm{T}\setminus v_n}$, hence
\[
\xi_{\mathrm{T} *}\left( \bm{\psi} \right) = \gamma_*\left(\xi_{\mathrm{T}\setminus v_n \,*}\left( \bm{\psi} \right)\right).
\]
This uses that the degree of $\psi_h$ with $h=h_n^+$ in $\bm{\psi}$ is zero.
Moreover, one has 
\[
\mathrm{H}^+(\mathrm{T}) = \mathrm{H}^+(\mathrm{T}\setminus v_n)\sqcup \{h_n^+\}, 
\]
hence $|\mathrm{H}^+(\mathrm{T})|-1 = |\mathrm{H}^+(\mathrm{T}\setminus v_n)|$.
Let $h_n^-$ be the tail half-edge such that $(h_n^+, h_n^-)\in \mathrm{E}(\mathrm{T})$.
One has a bijection
\begin{align*}
\mathrm{H}(\mathrm{T}, \bm{\psi})\setminus \{(h_n^+,0)\} &\xrightarrow{\cong} \mathrm{H}(\mathrm{T}\setminus v_n, \bm{\psi})\\
(h_n^-,e)&\mapsto (1,e), && \mbox{for  all $e$,}\\
(h,e)& \mapsto (h,e), && \mbox{for $h\not\in\{h_n^-, h_n^+\}$ and all $e$.}
\end{align*}
For all predecessors $h^+$ of $h_n^+$ in $\mathrm{T}$,
the  number $\ell_{h^+}-1$ for $h^+$ in $\mathrm{T}$ is equal to the weighted capacity $\ell^{m}_{h^+}$ of the corresponding $h^+$ in $\mathrm{T}\setminus v_n$.
For all other heads $h^+$ in $\mathrm{T}$, the  capacity $\ell_{h^+}$ of $h^+$ in $\mathrm{T}$ is equal to the weighted capacity $\ell^m_{h^+}$ of the corresponding $h^+$ in $\mathrm{T}\setminus v_n$. It follows that the constraint on $w\in \mathsf{W}^{i,\bm{I}}_{\mathrm{T},\bm{\psi}}$ that $w(h^+,0)\leq \ell_{h^+}-2$ for all predecessors $h^+$ of $h_n^+$ in $\mathrm{T}$ is equivalent to the constraint on $w\in \mathsf{W}^{i,m}_{\mathrm{T}\setminus v_n,\bm{\psi}}$ that $w(h^+,0)\leq \ell^{m}_{h^+}-1$ for the corresponding $h^+$ in $\mathrm{T}\setminus v_n$.

Consider the map
\[
\mathsf{W}^{i,\bm{I}}_{\mathrm{T},\bm{\psi}} \rightarrow \mathsf{W}^{i,m}_{\mathrm{T}\setminus v_n,\bm{\psi}}, \,\, \widehat{w} \mapsto w,
\quad \mbox{with} \quad
\left\{
\begin{array}{rll}
w(1,e) \!\!\!\!&:= \widehat{w}(h_n^-,e), &\\
w(h,e) \!\!\!\!&:= \widehat{w}(h,e), &\mbox{otherwise.}
\end{array}
\right.
\]
This map is a bijection, and since $\widehat{w}(h_n^+,0)=|\bm{I}|$, one has
\[
\mbox{if}\quad \widehat{w} \mapsto w, \quad \mbox{then} \quad \frac{1}{|\bm{I}|}\widehat{w}({\mathrm{T}}, \bm{\psi}) = w(\mathrm{T}\setminus v_n, \bm{\psi}).
\]
Using \eqref{eq:cimtipsi} and \eqref{eq:ditipsi}, it follows that 
\[
d^i_{\mathrm{T},\bm{\psi}} = c^{i,m}_{\mathrm{T}\setminus v_n, \bm{\psi}}.
\]
Hence \eqref{eq:EIij} can be written as
\[
 \mathsf{E}_{\bm{I}}(i,j) = \gamma_* \left(\sum_{(\mathrm{T}^-, \bm{\psi})}  (-1)^{|\mathrm{H}^+(\mathrm{T}^-)|} \,\, c^{i,m}_{\mathrm{T}^-, \bm{\psi}} \,\, \xi_{\mathrm{T}^- *}\left( \bm{\psi} \right)\right)  \in A^{n-1+j-i} \left(\ov{\M}_{0,n+1}\right)
\]
where the sum is over decorated trees $(\mathrm{T}^-, \bm{\psi})$ such that $(\mathrm{T}^-, \bm{\psi})=(\mathrm{T}\setminus v_n, \bm{\psi})$ for some $(\mathrm{T}, \bm{\psi})$ in $\mathsf{DC}_{\bm{I}}$, and such that $\xi_{\mathrm{T}^- *}\left( \bm{\psi} \right)$ has degree $n-2+j-i$. 
Using \eqref{eq:Znij-m}, the statement follows.
\end{proof}


\section{Vanishing cycles on moduli  of pointed rational curves}
\label{sec:vanprt}
Here we show how certain coefficients of the polynomial $\mathsf{Dec}_n^{i}(D)$ introduced in the previous section encode relations in $A^*\left(\ov{\M}_{0,n+1}\right)$.
This will be used in the proof of the recursive relations in \S \ref{sec:recid}.

\smallskip

Recall the cycle $\mathsf{Dec}_n^i(D)=\mathsf{Dec}_n^{i,1}(D)$  from \eqref{eq:Decimdef} defined as
\[
\mathsf{Dec}_n^i(D):=\sum_{{\mathrm{T}}\in \mathsf{G}_{0,n+1}} \sum_{\bm{\psi}\in \Psi(\mathrm{T})} (-1)^{|\mathrm{H}^+(\mathrm{T})|} \,\,
{c}^i_{\mathrm{T}, \bm{\psi}}\,\, \xi_{\mathrm{T} *} \left( \bm{\psi}  \right)\, D^{-|\mathrm{H}(\mathrm{T}, \bm{\psi})|}
\]
in  $A^*\left(\ov{\M}_{0,n+1}\right) \left[D^{-1}\right]$, where the coefficients $c^i_{\mathrm{T}, \bm{\psi}}=c^{i,1}_{\mathrm{T}, \bm{\psi}}$ are as in \eqref{eq:cimtipsi}.

\begin{theorem}
\label{thm:van}
For $i \geq 1$, one has
$
D^{n-i} \, \mathsf{Dec}_n^{i}(D) \in A^*\left(\ov{\M}_{0,n+1}\right) [D].
$
\end{theorem}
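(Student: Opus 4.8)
First I would translate the claim into a statement about individual coefficients. By \eqref{eq:mu1minus1}, the coefficient of $D^{-a}$ in $\mathsf{Dec}_n^i(D)$ is a cycle of codimension $a-1$ in $A^*\!\left(\ov{\M}_{0,n+1}\right)$, and since $\dim\ov{\M}_{0,n+1}=n-2$ this coefficient vanishes automatically once $a\geq n$. Hence $D^{n-i}\mathsf{Dec}_n^i(D)\in A^*\!\left(\ov{\M}_{0,n+1}\right)[D]$ is equivalent to the vanishing of the coefficient of $D^{-a}$ for $n-i+1\leq a\leq n-1$, i.e.\ (by \eqref{eq:Znijdef-m}) to $\mathsf{Z}(n,i,j)=0$ for $1\leq j\leq i-1$. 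In particular the statement is vacuous for $i=1$, and since $c^{i}_{\mathrm{T},\bm{\psi}}\neq 0$ forces $i\leq n-1$ we may assume $2\leq i\leq n-1$; this also exhibits Theorem~\ref{thm:van} and the vanishing $\mathsf{Z}(n,i,j)=0$ for $i,j\geq 1$ as two faces of the same fact.

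\textbf{Factoring the weighting coefficients.}
Next I would use that the defining conditions \eqref{eq:Wimtipsi} of a weighting are \emph{local on the edges}: for $m=1$ they couple the values $w(h^{+},\cdot)$ and $w(h^{-},\cdot)$ only within a single edge $e=(h^{+},h^{-})$, the sole global constraint being $w(h_0,0)=i$, and the leg-$1$ condition being empty. Consequently $c^{i}_{\mathrm{T},\bm{\psi}}$ factors as a product over $\{h_0\}\sqcup\mathrm{E}(\mathrm{T})$ of elementary sums of products of integers that depend only on the local decoration degrees and on the capacities $\ell_{h^{+}}$. Combining this with the operadic decomposition of the gluing maps $\xi_{\mathrm{T}}$ and of $\bm{\psi}$ along the rooted tree, one obtains a recursive description of $\mathsf{Dec}_n^i(D)$ in terms of cycles attached to the subtrees hanging off the root vertex $v_0$ (the one incident to $h_0$): each child subtree contributes its own $\mathsf{Dec}$-type cycle, glued at the node and weighted by an edge factor carrying exactly one power of $D^{-1}$ together with a capacity-dependent integer.

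\textbf{Resumming the tree sum.}
With this recursion I would then resum the sum over decorated trees (equivalently, induct on $n$, reducing each step to the forgetful map $\ov{\M}_{0,n+1}\to\ov{\M}_{0,n}$ and to the terms where the forgotten leg sits on a trivalent vertex). The point is that two mechanisms upgrade the automatic pole bound (order $\leq n-1$, i.e.\ divisibility by $D^{1-n}$) to divisibility by $D^{\,i-n}$: the alternating signs $(-1)^{|\mathrm{H}^{+}(\mathrm{T})|}$ together with the $D^{-1}$ carried by each edge turn the tree sum into a geometric-series-type inversion whose pole order is at most the number of legs $n$; and the constraint $w(h_0,0)=i$ forces, through the standard presentation of $A^*\!\left(\ov{\M}_{0,n+1}\right)$ (relations between $\psi$-classes and boundary strata), the vanishing of the top $i-1$ coefficients — precisely as in the instance $n=3$, $i=2$, where the codimension-$1$ part cancels because $6\,\psi_{h_0}=2(\delta_{1}+\delta_{2}+\delta_{3})$ on $\ov{\M}_{0,4}$. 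Extracting the coefficient of $D^{-a}$ for $a>n-i$ then yields $0$, which is the assertion.

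\textbf{Where the difficulty lies.}
The delicate part is the resummation step: one must match the weighting count $c^{i}_{\mathrm{T},\bm{\psi}}$ against the combinatorial coefficients that appear when the $\psi$-decorations are resolved into boundary strata, and verify that everything telescopes. This is awkward precisely because resolving a $\psi$ at a half-edge — like forgetting or inserting a leg — shifts the capacities $\ell_{h^{+}}$ all along a path to the root, so the effect on $c^{i}_{\mathrm{T},\bm{\psi}}$ is not local; keeping simultaneous track of these capacity shifts and of the signs is, I expect, the main obstacle.
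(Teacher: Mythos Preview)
Your reduction to the coefficient vanishing $\mathsf{Z}(n,i,j)=0$ for $1\le j\le i-1$ is correct and is exactly how the paper proceeds (this is Theorem~\ref{thm:van2}). Your observation that $c^{i}_{\mathrm{T},\bm{\psi}}$ factors as a product over $\{h_0\}\sqcup\mathrm{E}(\mathrm{T})$ is also correct, and the paper uses it, in the guise of the decomposition $w=w^{\rm int}\sqcup w^{\rm ext}$ of a weighting along a root-adjacent subtree.

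However, your ``Resumming the tree sum'' paragraph is not a proof. You assert that the sign pattern and the constraint $w(h_0,0)=i$ force the top $i-1$ coefficients to vanish ``through the standard presentation of $A^*\!\left(\ov{\M}_{0,n+1}\right)$'', but beyond the one instance $n=3$, $i=2$ you give no mechanism; and you yourself identify as ``the main obstacle'' exactly the step you have not carried out. The paper does \emph{not} close the induction by a direct resummation. Instead it strengthens the statement: it introduces a \emph{truncated} cycle $\mathsf{Z}^{\rm t}(n,i,j)$ (restricting to weightings satisfying a local inequality near the leg~$n$) and \emph{extra} cycles $\mathsf{E}_{\bm{I}}(i,j)$, and proves by a joint induction on $n$ both (a) the recursive identity
\[
\pi_{n}^*\!\left(\mathsf{Z}(n-1,i,j+1)\right)-\sum_{\bm{I}}|\bm{I}|\,\mathsf{E}_{\bm{I}}(i,j)=\mathsf{Z}^{\rm t}(n,i,j)
\]
and (b) the vanishing $\mathsf{Z}(n,i,j)=\mathsf{Z}^{\rm t}(n,i,j)=0$ for $i,j\ge1$. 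Part~(a) is established by a six-subcase graph-by-graph comparison (this is where your edge-factorisation is actually cashed in, via the splitting $c^i_{\mathrm{T},\bm{\psi}}=\sum_{w^{\rm int}}w^{\rm int}(\cdots)\sum_{a>a_0}c^{a}_{\mathrm{T}^{\rm ext},\bm{\psi}^{\rm ext}}$). Part~(b) then follows because, by induction, $\mathsf{Z}(n-1,i,j+1)=0$, and the $\mathsf{E}_{\bm{I}}(i,j)$ vanish too since they are pushforwards of collided versions $\mathsf{Z}^{m}(n-m,i,j)$ of $\mathsf{Z}(n-1,i,j)$ (Theorem~\ref{thm:collidinggenus0} and Lemma~\ref{lemma:EIisgammapfwd}). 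The boundary case $i=n-1$ needs a separate divisorial computation (Lemmas~\ref{lemma:imaxj1van} and~\ref{lemma:vanimaxrec}). The key point you are missing is that $\mathsf{Z}(n,i,j)$ is neither a pull-back nor a pushforward of $\mathsf{Z}(n-1,\cdot,\cdot)$ on the nose; the recursive identity~(a), with its extra correction terms $\mathsf{E}_{\bm{I}}$, is precisely what organises the capacity shifts you flagged as the obstacle, and without it the induction on the vanishing alone does not close.
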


First we rewrite this statement as a vanishing statement about the coefficients of the polynomial $\mathsf{Dec}_n^i(D)$.
Recall from \eqref{eq:Znijdef-m} the cycle
\begin{equation*}
\label{eq:Znijdef}
\mathsf{Z}(n,i,j) := \left[\mathsf{Dec}_n^i(D)\right]_{i-n-j} \qquad \mbox{in $A^{*} \left(\ov{\M}_{0,n+1}\right)$}
\end{equation*}
where $[X]_{t}$ denotes the coefficient of $D^t$ in $X$.
This gives
\begin{equation}
\label{eq:Znij-prelimvan}
\mathsf{Z}(n,i,j) = \sum_{(\mathrm{T}, \bm{\psi})}  (-1)^{|\mathrm{H}^+(\mathrm{T})|} \,\, c^i_{\mathrm{T}, \bm{\psi}} \,\, \xi_{\mathrm{T} *}\left( \bm{\psi} \right) \qquad \mbox{in $A^{n-1+j-i} \left(\ov{\M}_{0,n+1}\right)$}
\end{equation}
where the sum is over decorated trees $(\mathrm{T}, \bm{\psi})$ such that $\xi_{\mathrm{T} *}\left( \bm{\psi} \right)$
has  degree $n-1+j-i$ in $A^* \left(\ov{\M}_{0,n+1}\right)$. 

To prove Theorem \ref{thm:van}, it suffices to prove the following. 

\begin{theorem}
\label{thm:van2}
For $i,j\geq 1$, one has
$\mathsf{Z}(n,i,j) =0$ in $A^{n-1+j-i} \left(\ov{\M}_{0,n+1}\right)$.

\end{theorem}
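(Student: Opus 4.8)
The plan is to prove Theorem~\ref{thm:van2} (which implies Theorem~\ref{thm:van}) by induction on $n$, by restricting $\mathsf{Z}(n,i,j)$ to the boundary divisors of $\ov{\M}_{0,n+1}$ and recognizing the restrictions as combinations of the cycles already analyzed in \S\ref{sec:cycleswrt}.

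First, some reductions. For $i,j\geq 1$ the class $\mathsf{Z}(n,i,j)$ lies in $A^{d}(\ov{\M}_{0,n+1})$ with $d=n-1+j-i\geq 1$; when $j\geq i$ one has $d>\dim \ov{\M}_{0,n+1}=n-2$ and the statement is vacuous, so we may assume $1\leq j\leq i-1$. Since $\ov{\M}_{0,n+1}$ is smooth and projective and its Chow ring is generated by the classes of boundary divisors (with perfect intersection pairing $A^{d}\times A^{n-2-d}\to\Z$), it suffices to prove: (a) $\mathsf{Z}(n,i,j)|_{\delta}=0$ for every boundary divisor $\delta$; and (b), in the case $d=n-2$ (i.e.\ $j=i-1$), the vanishing of the number $\int_{\ov{\M}_{0,n+1}}\mathsf{Z}(n,i,j)$. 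Indeed every test class of positive degree is a monomial in boundary divisors, hence of the form $\delta\cdot(\cdots)$ for some boundary divisor $\delta$, so by the projection formula (a) makes the pairing vanish; the only pairing left is against a scalar, which is (b).

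For (a), fix $\delta=\delta_{0,A\mid B}\cong\ov{\M}_{0,A\sqcup\{\star\}}\times\ov{\M}_{0,B\sqcup\{\star'\}}$ with $h_0\in B$, and expand the restriction of each term $\xi_{\mathrm{T}*}(\bm\psi)$ of $\mathsf{Z}(n,i,j)$ by the standard pullback formula for decorated stratum classes: $(\mathrm{T},\bm\psi)$ contributes through the trees compatible with the splitting, with excess corrections $-\psi_\star-\psi_{\star'}$ at the separating node. The combinatorial data in \S\ref{sec:cycleswrt} --- the capacities, the special leg $h_0$, and the weighting inequalities \eqref{eq:Wimtipsi} --- are arranged so that after this expansion the coefficients $c^{i}_{\mathrm{T},\bm\psi}$ reorganize and the restriction becomes an exterior product, over the two factors, of cycles of the form $\mathsf{Z}^{m'}(n',i',j')$ (on the side carrying $h_0$) and $\mathsf{E}_{\bm{I}'}(i',j')$ (on the other side, after relabeling the half-edge at $\star$ as the heavy leg as in \S\ref{sec:RRT}), with all indices satisfying $i',j'\geq 1$ --- the condition $j'\geq 1$ being exactly the bookkeeping that the $-\psi_\star-\psi_{\star'}$ corrections absorb the appropriate amount of codimension. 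By the inductive hypothesis $\mathsf{Z}(n'',i',j')=0$ for $n''<n$ and $i',j'\geq 1$; by Theorem~\ref{thm:collidinggenus0} this forces $\mathsf{Z}^{m'}(n',i',j')=0$, and by Lemma~\ref{lemma:EIisgammapfwd} --- noting that for $j'\geq 1$ the full coda $\bm{I}'=\{1,\dots,n'-1\}$ contributes only in degree $j'=0$ --- also $\mathsf{E}_{\bm{I}'}(i',j')=0$. Hence $\mathsf{Z}(n,i,j)|_{\delta}=0$.

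For (b), expand $\int_{\ov{\M}_{0,n+1}}\mathsf{Z}(n,i,j)=\sum_{(\mathrm{T},\bm\psi)}(-1)^{|\mathrm{H}^+(\mathrm{T})|}\,c^{i}_{\mathrm{T},\bm\psi}\int_{\ov{\M}_{\mathrm{T}}}\bm\psi$, where each top intersection number is a product of multinomial coefficients; the identity then reduces to a self-contained combinatorial cancellation (for $n=3$ it is precisely the cancellation of the single $\psi_{h_0}$-term against the three one-edge boundary terms), which serves as the base case and is checked in general directly. Combining (a) and (b) completes the induction. The main obstacle is the bookkeeping in step (a): one must match the $-\psi_\star-\psi_{\star'}$ corrections with the weighting inequalities, track how the capacity of a head half-edge splits across $\delta$ and how the vanishing order $i$ at $h_0$ is distributed between the two factors, and confirm that only cycles $\mathsf{Z}^{m'}$, $\mathsf{E}_{\bm{I}'}$ with $j'\geq 1$ occur; it may streamline this to first record that for fixed $\mathrm{T}$ the inner sum over weightings factors over the half-edges, a head of capacity $c$ contributing $\sum_{a=1}^{c-1}\tfrac{a}{D}\prod_{b=1}^{a}(1-b\psi_{h^+}/D)^{-1}\prod_{b=1}^{a-1}(1+b\psi_{h^-}/D)$. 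The secondary obstacle is the combinatorial identity in step (b).
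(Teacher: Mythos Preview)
Your strategy---restrict to boundary divisors and handle the top-degree case by a direct integral---is a reasonable template for vanishing in $A^*(\ov{\M}_{0,n+1})$, and it is genuinely different from the paper's route. The paper does \emph{not} restrict to boundary divisors; instead it proves the vanishing simultaneously with a recursive identity (Theorem~\ref{thm:mainthmgenuszeroij}) by analysing the pullback along the forgetful map $\pi_n$. The inductive engine there is: part~(b) for $n-1$ kills $\mathsf{Z}(n-1,i,j+1)$ and (via Theorem~\ref{thm:collidinggenus0} and Lemma~\ref{lemma:EIisgammapfwd}) all $\mathsf{E}_{\bm{I}}(i,j)$; then part~(a) for $n$ gives $\mathsf{Z}^{\rm t}(n,i,j)=0$; then Lemma~\ref{lem:DectZt} upgrades this to $\mathsf{Z}(n,i,j)=0$. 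The maximal case $i=n-1$ is handled separately by Lemmas~\ref{lemma:imaxj1van} and~\ref{lemma:vanimaxrec}.

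However, your proposal as written has two genuine gaps, and one of them is structural.

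\textbf{Step (a): the factorisation and the circularity.} You assert that the restriction to $\delta_{A\mid B}$ (with $h_0\in B$) reorganises into products of cycles $\mathsf{Z}^{m'}(n',i',j')$ on the $B$-side and something on the $A$-side, all with $j'\geq 1$. Even granting the factorisation, there is a hidden circularity: if the node on the $B$-side is treated as the heavy leg of weight $m'=|A|$, then $\mathsf{Z}^{m'}(n',i',j')$ lives on $\ov{\M}_{0,|B|+1}$ with $n'=|B|$, and by Theorem~\ref{thm:collidinggenus0} it is the collision of $\mathsf{Z}(n'+m'-1,i',j')=\mathsf{Z}(n,i',j')$---the same $n$ you started with. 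So induction on $n$ alone does not close. You would need a secondary induction (e.g.\ on $j$ or on the codimension $d$), and for that you must show that every term appearing with $n''=n$ has $j'<j$. A codimension count shows the $A$-side has codimension $j-j'$ (when $i'=i$), so $j'=j$ corresponds to a scalar on the $A$-side and cannot be excluded a priori; you have to argue that the excess factors $-\psi_\star-\psi_{\star'}$ absorb exactly the right amount. You flag this as ``bookkeeping'', but it is the heart of the argument and is not carried out. (On the $A$-side, note also that $\mathsf{E}_{\bm{I}'}$ is a class on a single $\ov{\M}_{0,*}$, not on a factor of $\delta$; what you want there is closer to a $\mathsf{Z}(|A|,a,\cdot)$ with the node as new root.)

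\textbf{Step (b): the top-degree identity.} For $d=n-2$ you must show $\int_{\ov{\M}_{0,n+1}}\mathsf{Z}(n,i,i-1)=0$ for every $2\leq i\leq n-1$. This is a family of nontrivial identities (one for each $i$) involving sums over all decorated trees weighted by $c^i_{\mathrm{T},\bm\psi}$ and products of multinomials; saying it ``is checked in general directly'' is not a proof. The paper never needs this identity, because its inductive scheme avoids the top degree altogether (the input at step $n$ is the vanishing at step $n-1$, and the case $i=n-1$ is handled by the explicit divisorial Lemma~\ref{lemma:imaxj1van} plus the recursion Lemma~\ref{lemma:vanimaxrec}).

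In short: the outline is plausible, but to make it work you would have to (i) set up a double induction to break the $n\mapsto n$ circularity on the $B$-side, and (ii) actually prove the family of top-degree integrals vanishes. Neither is done here.
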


The statement is non-trivial for  $i<n$ (since the coefficients $c^i_{\mathrm{T}, \bm{\psi}}$ vanish by definition otherwise) and  $j < i$ (since the cycle vanishes for  degree reasons otherwise). This statement is proven as part $(b)$ of Theorem \ref{thm:mainthmgenuszeroij} in \S\ref{sec:recid}.

\subsection{Examples}
\label{sec:exvan}
Here we collect some examples of Theorem \ref{thm:van2}. Each graph stands for the sum of the pairwise non-isomorphic graphs obtained by all possible labeling of the unmarked legs by $1,\dots,n$. One has
\[
\mathsf{Z}(3,2,1)= \,
2
\mbox{
\begin{tikzpicture}[baseline={([yshift=-.5ex]current bounding box.center)}]
      \path(0,0) ellipse (2 and 2);
      \tikzstyle{level 1}=[counterclockwise from=-60, level distance=12mm, sibling angle=120]
      \node [draw,circle,fill] (A0) at (0:1.5) {}
	    child  {node [label=-60: {$\scriptstyle{}$}]{}}
    	    child  {node [label=60: {$\scriptstyle{}$}]{}};
       \tikzstyle{level 1}=[counterclockwise from=120,level distance=12mm,sibling angle=120]
        \node [draw,circle,fill] (A1) at (180:1.5) {}
            child {node [label=120: {$\scriptstyle{h_0}$}]{}}
    	    child {node [label=-120: {$\scriptstyle{}$}]{}};
        \path (A0) edge [] node[]{} (A1);
    \end{tikzpicture}
}
- 6
\mbox{
\begin{tikzpicture}[baseline={([yshift=-.5ex]current bounding box.center)}]
      \path(0,0) ellipse (1 and 1);
      \tikzstyle{level 1}=[counterclockwise from=180, level distance=12mm, sibling angle=120]
      \node [draw,circle,fill] (A0) at (0:0) {}
            child {node [label=180: {${\psi_{h_0}}$}]{}}
    	    child {node [label=-60: {$\scriptstyle{}$}]{}}
	    child {node [label=60: {$\scriptstyle{}$}]{}}
    	    child [grow=0]{node [label=0: {$\scriptstyle{}$}]{}};
    \end{tikzpicture}
}=0 \qquad \in A^{1} \left(\ov{\M}_{0,4}\right),
\]
and
\[
\mathsf{Z}(4,3,1)= \,
3
\mbox{
\begin{tikzpicture}[baseline={([yshift=-.5ex]current bounding box.center)}]
      \path(0,0) ellipse (2 and 2);
      \tikzstyle{level 1}=[counterclockwise from=-60, level distance=12mm, sibling angle=120]
      \node [draw,circle,fill] (A0) at (0:1.5) {}
	    child  {node [label=-60: {$\scriptstyle{}$}]{}}
    	    child  {node [label=60: {$\scriptstyle{}$}]{}};
       \tikzstyle{level 1}=[counterclockwise from=120,level distance=12mm,sibling angle=60]
        \node [draw,circle,fill] (A1) at (180:1.5) {}
            child {node [label=120: {$\scriptstyle{h_0}$}]{}}
    	    child {node [label=-180: {$\scriptstyle{}$}]{}}
    	    child {node [label=-120: {$\scriptstyle{}$}]{}};
        \path (A0) edge [] node[]{} (A1);
    \end{tikzpicture}
}
+9
\mbox{
\begin{tikzpicture}[baseline={([yshift=-.5ex]current bounding box.center)}]
      \path(0,0) ellipse (2 and 2);
      \tikzstyle{level 1}=[counterclockwise from=-60, level distance=12mm, sibling angle=60]
      \node [draw,circle,fill] (A0) at (0:1.5) {}
	    child  {node [label=-60: {$\scriptstyle{}$}]{}}
	    child  {node [label=-0: {$\scriptstyle{}$}]{}}
    	    child  {node [label=60: {$\scriptstyle{}$}]{}};
       \tikzstyle{level 1}=[counterclockwise from=120,level distance=12mm,sibling angle=120]
        \node [draw,circle,fill] (A1) at (180:1.5) {}
            child {node [label=120: {$\scriptstyle{h_0}$}]{}}
    	    child {node [label=-120: {$\scriptstyle{}$}]{}};
        \path (A0) edge [] node[]{} (A1);
    \end{tikzpicture}
}
- 18
\mbox{
\begin{tikzpicture}[baseline={([yshift=-.5ex]current bounding box.center)}]
      \path(0,0) ellipse (1 and 1);
      \tikzstyle{level 1}=[counterclockwise from=180, level distance=12mm, sibling angle=120]
      \node [draw,circle,fill] (A0) at (0:0) {}
            child {node [label=180: {${\psi_{h_0}}$}]{}}
    	    child {node [label=-60: {$\scriptstyle{}$}]{}}
	    child {node [label=60: {$\scriptstyle{}$}]{}}
    	    child [grow=0]{node [label=0: {$\scriptstyle{}$}]{}};
    \end{tikzpicture}
}
\!=0 
\]
in $A^{1} \left(\ov{\M}_{0,5}\right)$.
Also, the following cycles vanish in $A^{2} \left(\ov{\M}_{0,5}\right)$:
\begin{align*}
\mathsf{Z}(4,2,1) = & 
-14
\mbox{
\begin{tikzpicture}[baseline={([yshift=-.5ex]current bounding box.center)}]
      \path(0,0) ellipse (1 and 1);
      \tikzstyle{level 1}=[counterclockwise from=-90, level distance=12mm, sibling angle=60]
      \node [draw,circle,fill] (A0) at (0:0) {}
    	    child {node [label=-60: {$\scriptstyle{}$}]{}}
    	    child {node [label=-60: {$\scriptstyle{}$}]{}}	    
	    child {node [label=60: {$\scriptstyle{}$}]{}}
    	    child {node [label=0: {$\scriptstyle{}$}]{}}
	    child [grow=180]{node [label=180: {${\psi^2_{h_0}}$}]{}};
    \end{tikzpicture}
}
+14
\mbox{
\begin{tikzpicture}[baseline={([yshift=-.5ex]current bounding box.center)}]
      \path(0,0) ellipse (2 and 2);
      \tikzstyle{level 1}=[counterclockwise from=-60, level distance=12mm, sibling angle=60]
      \node [draw,circle,fill] (A0) at (0:1.5) {}
	    child  {node [label=-60: {$\scriptstyle{}$}]{}}
	    child  {node [label=-0: {$\scriptstyle{}$}]{}}
    	    child  {node [label=60: {$\scriptstyle{}$}]{}};
       \tikzstyle{level 1}=[counterclockwise from=120,level distance=12mm,sibling angle=120]
        \node [draw,circle,fill] (A1) at (180:1.5) {}
            child {node [label=120: {$\scriptstyle{h_0}$}]{}}
    	    child {node [label=-120: {$\scriptstyle{}$}]{}};
        \path (A0) edge [] node[near start, above=0.1, label={90:${\psi}$}]{} (A1);
    \end{tikzpicture}
}
+6
\mbox{
\begin{tikzpicture}[baseline={([yshift=-.5ex]current bounding box.center)}]
      \path(0,0) ellipse (2 and 2);
      \tikzstyle{level 1}=[counterclockwise from=-60, level distance=12mm, sibling angle=120]
      \node [draw,circle,fill] (A0) at (0:1.5) {}
	    child  {node [label=-60: {$\scriptstyle{}$}]{}}
    	    child  {node [label=60: {$\scriptstyle{}$}]{}};
       \tikzstyle{level 1}=[counterclockwise from=120,level distance=12mm,sibling angle=60]
        \node [draw,circle,fill] (A1) at (180:1.5) {}
            child {node [label=120: {${\psi_{h_0}}$}]{}}
    	    child {node [label=-180: {$\scriptstyle{}$}]{}}
    	    child {node [label=-120: {$\scriptstyle{}$}]{}};
        \path (A0) edge [] node[]{} (A1);
    \end{tikzpicture}
}
\\
&
-6
\mbox{
\begin{tikzpicture}[baseline={([yshift=-.5ex]current bounding box.center)}]
      \path(0,0) ellipse (3 and 2);
      \tikzstyle{level 1}=[counterclockwise from=-60, level distance=12mm, sibling angle=120]
      \node [draw,circle,fill] (A0) at (0:2.5) {}
	    child  {node [label=-60: {$\scriptstyle{}$}]{}}
    	    child  {node [label=60: {$\scriptstyle{}$}]{}};
       \tikzstyle{level 1}=[counterclockwise from=-90,level distance=12mm,sibling angle=60]
        \node [draw,circle,fill] (A1) at (0:0) {}
    	    child {node [label=-90: {$\scriptstyle{}$}]{}};
       \tikzstyle{level 1}=[counterclockwise from=120,level distance=12mm,sibling angle=120]
        \node [draw,circle,fill] (A2) at (180:2.5) {}
            child {node [label=120: {$\scriptstyle{h_0}$}]{}}
    	    child {node [label=-120: {$\scriptstyle{}$}]{}};	    
        \path (A0) edge [] node[]{} (A1);
        \path (A1) edge [] node[]{} (A2);        
    \end{tikzpicture}
} 
-2
\mbox{
\begin{tikzpicture}[baseline={([yshift=-.5ex]current bounding box.center)}]
      \path(0,0) ellipse (3 and 2);
      \tikzstyle{level 1}=[counterclockwise from=-60, level distance=12mm, sibling angle=120]
      \node [draw,circle,fill] (A0) at (0:2.5) {}
	    child  {node [label=-60: {$\scriptstyle{}$}]{}}
    	    child  {node [label=60: {$\scriptstyle{}$}]{}};
       \tikzstyle{level 1}=[counterclockwise from=-90,level distance=12mm,sibling angle=60]
        \node [draw,circle,fill] (A1) at (0:0) {}
    	    child {node [label=-90: {$\scriptstyle{h_0}$}]{}};
       \tikzstyle{level 1}=[counterclockwise from=120,level distance=12mm,sibling angle=120]
        \node [draw,circle,fill] (A2) at (180:2.5) {}
            child {node [label=120: {$\scriptstyle{}$}]{}}
    	    child {node [label=-120: {$\scriptstyle{}$}]{}};	    
        \path (A0) edge [] node[]{} (A1);
        \path (A1) edge [] node[]{} (A2);        
    \end{tikzpicture}
} ,
\end{align*}
\begin{align*}
\mathsf{Z}(4,3,2) = & 
-75
\mbox{
\begin{tikzpicture}[baseline={([yshift=-.5ex]current bounding box.center)}]
      \path(0,0) ellipse (1 and 1);
      \tikzstyle{level 1}=[counterclockwise from=-90, level distance=12mm, sibling angle=60]
      \node [draw,circle,fill] (A0) at (0:0) {}
    	    child {node [label=-60: {$\scriptstyle{}$}]{}}
    	    child {node [label=-60: {$\scriptstyle{}$}]{}}	    
	    child {node [label=60: {$\scriptstyle{}$}]{}}
    	    child {node [label=0: {$\scriptstyle{}$}]{}}
	    child [grow=180]{node [label=180: {${\psi^2_{h_0}}$}]{}};
    \end{tikzpicture}
}
+21
\mbox{
\begin{tikzpicture}[baseline={([yshift=-.5ex]current bounding box.center)}]
      \path(0,0) ellipse (2 and 2);
      \tikzstyle{level 1}=[counterclockwise from=-60, level distance=12mm, sibling angle=60]
      \node [draw,circle,fill] (A0) at (0:1.5) {}
	    child  {node [label=-60: {$\scriptstyle{}$}]{}}
	    child  {node [label=-0: {$\scriptstyle{}$}]{}}
    	    child  {node [label=60: {$\scriptstyle{}$}]{}};
       \tikzstyle{level 1}=[counterclockwise from=120,level distance=12mm,sibling angle=120]
        \node [draw,circle,fill] (A1) at (180:1.5) {}
            child {node [label=120: {$\scriptstyle{h_0}$}]{}}
    	    child {node [label=-120: {$\scriptstyle{}$}]{}};
        \path (A0) edge [] node[near start, above=0.1, label={90:${\psi}$}]{} (A1);
    \end{tikzpicture}
}
+18
\mbox{
\begin{tikzpicture}[baseline={([yshift=-.5ex]current bounding box.center)}]
      \path(0,0) ellipse (2 and 2);
      \tikzstyle{level 1}=[counterclockwise from=-60, level distance=12mm, sibling angle=120]
      \node [draw,circle,fill] (A0) at (0:1.5) {}
	    child  {node [label=-60: {$\scriptstyle{}$}]{}}
    	    child  {node [label=60: {$\scriptstyle{}$}]{}};
       \tikzstyle{level 1}=[counterclockwise from=120,level distance=12mm,sibling angle=60]
        \node [draw,circle,fill] (A1) at (180:1.5) {}
            child {node [label=120: {${\psi_{h_0}}$}]{}}
    	    child {node [label=-180: {$\scriptstyle{}$}]{}}
    	    child {node [label=-120: {$\scriptstyle{}$}]{}};
        \path (A0) edge [] node[]{} (A1);
    \end{tikzpicture}
}
\\
&
-9
\mbox{
\begin{tikzpicture}[baseline={([yshift=-.5ex]current bounding box.center)}]
      \path(0,0) ellipse (3 and 2);
      \tikzstyle{level 1}=[counterclockwise from=-60, level distance=12mm, sibling angle=120]
      \node [draw,circle,fill] (A0) at (0:2.5) {}
	    child  {node [label=-60: {$\scriptstyle{}$}]{}}
    	    child  {node [label=60: {$\scriptstyle{}$}]{}};
       \tikzstyle{level 1}=[counterclockwise from=-90,level distance=12mm,sibling angle=60]
        \node [draw,circle,fill] (A1) at (0:0) {}
    	    child {node [label=-90: {$\scriptstyle{}$}]{}};
       \tikzstyle{level 1}=[counterclockwise from=120,level distance=12mm,sibling angle=120]
        \node [draw,circle,fill] (A2) at (180:2.5) {}
            child {node [label=120: {$\scriptstyle{h_0}$}]{}}
    	    child {node [label=-120: {$\scriptstyle{}$}]{}};	    
        \path (A0) edge [] node[]{} (A1);
        \path (A1) edge [] node[]{} (A2);        
    \end{tikzpicture}
} 
-3
\mbox{
\begin{tikzpicture}[baseline={([yshift=-.5ex]current bounding box.center)}]
      \path(0,0) ellipse (3 and 2);
      \tikzstyle{level 1}=[counterclockwise from=-60, level distance=12mm, sibling angle=120]
      \node [draw,circle,fill] (A0) at (0:2.5) {}
	    child  {node [label=-60: {$\scriptstyle{}$}]{}}
    	    child  {node [label=60: {$\scriptstyle{}$}]{}};
       \tikzstyle{level 1}=[counterclockwise from=-90,level distance=12mm,sibling angle=60]
        \node [draw,circle,fill] (A1) at (0:0) {}
    	    child {node [label=-90: {$\scriptstyle{h_0}$}]{}};
       \tikzstyle{level 1}=[counterclockwise from=120,level distance=12mm,sibling angle=120]
        \node [draw,circle,fill] (A2) at (180:2.5) {}
            child {node [label=120: {$\scriptstyle{}$}]{}}
    	    child {node [label=-120: {$\scriptstyle{}$}]{}};	    
        \path (A0) edge [] node[]{} (A1);
        \path (A1) edge [] node[]{} (A2);        
    \end{tikzpicture}
} .
\end{align*}

\subsection{Preliminary identities}

Here we start proving some identities which will help to prove Theorem \ref{thm:van2} in \S\ref{sec:recid}. 

\begin{lemma}
\label{lemma:imaxj1van}
For $n\geq 3$, one has $\mathsf{Z}(n,n-1,1) =0$ in $A^{1} \left(\ov{\M}_{0,n+1}\right)$.
\end{lemma}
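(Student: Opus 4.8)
The plan is to compute $\mathsf{Z}(n,n-1,1)$ directly from \eqref{eq:Znij-prelimvan} and to recognize the outcome as a multiple of a classical boundary relation in $A^1(\ov{\M}_{0,n+1})$. Since the cycle has degree $n-1+1-(n-1)=1$, the sum in \eqref{eq:Znij-prelimvan} involves only decorated trees $(\mathrm{T},\bm{\psi})$ with $|\mathrm{E}(\mathrm{T})|+\deg\bm{\psi}=1$, and there are exactly two families: the one-vertex tree carrying a decoration $\bm{\psi}$ of degree $1$ (so $\bm{\psi}\in\{\psi_1,\psi_{h_0}\}$), and the one-edge trees with $\bm{\psi}=1$.

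First I would dispose of the one-vertex tree. Here $\mathrm{H}^+(\mathrm{T})=\{h_0\}$, and $\mathsf{W}^{n-1,1}_{\mathrm{T},\psi_1}$ is empty because \eqref{eq:Wimtipsi} would force $1\le w(1,1)<m=1$; so only $\bm{\psi}=\psi_{h_0}$ survives, for which \eqref{eq:Wimtipsi} forces $w(h_0,0)=n-1$ and lets $w(h_0,1)$ range over $\{1,\dots,n-1\}$, giving $c^{n-1,1}_{\mathrm{T},\psi_{h_0}}=\sum_{a=1}^{n-1}(n-1)a=(n-1)\binom{n}{2}$ and a contribution $-(n-1)\binom{n}{2}\psi_{h_0}$ (the sign being $(-1)^{|\mathrm{H}^+(\mathrm{T})|}=-1$, with $\xi_{\mathrm{T}}=\mathrm{id}$).

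Next I would handle the one-edge trees: such a tree is specified by the set $S\subseteq\{1,\dots,n\}$ of legs on the external vertex, with $2\le|S|\le n-1$ by stability and $\ell^1_{h^+}=|S|$ for its head $h^+$. Again \eqref{eq:Wimtipsi} forces $w(h_0,0)=n-1$ and lets $w(h^+,0)$ range over $\{1,\dots,|S|-1\}$, so $c^{n-1,1}_{\mathrm{T},1}=(n-1)\binom{|S|}{2}$; with $(-1)^{|\mathrm{H}^+(\mathrm{T})|}=+1$ and $\xi_{\mathrm{T}*}(1)=\delta_S$ (the boundary divisor where the legs of $S$ bubble off from $\{h_0\}\cup S^c$) the contribution is $(n-1)\binom{|S|}{2}\delta_S$. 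Collecting everything,
\[
\mathsf{Z}(n,n-1,1)=(n-1)\left(\ \sum_{\substack{S\subseteq\{1,\dots,n\}\\ 2\le|S|\le n-1}}\binom{|S|}{2}\delta_S\ -\ \binom{n}{2}\psi_{h_0}\ \right).
\]

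Finally I would invoke the classical presentation of $A^1(\ov{\M}_{0,n+1})$: for any two distinct $a,b\in\{1,\dots,n\}$ one has $\psi_{h_0}=\sum_{S\ni a,b}\delta_S$ (over boundary divisors $\delta_S$ with $2\le|S|\le n-1$). Summing this over all $\binom{n}{2}$ unordered pairs $\{a,b\}$ and using that each $S$ contains $\binom{|S|}{2}$ such pairs yields $\binom{n}{2}\psi_{h_0}=\sum_S\binom{|S|}{2}\delta_S$, which is exactly the parenthesized expression; hence $\mathsf{Z}(n,n-1,1)=0$. I do not expect a genuine obstacle: the one subtle point is the careful reading of the weighting conditions \eqref{eq:Wimtipsi} in each case (the forced equality $w(h_0,0)=n-1$, and the emptiness of $\mathsf{W}^{n-1,1}_{\mathrm{T},\psi_1}$), together with the identification of $\xi_{\mathrm{T}*}$ of the relevant classes; once these are settled, the vanishing is immediate from the boundary relation. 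As a check, for $n=3$ this reproduces $\mathsf{Z}(3,2,1)=2(\delta_{12}+\delta_{13}+\delta_{23})-6\psi_{h_0}=0$, matching \S\ref{sec:exvan}.
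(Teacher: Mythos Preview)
Your proposal is correct and follows essentially the same route as the paper's proof: both enumerate the degree-one contributions (the one-vertex tree with $\bm{\psi}=\psi_{h_0}$ and the one-edge trees with $\bm{\psi}=1$), compute the coefficients $c^{n-1}_{\mathrm{T},\bm{\psi}}$ via the weighting conditions, and then recognize the resulting expression as $(n-1)$ times the classical relation $\binom{n}{2}\psi_{h_0}=\sum_{S}\binom{|S|}{2}\delta_S$ obtained by summing the identity $\psi_{h_0}=\sum_{S\ni a,b}\delta_S$ over all pairs $\{a,b\}$. The only cosmetic differences are that you index by the individual subset $S$ rather than by its size, and that you explicitly dispose of the $\bm{\psi}=\psi_1$ case (which the paper handles implicitly via \eqref{eq:Wimnonempty}).
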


\begin{proof}
From \eqref{eq:mu1minus1}, a decorated tree $(\mathrm{T}, \bm{\psi})$ contributes to $\mathsf{Z}(n,n-1,1)$ if and only if 
$|\mathrm{E}(\mathrm{T})| +\deg (\bm{\psi})=1$, i.e., $|\mathrm{H}(\mathrm{T}, \bm{\psi})|=2$.
Recall from \S\ref{sec:RRT} that $\mathrm{H}^+(\mathrm{T})$ always contains the element $h_0$.
There are two cases. 

If $\deg \bm{\psi}=1$, then the tree $\mathrm{T}$ has only one vertex, and $\bm{\psi}=\psi_{h_0}$.
Hence, one has $\mathrm{H}(\mathrm{T}, \bm{\psi})=\{(h_0,0), (h_0,1)\}$. In this case, the set of $(n-1)$-weightings $\mathsf{W}^{n-1}_{\mathrm{T}, \bm{\psi}}$ from \eqref{eq:Wi1tipsi} consists of the functions $w$ such that $w(h_0,0)=n-1$ and $w(h_0,1)=a$, with $a\in \{1,\dots, n-1\}$. From \eqref{eq:cimtipsi}, it follows that $c^{n-1}_{\mathrm{T}, \bm{\psi}}=(n-1){n \choose 2}$.

If $\deg \bm{\psi}=0$, then the tree $\mathrm{T}$ has exactly one edge $e=(h^+,h^-)$. In this case, one has $\mathrm{H}(\mathrm{T}, \bm{\psi})=\{(h_0,0), (h^+,0)\}$.
The set $\mathsf{W}^{n-1}_{\mathrm{T}, \bm{\psi}}$ consists of the functions $w$ such that $w(h_0,0)=n-1$ and $w(h^+,0)=a$, with $a\in \{1,\dots, \ell_{h^+}-1 \}$. 
It follows that $c^{n-1}_{\mathrm{T}, \bm{\psi}}=(n-1){\ell_{h^+} \choose 2}$.

We deduce that 
\begin{equation}
\label{eq:Znnminus11expl}
\mathsf{Z}(n,n-1,1) = - (n-1){n \choose 2}\psi_{h_0} + (n-1)\sum_{m=2}^{n-1} {m \choose 2} \delta_{m}
\end{equation}
where $\delta_{m}$ is the sum of the classes of the boundary divisors in $\ov{\M}_{0,n+1}$ whose general elements consist of a rational component containing the marked point $h_0$ and a rational component containing precisely $m$ other marked points. The vanishing of $\mathsf{Z}(n,n-1,1)$ is then due to the identity 
\begin{equation}
\label{eq:divid1}
{n \choose 2}\psi_{h_0} = \sum_{m=2}^{n-1} {m \choose 2} \delta_{m} \qquad \mbox{in $A^{1} \left(\ov{\M}_{0,n+1}\right)$.}
\end{equation}
This  follows from the well-known identity
$\psi_{h_0} = \sum_{M} \delta_{M}$ in $A^{1} \left(\ov{\M}_{0,n+1}\right)$
obtained by fixing two elements $a,b\in\{1,\dots,n\}$,
where the sum is over $\{a,b\}\subseteq M\subset \{1, \dots, n\}$ with $|M|\leq n-1$, and  $\delta_{M}$ is the class of the divisor in $\ov{\M}_{0,n+1}$ whose general element consists of a rational component containing the marked point $h_0$ and a rational component containing precisely the marked points in~$M$. 
Summing over all choices of $a$ and $b$, one deduces \eqref{eq:divid1}, hence the vanishing of $\mathsf{Z}(n,n-1,1)$ from~\eqref{eq:Znnminus11expl}.
\end{proof}

We end this section with the following recursive identity, which will be used in the proof of Theorem \ref{thm:mainthmgenuszeroij}$(b)$ to deduce the vanishing of $\mathsf{Z}(n,n-1,j)$  for  $j\geq 1$ from Lemma  \ref{lemma:imaxj1van} and the vanishing of  $\mathsf{Z}(n,n-2,j)$ for $j\geq 1$:

\begin{lemma}
\label{lemma:vanimaxrec}
For $j\geq 2$, the following holds in $A^{j} \left(\ov{\M}_{0,n+1}\right)$:
\begin{align*}
\mathsf{Z}(n,n-1,j) = 
  \frac{n-1}{n-2} \, \mathsf{Z}(n,n-2,j-1)
  +(n-1)\,\psi_{h_0} \, \mathsf{Z}(n,n-1,j-1) .
\end{align*}
\end{lemma}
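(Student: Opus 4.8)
The plan is to prove the identity by a direct comparison of the two sides, expanded as sums over decorated trees via \eqref{eq:Znij-prelimvan}. Write $\mathsf{Z}(n,n-1,j)$ as a sum over decorated trees $(\mathrm{T},\bm{\psi})$ with $|\mathrm{E}(\mathrm{T})|+\deg\bm{\psi}=j$ of $(-1)^{|\mathrm{H}^+(\mathrm{T})|}c^{n-1}_{\mathrm{T},\bm{\psi}}\,\xi_{\mathrm{T}*}(\bm{\psi})$. The weightings in $\mathsf{W}^{n-1}_{\mathrm{T},\bm{\psi}}$ all share the constraint $w(h_0,0)=n-1$, which is the maximal allowed value of the capacity $\ell_{h_0}=n-1$, so in fact the condition $\ell_{h_0}-1\geq w(h_0,0)$ is an equality, not an inequality. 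This forces a special structure: either $h_0$ carries $\psi$-decoration (contributing factors $w(h_0,e)$ with $1\leq w(h_0,1)<\cdots<w(h_0,d_{h_0})<n-1$ after subtracting the head constraint... wait, for the head $h_0$ the chain is $w(h_0,0)\geq w(h_0,1)\geq\cdots$), or the vertex incident to $h_0$ has a child edge. I would split the sum over $(\mathrm{T},\bm{\psi})$ according to whether the degree $d_{h_0}$ of $\psi_{h_0}$ in $\bm{\psi}$ is positive or zero, and in the latter case, according to the head $h^+$ of the unique edge at the root vertex.

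The key step is then a \emph{peeling} bijection: given a tree $\mathrm{T}$ contributing to $\mathsf{Z}(n,n-1,j)$, remove one unit of decoration or one level of structure near the root to obtain a tree contributing to $\mathsf{Z}(n,n-2,j-1)$ or to $\mathsf{Z}(n,n-1,j-1)$ (the latter multiplied by $\psi_{h_0}$, reflecting that we strip one power of $\psi_{h_0}$). Concretely: when $d_{h_0}\geq 1$, stripping one $\psi_{h_0}$ leaves a decorated tree $(\mathrm{T},\bm{\psi}/\psi_{h_0})$ contributing to $\mathsf{Z}(n,n-1,j-1)$, and the weightings of the original decompose according to the value $a=w(h_0,d_{h_0})\in\{1,\dots,n-1\}$ that is being removed; summing $a$ over $\{1,\dots,n-1\}$ gives a factor accounting for the coefficient $(n-1)$ and the $\psi_{h_0}$ factor, after one checks the chain inequalities among the $w(h_0,e)$ match up correctly. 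When $d_{h_0}=0$, one analyzes the edge $e=(h_0',h^+)$ out of the root vertex (here $h_0'$ is the tail sibling of $h_0$): collapsing or re-rooting at $h^+$ produces a tree where the capacity $\ell_{h^+}$ plays the role of $n-1$ in a graph with "$n-1$ legs", i.e., something contributing to $\mathsf{Z}(n,n-2,j-1)$; the combinatorial factor $\frac{n-1}{n-2}$ arises from comparing $\binom{n-1}{2}$-type contributions of $w(h_0,0)=n-1$ against the corresponding $w$-value $n-2$ in the smaller configuration, exactly as $\tfrac{n-1}{n-2}\binom{n-2}{2}$ versus $\binom{n-1}{2}$ differ. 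I would make these two correspondences precise as weight-preserving bijections between the relevant weighting sets, tracking the sign $(-1)^{|\mathrm{H}^+(\mathrm{T})|}$, which is unchanged under stripping a $\psi_{h_0}$ and changes predictably under the root-edge manipulation.

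The main obstacle I expect is the bookkeeping in the $d_{h_0}=0$ case: one must correctly identify, for each tree $\mathrm{T}$ contributing to $\mathsf{Z}(n,n-1,j)$ with no $\psi_{h_0}$ decoration, which tree it maps to on the right-hand side, handle the capacity shift $\ell_{h_0}=n-1\mapsto \ell_{h^+}\leq n-2$ together with the fact that the head $h^+$ now receives the extra decoration unit (so $\deg\bm{\psi}$ shifts), and verify that the rational number $\frac{n-1}{n-2}$ emerges uniformly across all such trees rather than depending on $\mathrm{T}$. A useful sanity check before the general argument is to confirm the identity in the first nontrivial case $j=2$: comparing with Lemma \ref{lemma:imaxj1van}, \eqref{eq:Znnminus11expl}, and \eqref{eq:divid1}, one can verify that the explicit divisor expansions on both sides agree, which both validates the normalization constants $\frac{n-1}{n-2}$ and $n-1$ and serves as the base case informing the structure of the bijections in general.
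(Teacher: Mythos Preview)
Your plan rests on a misreading of the right-hand side. In $\mathsf{Z}(n,n-2,j-1)$ the first index $n$ is unchanged: this cycle lives in $A^{j}\left(\ov{\M}_{0,n+1}\right)$ and is a sum over the \emph{same} decorated trees $(\mathrm{T},\bm{\psi})$ with $|\mathrm{E}(\mathrm{T})|+\deg\bm{\psi}=j$ as the left-hand side (check the degree: $n-1+(j-1)-(n-2)=j$). Only the weighting constraint changes, from $w(h_0,0)=n-1$ to $w(h_0,0)=n-2$. There is no passage to trees with $n-1$ legs, no root-edge collapsing, and no ``unique edge at the root vertex'' (the root can have any number of outgoing tails). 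The identity is therefore termwise in $(\mathrm{T},\bm{\psi})$: one only needs
\[
c^{n-1}_{\mathrm{T},\bm{\psi}} \;=\; \tfrac{n-1}{n-2}\,c^{n-2}_{\mathrm{T},\bm{\psi}} \;+\; (n-1)\,c^{n-1}_{\mathrm{T},\bm{\psi}/\psi_{h_0}},
\]
the last term interpreted as zero when $d_{h_0}=0$.

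Your case split is also off. When $d_{h_0}\ge 1$, \emph{both} summands on the right contribute, not just the $\psi_{h_0}$ term; and removing the bottom value $a=w(h_0,d_{h_0})$ does not give a constant factor, since $a$ ranges only up to $w(h_0,d_{h_0}-1)$. The correct bijection works at the top: the value $n-1$ can occur only at pairs $(h_0,e)$ (since $\ell_{h^+}\le n-1$ for all other heads), so partition $\mathsf{W}^{n-1}_{\mathrm{T},\bm{\psi}}$ according to whether $w(h_0,1)\le n-2$ or $w(h_0,1)=n-1$. In the first part (and in the whole set when $d_{h_0}=0$) replacing $w(h_0,0)=n-1$ by $n-2$ is a bijection onto $\mathsf{W}^{n-2}_{\mathrm{T},\bm{\psi}}$ and rescales the weight by $\tfrac{n-1}{n-2}$, uniformly in $\mathrm{T}$. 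In the second part, deleting $w(h_0,0)=n-1$ and shifting $(h_0,e)\mapsto(h_0,e-1)$ is a bijection onto $\mathsf{W}^{n-1}_{\mathrm{T},\bm{\psi}/\psi_{h_0}}$ and divides the weight by $n-1$. This gives the coefficient identity directly, with no Chow-ring relations needed.
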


\begin{proof}
Let $j\geq 2$, and let $(\mathrm{T},\bm{\psi})$ be a decorated tree such that $\xi_{\mathrm{T} *}\left( \bm{\psi} \right)$ has degree
 $j$ in $A^* \left(\ov{\M}_{0,n+1}\right)$. We show that the contributions of $(\mathrm{T},\bm{\psi})$ to the two sides of the equality in the statement match.
 
The maximal capacity $\ell_h = n$ for $h$ in $\mathrm{H}^+(\mathrm{T})$ is attained only at $h=h_0$. It follows that the only elements of  $\mathrm{H}(\mathrm{T}, \bm{\psi})$ where an $(n-1)$-weighting can have value $n-1$ are the pairs $(h,e)$ where $h=h_0$. 
We distinguish two cases.

First, assume the degree of $\psi_{h_0}$ in $\bm{\psi}$ is zero. In this case, $(\mathrm{T},\bm{\psi})$ does not contribute to $\psi_{h_0} \, \mathsf{Z}(n,n-1,j-1)$.
Moreover, the pair $(h_0,0)$ is the only element of  $\mathrm{H}(\mathrm{T}, \bm{\psi})$ where an $(n-1)$-weighting can have value $n-1$. 
It follows that the map
\[
\mathsf{W}^{n-1}_{\mathrm{T},\bm{\psi}} \rightarrow \mathsf{W}^{n-2}_{\mathrm{T},\bm{\psi}}, \quad w \mapsto \overline{w} \quad \mbox{where} \quad\overline{w}(h,e):=
\left\{
\begin{array}{ll}
n-2&\mbox{if $(h,e)=(h_0,0)$,}\\
w(h,e)&\mbox{otherwise}
\end{array}
\right.
\]
is a bijection. If $w\mapsto \ov{w}$, then  $w$ and $\ov{w}$ only differ at $(h_0,0)$, where $w(h_0,0)=n-1$ and $\ov{w}(h_0,0)=n-2$.
Using the definition \eqref{eq:imweighttipsi}, one has $w(\mathrm{T},\bm{\psi})= \frac{n-1}{n-2} \, \overline{w}(\mathrm{T},\bm{\psi})$.
From the definition \eqref{eq:cimtipsi}, one has
\[
c^{n-1}_{\mathrm{T}, \bm{\psi}} =   \frac{n-1}{n-2} \, c^{n-2}_{\mathrm{T}, \bm{\psi}}.
\]
From \eqref{eq:Znij-prelimvan}, it follows that $(\mathrm{T},\bm{\psi})$ contributes equally to the two sides of the equality in the statement.

Finally, assume the degree of $\psi_{h_0}$ in $\bm{\psi}$ is positive. Let $d_0>0$ be the degree of $\psi_{h_0}$ in $\bm{\psi}$. Then $\mathrm{H}(\mathrm{T},\bm{\psi})$ contains  $(h_0,e)$ precisely for $0\leq e\leq d_0$.
Consider the partition
\[
\mathsf{W}^{n-1}_{\mathrm{T},\bm{\psi}} = \mathsf{W}^a \sqcup \mathsf{W}^b
\]
where
\begin{align*}
\mathsf{W}^a &:= \left\{w\in \mathsf{W}^{n-1}_{\mathrm{T},\bm{\psi}} \, | \, w(h_0,1)\leq n-2 \right\},\\
\mathsf{W}^b &:= \left\{w\in \mathsf{W}^{n-1}_{\mathrm{T},\bm{\psi}} \, | \, w(h_0,1)=n-1 \right\} .
\end{align*}
The map $w\mapsto \ov{w}$ from the previous case gives a bijection $\mathsf{W}^a \xrightarrow{\cong} \mathsf{W}^{n-2}_{\mathrm{T},\bm{\psi}}$ such that $w(\mathrm{T},\bm{\psi})= \frac{n-1}{n-2} \, \overline{w}(\mathrm{T},\bm{\psi})$. Next, let $\bm{\psi}^-:=\psi_{h_0}^{-1}\bm{\psi}$. 
The contribution of $(\mathrm{T},\bm{\psi})$ to $\psi_{h_0} \, \mathsf{Z}(n,n-1,j-1)$ equals the contribution of $(\mathrm{T},\bm{\psi}^-)$ to $\mathsf{Z}(n,n-1,j-1)$.
The set $\mathrm{H}(\mathrm{T},\bm{\psi}^-)$ contains $(h_0,e)$ only for $0\leq e\leq d_0-1$.
One has a bijection
\[
\mathsf{W}^b \rightarrow \mathsf{W}^{n-1}_{\mathrm{T},\bm{\psi}^-}, \quad w \mapsto w^- \quad \mbox{where} \quad w^-(h,e):=
\left\{
\begin{array}{ll}
w(h_0, e+1)&\mbox{if $h=h_0$,}\\
w(h,e)&\mbox{otherwise.}
\end{array}
\right.
\]
Moreover, if $w\mapsto w^-$, then $w(\mathrm{T},\bm{\psi})= (n-1) \, w^-(\mathrm{T},\bm{\psi}^-)$.
Hence one has
\[
c^{n-1}_{\mathrm{T}, \bm{\psi}} =   \frac{n-1}{n-2} \, c^{n-2}_{\mathrm{T}, \bm{\psi}} + (n-1)\, c^{n-1}_{\mathrm{T}, \bm{\psi}^-}.
\]
It follows that $(\mathrm{T},\bm{\psi})$ contributes equally to the two sides of the equality in the statement.
This implies the statement.
\end{proof}


\section{Recursive identities on moduli  of pointed rational curves}
\label{sec:recid}

Here we prove certain recursive identities satisfied by the cycles $\mathsf{Z}(n,i,j)$ and $\mathsf{Dec}_n^i(D)$. These will be a key ingredient in the proof of Theorem \ref{thm:graphFor}.
The proof  of the recursive identities is intertwined with the proof of the vanishing statement from Theorem \ref{thm:van2}.

\smallskip

Recall the cycle $\mathsf{Z}(n,i,j)$ in $A^* \left(\ov{\M}_{0,n+1}\right)$ from \eqref{eq:Znij-prelimvan}.
Expanding the coefficients $c^i_{\mathrm{T}, \bm{\psi}}= c^{i,1}_{\mathrm{T}, \bm{\psi}}$ as in \eqref{eq:cimtipsi}, one has
\begin{equation}
\label{eq:Znijw}
\mathsf{Z}(n,i,j) = \sum_{(\mathrm{T}, \bm{\psi},w)}  (-1)^{|\mathrm{H}^+(\mathrm{T})|} \,\, w(\mathrm{T}, \bm{\psi}) \,\, \xi_{\mathrm{T} *}\left( \bm{\psi} \right) 
\quad \mbox{in $A^{n-1+j-i} \left(\ov{\M}_{0,n+1}\right)$}
\end{equation}
where the sum is over $(\mathrm{T}, \bm{\psi},w)$ such that $\xi_{\mathrm{T} *}\left( \bm{\psi} \right)$ 
has  degree $n-1+j-i$ in $A^* \left(\ov{\M}_{0,n+1}\right)$ and $w$ is an $i$-weighting of $(\mathrm{T}, \bm{\psi})$. Recall that the legs of each such $\mathrm{T}$ are labelled by $1,\dots,n, h_0$.

Define the \textit{truncated} cycle
\begin{equation}
\label{eq:Ztnij}
\mathsf{Z}^{\mathrm t}(n,i,j) \qquad \mbox{in $A^{n-1+j-i} \left(\ov{\M}_{0,n+1}\right)$}
\end{equation}
as the cycle obtained from $\mathsf{Z}(n,i,j)$ by restricting the sum in \eqref{eq:Znijw} to those $(\mathrm{T}, \bm{\psi},w)$ additionally satisfying the following property: 
\begin{equation}
\label{eq:truncatedprop}
\left\{
\begin{array}{l}
\mbox{if $h_0$, the leg $n$, and a tail $h^-$ are incident to the same trivalent vertex,}\\[.1cm]
\mbox{then $i=w(h_0,0)\geq w(\iota(h^-),0)$.}
\end{array}
\right.
\end{equation}

Also, for a non-empty $\bm{I}\subseteq \{1,\dots,n-1\}$,  recall the cycle
\begin{equation*}
 \mathsf{E}_{\bm{I}}(i,j) = \sum_{(\mathrm{T}, \bm{\psi})}  (-1)^{|\mathrm{H}^+(\mathrm{T})|-1}  \,\, d^i_{\mathrm{T},\bm{\psi}} \,\, \xi_{\mathrm{T} *}\left( \bm{\psi} \right)  \qquad \mbox{in $A^{n-1+j-i}\left( \ov{\M}_{0,n+1}\right)$}
\end{equation*}
from \eqref{eq:EIij}, where the sum is over those decorated trees $(\mathrm{T}, \bm{\psi})$ in the set $\mathsf{DC}_{\bm{I}}$ from \eqref{eq:DCI} such that $\xi_{\mathrm{T} *}\left( \bm{\psi} \right)$ has degree $n-1+j-i$,
and the coefficients $d^i_{\mathrm{T}, \bm{\psi}}$ are as in \eqref{eq:ditipsi}.

Let $\pi_{n}\colon \ov{\M}_{0,n+1}\rightarrow\ov{\M}_{0,n}$ be the map obtained by forgetting the point $P_{n}$.

\begin{theorem}
\label{thm:mainthmgenuszeroij}
\begin{enumerate}[(a)]

\item For $1\leq i \leq n-2$ and $j\in \mathbb{Z}$, one has
\begin{equation}
\label{eq:partapullback}
\pi_{n}^*\left( \mathsf{Z}(n-1,i,j+1) \right) - \sum_{\bm{I} } |\bm{I}| \, \mathsf{E}_{\bm{I}}(i,j) 
= \mathsf{Z}^{\rm t}(n,i,j)
\end{equation}
in $A^{n-1+j-i}\left(\ov{\M}_{0,n+1}\right)$, where the sum is over all $\varnothing\neq \bm{I}\subseteq \{1,\dots, n-1\}$.

\item For $i,j \geq 1$, one has
\[
\mathsf{Z}^{\rm t}(n,i,j) = 0 \quad \mbox{and} \quad \mathsf{Z}(n,i,j) =0 \qquad \mbox{in $A^{n-1+j-i}\left(\ov{\M}_{0,n+1}\right)$.}
\]
\end{enumerate}
\end{theorem}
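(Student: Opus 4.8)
The plan is to establish (a) first as an explicit cycle identity on $\ov{\M}_{0,n+1}$, and then to derive (b) from it by a double induction, feeding in the lemmas already proved.

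For (a) I would expand the left-hand side directly. Write $\mathsf{Z}(n-1,i,j+1)$ via \eqref{eq:Znijw} as a signed sum over triples $(\mathrm{T}',\bm{\psi}',w')$ with $w'$ an $i$-weighting, and apply $\pi_n^*$ termwise using the standard comparison formulas for the forgetful map $\pi_n\colon\ov{\M}_{0,n+1}\to\ov{\M}_{0,n}$: the class $\pi_n^*\xi_{\mathrm{T}'*}(1)$ is the sum of $\xi_{\mathrm{T}*}(1)$ over all $\mathrm{T}\in\mathsf{G}_{0,n+1}$ obtained by attaching the leg $n$ to a vertex of $\mathrm{T}'$ or by sprouting a new trivalent vertex on an edge or a leg, and $\pi_n^*\psi_h=\psi_h-[\text{bubble carrying }n\text{ and }h]$. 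Collecting the resulting decorated trees on $\ov{\M}_{0,n+1}$ and matching weightings, I expect three types of contribution: those in which $n$ lands on a vertex also carrying $h_0$ and a tail --- these, after combining with the $\psi$-corrections, are exactly the terms surviving the truncation \eqref{eq:truncatedprop} and sum to $\mathsf{Z}^{\rm t}(n,i,j)$; those in which $n$ lands on an external vertex whose other incident legs form a set $\bm{I}$ --- these sum, via Lemma \ref{lemma:EIisgammapfwd}, to $|\bm{I}|\,\mathsf{E}_{\bm{I}}(i,j)$, the factor $|\bm{I}|$ coming from the value $w(h_n^+,0)=|\bm{I}|$ forced by the capacity bound at $h_n^+$; and the rest, which cancel in pairs. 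The core of the proof is a careful bijective bookkeeping: redistributing an $i$-weighting of $\mathrm{T}'$ across the enlarged half-edge set of $\mathrm{T}$ must induce weight-preserving bijections onto the relevant subsets of $\mathsf{W}^i_{\mathrm{T},\bm{\psi}}$ and $\mathsf{W}^{i,\bm{I}}_{\mathrm{T},\bm{\psi}}$, which one controls by tracking how the capacities $\ell_{h^+}$ change when $n$ is inserted. Deciding precisely which terms fall into $\mathsf{Z}^{\rm t}$, which into $\sum_{\bm{I}}|\bm{I}|\,\mathsf{E}_{\bm{I}}$, and which cancel is, I expect, the main obstacle in (a).

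For (b) I would induct on $n$. The base $n=3$: the only relevant cycle with $i,j\geq1$ and $j<i$ is $\mathsf{Z}(3,2,1)$, which vanishes by Lemma \ref{lemma:imaxj1van}, and all other $\mathsf{Z}(3,i,j)$ vanish for dimension reasons since $\dim\ov{\M}_{0,4}=1$. For the inductive step fix $n\geq4$, assume the vanishing for all smaller values, and first treat $1\leq i\leq n-2$. By (a),
\[
\mathsf{Z}^{\rm t}(n,i,j)=\pi_n^*\mathsf{Z}(n-1,i,j+1)-\sum_{\bm{I}}|\bm{I}|\,\mathsf{E}_{\bm{I}}(i,j);
\]
the first term vanishes by the inductive hypothesis, and each $\mathsf{E}_{\bm{I}}(i,j)$ vanishes too: for $|\bm{I}|\leq n-2$ by Lemma \ref{lemma:EIisgammapfwd} together with Theorem \ref{thm:collidinggenus0}, which realize $\mathsf{E}_{\bm{I}}(i,j)$ as a pushforward of a cycle obtained from $\mathsf{Z}(n-1,i,j)$ by colliding points, hence zero by induction, and for $|\bm{I}|=n-1$ because we are then outside the range \eqref{eq:ditipsineq0} when $i\leq n-2$. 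Thus $\mathsf{Z}^{\rm t}(n,i,j)=0$. To upgrade this to $\mathsf{Z}(n,i,j)=0$, note that the complementary sum $\mathsf{Z}(n,i,j)-\mathsf{Z}^{\rm t}(n,i,j)$ is supported on decorated trees whose root vertex is trivalent and incident to $h_0$, the leg $n$, and a single edge; on such a tree the $\psi$-decoration at the root bubble must vanish (it would be a $\psi$-class on $\ov{\M}_{0,3}$), so these terms factor through the boundary divisor $\delta_{\{h_0,n\}}$, and reindexing by the subtree $\mathrm{T}''$ below the bubble (with the head $h^+$ of the detached edge relabelled as its root half-edge, and $i':=w(h^+,0)>i$) yields
\[
\mathsf{Z}(n,i,j)-\mathsf{Z}^{\rm t}(n,i,j)=-\,i\,[\delta_{\{h_0,n\}}]\cdot\pi_n^*\Bigl(\sum_{i'=i+1}^{n-2}\mathsf{Z}(n-1,i',\,j+i'-i)\Bigr),
\]
an empty sum when $i=n-2$ and otherwise zero by the inductive hypothesis, since $i'\geq2$ and $j+i'-i\geq2$. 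Hence $\mathsf{Z}(n,i,j)=0$ for $1\leq i\leq n-2$, and also $\mathsf{Z}^{\rm t}(n,n-1,j)=\mathsf{Z}(n,n-1,j)$ because the complementary sum is empty at $i=n-1$.

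Finally, for $i=n-1$ I would induct on $j$: the base $j=1$ is Lemma \ref{lemma:imaxj1van}, and for $j\geq2$ Lemma \ref{lemma:vanimaxrec} expresses $\mathsf{Z}(n,n-1,j)$ through $\mathsf{Z}(n,n-2,j-1)$, which is zero by the previous paragraph when $j-1\geq1$, and $\mathsf{Z}(n,n-1,j-1)$, which is zero by the inner induction (or by Lemma \ref{lemma:imaxj1van} when $j=2$). This closes the induction and gives both $\mathsf{Z}^{\rm t}(n,i,j)=0$ and $\mathsf{Z}(n,i,j)=0$ for $i,j\geq1$. Besides the termwise matching in (a), the only step I expect to require real care is verifying the identification of the complementary sum $\mathsf{Z}-\mathsf{Z}^{\rm t}$ with the stated pullback --- in particular the bookkeeping of signs, of the scalar $i$ coming from $w(h_0,0)$, and of the degree shift $j\mapsto j+i'-i$.
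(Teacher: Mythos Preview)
Your plan for (b) is essentially the paper's Claim~\ref{claim:part_b}: invoke (a) at level $n$, kill the terms on its left-hand side via the inductive hypothesis (through Theorem~\ref{thm:collidinggenus0} and Lemma~\ref{lemma:EIisgammapfwd} for the $\mathsf{E}_{\bm{I}}$), use Lemma~\ref{lem:DectZt} to pass from $\mathsf{Z}^{\rm t}$ back to $\mathsf{Z}$, and close the case $i=n-1$ with Lemmas~\ref{lemma:imaxj1van} and~\ref{lemma:vanimaxrec}. That part is correct.

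The gap is in (a). First, your reading of $\mathsf{Z}^{\rm t}(n,i,j)$ is off: the truncation~\eqref{eq:truncatedprop} only discards certain triples in which $v_0=v_n$ is \emph{trivalent}, so $\mathsf{Z}^{\rm t}$ still contains every term in which the leg $n$ sits far from $h_0$; it is not ``the piece where $n$ lands on the vertex carrying $h_0$''. Second, and more seriously, a direct bijective matching cannot work for those far-away terms. If $(\mathrm{T},\bm{\psi},w)$ has $v_n\neq v_0$ and you delete the leg $n$ to get $\ov{\mathrm{T}}\in\mathsf{G}_{0,n}$, the capacity of every head on the path from $v_n$ back to $h_0$ drops by one; hence any $i$-weighting with $w(h^+,0)=\ell_{h^+}-1$ at one of those heads has no preimage in $\mathsf{W}^{i}_{\ov{\mathrm{T}},\bm{\psi}}$, and such triples do occur in $\mathsf{Z}^{\rm t}(n,i,j)$. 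The paper's resolution (its Claim~\ref{claim:part_a}) is to prove (a) and (b) \emph{simultaneously} by induction on $n$: the contributions with $v_n\neq v_0$ are grouped by the maximal external subtree $\mathrm{T}^{\rm ext}$ containing the leg $n$, and the full identity of Theorem~\ref{thm:mainthmgenuszero} is applied to $\mathrm{T}^{\rm ext}$ at every value $a>a_0$ of the weighting at its root half-edge. Since this includes the extremal value $a=n^{\rm ext}-1$, and Lemma~\ref{lemma:Thm1implies2} shows that case consumes part~(b) at level $n^{\rm ext}$, establishing (a) for $n$ genuinely requires (b) for all smaller $n$. Your sequential plan ``first (a) for all $n$, then (b)'' therefore cannot work as stated; the induction must intertwine the two parts.
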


As a consequence of Theorem \ref{thm:mainthmgenuszeroij}, we show:

\begin{theorem}
\label{thm:mainthmgenuszero}
For all $i\geq 1$ and $j\in \mathbb{Z}$, one has
\begin{equation}
\label{eq:partapullback2}
\pi_{n}^*\left( \mathsf{Z}(n-1,i,j+1) \right) - \sum_{\bm{I} } |\bm{I}| \, \mathsf{E}_{\bm{I}}(i,j) 
= \mathsf{Z}^{\rm t}(n,i,j)
\end{equation}
in $A^{n-1+j-i}\left(\ov{\M}_{0,n+1}\right)$, where the sum is over all $\varnothing\neq \bm{I}\subseteq \{1,\dots, n-1\}$.
\end{theorem}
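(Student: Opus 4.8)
The plan is to derive Theorem~\ref{thm:mainthmgenuszero} from Theorem~\ref{thm:mainthmgenuszeroij} by splitting into the three ranges $1\le i\le n-2$, $i=n-1$, and $i\ge n$. In the first range the asserted identity \eqref{eq:partapullback2} is literally \eqref{eq:partapullback}, i.e.\ part $(a)$ of Theorem~\ref{thm:mainthmgenuszeroij}, so there is nothing new to prove; all the work lies in checking that the two ranges $i\ge n-1$ contribute only pieces that either vanish or can be evaluated by hand.

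For $i\ge n$ I would verify that every term in \eqref{eq:partapullback2} vanishes. The non-emptiness constraint \eqref{eq:Wimnonempty} (with $m=1$) forces $\mathsf{W}^{i}_{\mathrm{T},\bm{\psi}}=\varnothing$ for all decorated trees once $i\ge n$, hence $c^{i}_{\mathrm{T},\bm{\psi}}=0$ and therefore $\mathsf{Z}(n,i,j)=\mathsf{Z}^{\rm t}(n,i,j)=0$; the same constraint on $\ov{\M}_{0,n}$ (where it reads $i\le n-2$) gives $\mathsf{Z}(n-1,i,j+1)=0$; and \eqref{eq:ditipsineq0} gives $\mathsf{E}_{\bm{I}}(i,j)=0$ for every $\bm{I}$. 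So \eqref{eq:partapullback2} reduces to $0=0$.

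For $i=n-1$ I would argue as follows. The term $\pi_{n}^{*}\left(\mathsf{Z}(n-1,n-1,j+1)\right)$ vanishes for every $j$, since $\mathsf{Z}(n-1,\cdot,\cdot)$ is assembled from decorated trees on $\ov{\M}_{0,n}$, for which \eqref{eq:Wimnonempty} forces the first index to be at most $n-2$. By \eqref{eq:ditipsineq0}, among the extra cycles only $\bm{I}=\{1,\dots,n-1\}$ can contribute; its set of decorated codas is the single one-vertex decorated tree, so $\mathsf{E}^{n-1}_{\{1,\dots,n-1\}}(D)$ is the single monomial $[\ov{\M}_{0,n+1}]\,D^{-1}$, whence $\mathsf{E}_{\{1,\dots,n-1\}}(n-1,j)$ equals $[\ov{\M}_{0,n+1}]$ if $j=0$ and $0$ otherwise. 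On the right-hand side, $\mathsf{Z}^{\rm t}(n,n-1,j)$ lies in $A^{j}(\ov{\M}_{0,n+1})$, which is zero for $j<0$ by degree and zero for $j\ge 1$ by Theorem~\ref{thm:mainthmgenuszeroij}$(b)$; for $j=0$ only the one-vertex undecorated tree contributes, and counting its unique $(n-1)$-weighting gives $\mathsf{Z}^{\rm t}(n,n-1,0)=-(n-1)\,[\ov{\M}_{0,n+1}]$. Assembling these, \eqref{eq:partapullback2} becomes $0-(n-1)\,\mathsf{E}_{\{1,\dots,n-1\}}(n-1,j)=\mathsf{Z}^{\rm t}(n,n-1,j)$, which holds for every $j$.

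The main obstacle is not conceptual but bookkeeping: one must make the degenerate conventions attached to the coda $\bm{I}=\{1,\dots,n-1\}$ (the one-vertex tree, zero decoration, $|\mathrm{H}^{+}|=1$, $d^{n-1}_{\mathrm{T},0}=1$) line up exactly with the single one-vertex term of $\mathsf{Z}^{\rm t}(n,n-1,0)$, so that the sign and the factor $n-1$ cancel precisely. Everything else is a direct consequence of the constraints \eqref{eq:Wimnonempty} and \eqref{eq:ditipsineq0} together with the one substantive input, the vanishing $\mathsf{Z}^{\rm t}(n,i,j)=0$ for $i,j\ge 1$ provided by Theorem~\ref{thm:mainthmgenuszeroij}$(b)$.
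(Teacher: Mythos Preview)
Your proposal is correct and follows essentially the same approach as the paper: the paper packages this argument as Lemma~\ref{lemma:Thm1implies2}, splitting into the same three ranges $1\le i\le n-2$, $i=n-1$, and $i\ge n$, invoking \eqref{eq:Wimnonempty} and \eqref{eq:ditipsineq0} for the vanishing pieces, and computing the $j=0$ case at $i=n-1$ by hand from the one-vertex tree exactly as you do. The only cosmetic difference is that the paper treats the $j<0$ subcase at $i=n-1$ implicitly (since the identity lives in $A^{j}$), whereas you spell it out.
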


Next, we modify this statement by replacing the truncated cycles $\mathsf{Z}^{\rm t}(n,i,j)$ via the following Lemma. Let $\sigma_0\colon \ov{\M}_{0,n} \rightarrow \ov{\M}_{0,n+1}$ be the gluing map obtained by attaching at the marked point $h_0$ a rational component containing the marked points $h_0$ and $P_n$.

\begin{lemma}
\label{lem:DectZt}
For $i\geq 1$ and $j\in \mathbb{Z}$, one has
\begin{equation*}
\mathsf{Z}(n,i,j) = \mathsf{Z}^{\rm t}(n,i,j)
-\sum_{i^+>i} i\,\sigma_{0*} \left( \mathsf{Z}(n-1,i^+, j^+) \right) \,\, \in A^{n-1+j-i}\left( \ov{\M}_{0,n+1}\right)
\end{equation*}
where for each $i^+$, the number $j^+$ is defined by $j^+ - i^+ = j-i$. 
\end{lemma}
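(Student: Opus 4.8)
The plan is to recognize $\mathsf{Z}(n,i,j)-\mathsf{Z}^{\mathrm t}(n,i,j)$ as the sum of those triples $(\mathrm{T},\bm{\psi},w)$ appearing in \eqref{eq:Znijw} that \emph{violate} the property \eqref{eq:truncatedprop}, and to reorganize this sum according to the value $i^{+}:=w(\iota(h^-),0)$. Since the leg $h_0$ is incident to the root of $\mathrm{T}$, such a violation can occur only at the root, and only when the root vertex $v_0$ is trivalent with the three incident half-edges $h_0$, the leg $n$, and a single tail $h^-$; write $h^{+}:=\iota(h^-)$ for the head of the edge at $v_0$. Recalling that $w(h_0,0)=i$ for every $i$-weighting, the triples contributing to $\mathsf{Z}(n,i,j)-\mathsf{Z}^{\mathrm t}(n,i,j)$ are thus exactly those with such a trivalent root and $w(h^{+},0)>i$. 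Moreover $\psi_{h_0}$ and $\psi_{h^-}$ are pulled back from the $\ov{\M}_{0,3}$-factor of $\ov{\M}_{\mathrm{T}}$ attached to $v_0$, so they vanish; only triples with $d_{h_0}=d_{h^-}=0$ (and, as always for $m=1$, $d_1=0$) contribute.

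The core of the argument is a bijection between the surviving violating triples with a prescribed value $i^{+}=w(h^{+},0)$ and all triples $(\mathrm{T}',\bm{\psi}',w')$ contributing to $\mathsf{Z}(n-1,i^{+},j^{+})$, where $j^{+}-i^{+}=j-i$. Given such a $(\mathrm{T},\bm{\psi},w)$, I would delete the root $v_0$ together with $h_0$, the leg $n$ and the tail $h^-$, and relabel $h^{+}$ as the new leg $h_0$; this produces $\mathrm{T}'\in\mathsf{G}_{0,n}$ with a decoration $\bm{\psi}'$ inherited from $\bm{\psi}$ (a legitimate identification because $d_{h_0}=d_{h^-}=0$), and under $\ov{\M}_{\mathrm{T}}\cong\ov{\M}_{\mathrm{T}'}\times\ov{\M}_{0,3}\cong\ov{\M}_{\mathrm{T}'}$ one has $\xi_{\mathrm{T}}=\sigma_0\circ\xi_{\mathrm{T}'}$, hence $\sigma_{0*}\,\xi_{\mathrm{T}'*}(\bm{\psi}')=\xi_{\mathrm{T}*}(\bm{\psi})$. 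One has $|\mathrm{H}^+(\mathrm{T}')|=|\mathrm{H}^+(\mathrm{T})|-1$, and by \eqref{eq:mu1minus1} the degree of $\xi_{\mathrm{T}*}(\bm{\psi})$ is one more than that of $\xi_{\mathrm{T}'*}(\bm{\psi}')$, so the two degree constraints ($n-1+j-i$ on $\ov{\M}_{0,n+1}$ versus $n-2+j^{+}-i^{+}=n-2+j-i$ on $\ov{\M}_{0,n}$) are compatible after pushforward. Restricting $w$ to $\mathrm{H}(\mathrm{T}',\bm{\psi}')\subset\mathrm{H}(\mathrm{T},\bm{\psi})$ (with $h^{+}$ relabeled to $h_0$) yields $w'$, and since $\mathrm{H}(\mathrm{T},\bm{\psi})=\{(h_0,0)\}\sqcup\mathrm{H}(\mathrm{T}',\bm{\psi}')$ one gets $w(\mathrm{T},\bm{\psi})=w(h_0,0)\cdot w'(\mathrm{T}',\bm{\psi}')=i\cdot w'(\mathrm{T}',\bm{\psi}')$.

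It then remains to check that $w'$ is precisely an $i^{+}$-weighting of $(\mathrm{T}',\bm{\psi}')$ in the sense of \eqref{eq:Wi1tipsi} and that every such weighting arises this way. The only delicate point is the capacity at and just below $v_0$: the capacity $\ell_{h^{+}}$ of $h^{+}$ in $\mathrm{T}$ equals $n-1$ (the subtree cut off by the edge at $v_0$ contains exactly the legs $1,\dots,n-1$), which is also the value $\ell^1_{h_0}=n-1$ of the new leg $h_0$ in $\mathrm{T}'\in\mathsf{G}_{0,n}$; all other heads retain their capacity, so the head and tail inequalities for $w'$ transcribe verbatim those of $w$ away from $v_0$, and the requirement $w(h^{+},0)=i^{+}$ (with no constraint below it, since $d_{h^-}=0$) becomes $w'(h_0,0)=i^{+}$. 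Conversely, any triple $(\mathrm{T}',\bm{\psi}',w')$ contributing to $\mathsf{Z}(n-1,i^{+},j^{+})$ is obtained by reattaching the trivalent root $v_0$ and extending $w'$ by $w(h_0,0):=i$; this is a valid $i$-weighting, since $i<i^{+}\le\ell^1_{h_0}-1=n-2$ forces $i\le n-3\le\ell^1_{h_0^{(\mathrm T)}}-1=n-1$ and all remaining conditions are inherited. (When $i\ge n-2$ there are no violating triples and the asserted sum is empty, so the identity is trivial in that range.) I expect this bookkeeping --- making sure the weighting conditions transfer with no off-by-one slip in the capacities at $v_0$ and its child --- to be the main obstacle; everything else is formal.

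Putting this together, for each fixed $i^{+}>i$ the bijection, together with $(-1)^{|\mathrm{H}^+(\mathrm{T}')|}=-(-1)^{|\mathrm{H}^+(\mathrm{T})|}$ and the factored-out $w(h_0,0)=i$, gives
\[
i\,\sigma_{0*}\bigl(\mathsf{Z}(n-1,i^{+},j^{+})\bigr)=-\sum_{(\mathrm{T},\bm{\psi},w)}(-1)^{|\mathrm{H}^+(\mathrm{T})|}\,w(\mathrm{T},\bm{\psi})\,\xi_{\mathrm{T}*}(\bm{\psi}),
\]
the sum on the right being over the violating triples with $w(\iota(h^-),0)=i^{+}$. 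Summing over all $i^{+}>i$ --- a finite sum, since $c^{i^{+}}_{\mathrm{T}',\bm{\psi}'}=0$ once $i^{+}\ge n-1$ --- the right-hand sides assemble, by the first paragraph, to $-\bigl(\mathsf{Z}(n,i,j)-\mathsf{Z}^{\mathrm t}(n,i,j)\bigr)$, which rearranges to the claimed identity.
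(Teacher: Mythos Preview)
Your proof is correct and follows essentially the same approach as the paper: identify $\mathsf{Z}(n,i,j)-\mathsf{Z}^{\mathrm t}(n,i,j)$ as the sum over triples violating \eqref{eq:truncatedprop}, set up the bijection with triples contributing to $\mathsf{Z}(n-1,i^{+},j^{+})$ by removing the trivalent root vertex, and track the sign and the factor $i=w(h_0,0)$. You simply spell out in full the details (the vanishing of $\psi_{h_0},\psi_{h^-}$ on the $\ov{\M}_{0,3}$-factor, the matching of capacities, the degree count) that the paper's proof leaves implicit.
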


\begin{proof}
Since $j^+ - i^+ = j-i$, the cycle $\mathsf{Z}(n-1,i^+, j^+)$ has degree equal to $n-2+j-i$ in $A^*\left(\ov{\M}_{0,n}\right)$, and thus $\sigma_{0*} \left( \mathsf{Z}(n-1,i^+, j^+) \right)$ has degree $n-1+j-i$ in $A^*\left(\ov{\M}_{0,n+1}\right)$.
The statement follows from the definition of $\mathsf{Z}^{\rm t}(n,i,j)$ as the cycle obtained from $\mathsf{Z}(n,i,j)$ by restricting the sum in \eqref{eq:Znijw}  
to those triples $(\mathrm{T}, \bm{\psi}, w)$ satisfying \eqref{eq:truncatedprop}. Hence the difference $\mathsf{Z}(n,i,j) -\mathsf{Z}^{\rm t}(n,i,j)$ is contributed by those triples $(\mathrm{T}, \bm{\psi}, w)$ which do not satisfy \eqref{eq:truncatedprop}, i.e., $h_0$, the leg $n$, and a tail $h^-$ are incident to a trivalent vertex of $\mathrm{T}$ and $i = w(h_0,0) < w(\iota(h^-),0) =: i^+$. Such a triple $(\mathrm{T}, \bm{\psi}, w)$ contributes to $i\,\sigma_{0*} \left( \mathsf{Z}(n-1,i^+, j^+) \right)$, and vice versa. The negative sign before $i\,\sigma_{0*} \left( \mathsf{Z}(n-1,i^+, j^+) \right)$ accounts for the new edge contributed by the gluing map $\sigma_0$.
Hence the statement.
\end{proof}

By  \eqref{eq:Znijdef-m} and \eqref{eq:Eijdef}, the cycles $\mathsf{Z}(n,i,j)$ and $\mathsf{E}_{\bm{I}}(i,j)$ are coefficients of the polynomials 
$\mathsf{Dec}_n^i(D)=\mathsf{Dec}_n^{i,1}(D)$ and $\mathsf{E}_{\bm{I}}^i(D)$, respectively. That is,
\begin{align*}
\mathsf{Dec}_n^{i}(D) & = \sum_{j> i-n} \mathsf{Z}(n,i,j) D^{i-n-j}, &
\mathsf{E}_{\bm{I}}^i(D) & = \sum_{j> i-n} \mathsf{E}_{\bm{I}}(i,j) D^{i-n-j}
\end{align*}
in $A^*\left( \ov{\M}_{0,n+1}\right)\left[ D^{-1} \right]$.
Combining Theorem \ref{thm:mainthmgenuszero} and Lemma \ref{lem:DectZt}, we have:

\begin{theorem}
\label{thm:Decrec}
For $i\geq 1$, one has
\[
\pi_{n}^*\left( \mathsf{Dec}_{n-1}^i(D) \right) 
- \sum_{\bm{I} } |\bm{I}| \, \mathsf{E}_{\bm{I}}^i(D)
-\sum_{i^+>i} i\,\sigma_{0*} \left(   \mathsf{Dec}_{n-1}^{i^+}(D) \right)
= \mathsf{Dec}_{n}^i(D)
\]
in $A^*\left( \ov{\M}_{0,n+1}\right)\left[D^{-1}\right]$. The first sum is over all $\varnothing\neq \bm{I}\subseteq \{1,\dots, n-1\}$.
\end{theorem}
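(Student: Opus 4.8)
The plan is to read off Theorem~\ref{thm:Decrec} from the two coefficient-wise statements already established, Theorem~\ref{thm:mainthmgenuszero} and Lemma~\ref{lem:DectZt}, by repackaging them as an identity of polynomials in $D^{-1}$. Fix $i\geq 1$. Every term of the asserted equality lies in $A^*\left(\ov{\M}_{0,n+1}\right)\left[D^{-1}\right]$, so it suffices to compare the coefficient of each monomial $D^{i-n-j}$ for $j\in\mathbb{Z}$. By \eqref{eq:Znijdef-m} one has $\mathsf{Dec}_n^i(D)=\sum_j \mathsf{Z}(n,i,j)\,D^{i-n-j}$ and $\mathsf{Dec}_{n-1}^i(D)=\sum_j \mathsf{Z}(n-1,i,j)\,D^{i-(n-1)-j}$, and by \eqref{eq:Eijdef} one has $\mathsf{E}_{\bm{I}}^i(D)=\sum_j \mathsf{E}_{\bm{I}}(i,j)\,D^{i-n-j}$. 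Since $D$ is a formal variable and $\pi_n^*$ and $\sigma_{0*}$ act separately on each Chow coefficient, extracting the coefficient of $D^{i-n-j}$ reduces the claimed identity to the statement that
\[
\mathsf{Z}(n,i,j)=\pi_n^*\left(\mathsf{Z}(n-1,i,j+1)\right)-\sum_{\bm{I}}|\bm{I}|\,\mathsf{E}_{\bm{I}}(i,j)-\sum_{i^+>i} i\,\sigma_{0*}\left(\mathsf{Z}(n-1,i^+,j^+)\right)
\]
holds for every $j\in\mathbb{Z}$, where the shift $j\mapsto j+1$ in the first term is forced by $i-n-j=i-(n-1)-(j+1)$, and where $j^+$ is fixed by $j^+-i^+=j-i$, so that $\sigma_{0*}$ raises the degree $n-2+j-i$ of $\mathsf{Z}(n-1,i^+,j^+)$ to the degree $n-1+j-i$ of $\mathsf{Z}(n,i,j)$.

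This coefficient identity is then immediate from the two inputs. Theorem~\ref{thm:mainthmgenuszero}, which holds for all $i\geq 1$ and $j\in\mathbb{Z}$, asserts exactly that $\pi_n^*\left(\mathsf{Z}(n-1,i,j+1)\right)-\sum_{\bm{I}}|\bm{I}|\,\mathsf{E}_{\bm{I}}(i,j)=\mathsf{Z}^{\rm t}(n,i,j)$, while Lemma~\ref{lem:DectZt} gives $\mathsf{Z}(n,i,j)=\mathsf{Z}^{\rm t}(n,i,j)-\sum_{i^+>i} i\,\sigma_{0*}\left(\mathsf{Z}(n-1,i^+,j^+)\right)$ with the same convention $j^+-i^+=j-i$; substituting the first into the second produces precisely the display above. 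Multiplying through by $D^{i-n-j}$ and summing over all $j$ recovers $\mathsf{Dec}_n^i(D)$ on one side and, on the other, $\pi_n^*\left(\mathsf{Dec}_{n-1}^i(D)\right)$, the term $\sum_{\bm{I}}|\bm{I}|\,\mathsf{E}_{\bm{I}}^i(D)$, and the $\sigma_0$-sum, the latter assembled with the power of $D$ dictated by $i^+-(n-1)-j^+=(i-n-j)+1$ — that is, carrying the extra $D^{-1}$ that records the edge added by $\sigma_0$, as already noted in the proof of Lemma~\ref{lem:DectZt}. This is the assertion of Theorem~\ref{thm:Decrec}.

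There is no remaining obstacle of substance: the work is done by Theorem~\ref{thm:mainthmgenuszero} and Lemma~\ref{lem:DectZt}, and Theorem~\ref{thm:Decrec} is only their generating-function form. The single place that needs care is the bookkeeping of the exponents of $D$: the maps $\pi_n^*$ and $\sigma_{0*}$ enter with different shifts of the triple $(n,i,j)$ — a shift of the index $j$ for the pullback, and an extra power of $D$ for the pushforward along the codimension-one map $\sigma_0$ — and one must track each of these consistently through the defining formulas \eqref{eq:Decimdef} and \eqref{eq:Eijdef}. With that accounting done, the theorem follows by direct substitution.
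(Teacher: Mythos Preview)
Your proof is correct and follows exactly the paper's approach: the paper states Theorem~\ref{thm:Decrec} as the immediate generating-function consequence of Theorem~\ref{thm:mainthmgenuszero} and Lemma~\ref{lem:DectZt}, and you carry out precisely that combination, coefficient by coefficient in $D^{-1}$. Your careful tracking of the index shifts (the $j\mapsto j+1$ in the pullback term and the extra power of $D$ accompanying $\sigma_{0*}$) makes explicit the only bookkeeping the paper leaves implicit.
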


This statement will be used in the proof of Theorem \ref{thm:graphFor-m}. The next Lemma \ref{lemma:Thm1implies2} shows that for any given $n$, Theorem \ref{thm:mainthmgenuszero} follows from Theorem \ref{thm:mainthmgenuszeroij}. The latter is proven in the remainder of \S\ref{sec:mainthmgenuszero}.

\subsection{Partitions of $\mathsf{G}_{0,n+1}$}
\label{sec:partitions}
We consider here partitions of the set of rational rooted trees $\mathsf{G}_{0,n+1}$. These will be used in the proof of Theorem \ref{thm:mainthmgenuszeroij}. 

Let $\mathrm{T}\in \mathsf{G}_{0,n+1}$. 
A vertex $v$ of $\mathrm{T}$ is \textit{external}  if there is no tail half-edge of $\mathrm{T}$  incident to $v$.
A subtree $\mathrm{T}'$ of $\mathrm{T}$ is  \textit{external} if either $\mathrm{T}' = \mathrm{T}$, or there exists an edge $(h^+,h^-)$ in $\mathrm{E}(\mathrm{T})$ such that $\mathrm{T}'$ is the maximal subtree of $\mathrm{T}$ containing the head half-edge $h^+$, but not the tail half-edge $h^-$. 

Let $v_n$ be the vertex incident to the leg $n$, and  $v_0$ the vertex incident to~$h_0$. Let $h_n$ be the unique element of $\mathrm{H}^+(\mathrm{T})$ incident to~$v_n$. For a half-edge $h$, let $\iota(h)$ be the half-edge such that $(h, \iota(h))\in \mathrm{E}(\mathrm{T})$.

Consider the set of decorated rational trees with $i$-weightings:
\begin{equation}
\label{eq:DGW}
\mathsf{DGW}^i_{0,n+1} = \left\{ (\mathrm{T}, \bm{\psi}, w) \, | \, \mathrm{T}\in \mathsf{G}_{0,n+1}, \, \bm{\psi}\in \Psi(\mathrm{T}), \, w\in \mathsf{W}^i_{\mathrm{T}, \bm{\psi}}  \right\}.
\end{equation}
For $1\leq i\leq  n-2$, let
\[
\mathsf{DGW}^i_{0,n+1} = \mathsf{A}_{0,n+1}^i \sqcup \mathsf{B}_{0,n+1}^i \sqcup \mathsf{C}_{0,n+1}^i
\]
be the partition where 
\begin{align*}
\mathsf{A}_{0,n+1}^i := &
\left\{ (\mathrm{T}, \bm{\psi}, w) \left|  
\begin{array}{l}
\mbox{either $v_n$ is incident to $h_0$;}\\[.1cm]
\mbox{or $v_n$ is trivalent, adjacent to $v_0$, and incident to} \\[.1cm]
\mbox{a tail $h^-$, and $w(h_n,0)<w(\iota(h^-),0)$}
\end{array}
\right.
\!
\right\}, \\
\mathsf{B}_{0,n+1}^i :=& \left\{ (\mathrm{T}, \bm{\psi}, w) \left|  
\begin{array}{l}
\mbox{$v_n$ is not incident to $h_0$; and}\\[.1cm]
\mbox{if $v_n$ is trivalent, adjacent to $v_0$, and incident to}\\[.1cm]
\mbox{a tail $h^-$, then $w(h_n,0)\geq w(\iota(h^-),0)$; and}\\[.1cm]
\mbox{if $v_n$ is external,} \\[.1cm]
\mbox{then the degree of $\psi_{h_n}$ in $\bm{\psi}$ is positive}
\end{array}
\right.
\!
\right\},\\
\mathsf{C}_{0,n+1}^i :=& \, \left\{ (\mathrm{T}, \bm{\psi}, w) \left|
\begin{array}{l}
\mbox{$v_n$ is external and not incident to $h_0$,}\\[.1cm]
\mbox{and the degree of $\psi_{h_n}$ in $\bm{\psi}$ is zero}
\end{array}
\right.
\!
\right\}.
\end{align*}

For $(\mathrm{T}, \bm{\psi}, w)\in \mathsf{A}_{0,n+1}^i$, the vertex $v_n$ could be external. In this case, $v_n$ is the only vertex of $\mathrm{T}$. 

For $n= 3$ and $i=1$, one has $\mathsf{B}_{0,n+1}^1 = \varnothing$, and $\mathsf{G}_{0,n+1} = \mathsf{A}_{0,n+1}^1 \sqcup \mathsf{C}_{0,n+1}^1$.

\subsection{Proof of Theorems \ref{thm:mainthmgenuszeroij} and \ref{thm:mainthmgenuszero}}
\label{sec:mainthmgenuszero}
We start with two preliminary Lemmata:

\begin{lemma}
\label{lemma:leftrightterms}
The LHS of \eqref{eq:partapullback} expands as a $\mathbb{Z}$-linear combination of classes $\xi_{\mathrm{T}*}\left( \bm{\psi}\right)$ for $\mathrm{T}\in \mathsf{G}_{0,n+1}$ and  $\bm{\psi}\in \Psi(\mathrm{T})$, as does the RHS.
\end{lemma}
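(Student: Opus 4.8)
The plan is to examine each of the three terms on the left-hand side of \eqref{eq:partapullback} --- namely $\pi_n^*\left( \mathsf{Z}(n-1,i,j+1) \right)$, the sum $\sum_{\bm{I}} |\bm{I}|\, \mathsf{E}_{\bm{I}}(i,j)$, and implicitly the right-hand side $\mathsf{Z}^{\rm t}(n,i,j)$ --- and show that each expands into a $\mathbb{Z}$-linear combination of push-forwards $\xi_{\mathrm{T}*}(\bm\psi)$ along gluing maps of trees $\mathrm{T}\in \mathsf{G}_{0,n+1}$ with $\bm\psi\in\Psi(\mathrm{T})$. For the right-hand side this is immediate: by \eqref{eq:Znijw}, the cycle $\mathsf{Z}^{\rm t}(n,i,j)$ is by construction a signed integer combination of such classes, obtained from $\mathsf{Z}(n,i,j)$ by discarding those triples $(\mathrm{T},\bm\psi,w)$ failing \eqref{eq:truncatedprop}, and discarding a sub-collection of terms in an integer linear combination leaves an integer linear combination. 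The term $\sum_{\bm{I}} |\bm{I}|\, \mathsf{E}_{\bm{I}}(i,j)$ is likewise immediate from \eqref{eq:EIij}: each $\mathsf{E}_{\bm{I}}(i,j)$ is a signed sum of $\xi_{\mathrm{T}*}(\bm\psi)$ over $(\mathrm{T},\bm\psi)\in\mathsf{DC}_{\bm I}$, with coefficient $d^i_{\mathrm{T},\bm\psi}$, and the only subtlety is that $d^i_{\mathrm{T},\bm\psi}$ as defined in \eqref{eq:ditipsi} carries a factor $1/|\bm I|$; but this is exactly cancelled by the multiplicative factor $|\bm I|$ appearing in front of $\mathsf{E}_{\bm I}(i,j)$ in the sum, so $|\bm I|\, d^i_{\mathrm{T},\bm\psi} = \sum_{w\in \mathsf{W}^{i,\bm I}_{\mathrm{T},\bm\psi}} w(\mathrm{T},\bm\psi)$ is a nonnegative integer, and $|\bm I|\,\mathsf{E}_{\bm I}(i,j)$ is a $\mathbb Z$-linear combination of the desired classes.

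The one term that genuinely requires an argument is the pull-back $\pi_n^*\left( \mathsf{Z}(n-1,i,j+1) \right)$. By \eqref{eq:Znij-m}, $\mathsf{Z}(n-1,i,j+1)$ is a $\mathbb Z$-linear combination of classes $\xi_{\mathrm{T}'*}(\bm\psi')$ for trees $\mathrm{T}'\in\mathsf{G}_{0,n}$; so it suffices to show that for each such tree, $\pi_n^*\, \xi_{\mathrm{T}'*}(\bm\psi')$ is itself a $\mathbb Z$-linear combination of classes $\xi_{\mathrm{T}*}(\bm\psi)$ with $\mathrm{T}\in\mathsf{G}_{0,n+1}$. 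This is a standard computation with the forgetful map $\pi_n\colon \ov{\M}_{0,n+1}\to\ov{\M}_{0,n}$: first commute the pull-back past the gluing push-forward using the excess-intersection / comparison formula for $\pi_n$ restricted over the boundary stratum (the fiber square relating $\pi_n$ to the universal curve over $\ov{\M}_{\mathrm{T}'}$), producing a sum over the ways the point $P_n$ can be distributed onto the components of $\mathrm{T}'$ --- i.e., over trees $\mathrm{T}$ obtained from $\mathrm{T}'$ by adding a leg labelled $n$ to one of its vertices, possibly together with a new trivalent component; second, expand the $\psi$-class comparison $\pi_n^*\psi_h = \psi_h - \delta$ (the usual dilaton/comparison identity $\psi = \pi^*\psi + D_{\ast n}$) to rewrite each decorated class back in terms of the $\psi$-decorations on $\mathrm{T}$. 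Each step introduces only integer coefficients and only classes of the allowed form, so the claim follows by induction on the degree of $\bm\psi'$.

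I expect the main obstacle to be purely bookkeeping: writing the comparison formula for $\pi_n^*\, \xi_{\mathrm{T}'*}$ cleanly enough that it is visibly a $\mathbb Z$-combination of $\xi_{\mathrm{T}*}(\bm\psi)$'s, and in particular keeping track of which new trees $\mathrm{T}\in\mathsf{G}_{0,n+1}$ arise (those with $v_n$ attached to an existing vertex of $\mathrm{T}'$, and those where $P_n$ sprouts a new rational bubble, matching the stratification of the universal curve over $\ov{\M}_{\mathrm{T}'}$ into its smooth locus and its boundary sections). No new geometric input is needed beyond the standard functoriality of tautological classes under forgetful and gluing maps; the lemma is really just the observation that the left-hand side of \eqref{eq:partapullback} lives in the span of boundary-pushforward classes, which is what makes the term-by-term matching in the proof of Theorem~\ref{thm:mainthmgenuszeroij} meaningful.
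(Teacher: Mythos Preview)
Your proposal is correct and follows essentially the same approach as the paper: the RHS and the $\mathsf{E}_{\bm I}$ terms are immediate from their definitions, and for $\pi_n^*(\mathsf{Z}(n-1,i,j+1))$ one first writes $\pi_n^*\,\xi_{\overline{\mathrm{T}}*}(\bm\psi)$ as a sum over trees obtained by attaching the leg $n$ to a vertex of $\overline{\mathrm{T}}$, then expands each $\xi_{\mathrm{T}*}(\pi_n^*\bm\psi)$ via the $\psi$-class comparison formula. The paper carries this out more explicitly---defining the auxiliary decorated trees $(\mathrm{T}^\circ,\bm\psi^\circ)$ and $(\mathrm{T}^{h^-},\bm\psi^{h^-})$ and recording the resulting identity \eqref{eq:pullbackpsi}---because these constructions are reused throughout the proof of Theorem~\ref{thm:mainthmgenuszeroij}; your sketch covers the lemma itself but would need to be fleshed out to that level of detail to support the subsequent argument.
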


\begin{proof}
Let us first consider $\pi_{n}^*\left( \mathsf{Z}(n-1,i,j+1) \right)$.
By definition, this is a $\mathbb{Z}$-linear combination of terms of type $\pi_n^*\left(\xi_{\overline{\mathrm{T}}*}\left( \bm{\psi}\right) \right)$ with $\overline{\mathrm{T}}$ in $\mathsf{G}_{0,n}$ and $\bm{\psi}$ in $\Psi\left( \ov{\mathrm{T}}\right)$. Each such term  expands as 
\begin{equation}
\label{eq:pullbackTbar}
\pi_n^*\left(\xi_{\ov{\mathrm{T}}*}\left( \bm{\psi}\right) \right) = \sum_{\mathrm{T}} \xi_{\mathrm{T}*}\left( \pi_n^*\left(\bm{\psi}\right) \right)
\end{equation}
where $\mathrm{T}\in \mathsf{G}_{0,n+1}$ ranges over all graphs obtained from $\ov{\mathrm{T}}$ by adding a leg $n$ to one of its vertices. 
Let $\mathrm{T}$ be one of such graphs,  let $v_n$ be the vertex incident to the leg $n$, and $h_n$ be the unique element of $\mathrm{H}^+(\mathrm{T})$ incident to $v_n$.

Consider first the case when for all tail half-edges $h^-$ incident to $v_n$, the degree of $\psi_{h^-}$ in $\bm{\psi}$ is zero.
Pull-back formulas for $\psi$-classes give
\begin{equation*}
\xi_{\mathrm{T}*}\left( \pi_n^*\left(\bm{\psi}\right) \right) = \xi_{\mathrm{T}*}\left( \bm{\psi}\right) - \xi_{\mathrm{T}^\circ *}\left( \bm{\psi}^\circ\right) 
\qquad  \in A^*\left( \ov{\M}_{0,n+1}\right)
\end{equation*}
where: 
\begin{align}
\label{eq:Tcirc}
\mathrm{T}^\circ &:=\left\{
\begin{array}{l}
\mbox{the tree obtained from $\mathrm{T}$ by}\\
\mbox{splitting the vertex $v_n$ into two  vertices $v_n^\circ$  and $v^\perp$}\\
\mbox{such that $v_n^\circ$ is trivalent and incident to: the leg $n$,}\\
\mbox{the element in $\mathrm{H}^+(\mathrm{T}^\circ)$ equivalent to  $h_n$ from $\mathrm{T}$,}\\ 
\mbox{and an edge incident to $v^\perp$;}
\end{array}
\right. \\
\allowdisplaybreaks
\label{eq:psicirc}
\bm{\psi}^\circ &:= \left\{
\begin{array}{l}
\mbox{the monomial obtained from $\bm{\psi}$  by}\\ 
\mbox{replacing the factor $\psi_{h_n}^{d_n}$,}\\ 
\mbox{where $d_n$ is the degree of $\psi_{h_n}$ in $\bm{\psi}$,}\\
\mbox{with $\psi_{h^\perp}^{d^\perp}$ where $h^\perp$ is the head half-edge incident to $v^\perp$}\\
\mbox{and  $d^\perp := d_n-1$;}\\
\mbox{if $d_n=0$, then one sets $\bm{\psi}^\circ =0$.}
\end{array}
\right.
\end{align}
An example is given in Figure \ref{fig:Tcircpsicirc}.

\begin{figure}[htb]
\begin{align*}
(\mathrm{T}, \bm{\psi}) =
\begin{tikzpicture}[baseline={([yshift=-.5ex]current bounding box.center)}]
      \path(0,0) ellipse (3 and 2);
      \tikzstyle{level 1}=[counterclockwise from=-60, level distance=12mm, sibling angle=120]
      \node [draw,circle,fill] (A0) at (0:3) {}
	    child  {node [label=-60: {$\scriptstyle{}$}]{}}
    	    child  {node [label=60: {$\scriptstyle{}$}]{}};
       \tikzstyle{level 1}=[counterclockwise from=-120,level distance=12mm,sibling angle=30]
        \node [draw,circle,fill] (A1) at (0:0) {}
    	    child {node [label=-120: {$\scriptstyle{n}$}]{}}
    	    child {node [label=-90: {$\scriptstyle{}$}]{}}
    	    child {node [label=-90: {$\scriptstyle{}$}]{}};
       \tikzstyle{level 1}=[counterclockwise from=120,level distance=12mm,sibling angle=120]
             \node [draw,circle,fill] (A2) at (180:3) {}
             child  {node [label=120: {$\scriptstyle{h_0}$}]{}}
    	     child  {node [label=60: {$\scriptstyle{}$}]{}};
        \path (A0) edge [] node[pos=0.6, above=0.1, label={90:$\psi_{h^-}$}]{} (A1);
        \path (A1) edge [] node[pos=0.3, above=0.1, label={90:$\psi_{h_n}$}]{} (A2);        
    \end{tikzpicture}  \mapsto \left\{
    \begin{array}{r}
\left(\mathrm{T}^\circ, \bm{\psi}^\circ \right) = 
\begin{tikzpicture}[baseline={([yshift=-.5ex]current bounding box.center)}]
      \path(0,0) ellipse (4.5 and 2);
      \tikzstyle{level 1}=[counterclockwise from=-60, level distance=12mm, sibling angle=120]
      \node [draw,circle,fill] (A0) at (0:4.5) {}
	    child  {node [label=-60: {$\scriptstyle{}$}]{}}
    	    child  {node [label=60: {$\scriptstyle{}$}]{}};
       \tikzstyle{level 1}=[counterclockwise from=-90,level distance=12mm,sibling angle=30]
        \node [draw,circle,fill] (A1) at (0:1.5) {}
    	    child {node [label=-90: {$\scriptstyle{}$}]{}}
    	    child {node [label=-90: {$\scriptstyle{}$}]{}};
       \tikzstyle{level 1}=[counterclockwise from=-90,level distance=12mm,sibling angle=60]
        \node [draw,circle,fill] (A2) at (180:1.5) {}
    	    child {node [label=-90: {$\scriptstyle{n}$}]{}};
       \tikzstyle{level 1}=[counterclockwise from=120,level distance=12mm,sibling angle=120]
             \node [draw,circle,fill] (A3) at (180:4.5) {}
             child  {node [label=120: {$\scriptstyle{h_0}$}]{}}
    	     child  {node [label=60: {$\scriptstyle{}$}]{}};
        \path (A0) edge [] node[pos=0.6, above=0.1, label={90:$\psi_{h^-}$}]{} (A1);
        \path (A1) edge [] node[]{} (A2);
        \path (A2) edge [] node[]{} (A3);        
    \end{tikzpicture}
\\
\left(\mathrm{T}^{h^-}, \bm{\psi}^{h^-} \right) = 
\begin{tikzpicture}[baseline={([yshift=-.5ex]current bounding box.center)}]
      \path(0,0) ellipse (4.5 and 2);
      \tikzstyle{level 1}=[counterclockwise from=-60, level distance=12mm, sibling angle=120]
      \node [draw,circle,fill] (A0) at (0:4.5) {}
	    child  {node [label=-60: {$\scriptstyle{}$}]{}}
    	    child  {node [label=60: {$\scriptstyle{}$}]{}};
       \tikzstyle{level 1}=[clockwise from=-90,level distance=12mm,sibling angle=30]
        \node [draw,circle,fill] (A1) at (0:1.5) {}
    	    child {node [label=-90: {$\scriptstyle{n}$}]{}};
       \tikzstyle{level 1}=[clockwise from=-90,level distance=12mm,sibling angle=30]
        \node [draw,circle,fill] (A2) at (180:1.5) {}
    	    child {node [label=-90: {$\scriptstyle{}$}]{}}
    	    child {node [label=-90: {$\scriptstyle{}$}]{}};
       \tikzstyle{level 1}=[counterclockwise from=120,level distance=12mm,sibling angle=120]
             \node [draw,circle,fill] (A3) at (180:4.5) {}
             child  {node [label=120: {$\scriptstyle{h_0}$}]{}}
    	     child  {node [label=60: {$\scriptstyle{}$}]{}};
        \path (A0) edge [] node[]{} (A1);
        \path (A1) edge [] node[]{} (A2);
        \path (A2) edge [] node[pos=0.3, above=0.1, label={90:$\psi_{h^+}$}]{} (A3);        
    \end{tikzpicture}
    \end{array}
    \right.
\end{align*}
    \caption{An example of the decorated trees $\left(\mathrm{T}^\circ, \bm{\psi}^\circ \right)$ and $\left(\mathrm{T}^{h^-}, \bm{\psi}^{h^-} \right)$.}
    \label{fig:Tcircpsicirc}
\end{figure}

More generally, if for some tail half-edge $h^-$ incident to $v_n$, the degree of $\psi_{h^-}$ in $\bm{\psi}$ is positive, then pull-back formulas for $\psi$-classes give more terms:
\begin{equation}
\label{eq:pullbackpsi}
\xi_{\mathrm{T}*}\left( \pi_n^*\left(\bm{\psi}\right) \right) = \xi_{\mathrm{T}*}\left( \bm{\psi}\right) - \xi_{\mathrm{T}^\circ *}\left( \bm{\psi}^\circ\right) 
-\sum_{h^-} \xi_{\mathrm{T}^{h^-} *}\left( \bm{\psi}^{h^-}\right) 
\end{equation}
where $\mathrm{T}^\circ$ and $\bm{\psi}^\circ$ are as in \eqref{eq:Tcirc} and \eqref{eq:psicirc}; the sum is over tail half-edges $h^-$ incident to $v_n$; and for each such tail $h^-$:
\begin{align}
\label{eq:Thminus}
\mathrm{T}^{h^-} &:=\left\{
\begin{array}{l}
\mbox{the tree obtained from $\mathrm{T}$ by}\\ 
\mbox{splitting the vertex $v_n$ into two  vertices $v_n^{h^-}$  and $v^\perp$}\\
\mbox{such that $v_n^{h^-}$ is trivalent and incident to: the leg $n$,}\\ 
\mbox{the tail half-edge equivalent to  $h^-$ from $\mathrm{T}$,}\\ 
\mbox{and an edge incident to $v^\perp$;} 
\end{array}
\right.\\ 
\allowdisplaybreaks
\label{eq:psihminus}
\bm{\psi}^{h^-} &:=\left\{
\begin{array}{l}
\mbox{the monomial obtained from $\bm{\psi}$}\\  
\mbox{by replacing the factor $\psi_{h^-}^{d_-}$,}\\ 
\mbox{where $d_-$ is the degree of $\psi_{h^-}$ in $\bm{\psi}$,}\\ 
\mbox{with $\psi_{h^\perp}^{d^\perp}$ where $h^\perp$ is the tail half-edge incident to $v^\perp$}\\ 
\mbox{in the direction of $v_n^{h^-}$, and  $d^\perp := d_- -1$;}\\
\mbox{if $d_-=0$, then one sets $\bm{\psi}^{h^-} =0$.}
\end{array}
\right.
\end{align}
See the example in Figure \ref{fig:Tcircpsicirc}.

Combining \eqref{eq:pullbackTbar} and \eqref{eq:pullbackpsi} shows the claim for the contributions arising from $\pi_{n}^*\left( \mathsf{Z}(n-1,i,j+1) \right)$.

Regarding the cycles $\mathsf{E}_{\bm{I}}(i,j)$,  the claim follows immediately from \eqref{eq:EIij}. We emphasize that after \eqref{eq:DCI} and \eqref{eq:ditipsineq0}, one has
\begin{equation}
\label{eq:tipsiforEI}
\mbox{if} \,\, d^i_{\mathrm{T},\bm{\psi}} \neq 0 \mbox{ and } i\leq n-2, \,\, \mbox{then}\,\, 
\left\{
\begin{array}{l}
\mbox{$v_n$ is external and not incident to $h_0$,}\\[.1cm]
\mbox{and the degree of $\psi_{h_n}$ in $\bm{\psi}$ is zero. }
\end{array}
\right.
\end{equation}
Hence, if  $(\mathrm{T}, \bm{\psi})$  contributes to $\mathsf{E}_{\bm{I}}(i,j)$ on the LHS of \eqref{eq:partapullback}, then $(\mathrm{T}, \bm{\psi}, w)$ for all $i$-weightings $w$ is in the subset $\mathsf{C}_{0,n+1}^i\subseteq \mathsf{DGW}^i_{0,n+1}$ from \S\ref{sec:partitions}. 
\end{proof}

\begin{lemma}
\label{lemma:Thm1implies2}
For any given $n\geq 3$, Theorem \ref{thm:mainthmgenuszeroij} implies Theorem \ref{thm:mainthmgenuszero}.
\end{lemma}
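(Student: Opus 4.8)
The plan is to fix $n\ge 3$, assume all of Theorem \ref{thm:mainthmgenuszeroij}, and prove the identity \eqref{eq:partapullback2} by cases on $i$. Since Theorem \ref{thm:mainthmgenuszeroij}(a) already gives \eqref{eq:partapullback2} verbatim for $1\le i\le n-2$, I only need to handle the two remaining ranges $i\ge n$ and $i=n-1$, and I expect the latter to be the only one requiring genuine computation.

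For $i\ge n$ I would argue that every term of \eqref{eq:partapullback2} vanishes identically. By the nonemptiness constraint \eqref{eq:Wimnonempty} applied to trees in $\mathsf{G}_{0,n}$ (so that the relevant parameter is $n-1$, with $m=1$), all coefficients $c^i_{\mathrm{T},\bm{\psi}}$ entering $\mathsf{Z}(n-1,i,j+1)$ vanish for $i\ge n-1$, hence $\pi_n^*\bigl(\mathsf{Z}(n-1,i,j+1)\bigr)=0$; by \eqref{eq:ditipsineq0} each $\mathsf{E}_{\bm{I}}(i,j)$ with $i\ge n$ vanishes; and $\mathsf{Z}^{\rm t}(n,i,j)$ is a subsum of $\mathsf{Z}(n,i,j)$, all of whose coefficients vanish for $i\ge n$ by \eqref{eq:Wimnonempty} with parameter $n$. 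So \eqref{eq:partapullback2} reads $0=0$.

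The case $i=n-1$ is the substantive one. On the left-hand side, $\mathsf{Z}(n-1,n-1,j+1)=0$ again by \eqref{eq:Wimnonempty} on $\ov{\M}_{0,n}$, and among the cycles $\mathsf{E}_{\bm{I}}(n-1,j)$ only $\bm{I}=\{1,\dots,n-1\}$ survives by \eqref{eq:ditipsineq0}; for this $\bm{I}$, $\mathsf{DC}_{\bm{I}}$ is the single one-vertex decorated tree $(\mathrm{T},0)$ with $d^{n-1}_{\mathrm{T},0}=1$, from which I would read off that $\mathsf{E}_{\{1,\dots,n-1\}}(n-1,j)$ equals $[\ov{\M}_{0,n+1}]$ for $j=0$ and $0$ otherwise. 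Thus the left-hand side is $-(n-1)[\ov{\M}_{0,n+1}]$ for $j=0$ and $0$ for $j\ne 0$. On the right-hand side, $\mathsf{Z}^{\rm t}(n,n-1,j)=0$ for $j\ge 1$ by Theorem \ref{thm:mainthmgenuszeroij}(b) and for $j<0$ for degree reasons, while for $j=0$ I would compute directly: the only decorated tree whose pushforward lands in $A^0\bigl(\ov{\M}_{0,n+1}\bigr)$ is the one-vertex tree with trivial decoration, whose unique $(n-1)$-weighting assigns $w(h_0,0)=n-1$, so $\mathsf{Z}(n,n-1,0)=-(n-1)[\ov{\M}_{0,n+1}]$; and since that tree has no tail half-edge the truncation condition \eqref{eq:truncatedprop} removes nothing, giving $\mathsf{Z}^{\rm t}(n,n-1,0)=-(n-1)[\ov{\M}_{0,n+1}]$. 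Comparing degree by degree in $j$, both sides of \eqref{eq:partapullback2} agree for $i=n-1$, which completes the argument.

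The only real obstacle, and the one I would spend care on, is the $i=n-1$, $j=0$ bookkeeping: verifying that the one-vertex contributions to $\mathsf{E}^{n-1}_{\{1,\dots,n-1\}}(D)$ and to $\mathsf{Z}(n,n-1,0)$ come out with the correct signs, the correct capacity $\ell^1_{h_0}=n$, and the correct weight $n-1$, and confirming that \eqref{eq:truncatedprop} is genuinely vacuous on the one-vertex tree. Everything else is a direct consequence of the vanishing constraints \eqref{eq:Wimnonempty} and \eqref{eq:ditipsineq0}.
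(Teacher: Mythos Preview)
Your proposal is correct and follows essentially the same approach as the paper's proof: the same three-way case split on $i$ (namely $1\le i\le n-2$, $i=n-1$, and $i\ge n$), the same appeals to \eqref{eq:Wimnonempty} and \eqref{eq:ditipsineq0} for the vanishing of terms, the same use of Theorem~\ref{thm:mainthmgenuszeroij}(b) for $i=n-1$ and $j\ge 1$, and the same direct one-vertex computation yielding $-(n-1)$ on both sides for $i=n-1$, $j=0$. The bookkeeping you flag (sign from $|\mathrm{H}^+(\mathrm{T})|=1$, capacity $\ell_{h_0}=n$, vacuity of \eqref{eq:truncatedprop} on the one-vertex tree) is handled correctly.
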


\begin{proof}
Let $n\geq 3$, and assume Theorem \ref{thm:mainthmgenuszeroij} holds for $n$.

For $i\geq n$, both sides of \eqref{eq:partapullback2} vanish. Indeed, 
for the sets of $i$-weightings to be non-empty, one needs $i<n$, as in \eqref{eq:Wimnonempty}. 
Hence, using the definitions \eqref{eq:Znijw} and \eqref{eq:Ztnij}, both  $\mathsf{Z}(n-1,i,j+1)$ and $ \mathsf{Z}^{\rm t}(n,i,j)$ vanish for $i\geq n$. Similarly, using the definition \eqref{eq:EIij} and \eqref{eq:ditipsineq0}, one concludes the vanishing of the cycles $\mathsf{E}_{\bm{I}}(i,j)$ for $i\geq n$.

For $i=n-1$, we need to prove \eqref{eq:partapullback2} in $A^{j}\left(\ov{\M}_{0,n+1}\right)$ for $j\geq 0$.
As in the previous paragraph, the cycle $\mathsf{Z}(n-1,n-1,j+1)$ on the LHS vanishes for all $j$, since the sets of $i$-weightings are empty in this case. 
For $j\geq 1$, the cycle $\mathsf{Z}^{\rm t}(n,n-1,j)$ on the RHS vanishes from Theorem \ref{thm:mainthmgenuszeroij}$(b)$. 
Moreover, when $j\geq 1$, the only cycles $\mathsf{E}_{\bm{I}}(n-1,j)$ of positive degree $j$ are those with $|\bm{I}|<n-1$, and these vanish 
since all coefficients $d^{n-1}_{\mathrm{T},\bm{\psi}}$  in \eqref{eq:EIij} vanish in this case using \eqref{eq:ditipsineq0}.
For $j=0$, using \eqref{eq:ditipsineq0}, the only non-vanishing cycle  $\mathsf{E}_{\bm{I}}(n-1,0)$ in \eqref{eq:partapullback2} is the one with $|\bm{I}|=n-1$. In this case, both cycles $\mathsf{E}_{\bm{I}}(n-1,0)$ and $\mathsf{Z}^{\rm t}(n,n-1,0)$ consist of only one term contributed by $(\mathrm{T},\bm{\psi})$ where $\mathrm{T}$ is the tree with only one vertex and $\bm{\psi}=0$. In this case, both sides of \eqref{eq:partapullback2} are equal to $-i=-(n-1)$.

Finally, for $1\leq i\leq n-2$, the identity \eqref{eq:partapullback2} follows from part (a) of Theorem \ref{thm:mainthmgenuszeroij}. Hence the statement.
\end{proof}

We now turn to the proof of Theorem \ref{thm:mainthmgenuszeroij}:

\begin{proof}[Proof of Theorem \ref{thm:mainthmgenuszeroij}]
We prove part $(a)$ and part $(b)$ simultaneously by induction on~$n$. 

For part $(b)$, one can assume that $1\leq j < i$, since the vanishing of $\mathsf{Z}^{\rm t}(n,i,j)$  and $\mathsf{Z}(n,i,j)$ in $A^{n-1+j-i}\left(\ov{\M}_{0,n+1}\right)$ for  $1\leq i \leq j$ is  trivial  for degree reasons.
In addition, one can assume $i<n$, since otherwise the sets of $i$-weightings are empty, hence $\mathsf{Z}^{\rm t}(n,i,j)$  and $\mathsf{Z}(n,i,j)$ vanish.

\medskip

The base case is $n=3$, where $i=1$. 
In this case, \eqref{eq:partapullback} is an identity in $A^{1+j}\left(\ov{\M}_{0,4}\right)$, hence non-trivial only for $j\in\{-1,0\}$.

When $j=-1$, the only cycle $\mathsf{E}_{\bm{I}}(1,-1)$ of degree zero is the one with $|\bm{I}|=n-1$, and this vanishes from \eqref{eq:ditipsineq0} since $i<n-1$.
Both sides of \eqref{eq:partapullback} are then equal to the fundamental class of $\ov{\M}_{0,4}$. 

When $j=0$, the term $\pi_{n}^*\left( \mathsf{Z}(n-1,i,j+1) \right)$ does not contribute any cycle in dimension zero.
Hence the LHS of \eqref{eq:partapullback} is provided only by the two $\mathsf{E}_{\bm{I}}(1,0)$, namely
\[
\mbox{
\begin{tikzpicture}[baseline={([yshift=-.5ex]current bounding box.center)}]
      \path(0,0) ellipse (2 and 2);
      \tikzstyle{level 1}=[counterclockwise from=-60, level distance=12mm, sibling angle=120]
      \node [draw,circle,fill] (A0) at (0:1.5) {}
	    child  {node [label=-60: {$\scriptstyle{3}$}]{}}
    	    child  {node [label=60: {$\scriptstyle{1}$}]{}};
       \tikzstyle{level 1}=[counterclockwise from=120,level distance=12mm,sibling angle=120]
        \node [draw,circle,fill] (A1) at (180:1.5) {}
            child {node [label=120: {$\scriptstyle{h_0}$}]{}}
    	    child {node [label=-120: {$\scriptstyle{2}$}]{}};
        \path (A0) edge [] node[]{} (A1);
    \end{tikzpicture}
}
+
\mbox{
\begin{tikzpicture}[baseline={([yshift=-.5ex]current bounding box.center)}]
      \path(0,0) ellipse (2 and 2);
      \tikzstyle{level 1}=[counterclockwise from=-60, level distance=12mm, sibling angle=120]
      \node [draw,circle,fill] (A0) at (0:1.5) {}
	    child  {node [label=-60: {$\scriptstyle{3}$}]{}}
    	    child  {node [label=60: {$\scriptstyle{2}$}]{}};
       \tikzstyle{level 1}=[counterclockwise from=120,level distance=12mm,sibling angle=120]
        \node [draw,circle,fill] (A1) at (180:1.5) {}
            child {node [label=120: {$\scriptstyle{h_0}$}]{}}
    	    child {node [label=-120: {$\scriptstyle{1}$}]{}};
        \path (A0) edge [] node[]{} (A1);
    \end{tikzpicture}
}\]
while the RHS of \eqref{eq:partapullback} has in addition the following terms
\begin{equation}
\label{eq:divM04}
\mbox{
\begin{tikzpicture}[baseline={([yshift=-.5ex]current bounding box.center)}]
      \path(0,0) ellipse (2 and 2);
      \tikzstyle{level 1}=[counterclockwise from=-60, level distance=12mm, sibling angle=120]
      \node [draw,circle,fill] (A0) at (0:1.5) {}
	    child  {node [label=-60: {$\scriptstyle{2}$}]{}}
    	    child  {node [label=60: {$\scriptstyle{1}$}]{}};
       \tikzstyle{level 1}=[counterclockwise from=120,level distance=12mm,sibling angle=120]
        \node [draw,circle,fill] (A1) at (180:1.5) {}
            child {node [label=120: {$\scriptstyle{h_0}$}]{}}
    	    child {node [label=-120: {$\scriptstyle{3}$}]{}};
        \path (A0) edge [] node[]{} (A1);
    \end{tikzpicture}
}
-
\mbox{
\begin{tikzpicture}[baseline={([yshift=-.5ex]current bounding box.center)}]
      \path(0,0) ellipse (1 and 1);
      \tikzstyle{level 1}=[counterclockwise from=180, level distance=12mm, sibling angle=120]
      \node [draw,circle,fill] (A0) at (0:0) {}
            child {node [label=180: {${\psi_{h_0}}$}]{}}
    	    child {node [label=-60: {$\scriptstyle{3}$}]{}}
	    child {node [label=60: {$\scriptstyle{1}$}]{}}
    	    child [grow=0]{node [label=0: {$\scriptstyle{2}$}]{}};
    \end{tikzpicture}
}.
\end{equation}
Since \eqref{eq:divM04} vanishes in $A^{1}\left(\ov{\M}_{0,4}\right)$, part $(a)$ follows.
 
For $n=3$, the  only non-trivial case of part $(b)$  is given when $i=2$ and $j=1$, and it is a special case of Lemma~\ref{lemma:imaxj1van}.

\medskip

Next, we show the following: 

\begin{claim}
\label{claim:part_a}
Assume parts $(a)$ and $(b)$ for $n-1, n-2, \dots, 3$. Then part $(a)$ holds for $n$, i.e.,  
the contributions of all $(\mathrm{T}, \bm{\psi})$
to the two sides of \eqref{eq:partapullback} match, modulo relations in $A^{*}\left(\ov{\M}_{0,n+1}\right)$. 
\end{claim}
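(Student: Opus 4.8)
The plan is to prove the coefficient-level identity underlying \eqref{eq:partapullback}. By Lemma~\ref{lemma:leftrightterms}, once $\pi_n^*\bigl(\mathsf{Z}(n-1,i,j+1)\bigr)$ is expanded through the $\psi$-pull-back formula \eqref{eq:pullbackpsi}, $\sum_{\bm{I}}|\bm{I}|\,\mathsf{E}_{\bm{I}}(i,j)$ through \eqref{eq:EIij}, and $\mathsf{Z}^{\rm t}(n,i,j)$ through its defining weighted sum over triples $(\mathrm{T},\bm{\psi},w)$, every term on both sides is a signed integer multiple of a class $\xi_{\mathrm{T}*}(\bm{\psi})$. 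So it suffices to produce, for each $(\mathrm{T},\bm{\psi})$, a bijective matching of contributing triples and pull-back/extra terms compatible with the signs $(-1)^{|\mathrm{H}^+(\mathrm{T})|}$ and the weight products; the book-keeping device will be the partition $\mathsf{DGW}^i_{0,n+1}=\mathsf{A}^i_{0,n+1}\sqcup\mathsf{B}^i_{0,n+1}\sqcup\mathsf{C}^i_{0,n+1}$ of \S\ref{sec:partitions}, applied to the triples defining $\mathsf{Z}^{\rm t}(n,i,j)$, with each block matched separately against a portion of the left-hand side.

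\textbf{The $\mathsf{C}$-block and the extra cycles.} For a triple in $\mathsf{C}^i_{0,n+1}$ the vertex $v_n$ is external, not adjacent to $h_0$, and undecorated at $h_n$, so its incident legs form a set $\bm{I}\sqcup\{n\}$; deleting $v_n$ identifies it with a decorated coda in $\mathsf{DC}_{\bm{I}}$. I would split the $i$-weightings of such a triple according to the value $a:=w(h_n,0)\in\{1,\dots,|\bm{I}|\}$ and according to whether the cap $w(h^+,0)\le\ell_{h^+}-2$ holds along the $h_n$--$h_0$ path. The weightings with $a=|\bm{I}|$ and that cap are exactly $\mathsf{W}^{i,\bm{I}}_{\mathrm{T},\bm{\psi}}$, by the capacity comparison in the proof of Lemma~\ref{lemma:EIisgammapfwd}; summing their weights yields $|\bm{I}|\,d^i_{\mathrm{T},\bm{\psi}}$, and via \eqref{eq:tipsiforEI} these account, with the correct sign, for $-\sum_{\bm{I}}|\bm{I}|\,\mathsf{E}_{\bm{I}}(i,j)$ on the left. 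The remaining $\mathsf{C}$-weightings (sub-maximal along the path) are those that come, under contraction of the leg $n$, from genuine $i$-weightings of $(\mathrm{T}\setminus\text{leg }n,\bm{\psi})\in\mathsf{G}_{0,n}$, hence match the \emph{diagonal} pull-back terms $\xi_{\mathrm{T}*}(\bm{\psi})$ in \eqref{eq:pullbackpsi}; here one uses that adding the leg $n$ raises by one the capacities along the $v_n$--root path.

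\textbf{The $\mathsf{B}$- and $\mathsf{A}$-blocks.} The same leg-contraction matches the $\mathsf{B}$-block (whose triples all obey \eqref{eq:truncatedprop}, since $v_n$ is not adjacent to $h_0$) with the rest of the diagonal pull-back terms $\xi_{\mathrm{T}*}(\bm{\psi})$. What remains on the left are the correction terms $-\xi_{\mathrm{T}^{\circ}*}(\bm{\psi}^{\circ})$ and $-\sum_{h^-}\xi_{\mathrm{T}^{h^-}*}(\bm{\psi}^{h^-})$ from \eqref{eq:pullbackpsi}, which create a fresh trivalent vertex carrying the leg $n$ next to $v_0$: I would match these against the $\mathsf{A}$-block (restricted to triples obeying \eqref{eq:truncatedprop}), whose defining feature is exactly that leg $n$ sits at, or one edge below, the root, and whose second clause records the inequality $w(h_n,0)<w(\iota(h^-),0)$ distinguishing $\mathsf{A}$ from $\mathsf{B}$, the negation of \eqref{eq:truncatedprop}. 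After all matched terms cancel, the surplus should reorganise as a $\mathbb{Z}$-combination of cycles $\mathsf{Z}^{\rm t}(n',i',j')$ and $\mathsf{Z}(n',i',j')$ with $n'<n$ and $i',j'\ge1$ --- obtained by contracting the rational subtree hanging below $v_0$ --- together with a remainder expressed through the level-$(n-1)$ identity; the former vanish by part~$(b)$ of Theorem~\ref{thm:mainthmgenuszeroij} at the smaller indices, and the latter is disposed of by part~$(a)$ at level $n-1$. Collecting the three blocks then gives \eqref{eq:partapullback}.

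\textbf{Main obstacle.} The hard part will be the $\mathsf{A}$-block. There the leg $n$ is attached at the root, the $\psi$-pull-back corrections $\mathrm{T}^{\circ}$ and $\mathrm{T}^{h^-}$ introduce new trivalent vertices precisely where the truncation hypothesis \eqref{eq:truncatedprop} is binding, and the maximal capacity is attained only at $h_0$, so the weightings competing for the top value stack up on one another. Keeping straight the signs $(-1)^{|\mathrm{H}^+(\mathrm{T})|}$, the reindexing of weightings across the vertex splittings, and --- above all --- recognising the uncancelled residue as lower-level $\mathsf{Z}$- and $\mathsf{Z}^{\rm t}$-cycles so that the inductive hypothesis (parts~$(a)$ and~$(b)$ at $n-1,\dots,3$) closes the argument, is the technical heart of the proof.
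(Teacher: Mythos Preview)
Your block-by-block plan has the roles of the partition inverted and, as a consequence, a genuine gap in the $\mathsf{C}$-- and $\mathsf{B}$--blocks. Concretely, your $\mathsf{C}$-block argument asserts the coefficient identity
\[
c^{i}_{\mathrm{T},\bm{\psi}} \;=\; c^{i}_{\ov{\mathrm{T}},\bm{\psi}} \;+\; |\bm{I}|\,d^{i}_{\mathrm{T},\bm{\psi}}
\]
(contracted-tree weightings plus the capped $w(h_n,0)=|\bm{I}|$ weightings). This is false once the path from $v_n$ to $v_0$ has length $\ge 2$. For instance take $n=5$, $i=1$, $\bm{\psi}=1$, and $\mathrm{T}$ the chain $v_0$--$v_1$--$v_2$ with leg $1$ on $v_0$, leg $2$ on $v_1$, legs $3,4,5$ on $v_2$: then $c^{1}_{\mathrm{T},\bm{\psi}}=18$, $c^{1}_{\ov{\mathrm{T}},\bm{\psi}}=3$, $|\bm{I}|\,d^{1}_{\mathrm{T},\bm{\psi}}=6$. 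The surplus $9$ comes from weightings with $w(h_1,0)=\ell_{h_1}-1=3$, which are \emph{neither} valid on $\ov{\mathrm{T}}$ (capacity drops by one along the whole path, not just at $h_n$) \emph{nor} in $\mathsf{W}^{i,\bm{I}}_{\mathrm{T},\bm{\psi}}$ (which also requires the predecessor cap $w(h^+,0)\le \ell_{h^+}-2$). The same obstruction hits your $\mathsf{B}$-block leg-contraction. The vague remark that any surplus ``should reorganise'' into lower-level $\mathsf{Z}$-cycles does not explain how: the surplus weightings are indexed by which intermediate head on the $v_n$--$v_0$ path is saturated, and there is no evident way to recognise their sum as a $\mathsf{Z}^{\rm t}(n',i',j')$ unless one first \emph{chooses} that intermediate edge and groups over everything below it.

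That regrouping is exactly what the paper does, and it is applied to $\mathsf{B}\sqcup\mathsf{C}$, not to $\mathsf{A}$. The $\mathsf{A}$-block is handled directly, with no induction: for $(\mathrm{T},\bm{\psi},w)\in\mathsf{A}$ the leg $n$ sits at (or one edge below) $v_0$, so removing it does \emph{not} change any head capacity (only $\ell_{h_0}$ drops, and $i\le n-2$ makes this harmless); the triple together with its companions $(\mathrm{T}^\circ,\bm{\psi}^\circ,w^\circ)$, $(\mathrm{T}^{h^-},\bm{\psi}^{h^-},w^{h^-})$ packages via \eqref{eq:pullbackpsi} into a single $\xi_{\mathrm{T}*}(\pi_n^*\bm{\psi})$-term matching the pull-back. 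For $\mathsf{B}\sqcup\mathsf{C}$ the paper instead fixes the internal piece $(\mathrm{T}^{\rm int},\bm{\psi}^{\rm int},w^{\rm int})$ down to the edge entering the maximal external subtree $\mathrm{T}^{\rm ext}\ni\{\text{leg }n\}$, and sums over \emph{all} $(\mathrm{T}^{\rm ext},\bm{\psi}^{\rm ext},w^{\rm ext})$; that sum is precisely $\sum_{a>a_0}\mathsf{Z}^{\rm t}(n^{\rm ext},a,b)$ with $n^{\rm ext}<n$, to which the inductive hypothesis (parts $(a)$ and $(b)$ at $n^{\rm ext}$, combined via Lemma~\ref{lemma:Thm1implies2}) applies. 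This is the missing organisational idea in your proposal: the induction goes through the external subtree, not through residual $\mathsf{A}$-terms, and the correction terms $\mathrm{T}^\circ$, $\mathrm{T}^{h^-}$ from \eqref{eq:pullbackpsi} appear at \emph{every} vertex where leg $n$ is attached, not only near $v_0$.
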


\noindent \textit{Proof of Claim \ref{claim:part_a}.}
Let $n\geq 4$.
The RHS of \eqref{eq:partapullback} is $\mathsf{Z}^{\rm t}(n,i,j)$, given by
\begin{equation*}
\mathsf{Z}^{\rm t}(n,i,j) = \sum_{(\mathrm{T}, \bm{\psi},w)}  (-1)^{|\mathrm{H}^+(\mathrm{T})|} \,\, w(\mathrm{T}, \bm{\psi}) \,\, \xi_{\mathrm{T} *}\left( \bm{\psi} \right) \qquad \mbox{in $A^{n-1+j-i} \left(\ov{\M}_{0,n+1}\right)$}
\end{equation*}
where the sum is over $(\mathrm{T}, \bm{\psi},w)$ in the set $\mathsf{DGW}^i_{0,n+1}$ from \eqref{eq:DGW} such that $\xi_{\mathrm{T} *}\left( \bm{\psi} \right)$ 
has  degree $n-1+j-i$ in $A^* \left(\ov{\M}_{0,n+1}\right)$ and the property \eqref{eq:truncatedprop} is satisfied.

For each such $(\mathrm{T}, \bm{\psi},w)$, the argument depends on the position of the leg $n$ in $\mathrm{T}$. 
For this, let $1\leq i \leq  n-2$, and consider the partition of $\mathsf{DGW}^i_{0,n+1}$ from \S\ref{sec:partitions}:
\[
\mathsf{DGW}^i_{0,n+1} = \mathsf{A}_{0,n+1}^i \sqcup \mathsf{B}_{0,n+1}^i \sqcup \mathsf{C}_{0,n+1}^i.
\]

\medskip

\noindent \textit{Contributions from $\mathsf{A}_{0,n+1}^i$.} 
Let us consider the contributions of $(\mathrm{T}, \bm{\psi},w)$ in $\mathsf{A}_{0,n+1}^i$ to $\mathsf{Z}^{\rm t}(n,i,j)$. 
First, we arrange such contributions in parts, and then we distinguish Subcases 1 and 2 below. 

We partition the contributions from $\mathsf{A}_{0,n+1}^i$ to $\mathsf{Z}^{\rm t}(n,i,j)$ as follows. 
The idea is that the partition is suggested by the set of decorated trees contributing to the pull-back of $\psi$-classes, as in \eqref{eq:pullbackpsi},
and each part consists~of:
\begin{enumerate}[(i)]

\item a triple $(\mathrm{T}, \bm{\psi},w)$ where the vertex $v_n$ incident to the leg $n$ has valence at least four and is incident to $h_0$; 

\item possibly a triple $(\mathrm{T}^\circ, \bm{\psi}^\circ, w^\circ)$ where the vertex $v^\circ_n$ incident to the leg $n$ is trivalent and incident to $h_0$;  

\item possibly triples $(\mathrm{T}^{h^-}, \bm{\psi}^{h^-}, w^{h^-})$ where the vertex $v^{h^-}_n$ incident to the leg $n$ is trivalent and adjacent to the vertex incident to $h_0$. 

\end{enumerate}
To define such parts, consider a triple $(\mathrm{T}, \bm{\psi}, w)$ in $\mathsf{A}_{0,n+1}^i$  such that 
 the valence of the vertex $v_n$ incident  to the leg $n$ is at least four. In particular, $v_n$ is incident to $h_0$. 

The possible triple $(\mathrm{T}^\circ, \bm{\psi}^\circ, w^\circ)$ in the part containing $(\mathrm{T}, \bm{\psi}, w)$ is defined as follows: $\mathrm{T}^\circ$ is the graph obtained from $\mathrm{T}$ as in  \eqref{eq:Tcirc}; $\bm{\psi}^\circ$ is the decoration obtained from $\bm{\psi}$ as in \eqref{eq:psicirc}; and $w^\circ$ is the $i$-weighting defined by 
\begin{align*}
w^\circ \left( h^\circ_n,0 \right) &= w \left( h_n,0 \right),\\
w^\circ \left( h^\perp,e \right) &= w \left( h_n,e+1 \right) && \mbox{for $0\leq e\leq d^\perp$,}\\
w^\circ \left( h,e \right) &= w \left( h,e \right) && \mbox{otherwise.}
\end{align*}
Here, we have used the identification
\begin{align*}
\mathrm{H}(\mathrm{T}, \bm{\psi}) &\xrightarrow{\cong} \mathrm{H}(\mathrm{T}^\circ, \bm{\psi}^\circ)\\
(h_n,0)&\mapsto (h^\circ_n,0), \\
(h_n,e+1) &\mapsto (h^\perp, e), && \mbox{for  $0\leq e \leq d^\perp$,}\\
(h,e)& \mapsto (h,e), && \mbox{for $h\neq h_n$ and all $e$.}
\end{align*}
The triple $(\mathrm{T}^\circ, \bm{\psi}^\circ, w^\circ)$ is considered only if $\bm{\psi}^\circ \neq 0$.
Since $(\mathrm{T}, \bm{\psi},w)$ is in $\mathsf{A}_{0,n+1}^i$, by definition the triple $(\mathrm{T}^\circ, \bm{\psi}^\circ, w^\circ)$ is in $\mathsf{A}_{0,n+1}^i$ as well. Moreover, one has the following equality of weights
\begin{equation}
\label{eq:wtipsiiswtipsicirc}
w\left(\mathrm{T},\bm{\psi}\right)=w^\circ\left(\mathrm{T}^\circ,\bm{\psi}^\circ\right).
\end{equation}
Since a divisorial $\psi$-class is exchanged with an edge, the classes $\xi_{\mathrm{T}*}(\bm{\psi})$ and $\xi_{\mathrm{T}^\circ*}(\bm{\psi}^\circ) $ have equal degree in $A^*\left( \ov{\M}_{0,n+1}\right)$.

In addition, the possibly remaining triples of the part containing $(\mathrm{T}, \bm{\psi}, w)$ are triples $(\mathrm{T}^{h^-}, \bm{\psi}^{h^-}, w^{h^-})$, one for each tail half-edge $h^-$ incident to $v_n$ in $\mathrm{T}$, where:  $\mathrm{T}^{h^-}$ is the graph obtained from $\mathrm{T}$ as in  \eqref{eq:Thminus}; $\bm{\psi}^{h^-}$ is the decoration obtained from $\bm{\psi}$ as in \eqref{eq:psihminus}; and $w^{h^-}$ is the $i$-weighting defined by 
\begin{align*}
w^{h^-} \left( h^\perp,e \right) &= w \left( h^-,e \right),&& \mbox{for  $0\leq e \leq d^\perp$,}\\
w^{h^-} \left( \iota(h^\perp), 0 \right) &= w \left( h^-, d^\perp +1 \right), \\
w^{h^-} \left( h,e \right) &= w \left( h,e \right) && \mbox{otherwise.}
\end{align*}
Here, we have used the identification
\begin{align*}
\mathrm{H}(\mathrm{T}, \bm{\psi}) &\xrightarrow{\cong} \mathrm{H}(\mathrm{T}^{h^-}, \bm{\psi}^{h^-})\\
(h^-,e)&\mapsto (h^\perp,e), && \mbox{for  $0\leq e \leq d^\perp$,}\\
(h^-,d^\perp +1) &\mapsto (\iota(h^\perp), 0), \\
(h,e)& \mapsto (h,e), && \mbox{for $h\neq h^-$ and all $e$.}
\end{align*}
For each $h^-$, the triple $(\mathrm{T}^{h^-}, \bm{\psi}^{h^-}, w^{h^-})$ is considered only if $\bm{\psi}^{h^-} \neq 0$.
Since $(\mathrm{T}, \bm{\psi},w)$ is in $\mathsf{A}_{0,n+1}^i$, by definition the triple $(\mathrm{T}^{h^-}, \bm{\psi}^{h^-}, w^{h^-})$ is in $\mathsf{A}_{0,n+1}^i$ as well, and one has
\begin{equation}
\label{eq:wtipsiiswtipsihminus}
w\left(\mathrm{T},\bm{\psi}\right)=w^{h^-}\left(\mathrm{T}^{h^-},\bm{\psi}^{h^-}\right).
\end{equation}
Since a divisorial $\psi$-class is exchanged with an edge, the classes $\xi_{\mathrm{T}*}(\bm{\psi})$ and $\xi_{\mathrm{T}^{h^-}*}(\bm{\psi}^{h^-})$ have equal degree in $A^*\left( \ov{\M}_{0,n+1}\right)$.

\smallskip

For $(\mathrm{T}, \bm{\psi},w)$ to contribute to $\mathsf{Z}^{\rm t}(n,i,j)$,  the degree of $\xi_{\mathrm{T}*}(\bm{\psi})$  is required to be $n-1+j-i$ in $A^*\left( \ov{\M}_{0,n+1}\right)$.
This implies that the corresponding $\xi_{\mathrm{T}^\circ*}(\bm{\psi}^\circ) $ and all $\xi_{\mathrm{T}^{h^-}*}(\bm{\psi}^{h^-})$ have the same degree \mbox{$n-1+j-i$}.
In addition, $(\mathrm{T}^\circ, \bm{\psi}^\circ, w^\circ)$ satisfies the property \eqref{eq:truncatedprop} by construction;
$(\mathrm{T}, \bm{\psi},w)$ and all $(\mathrm{T}^{h^-}, \bm{\psi}^{h^-}, w^{h^-})$ trivially satisfy \eqref{eq:truncatedprop}.
This implies that if $(\mathrm{T}, \bm{\psi},w)$  contributes to $\mathsf{Z}^{\rm t}(n,i,j)$, then $(\mathrm{T}^\circ, \bm{\psi}^\circ, w^\circ)$ and all $(\mathrm{T}^{h^-}, \bm{\psi}^{h^-}, w^{h^-})$ contribute to $\mathsf{Z}^{\rm t}(n,i,j)$ as well.

The number of head half-edges of $\mathrm{T}$ and corresponding $\mathrm{T}^\circ$ varies by one. Similarly, the number of head half-edges of $\mathrm{T}$ and each corresponding $\mathrm{T}^{h^-}$ varies by one. Using \eqref{eq:wtipsiiswtipsicirc} and \eqref{eq:wtipsiiswtipsihminus}, it follows that the sum of the contributions to $\mathsf{Z}^{\rm t}(n,i,j)$ of all triples in  the part of $\mathsf{A}_{0,n+1}^i$ corresponding to $(\mathrm{T}, \bm{\psi},w)$ is:
\begin{equation}
\label{eq:Aexp}
(-1)^{|\mathrm{H}^+(\mathrm{T})|} \, w(\mathrm{T}, \bm{\psi}) \left( \xi_{\mathrm{T}*}(\bm{\psi}) - \xi_{\mathrm{T}^\circ*}(\bm{\psi}^\circ) - \sum_{h^-} \xi_{\mathrm{T}^{h^-}*}(\bm{\psi}^{h^-})\right).
\end{equation}
As in \eqref{eq:pullbackpsi}, this is
\begin{equation}
\label{eq:totaltipsipair}
(-1)^{|\mathrm{H}^+(\mathrm{T})|} \, w(\mathrm{T}, \bm{\psi}) \, \xi_{\mathrm{T}*}(\pi_n^* \,\bm{\psi}) \quad \in A^*\left(\ov{\M}_{0,n+1} \right).
\end{equation}
This is the total contribution to $\mathsf{Z}^{\rm t}(n,i,j)$ of  the part of $\mathsf{A}_{0,n+1}^i$ corresponding to $(\mathrm{T}, \bm{\psi},w)$.
We distinguish two subcases:

\medskip

\noindent \textit{Subcase 1.}
Consider first the case when the sum of the degrees of $\psi_{h}$ in $\bm{\psi}$ for $h$ incident to $v_n$ in $\mathrm{T}$ is
$\mathrm{valence}(v_n)-3$. 
In this case,  the total contribution  to $\mathsf{Z}^{\rm t}(n,i,j)$ as given in \eqref{eq:totaltipsipair} vanishes. 
Indeed, our assumption implies that the sum of the degrees of $\pi_n^* \,\psi_{h}$ in the term $\pi_n^*\, \bm{\psi}$ from \eqref{eq:totaltipsipair} for $h$ incident to $v_n$  is
$\mathrm{valence}(v_n)-3$. 
The product of $\pi_n^* \,\psi_{h}$ in $\pi_n^*\, \bm{\psi}$ for $h$ incident to $v_n$ is then the pull-back via $\pi_n$ of a cycle of degree $\mathrm{valence}(v_n)-3$ supported on a moduli space of rational  curves with $\mathrm{valence}(v_n)-1$ marked points. Such cycle vanishes for degree reasons.

Similarly, no such contributions arise in $\pi_{n}^*\left( \mathsf{Z}(n-1,i,j+1) \right)$. Indeed, arguing as in Lemma \ref{lemma:leftrightterms}, contributions to $\pi_{n}^*\left( \mathsf{Z}(n-1,i,j+1) \right)$ arise precisely from terms like $\xi_{\mathrm{T}*}(\pi_n^* \,\bm{\psi})$, hence they are null in this case, as in the previous paragraph.
Also, since $(\mathrm{T}, \bm{\psi},w)$,  $(\mathrm{T}^\circ, \bm{\psi}^\circ, w^\circ)$ and $(\mathrm{T}^{h^-}, \bm{\psi}^{h^-}, w^{h^-})$ are all in $\mathsf{A}_{0,n+1}^i$, there are no contributions to the cycles  $\mathsf{E}_{\bm{I}}(i,j)$. 

It follows that the sum of the contributions of such decorated trees vanishes on both sides of~\eqref{eq:partapullback}.

\medskip

\noindent \textit{Subcase 2.}
It remains to consider the case when the sum of the degrees of $\psi_{h}$ in $\bm{\psi}$ for $h$ incident to $v_n$ in $\mathrm{T}$ is less than $\mathrm{valence}(v_n)-3$.
After \eqref{eq:pullbackTbar}, the contribution \eqref{eq:totaltipsipair}  to $\mathsf{Z}^{\rm t}(n,i,j)$ in this case matches a contribution to 
 \[
 (-1)^{|\mathrm{H}^+(\mathrm{T})|} \, w(\mathrm{T}, \bm{\psi}) \, \pi_n^*\left( \xi_{\ov{\mathrm{T}}*}(\bm{\psi})\right), 
 \]
 where $\ov{\mathrm{T}}$ is the tree obtained by removing the leg $n$ from $\mathrm{T}$. Finally, this matches the contribution 
 \[
 (-1)^{|\mathrm{H}^+(\ov{\mathrm{T}})|} \, w(\ov{\mathrm{T}}, \bm{\psi}) \, \pi_n^*\left( \xi_{\ov{\mathrm{T}}*}(\bm{\psi})\right), 
 \] 
 to $\pi_n^*\left( \mathsf{Z}(n-1,i,j+1)\right)$ in the LHS of \eqref{eq:partapullback}. 
Indeed, recall that $h_0$ is incident to $v_n$ in $\mathrm{T}$, and 
$i\leq n-2$. In particular, 
removing the leg $n$ in $\mathrm{T}$ does not affect the computation of the capacities of the head half-edges, as defined in \eqref{eq:lenheads}. It follows that $(\mathrm{T}, \bm{\psi})$ and $(\ov{\mathrm{T}}, \bm{\psi})$ have the same $i$-weightings, hence $w(\mathrm{T}, \bm{\psi})=w(\ov{\mathrm{T}}, \bm{\psi})$. Also, one has $|\mathrm{H}^+(\mathrm{T})|=|\mathrm{H}^+(\ov{\mathrm{T}})|$.

Similarly to Subcase 1, there are no contributions to the cycles  $\mathsf{E}_{\bm{I}}(i,j)$. Hence such decorated trees have equal contributions to the two sides of \eqref{eq:partapullback}.

This concludes the discussion of the case $(\mathrm{T}, \bm{\psi},w)\in \mathsf{A}_{0,n+1}^i$.

\medskip

\noindent \textit{Contributions from  $\mathsf{B}_{0,n+1}^i \sqcup \mathsf{C}_{0,n+1}^i$.} 
Consider  $(\mathrm{T}, \bm{\psi},w)\in \mathsf{B}_{0,n+1}^i \sqcup \mathsf{C}_{0,n+1}^i$ contributing to $\mathsf{Z}^{\rm t}(n,i,j)$. 
We define a subset $\mathsf{P}_{\mathrm{T}, \bm{\psi},w}$ of $\mathsf{B}_{0,n+1}^i \sqcup \mathsf{C}_{0,n+1}^i$
 assigned to $(\mathrm{T}, \bm{\psi},w)$, and use the inductive hypothesis to show that the sum of the contributions of the triples in $\mathsf{P}_{\mathrm{T}, \bm{\psi},w}$ to 
 $\mathsf{Z}^{\rm t}(n,i,j)$ is equivalent to a contribution to the LHS of \eqref{eq:partapullback}.

To define the set $\mathsf{P}_{\mathrm{T}, \bm{\psi},w}$, we proceed as follows.
Consider 
\[
\mathrm{T}^{\rm ext} := \!\!
\begin{array}{l}
\mbox{ the maximal connected  subtree of $\mathrm{T}$}\\
 \mbox{ containing the leg $n$, but not containing $h_0$.}
 \end{array}
\]
Let $n^{\rm ext}$ be the number of legs of $\mathrm{T}$ within $\mathrm{T}^{\rm ext}$. One has $n^{\rm ext}< n$.

Define $\mathrm{T}^{\rm int}$ to be the complement of $\mathrm{T}^{\rm ext}$ in $\mathrm{T}$. 
There is a unique edge 
in $\mathrm{E}(\mathrm{T})$ with tail half-edge 
in $\mathrm{T}^{\rm int}$ and head half-edge in $\mathrm{T}^{\rm ext}$. Let $h_0^{\rm ext}$ be such head half-edge in $\mathrm{T}^{\rm ext}$. 

The set $\mathrm{H}^\pm(\mathrm{T}^{\rm ext})$ is the restriction of $\mathrm{H}^\pm(\mathrm{T})$ to $\mathrm{T}^{\rm ext}$, and define similarly $\mathrm{H}^\pm(\mathrm{T}^{\rm int})$ as the restriction of $\mathrm{H}^\pm(\mathrm{T})$ to $\mathrm{T}^{\rm int}$.
One has
\[
\mathrm{H}(\mathrm{T}) = \mathrm{H}(\mathrm{T}^{\rm ext}) \sqcup \mathrm{H}(\mathrm{T}^{\rm int}).
\]

Factoring $\bm{\psi}$ as $\bm{\psi}= \prod_{h\in \mathrm{H}(\mathrm{T})} \psi_{h}^{d_h}$,  consider 
its restrictions 
\begin{align*}
\bm{\psi}^{\rm ext} &:= \prod_{h\in \mathrm{H}(\mathrm{T}^{\rm ext})} \psi_{h}^{d_h} 
& \mbox{and} &&
\bm{\psi}^{\rm int} &:=\prod_{h\in \mathrm{H}(\mathrm{T}^{\rm int})} \psi_{h}^{d_h}
\end{align*}
to $\mathrm{T}^{\rm ext}$ and $\mathrm{T}^{\rm int}$, respectively. 
One has $\bm{\psi}= \bm{\psi}^{\rm int} \,\bm{\psi}^{\rm ext}$. This induces the partition
\[
\mathrm{H}(\mathrm{T}, \bm{\psi}) =
\mathrm{H}(\mathrm{T}^{\rm int}, \bm{\psi}^{\rm int}) \sqcup \mathrm{H}(\mathrm{T}^{\rm ext}, \bm{\psi}^{\rm ext}).
\]

The $i$-weighting $w$ in $\mathsf{W}^i_{\mathrm{T}, \bm{\psi}}$ is the union of two functions 
\[
w=w^{\rm int}\sqcup w^{\rm ext},
\]
where $w^{\rm int}$ and $w^{\rm ext}$ are the restrictions of $w$ to $\mathrm{H}(\mathrm{T}^{\rm int}, \bm{\psi}^{\rm int})$ and $\mathrm{H}(\mathrm{T}^{\rm ext}, \bm{\psi}^{\rm ext})$, respectively. One has
\begin{align}
\label{eq:wintwext}
w^{\rm int} \in \mathsf{W}^{i,a}_{{\mathrm{T}}^{\rm int}, {\bm{\psi}}^{\rm int}}&& \mbox{and} &&
w^{\rm ext} \in \mathsf{W}^a_{{\mathrm{T}}^{\rm ext}, {\bm{\psi}}^{\rm ext}} && \mbox{for some $a$.} 
\end{align}
Here, $\mathsf{W}^{i,a}_{{\mathrm{T}}^{\rm int}, {\bm{\psi}}^{\rm int}}$ is as in \eqref{eq:Wimtipsi}, with the tail half-edge $\iota(h_0^{\rm ext})$ in ${\mathrm{T}}^{\rm int}$ playing the role of the first leg in \eqref{eq:Wimtipsi},
 and $\mathsf{W}^a_{{\mathrm{T}}^{\rm ext}, {\bm{\psi}}^{\rm ext}}$ is as in \eqref{eq:Wi1tipsi}.

The function $w^{\rm int}$ gives a lower bound for $a$. For this, consider the tail $h^-:=\iota(h_0^{\rm ext})$ which is half of the edge connecting $\mathrm{T}^{\rm int}$ and $\mathrm{T}^{\rm ext}$.
If $d_-$ is the degree of $\psi_{h^-}$ in $\bm{\psi}$, let 
\begin{equation}
\label{eq:a0}
a_0:= w^{\rm int}\left(h^-,  d_- \right).
\end{equation}
By the definition of the weighting $w$, one has $w^{\rm ext}(h_0^{\rm ext}, 0)=a$ for some $a> a_0$.
 
 Since $(\mathrm{T}, \bm{\psi},w)$ is in $\mathsf{B}_{0,n+1}^i \sqcup \mathsf{C}_{0,n+1}^i$, one has that $w^{\rm ext}$ is an $a$-weighting of $({\mathrm{T}}^{\rm ext}, {\bm{\psi}}^{\rm ext})$, for some $a$, satisfying the property \eqref{eq:truncatedprop}.

The set $\mathsf{P}_{\mathrm{T}, \bm{\psi},w}$ consists of 
those triples $(\mathrm{T}', \bm{\psi}', w')$ in $\mathsf{B}_{0,n+1}^i \sqcup \mathsf{C}_{0,n+1}^i$ contributing to $\mathsf{Z}^{\rm t}(n,i,j)$ which are obtained by fixing $\mathrm{T}^{\rm int}$, $\bm{\psi}^{\rm int}$,  $w^{\rm int}$, and varying $\mathrm{T}^{\rm ext}$, $\bm{\psi}^{\rm ext}$, $w^{\rm ext}$. 
Namely, define $\mathsf{P}_{\mathrm{T}, \bm{\psi},w} \subseteq  \mathsf{B}_{0,n+1}^i \sqcup \mathsf{C}_{0,n+1}^i$ as
\[
\mathsf{P}_{\mathrm{T}, \bm{\psi},w} := \left\{ 
\left(\mathrm{T}', \bm{\psi}', w' \right) \, \Bigg| \, 
\begin{array}{l}
{\mathrm{T}'}^{\rm int} = \mathrm{T}^{\rm int}, \quad {\bm{\psi}'}^{\rm int} = \bm{\psi}^{\rm int}, \quad w'^{\rm int} = w^{\rm int}, \\[4pt]
\deg \xi_{\mathrm{T}' *}\left( \bm{\psi}'\right) = n-1+j-i \quad \mbox{in $A^*\left( \ov{\M}_{0,n+1}\right)$}
\end{array}
\right\}.
\]
Since $\mathsf{P}_{\mathrm{T}, \bm{\psi},w} \subseteq  \mathsf{B}_{0,n+1}^i \sqcup \mathsf{C}_{0,n+1}^i$, for each $\left(\mathrm{T}', \bm{\psi}', w' \right)$ in $\mathsf{P}_{\mathrm{T}, \bm{\psi},w}$ one has that 
\[
w'^{\rm ext} \in \mathsf{W}^a_{{\mathrm{T}'}^{\rm ext}, {\bm{\psi}'}^{\rm ext}} \qquad \mbox{for some $a > a_0$}
\]
and $w'^{\rm ext}$ satisfies the property \eqref{eq:truncatedprop}.

The sum of the contributions of all $(\mathrm{T}', \bm{\psi}', w')$ in $\mathsf{P}_{\mathrm{T}, \bm{\psi},w}$ to $\mathsf{Z}^{\rm t}(n,i,j)$ consists of the following term:
\begin{equation}
\label{eq:localsumBC}
\sum_{(\mathrm{T}', \bm{\psi}', w')\in \mathsf{P}_{\mathrm{T}, \bm{\psi},w}}  (-1)^{|\mathrm{H}^+(\mathrm{T}')|} \,\, w'(\mathrm{T}', \bm{\psi}') \,\,\xi_{\mathrm{T}' *}\left( \bm{\psi}' \right).
\end{equation}

We need to show that \eqref{eq:localsumBC} is equivalent to a contribution  to the LHS of \eqref{eq:partapullback}.
For this, we first rewrite \eqref{eq:localsumBC} so that we can apply the inductive hypothesis for the triples $\left({\mathrm{T}'}^{\rm ext}, {\bm{\psi}'}^{\rm ext}, {w'}^{\rm ext} \right)$.

\smallskip

For $\left(\mathrm{T}', \bm{\psi}', w' \right) \in \mathsf{P}_{\mathrm{T}, \bm{\psi},w}$,
the $i$-weighting $w'\in \mathsf{W}^i_{\mathrm{T}', \bm{\psi}'}$ is the union of two functions $w'=w^{\rm int}\sqcup {w'}^{\rm ext}$ such that 
\begin{align*}
w^{\rm int} \in \mathsf{W}^{i,a}_{{\mathrm{T}}^{\rm int}, {\bm{\psi}}^{\rm int}}&& \mbox{and} &&
w^{\rm ext} \in \mathsf{W}^a_{{\mathrm{T}'}^{\rm ext}, {\bm{\psi}'}^{\rm ext}} && \mbox{for some $a > a_0$}
\end{align*}
as in \eqref{eq:wintwext}.
From \eqref{eq:imweighttipsi}, the corresponding $i$-weight of $(\mathrm{T}', \bm{\psi}')$  factors as
\[
w'(\mathrm{T}', \bm{\psi}') =  w^{\rm int} \left( {\mathrm{T}}^{\rm int}, {\bm{\psi}}^{\rm int} \right) \,\, {w'}^{\rm ext} \left( {\mathrm{T}'}^{\rm ext}, {\bm{\psi}'}^{\rm ext} \right).
\]
We can thus rewrite \eqref{eq:localsumBC} as
\begin{equation}
\label{eq:localsumBCtake2}
w^{\rm int} \left( {\mathrm{T}}^{\rm int}, {\bm{\psi}}^{\rm int} \right) 
 \,\, \sum_{(\mathrm{T}', \bm{\psi}', w') \in \mathsf{P}_{\mathrm{T}, \bm{\psi},w}}  (-1)^{|\mathrm{H}^+(\mathrm{T}')|} \, \,
{w'}^{\rm ext}\left({{\mathrm{T}'}^{\rm ext}, {\bm{\psi}'}^{\rm ext}}\right)
 \,\,\xi_{\mathrm{T}' *}\left( \bm{\psi}' \right).
\end{equation}
One has 
\begin{equation}
\label{eq:EHTintText}
\mathrm{H}^+\left(\mathrm{T}' \right) = \mathrm{H}^+\left(\mathrm{T}^{\rm int}\right) \sqcup \mathrm{H}^+\left({\mathrm{T}'}^{\rm ext}\right).
\end{equation}
Moreover, each map $\xi_{\mathrm{T}'}$ factors as $\xi_{\mathrm{T}'} = \zeta_{\mathrm{T}}\circ \xi_{{\mathrm{T}'}^{\rm ext}}$, where 
\[
\zeta_{\mathrm{T}}\colon   \ov{\M}_{0,n^{\rm ext}+1} \times \prod_{v\in \mathrm{V}(\mathrm{T}^{\rm int})} \ov{\M}_{0, n(v)} \longrightarrow \ov{\M}_{0,n+1}
\]
is the gluing map of degree one  defined by $\mathrm{T}^{\rm int}$ and the edge connecting $\mathrm{T}^{\rm int}$ and $\mathrm{T}^{\rm ext}$.
Hence we have
\begin{equation}
\label{eq:xiT'factored}
\xi_{\mathrm{T}' *}\left( \bm{\psi}' \right) = \zeta_{\mathrm{T} *}\left( {\bm{\psi}}^{\rm int} \,\, \xi_{{\mathrm{T}'}^{\rm ext} *}\left( {\bm{\psi}'}^{\rm ext} \right)\right).
\end{equation}
By definition of the truncated cycles $\mathsf{Z}^{\rm t}(n^{\rm ext},a,b)$, we have
\begin{multline}
\label{eq:Znextab}
\sum_{(\mathrm{T}', \bm{\psi}', w') \in \mathsf{P}_{\mathrm{T}, \bm{\psi},w}}  (-1)^{|\mathrm{H}^+({\mathrm{T}'}^{\rm ext})|} \, \,
{w'}^{\rm ext}\left({{\mathrm{T}'}^{\rm ext}, {\bm{\psi}'}^{\rm ext}}\right)
 \,\,\xi_{{\mathrm{T}'}^{\rm ext} *}\left( {\bm{\psi}'}^{\rm ext} \right)\\
= \sum_{a> a_0}\,\, \mathsf{Z}^{\rm t}(n^{\rm ext},a,b)
\end{multline}
where for each $a$, the number $b$ is determined by
\[
n^{\rm ext} -1 +b-a= \deg\, \xi_{{\mathrm{T}}^{\rm ext}\, *}\left( {\bm{\psi}}^{\rm ext}\right).
\]
Since $\mathsf{W}^a_{{\mathrm{T}'}^{\rm ext}, {\bm{\psi}'}^{\rm ext}}=\varnothing$ for $a\geq  n^{\rm ext}$, 
the sum on the RHS of \eqref{eq:Znextab} is finite. 

Using \eqref{eq:EHTintText}, \eqref{eq:xiT'factored}, and \eqref{eq:Znextab}, it follows that \eqref{eq:localsumBCtake2} can be rewritten as
\begin{equation}
\label{eq:localsumBCtake3}
(-1)^{|\mathrm{H}^+(\mathrm{T}^{\rm int})|}\,\, w^{\rm int} \left( {\mathrm{T}}^{\rm int}, {\bm{\psi}}^{\rm int} \right) 
\sum_{a > a_0} \,\, \zeta_{\mathrm{T}*} \left( {\bm{\psi}}^{\rm int} \, \mathsf{Z}^{\rm t}(n^{\rm ext},a,b) \right).
\end{equation}
Formula \eqref{eq:localsumBCtake3} expresses the term of the RHS of \eqref{eq:partapullback} contributed by all triples  in $\mathsf{P}_{\mathrm{T}, \bm{\psi},w}$.

\smallskip

Since $n^{\rm ext}<n$, the induction hypothesis together with Lemma \ref{lemma:Thm1implies2} for $n^{\rm ext}$ imply that for each $a$, one has
\begin{equation}
\label{eq:Znext}
\pi_{n^{\rm ext}}^*\left( \mathsf{Z}(n^{\rm ext} -1,a,b +1) \right) - \sum_{\bm{I}} |\bm{I}| \, \mathsf{E}_{\bm{I}}(a,b) = \mathsf{Z}^{\rm t}(n^{\rm ext},a,b)
\end{equation}
in $A^{n^{\rm ext} -1+b-a}\left(\ov{\M}_{0,n^{\rm ext} +1}\right)$. 
In the above sum, $\bm{I}$ ranges over all non-empty subsets of the restriction of the set of legs $\mathrm{L}(\mathrm{T})\setminus \{n\}$ to $\mathrm{T}^{\rm ext}$.

Using \eqref{eq:Znext} to replace $\mathsf{Z}^{\rm t}(n^{\rm ext},a,b)$ in \eqref{eq:localsumBCtake3}, 
we have that the term of the RHS of \eqref{eq:partapullback} contributed by all triples $(\mathrm{T}', \bm{\psi}', w')$  in $\mathsf{P}_{\mathrm{T}, \bm{\psi},w}$~is
\begin{multline}
\label{eq:LHS_ext}
(-1)^{|\mathrm{H}^+(\mathrm{T}^{\rm int})|} \,\,w^{\rm int} \left( {\mathrm{T}}^{\rm int}, {\bm{\psi}}^{\rm int} \right) \\
\times \sum_{a > a_0}\,\, \zeta_{\mathrm{T}*} \left( {\bm{\psi}}^{\rm int} \left(
\pi_{n^{\rm ext}}^*\left( \mathsf{Z}(n^{\rm ext} -1,a,b +1) \right) - \sum_{\bm{I} } |\bm{I}| \, \mathsf{E}_{\bm{I}}(a,b) \right)
\right).
\end{multline}

\smallskip

Next, we verify that  \eqref{eq:LHS_ext}  matches  a contribution  to the LHS of \eqref{eq:partapullback}.  For this, we proceed by considering four subcases. This will conclude the proof of Claim \ref{claim:part_a}.

Let $(\mathrm{T}', \bm{\psi}', w')\in \mathsf{P}_{\mathrm{T}, \bm{\psi},w}$. Let $v_n'$ be the vertex of $\mathrm{T}'$ incident to the leg~$n$, and $h'_n$ be the head half-edge incident to $v_n'$.
Let ${\mathrm{T}'}_0^{\rm ext}$ be the maximal subtree of ${\mathrm{T}'}^{\rm ext}$ containing  $v_n'$ such that 
for every vertex $v$  of ${\mathrm{T}'}_0^{\rm ext}$, the sum of the degrees of $\psi_{h}$ in $\bm{\psi}'$ for $h$ incident to $v$ is $\mathrm{valence}(v) -3$. 
In particular, one has ${\mathrm{T}'}_0^{\rm ext}\not= \varnothing$ if and only if the sum of the degrees of $\psi_{h}$ in $\bm{\psi}'$ for $h$ incident to $v'_n$ is equal to $\mathrm{valence}(v'_n)-3$. For instance, ${\mathrm{T}'}_0^{\rm ext}\not= \varnothing$ when $v'_n$ is trivalent --- in this case, the degree of $\psi_{h}$ in $\bm{\psi}'$ for $h$ incident to $v'_n$ is necessarily zero, otherwise $\xi_{\mathrm{T}' *}\left( \bm{\psi}' \right)=0$.

\medskip

\noindent \textit{Subcase 3.}
Assume first that ${\mathrm{T}'}_0^{\rm ext}\not= \varnothing$, and if trivalent, then $v_n'$ is  adjacent only to vertices in ${\mathrm{T}'}_0^{\rm ext}$. 
Moreover,  assume that  $v_n'$ is not trivalent and external in $\mathrm{T}'$ --- the case when $v_n'$ is trivalent and external  will be treated in Subcase 5.
Following the proof of Lemma \ref{lemma:leftrightterms},
there is no contribution from such $(\mathrm{T}',  \bm{\psi}')$ to the LHS of \eqref{eq:partapullback}. 
Indeed, the conditions that ${\mathrm{T}'}_0^{\rm ext}\not= \varnothing$ and a trivalent $v_n'$ be adjacent only to vertices in ${\mathrm{T}'}_0^{\rm ext}$
imply that such a decorated tree does not contribute to $\pi_n^*\left( \mathsf{Z}(n -1,i,j +1)\right)$; moreover, the conditions that 
${\mathrm{T}'}_0^{\rm ext}\not= \varnothing$ and $v_n'$ is not trivalent and external in $\mathrm{T}'$ imply that such a decorated tree does not contribute
 to  the cycles $\mathsf{E}_{\bm{I}}(i,j)$.
Similarly, the decorated tree $({\mathrm{T}'}^{\rm ext}, {\bm{\psi}'}^{\rm ext} )$ 
does not contribute to the LHS of \eqref{eq:Znext}.
Hence, the decorated tree $(\mathrm{T}', \bm{\psi}' )$ contributes neither to the LHS of \eqref{eq:partapullback}, nor to \eqref{eq:LHS_ext}, as desired.

\medskip

\noindent \textit{Subcase 4.} 
Consider $(\mathrm{T}', \bm{\psi}', w')\in \mathsf{B}_{0,n+1}^i$. 
Then from \eqref{eq:tipsiforEI}, $(\mathrm{T}', \bm{\psi}')$ does not contribute to the classes $\mathsf{E}_{\bm{I}}(i,j)$ in the LHS of \eqref{eq:partapullback}.
Assume that either ${\mathrm{T}'}_0^{\rm ext} = \varnothing$, or $v_n'$ is  trivalent and adjacent to at least one vertex not in ${\mathrm{T}'}_0^{\rm ext}$.
In these cases, $(\mathrm{T}', \bm{\psi}')$ contributes to $\pi_{n}^*\left( \mathsf{Z}(n -1,i,j +1) \right)$ in the LHS of \eqref{eq:partapullback}. 

When ${\mathrm{T}'}_0^{\rm ext} = \varnothing$, the total contribution of $(\mathrm{T}', \bm{\psi}')$ to the LHS of \eqref{eq:partapullback} is
\begin{equation}
\label{eq:contLHSsubcase4}
(-1)^{|\mathrm{H}^+(\mathrm{T}')|} \,\, c^i_{\overline{\mathrm{T}'}, \bm{\psi}'} \,\,\xi_{\mathrm{T}' *}\left( \bm{\psi}' \right) 
\end{equation}
where $\left(\overline{\mathrm{T}'}, \bm{\psi}'\right)$ is the decorated tree with  $\overline{\mathrm{T}'}$  obtained by removing the leg $n$ from $\mathrm{T}'$. 
Similarly, consider the decorated tree $\left( {\overline{\mathrm{T}'}}^{\rm ext}, {\bm{\psi}' }^{\rm ext}\right)$ where ${\overline{\mathrm{T}'}}^{\rm ext}$ is the graph obtained by removing the leg $n$ from ${\mathrm{T}'} ^{\rm ext}$. 

The term in \eqref{eq:contLHSsubcase4} can be decomposed as follows.
As in \eqref{eq:wintwext}, an $i$-weighting $\overline{w}'$ of $\left(\overline{\mathrm{T}'}, \bm{\psi}' \right)$ decomposes as $\overline{w}'=w^{\rm int}\sqcup \overline{w}'^{\rm ext}$, where $w^{\rm int}$ is an $i$-weighting of $\left( {\mathrm{T}}^{\rm int}, {\bm{\psi}}^{\rm int} \right)$ and $\overline{w}'^{\rm ext}$ is an $a$-weighting of $({\overline{\mathrm{T}'}}^{\rm ext}, {\bm{\psi}'}^{\rm ext})$ for some $a$.
 Hence, we have
\begin{equation}
\label{eq:cidecomp}
c^i_{\overline{\mathrm{T}'}, \bm{\psi}'} = \sum_{w^{\rm int}} \,\,w^{\rm int} \left( {\mathrm{T}}^{\rm int}, {\bm{\psi}}^{\rm int} \right) 
\,\sum_{a > a_0} \,\,
 c^a_{{\overline{\mathrm{T}'}}^{\rm ext}, {\bm{\psi}'}^{\rm ext}}.
\end{equation}
The number $a_0$ is determined by each $w^{\rm int}$, as in \eqref{eq:a0}.

Fixing $w^{\rm int}$, consider the triples $(\mathrm{T}', \bm{\psi}', \overline{w}')$ for all $\overline{w}'$ whose restriction to $\left( {\mathrm{T}}^{\rm int}, {\bm{\psi}}^{\rm int} \right)$ is  $w^{\rm int}$. Restricting \eqref{eq:contLHSsubcase4}, the  contribution of  all such triples to the LHS of \eqref{eq:partapullback} is
\begin{equation*}
w^{\rm int} \left( {\mathrm{T}}^{\rm int}, {\bm{\psi}}^{\rm int} \right) \, \sum_{a> a_0} \,\,
(-1)^{|\mathrm{H}^+(\mathrm{T}')|} \,\, c^a_{{\overline{\mathrm{T}'}}^{\rm ext}, {\bm{\psi}'}^{\rm ext}}
\,\,\xi_{\mathrm{T}' *}\left( \bm{\psi}' \right).
\end{equation*}
Using \eqref{eq:EHTintText} and \eqref{eq:xiT'factored}, this is
\begin{multline}
\label{eq:contLHSsubcase4final}
 (-1)^{|\mathrm{H}^+(\mathrm{T}^{\rm int})|} \,\, w^{\rm int} \left( {\mathrm{T}}^{\rm int}, {\bm{\psi}}^{\rm int} \right)   \\
\times \sum_{a > a_0} \,\, \zeta_{\mathrm{T}*} \left( {\bm{\psi}}^{\rm int} \, (-1)^{|\mathrm{H}^+({\mathrm{T}'}^{\rm ext})|} 
\,\, c^a_{{\overline{\mathrm{T}'}}^{\rm ext}, {\bm{\psi}'}^{\rm ext}}
\,\,\xi_{{\mathrm{T}'}^{\rm ext} *}\left( {\bm{\psi}'}^{\rm ext} \right) \right).
\end{multline}
This formula gives the contribution to the LHS of \eqref{eq:partapullback} of the triples $(\mathrm{T}', \bm{\psi}', \overline{w}')$ for all $\overline{w}'$ whose restriction to $\left( {\mathrm{T}}^{\rm int}, {\bm{\psi}}^{\rm int} \right)$ is  $w^{\rm int}$.
Here the term 
\[
(-1)^{|\mathrm{H}^+({\mathrm{T}'}^{\rm ext})|} 
\,\, c^a_{{\overline{\mathrm{T}'}}^{\rm ext}, {\bm{\psi}'}^{\rm ext}}
\,\,\xi_{{\mathrm{T}'}^{\rm ext} *}\left( {\bm{\psi}'}^{\rm ext} \right)
\]
matches the contribution of $( {\mathrm{T}'}^{\rm ext}, {\bm{\psi}'}^{\rm ext} )$ to $\pi_{n^{\rm ext}}^*\left( \mathsf{Z}(n^{\rm ext} -1,a,b +1) \right)$. 

It follows that the contribution to the LHS of \eqref{eq:partapullback}  of the triples $(\mathrm{T}', \bm{\psi}', \overline{w}')$ for all $\overline{w}'$ with fixed $w^{\rm int}$  
matches the contribution to \eqref{eq:LHS_ext} of $(\mathrm{T}', \bm{\psi}', w')$ for all $w'$ with fixed $w^{\rm int}$, as desired.

When $v_n'$ is trivalent, the term in \eqref{eq:contLHSsubcase4} has to be replaced by
\begin{equation*}
(-1)^{|\mathrm{H}^+(\mathrm{T}')|} \,\, \left( c^i_{\ov{\mathrm{T}'}, \bm{\psi}^+} + c^i_{\ov{\mathrm{T}'}, \bm{\psi}^-} \right)\,\,\xi_{\mathrm{T}' *}\left( \bm{\psi}' \right) 
\end{equation*}
where: the tree $\ov{\mathrm{T}'}$ is obtained from $\mathrm{T}'$ by removing the leg $n$ and contracting an edge to stabilize; 
the decoration $\bm{\psi}^+$ is defined so that $\bm{\psi}'$ is obtained from $\bm{\psi}^+$ as in \eqref{eq:psicirc}; and $\bm{\psi}^-$ is defined so that $\bm{\psi}'$ is obtained from $\bm{\psi}^-$ as in \eqref{eq:psihminus}. 
If $v_n'$ is trivalent and adjacent to $v_0$, then the contribution  $c^i_{\ov{\mathrm{T}'}, \bm{\psi}^-}\,\,\xi_{\mathrm{T}' *}\left( \bm{\psi}' \right) $ arises as in \eqref{eq:Aexp}, and so it has already been discussed in Subcases 1--2. For all the other contributions, the argument then continues as in the case when ${\mathrm{T}'}_0^{\rm ext} = \varnothing$.

\medskip

\noindent \textit{Subcase 5.} 
Next, consider $(\mathrm{T}', \bm{\psi}', w')\in \mathsf{C}_{0,n+1}^i $, and
assume that $v'_n$ is trivalent and external in $\mathrm{T}'$. 
In this case, $(\mathrm{T}', \bm{\psi}')$ contributes to the class $\mathsf{E}_{\bm{I}}(i,j)$  such that $\bm{I}\sqcup \{n\}$ is the set of the two legs incident to $v'_n$,
but does not contribute to the class $\pi_{n}^*\left( \mathsf{Z}(n -1,i,j +1) \right)$ in the LHS of \eqref{eq:partapullback}. 
Using \eqref{eq:EIij}, it follows that the total contribution of $(\mathrm{T}', \bm{\psi}')$ to the LHS of \eqref{eq:partapullback} is
\begin{equation}
\label{eq:contLHSsubcase5.2}
-(-1)^{|\mathrm{H}^+(\mathrm{T}')|-1} \,\, |\bm{I}| \,\,d^i_{\mathrm{T}', \bm{\psi}'} \,\,\xi_{\mathrm{T}' *}\left( \bm{\psi}' \right).
\end{equation}
As in \eqref{eq:cidecomp}, we have
\[
d^i_{{\mathrm{T}'}, \bm{\psi}'} = \sum_{w^{\rm int}} \,\,w^{\rm int} \left( {\mathrm{T}}^{\rm int}, {\bm{\psi}}^{\rm int} \right) 
\, \sum_{a > a_0} \,\, d^a_{{{\mathrm{T}'}}^{\rm ext}, {\bm{\psi}'}^{\rm ext}}.
\]
Here as well, the number $a_0$ is determined by each $w^{\rm int}$ as in \eqref{eq:a0}.
Using this with \eqref{eq:EHTintText} and \eqref{eq:xiT'factored}, the  contribution  to the LHS of \eqref{eq:partapullback}
of $(\mathrm{T}', \bm{\psi}')$ for a fixed $w^{\rm int}$ becomes
\begin{multline}
\label{eq:contLHSsubcase5.2final}
 (-1)^{|\mathrm{H}^+(\mathrm{T}^{\rm int})|} \,w^{\rm int} \left( {\mathrm{T}}^{\rm int}, {\bm{\psi}}^{\rm int} \right)  \\
\times \sum_{a> a_0} \,\, \zeta_{\mathrm{T}*} \left( {\bm{\psi}}^{\rm int} \, (-1)^{|\mathrm{H}^+(\mathrm{T}^{\rm ext})|} 
\,\, |\bm{I}|\,\,d^a_{{{\mathrm{T}'}}^{\rm ext}, {\bm{\psi}'}^{\rm ext}}
\,\,\xi_{{\mathrm{T}'}^{\rm ext} *}\left( {\bm{\psi}'}^{\rm ext} \right) \right).
\end{multline}
This is the summand of \eqref{eq:contLHSsubcase5.2} corresponding to a fixed $w^{\rm int}$.
Here the term
\[
(-1)^{|\mathrm{H}^+(\mathrm{T}^{\rm ext})|} 
\,\,|\bm{I}|\,\, d^a_{{{\mathrm{T}'}}^{\rm ext}, {\bm{\psi}'}^{\rm ext}}
\,\,\xi_{{\mathrm{T}'}^{\rm ext} *}\left( {\bm{\psi}'}^{\rm ext} \right) 
\]
matches the contribution of $( {\mathrm{T}'}^{\rm ext}, {\bm{\psi}'}^{\rm ext}  )$ to $-|\bm{I}|\,\,\mathsf{E}_{\bm{I}}(a,b)$. 

Hence the contribution to the LHS of \eqref{eq:partapullback} of $(\mathrm{T}', \bm{\psi}' )$ for a fixed $w^{\rm int}$ matches the contribution to \eqref{eq:LHS_ext}  of $(\mathrm{T}', \bm{\psi}' )$ for a fixed $w^{\rm int}$, as desired.

\medskip

\noindent \textit{Subcase 6.} 
Finally, it remains to discuss the case when $(\mathrm{T}', \bm{\psi}', w')\in \mathsf{C}_{0,n+1}^i$ and the external vertex $v'_n$ has valence at least four. In this case, 
$(\mathrm{T}', \bm{\psi}')$ contributes both to $\pi_{n}^*\left( \mathsf{Z}(n -1,i,j +1) \right)$ and to the cycle $\mathsf{E}_{\bm{I}}(i,j)$ where $\bm{I}\sqcup \{n\}$ is the set of the  legs incident to $v'_n$. Then, the total contribution of $(\mathrm{T}', \bm{\psi}')$ to the LHS of \eqref{eq:partapullback} is
\[
(-1)^{|\mathrm{H}^+(\mathrm{T}')|} \,\, \left( c^i_{\overline{\mathrm{T}}', \bm{\psi}'}+ |\bm{I}|\,\,d^i_{\mathrm{T}', \bm{\psi}'}\right) \,\,\xi_{\mathrm{T}' *}\left( \bm{\psi}' \right).
\]
This is the sum of  terms similar to \eqref{eq:contLHSsubcase4} and \eqref{eq:contLHSsubcase5.2}.
Hence, as above, for a fixed $w^{\rm int}$ this reduces to the sum of \eqref{eq:contLHSsubcase4final} and \eqref{eq:contLHSsubcase5.2final}, and this matches the contribution to \eqref{eq:LHS_ext}, as desired.

This concludes the proof of Claim \ref{claim:part_a}. \hfill $\triangle$

\medskip

Finally, we show the following.

\begin{claim}
\label{claim:part_b}
Part $(b)$ for $n-1$ and part $(a)$ for $n$ imply part $(b)$ for $n$. 
\end{claim}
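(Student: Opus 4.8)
The plan is to prove part $(b)$ for $n$ by splitting into the range $1\le i\le n-2$, where part $(a)$ for $n$ is available, and the boundary range $i=n-1$, where it is not. As in the opening of the proof of Theorem~\ref{thm:mainthmgenuszeroij}, I would first discard the degenerate cases, so that it suffices to prove the two vanishings $\mathsf{Z}^{\rm t}(n,i,j)=0$ and $\mathsf{Z}(n,i,j)=0$ for $1\le j<i<n$.

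In the range $1\le i\le n-2$ I would plug in part $(a)$ for $n$,
\[
\mathsf{Z}^{\rm t}(n,i,j)=\pi_n^*\bigl(\mathsf{Z}(n-1,i,j+1)\bigr)-\sum_{\bm I}|\bm I|\,\mathsf{E}_{\bm I}(i,j),
\]
and show every term on the right vanishes. Since $i\ge 1$ and $j+1\ge 2$, the hypothesis part $(b)$ for $n-1$ kills $\mathsf{Z}(n-1,i,j+1)$, hence the pull-back term. For the cycle $\mathsf{E}_{\bm I}(i,j)$: if $|\bm I|=n-1$ it vanishes at once from~\eqref{eq:ditipsineq0} because $i\le n-2$; if $m:=|\bm I|\le n-2$, Lemma~\ref{lemma:EIisgammapfwd} writes it as $\gamma_*\mathsf{Z}^{m}(n-m,i,j)$, and Theorem~\ref{thm:collidinggenus0} (applied with its parameter $n$ replaced by $n-1$) exhibits $\mathsf{Z}^{m}(n-m,i,j)$ as obtained from $\mathsf{Z}(n-1,i,j)$ by colliding marked points; the latter vanishes by part $(b)$ for $n-1$ since $i,j\ge 1$. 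This gives $\mathsf{Z}^{\rm t}(n,i,j)=0$, and then Lemma~\ref{lem:DectZt} promotes this to $\mathsf{Z}(n,i,j)=0$: the correction terms $\sigma_{0*}\bigl(\mathsf{Z}(n-1,i^+,j^+)\bigr)$ with $i^+>i$ all vanish, because either $i^+\ge n-1$ (empty weighting sets) or $2\le i^+\le n-2$ with $j^+=j-i+i^+\ge 2$, bringing part $(b)$ for $n-1$ to bear once more.

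For $i=n-1$ I would argue by induction on $j$, using the preliminary identities of \S\ref{sec:vanprt} in place of part $(a)$. The base case $\mathsf{Z}(n,n-1,1)=0$ is precisely Lemma~\ref{lemma:imaxj1van}. For $j\ge 2$, Lemma~\ref{lemma:vanimaxrec} expresses $\mathsf{Z}(n,n-1,j)$ as a combination of $\mathsf{Z}(n,n-2,j-1)$, already shown to vanish in the first range since $1\le j-1$ and $n-2\le n-2$, and $\mathsf{Z}(n,n-1,j-1)$, which vanishes by the inductive hypothesis; hence $\mathsf{Z}(n,n-1,j)=0$. Finally $\mathsf{Z}^{\rm t}(n,n-1,j)=0$ follows from Lemma~\ref{lem:DectZt} once more, the correction sum now being empty because $i^+>n-1$ again forces empty weighting sets for $\mathsf{Z}(n-1,i^+,\cdot)$.

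The whole argument is essentially bookkeeping: tracking which pairs $(i,j)$ sit in the already-established vanishing regime and checking the degree and weighting-emptiness constraints. The only point that needs genuine care is the boundary value $i=n-1$, invisible to part $(a)$, where one must instead run the induction in $j$ through Lemmas~\ref{lemma:imaxj1van} and~\ref{lemma:vanimaxrec}, verifying that this induction only ever appeals to cases already resolved — the first range for the term $\mathsf{Z}(n,n-2,j-1)$ and strictly smaller $j$ for the term $\mathsf{Z}(n,n-1,j-1)$ — and that the $\mathsf{Z}$-versus-$\mathsf{Z}^{\rm t}$ passages via Lemma~\ref{lem:DectZt} contribute nothing. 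I expect that last round of verification to be the main, if modest, obstacle.
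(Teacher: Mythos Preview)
Your proposal is correct and follows essentially the same argument as the paper: kill $\mathsf{Z}^{\rm t}(n,i,j)$ for $1\le i\le n-2$ via part $(a)$ together with the vanishing of $\mathsf{Z}(n-1,i,j+1)$ and $\mathsf{E}_{\bm{I}}(i,j)$ (the latter via Theorem~\ref{thm:collidinggenus0} and Lemma~\ref{lemma:EIisgammapfwd}), promote to $\mathsf{Z}(n,i,j)=0$ via Lemma~\ref{lem:DectZt}, and then handle $i=n-1$ by induction on $j$ through Lemmas~\ref{lemma:imaxj1van} and~\ref{lemma:vanimaxrec}. The only cosmetic difference is that the paper observes directly that $\mathsf{Z}^{\rm t}(n,n-1,j)=\mathsf{Z}(n,n-1,j)$ because $w(h_0,0)=n-1$ is automatically the maximal weight, whereas you deduce this from Lemma~\ref{lem:DectZt} with an empty correction sum; the two are equivalent.
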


\noindent \textit{Proof of Claim \ref{claim:part_b}.}
As argued at the beginning of the proof of Theorem \ref{thm:mainthmgenuszeroij}, one can assume that $1\leq j < i<n$, since the vanishing of $\mathsf{Z}^{\rm t}(n,i,j)$ and $\mathsf{Z}(n,i,j)$  is otherwise trivial.

For $2\leq i \leq n-2$, assume part $(b)$ for $n-1$. Specifically, assume the vanishing
$\mathsf{Z}(n-1,i,j)=0$ in $A^{n-1+j-i}\left(\ov{\M}_{0,n}\right)$ for $j \geq 1$. 
Using Theorem \ref{thm:collidinggenus0}, this implies the vanishing of all cycles $\mathsf{Z}^m(n-m,i,j)$ obtained from $\mathsf{Z}(n-1,i,j)$ by colliding points.
Using \eqref{eq:EIijsarepfwds}, by pushing forward, this implies the vanishing of all cycles $\mathsf{E}_{\bm{I}}(i,j)$ in $A^{n-1+j-i}\left(\ov{\M}_{0,n+1}\right)$ for $j \geq 1$. 
Then part $(a)$ for $n$ implies 
\begin{equation}
\label{eq:Ztvanishing1}
\mathsf{Z}^{\rm t}(n,i,j) =0 \quad \mbox{in}\quad A^{n-1+j-i}\left(\ov{\M}_{0,n+1}\right) \quad \mbox{for $1\leq j <  i \leq n-2$.}
\end{equation}
 We note that in part $(a)$ with $j=0$, while $\mathsf{Z}(n-1,i,1)$  vanishes,  the cycles $\mathsf{E}_{\bm{I}}(i,0)$ are non-zero, and so is $\mathsf{Z}^{\rm t}(n,i,0)$.

Regarding the cycle $\mathsf{Z}(n,i,j)$, the vanishing
\[
\mathsf{Z}(n,i,j) =0 \quad \mbox{in}\quad A^{n-1+j-i}\left(\ov{\M}_{0,n+1}\right) \quad \mbox{for $1\leq j <  i \leq n-2$}
\]
follows from Lemma \ref{lem:DectZt}, \eqref{eq:Ztvanishing1}, and the vanishing of  $\mathsf{Z}(n-1,i^+, j^+)$ for $i^+>i$ and $j^+ - i^+ = j-i$. 
Indeed, since $j^+\geq 1$ for all $i^+>i$ and $j\geq 1$, the cycles $\mathsf{Z}(n-1,i^+, j^+)$ in Lemma \ref{lem:DectZt} vanish  by the induction hypothesis.

It remains to establish the case $i=n-1$, i.e., the vanishing of $\mathsf{Z}^{\rm t}(n,n-1,j)$ and $\mathsf{Z}(n,n-1,j)$ in $A^{j}\left(\ov{\M}_{0,n+1}\right)$ for $1\leq j \leq n-2$.
First we note that $\mathsf{Z}^{\rm t}(n,n-1,j)=\mathsf{Z}(n,n-1,j)$, since for any $i$-weighting $w$ with $i=n-1$, one has that $w(h_0,0)=n-1$ is the maximal value of $w$, hence the condition \eqref{eq:truncatedprop} is automatically satisfied.
The case $j=1$ is proven by Lemma \ref{lemma:imaxj1van}. For $j\geq 2$, assume the vanishing of $\mathsf{Z}(n,n-1,j-1)$. The case $i=n-2$ discussed above establishes the vanishing of $\mathsf{Z}(n,n-2,j-1)$. Then Lemma \ref{lemma:vanimaxrec} implies the vanishing of $\mathsf{Z}(n,n-1,j)$ in $A^{j}\left(\ov{\M}_{0,n+1}\right)$. Hence the case $i=n-1$ follows for all $j \geq 1$.

This concludes the proof of Claim \ref{claim:part_b}. \hfill $\triangle$

\medskip

Combining Claims \ref{claim:part_a} and \ref{claim:part_b}, Theorem \ref{thm:mainthmgenuszeroij} follows by induction. 
\end{proof}


\section{Colliding points on rational tail trees}

We introduce the cycle $\mathsf{F}^k_{g,\bm{m}}$ in Definition \ref{def:Fm} generalizing $\mathsf{F}^k_{g,n}$ from Definition \ref{def:graphfor}. As shown in Theorem \ref{thm:collidinggenusg}, $\mathsf{F}^k_{g,\bm{m}}$ is obtained from  $\mathsf{F}^k_{g,n}$ by colliding marked points. The cycle $\mathsf{F}^k_{g,\bm{m}}$ will feature in Theorem \ref{thm:graphFor-m}, a generalization of Theorem \ref{thm:graphFor}.

Let $\bm{m}=(m_1,\dots, m_n)$ be an $n$-tuple of  positive integers.
Let $g>0$ and $\Gamma\in \mathsf{G}^\mathsf{rt}_{g,n}$. The $n$-tuple $\bm{m}$ induces a weight on the legs of $\Gamma$, i.e., 
let the leg $\ell$  have weight $m_\ell$ for all $\ell=1,\dots, n$.

Define the set of decorations $\Psi(\Gamma)$ of the graph $\Gamma$ as
\[
\Psi(\Gamma):=\left\{\bm{\psi}= \prod_{\ell=1}^n \psi_\ell^{d_\ell} \prod_{h\in \mathrm{H}(\Gamma)} \psi_h^{d_h} \,\Bigg| \, 
\begin{array}{l}
d_h\geq 0 \mbox{ for all }h,\\[.2cm]
 d_\ell \geq 0 \mbox{ for all }\ell
\end{array}
\right\} \subset A^*\left(\ov{\M}_ \Gamma \right).
\]
Let $\bm{\psi}=\prod_{\ell=1}^n \psi_\ell^{d_\ell}\prod_{h\in \mathrm{H}(\Gamma)} \psi_h^{d_h}$ in $\Psi(\Gamma)$, and define accordingly 
\begin{eqnarray*}
\begin{split}
\mathrm{H}(\Gamma,\bm{\psi}) &:= && \{ (\ell, e) \, | \, \ell\in \mathrm{L}(\Gamma), \, 1\leq e \leq d_\ell  \} \\
&&&\sqcup \{ (h, e) \, | \, h\in \mathrm{H}^-(\Gamma), \, 1\leq e \leq d_h  \} \\
&&&\sqcup \{ (h, e) \, | \, h\in \mathrm{H}^+(\Gamma), \, 0\leq e \leq d_h  \} .
\end{split}
\end{eqnarray*}
One has $|\mathrm{H}(\Gamma,\bm{\psi})| = |\mathrm{E}(\Gamma)| + \deg \bm{\psi}$.

For a half-edge $h$ of $\Gamma$, let $\iota(h)$ be the half-edge such that $(h, \iota(h))\in \mathrm{E}(\Gamma)$.
For $h^+\in \mathrm{H}^+(\Gamma)$, define the weighted capacity $\ell^{\bm{m}}_{h^+}$ of $h^+$ as:
\begin{equation*}
\ell^{\bm{m}}_{h^+} := 
\left\{
\begin{array}{l}
\mbox{sum of the weights of the legs of $\Gamma$ in the maximal}\\
\mbox{connected subtree containing $h^+$, but not $\iota(h^+)$.}
\end{array}
\right.
\end{equation*}

The set of \textit{weightings of $(\Gamma, \bm{\psi})$ compatible with the legs weight} is
\begin{align*}
\mathsf{W}^{\bm{m}}_{\Gamma,\bm{\psi}} := \left\{ 
\begin{array}{l}
w \colon \mathrm{H}(\Gamma,\bm{\psi}) \rightarrow \mathbb{N}  \,: \, \\[.2cm]
\quad \ell^{\bm{m}}_{h^+} -1 \geq w(h^+,0) \geq w(h^+,1)  \geq \cdots \geq w(h^+,d_{h^+})  \\ [.2cm]
\qquad\qquad\mbox{for all heads $h^+$,}\\ [.2cm]
\quad 1 \leq w(h^-,1) < \cdots <  w(h^-,d_{h^-}) < w(\iota(h^-),0) \\ [.2cm]
\qquad\qquad\mbox{for all tails $h^-$,}\\ [.2cm]
\quad 1\leq w(\ell,1) < \cdots <  w(\ell,d_\ell) < m_\ell \\ [.2cm]
\qquad\qquad\mbox{for all legs $\ell$}
\end{array}
\right\}.
\end{align*}
One has
\begin{equation}
\label{eq:Wmnonempty}
\mbox{if} \quad \mathsf{W}^{\bm{m}}_{\Gamma,\bm{\psi}} \neq \varnothing, \quad \mbox{then} \quad
 d_\ell <m_\ell \quad \mbox{for all legs $\ell$}. 
\end{equation}
For each leg $\ell$, the inequality $d_\ell< m_\ell$ follows from the conditions on $w(\ell,e)$ for $e=1, \dots, d_\ell$.

For $w\in \mathsf{W}^{\bm{m}}_{\Gamma,\bm{\psi}}$, the corresponding \textit{weight} of $(\Gamma,\bm{\psi})$ is 
\begin{equation*}
w(\Gamma, \bm{\psi}) := \prod_{(h,e)\in \mathrm{H}(\Gamma, \bm{\psi})} w(h,e).
\end{equation*}
Define 
\begin{equation*}
{c}^{\bm{m}}_{\Gamma, \bm{\psi}} := \sum_{w\in \mathsf{W}^{\bm{m}}_{\Gamma, \bm{\psi}}} w(\Gamma, \bm{\psi}).
\end{equation*}

For $\bm{\psi}=\prod_{\ell=1}^n \psi_\ell^{d_\ell}\prod_{h\in \mathrm{H}(\Gamma)} \psi_h^{d_h}$ in $\Psi(\Gamma)$, define
\[
\bm{\beta}_{\Gamma, \bm{\psi}}:=
\prod_{\ell=1}^n (k \,\omega_\ell-\eta)^{d_\ell}
\prod_{h\in \mathrm{H}^+(\Gamma)} (k \,\omega_{h}-\eta)^{1+d_{h}}
\prod_{h\in \mathrm{H}^-(\Gamma)} (k \,\omega_{h}-\eta)^{d_{h}} 
\]
in $R^*\left(\mathbb{P}\mathbb{E}^k_\Gamma \right)$. One has $\deg \bm{\beta}_{\Gamma, \bm{\psi}} = |\mathrm{E}(\Gamma)|+\deg \bm{\psi}$.

\begin{definition}
\label{def:Fm}
For $\bm{m}=(m_1,\dots,m_n)$ with $n^+:=\sum_{\ell=1}^n m_\ell$,
define $\mathsf{F}^k_{g,\bm{m}}$ in $R^{n^+}\left(\mathbb{P} \mathbb{E}^k_{g,n} \right)$ as
\[
\mathsf{F}^k_{g,\bm{m}}:=
\sum_{\Gamma\in \mathsf{G}^\mathsf{rt}_{g,n}} \, (-1)^{|\mathrm{E}(\Gamma)|}  \,  {\xi_{\Gamma}}_* \left[  \prod_{\ell=1}^n (k \,\omega_\ell-\eta)^{m_\ell}
\sum_{\bm{\psi}\in \Psi(\Gamma)} {c}^{\bm{m}}_{\Gamma, \bm{\psi}}\, \bm{\psi} \, \bm{\beta}_{\Gamma, \bm{\psi}}^{-1}
\right].
\]
\end{definition}

For $\bm{m}=(1,\dots,1)=1^n$, the cycle $\mathsf{F}^k_{g,\bm{m}}$ specializes to the cycle $\mathsf{F}^k_{g,n}$ from Definition \ref{def:graphfor}.

\begin{theorem}
\label{thm:collidinggenusg}
The cycle $\mathsf{F}^k_{g,\bm{m}}$ in $R^{n^+}\left(\mathbb{P} \mathbb{E}^k_{g,n} \right)$ is obtained from $\mathsf{F}^k_{g,n^+}$ in $R^{n^+}\left(\mathbb{P} \mathbb{E}^k_{g,n^+} \right)$ by colliding the first $m_1$ marked points into the marked point $P_1$, the next $m_2$ marked points into the marked point $P_2$, etc.
\end{theorem}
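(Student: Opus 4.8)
The plan is to follow the strategy of Theorem~\ref{thm:collidinggenus0}, carried out now on $\P\E^k_{g,n}$ over curves with rational tails rather than on $\ov{\M}_{0,n+1}$. First I would reduce to colliding two marked points at a time. Iterating a single collision, and relabeling so that the collision always enlarges the first group, it suffices to prove: for $\bm{m}$ with $m_1\geq 2$ and $\bm{m}^-:=(m_1-1,1,m_2,\dots,m_n)$ (an $(n+1)$-tuple, with the new weight-$1$ leg labeled $2$),
\[
\mathsf{F}^k_{g,\bm{m}} = \pi_{2*}\!\left( \delta_{1,2}\cdot \mathsf{F}^k_{g,\bm{m}^-}\right)\qquad \in R^{n^+}\!\left(\P\E^k_{g,n}\right),
\]
where $\pi_2\colon \P\E^k_{g,n+1}\to\P\E^k_{g,n}$ forgets $P_2$, $\delta_{1,2}$ is the pull-back from $\ov{\M}_{g,n+1}$ of the divisor of curves with $P_1,P_2$ on a rational bubble, and one relabels the remaining markings. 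Since $\sum_\ell(m^-_\ell-1)=\sum_\ell(m_\ell-1)-1$ and the case $\bm{m}=(1,\dots,1)$ is $\mathsf{F}^k_{g,1^{n^+}}=\mathsf{F}^k_{g,n^+}$, an induction on $\sum_\ell(m_\ell-1)$ then gives the theorem.

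Next I would expand $\mathsf{F}^k_{g,\bm{m}^-}$ as a sum over $\Gamma^-\in\mathsf{G}^\mathsf{rt}_{g,n+1}$ and compute $\pi_{2*}(\delta_{1,2}\cdot -)$ term by term. By the projection formula, $\delta_{1,2}\cdot\xi_{\Gamma^-\!*}(\alpha)=\xi_{\Gamma^-\!*}(\alpha\cdot\xi_{\Gamma^-}^*\delta_{1,2})$, which vanishes unless the legs $1$ and $2$ are incident to a common vertex $v_1$ of $\Gamma^-$. For such $\Gamma^-$ there are two cases, exactly as in Theorem~\ref{thm:collidinggenus0}. In Case~1, $v_1$ is a rational trivalent vertex carrying legs $1,2$ and one half-edge $h_1^+$; then $\xi_{\Gamma^-}^*\delta_{1,2}=-\psi_{h_1^-}$ with $(h_1^+,h_1^-)\in\mathrm{E}(\Gamma^-)$, and $\pi_{2*}$ contracts the bubble, yielding the graph $\Gamma$ obtained from $\Gamma^-$ by deleting leg $2$ and contracting the edge at $v_1$, the merged leg (now of weight $m_1$) carrying one extra power of $\psi_1$. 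In Case~2, $v_1$ has valence $\geq 4$ or $v_1=v_g$; here $\psi_1$ and $\psi_2$ must have degree zero in the decoration because $\psi_{P_1},\psi_{P_2}$ restrict to $0$ on $\delta_{1,2}$, the class $\xi_{\Gamma^-}^*\delta_{1,2}$ is the boundary class $\delta_{\{1,2\}}$ on the $\ov{\M}$-factor of $v_1$, and $\pi_{2*}$ identifies $\delta_{\{1,2\}}\cong\ov{\M}_\Gamma$ with $\Gamma$ obtained by merging legs $1,2$ into a leg of weight $m_1$.

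Then I would match coefficients. The identity $c^{\bm{m}^-}_{\Gamma^-,\bm{\psi}^-}=c^{\bm{m}}_{\Gamma,\bm{\psi}}$ (with $\bm{\psi}=\bm{\psi}^-\cdot\psi_1$ in Case~1 and $\bm{\psi}=\bm{\psi}^-$ in Case~2) is the transcription of Cases~1 and~2 of Theorem~\ref{thm:collidinggenus0}: in Case~1 the assignment $w(1,e):=\widehat w(h_1^-,e)$ for $e<d_1$, $w(1,d_1):=\widehat w(h_1^+,0)$ is a weight-preserving bijection between the relevant sets of weightings, and in Case~2 all weighted capacities are unchanged since $m_1=(m_1-1)+1$; here $v_g$ plays the role that $h_0$ played in \S\ref{sec:recid}. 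The new verifications concern the $\P\E^k$-factors. On $\delta_{1,2}$ one has $\rho_1=\rho_2$, hence $\omega_1=\omega_2$, while $\eta$ is a $\pi_2$-pull-back, so the outer factor $(k\,\omega_1-\eta)^{m_1-1}(k\,\omega_2-\eta)$ passes to $(k\,\omega_1-\eta)^{m_1}$. For $\bm{\beta}^{-1}$: in Case~2 legs $1,2$ contribute no factor and the half-edge factors are unchanged, so $\bm{\beta}_{\Gamma^-,\bm{\psi}^-}=\bm{\beta}_{\Gamma,\bm{\psi}}$; in Case~1 the factor $(k\,\omega_{h_1^+}-\eta)(k\,\omega_{h_1^-}-\eta)^{d_{h_1^-}}$ attached to the contracted edge passes to $(k\,\omega_1-\eta)^{1+d_{h_1^-}}=(k\,\omega_1-\eta)^{d_1}$, using that $\omega_{h_1^+},\omega_{h_1^-},\omega_1$ all agree on the relevant locus (each being $\xi^*(\omega_\ell)$ for some $\ell\in\{1,2\}$) together with $|\mathrm{H}(\Gamma^-,\bm{\psi}^-)|=|\mathrm{H}(\Gamma,\bm{\psi})|$. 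The sign $(-1)^{|\mathrm{E}(\Gamma^-)|}$ becomes $(-1)^{|\mathrm{E}(\Gamma)|}$: the edge sets agree in Case~2, while in Case~1 the extra $-\psi_{h_1^-}$ supplies a sign which, with $|\mathrm{E}(\Gamma^-)|=|\mathrm{E}(\Gamma)|+1$, gives $(-1)^{|\mathrm{E}(\Gamma)|}$. Summing over all $\Gamma^-$ with legs $1,2$ on a common vertex reproduces $\mathsf{F}^k_{g,\bm{m}}$.

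The step I expect to be the main obstacle is the $\bm{\beta}$-bookkeeping in Case~1: one must check that the single $\bm{\beta}_{\Gamma^-,\bm{\psi}^-}$-factor of the contracted edge, together with the power of $(k\,\omega_1-\eta)$ released by the collision, reassembles exactly into the $\bm{\beta}_{\Gamma,\bm{\psi}}$-factor of the new leg; this rests on the coincidences $\omega_{h_1^+}=\omega_{h_1^-}=\omega_1$ on the relevant locus and on the degree accounting above. Everything else — the weight-preserving bijections of weightings, the vanishings $\psi_{P_i}\cdot\delta_{1,2}=0$, and the identification $\delta_{\{1,2\}}\cong\ov{\M}_\Gamma$ — is either a direct transcription of Theorem~\ref{thm:collidinggenus0} or standard boundary geometry of $\ov{\M}_{g,n}$.
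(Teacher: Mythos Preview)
Your proposal is correct and follows essentially the same approach as the paper: both reduce to a single collision via induction, split according to whether the common vertex is trivalent (Case~1) or not (Case~2), and transcribe the weight-preserving bijections of Theorem~\ref{thm:collidinggenus0}. You supply more detail than the paper does on the $\bm{\beta}$-bookkeeping and the behavior of the $(k\,\omega_\ell-\eta)$ factors under collision, which the paper's proof leaves implicit.
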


\begin{proof}
The argument is similar to the one for Theorem \ref{thm:collidinggenus0}. Here, trees have a positive genus vertex, and more than one leg can be decorated with $\psi$-classes. However, these differences do not basically alter the argument used for Theorem \ref{thm:collidinggenus0}.

Namely, it is enough to show that the cycle $\mathsf{F}^k_{g,\bm{m}}$ in $R^{n^+}\left(\mathbb{P} \mathbb{E}^k_{g,n} \right)$ for 
\[
\bm{m}=(m_1, \dots, m_{i-1}, m_i, 1^{n-i}) \qquad \mbox{with $i\leq n$}
\]
is obtained from $\mathsf{F}^k_{g,\bm{m}^-}$ in $R^{n^+}\left(\mathbb{P} \mathbb{E}^k_{g,n+1} \right)$ with 
\[
\bm{m}^-:=(m_1, \dots, m_{i-1}, m_i-1, 1^{n-i+1})
\]
by colliding the points $P_{i}$ and $P_{i+1}$. Here $1^a=(1,\dots, 1)$ is an $a$-tuple of elements $1$ for $a\geq 0$.
In other words, we show that
\begin{equation}
\label{eq:diffcollg}
\pi_{i+1 *}\left(  \delta_{0:\{i,i+1\}} \cdot \mathsf{F}^k_{g,\bm{m}^-} \right) = \mathsf{F}^k_{g,\bm{m}}  \in R^{n^+}\left(\mathbb{P} \mathbb{E}^k_{g,n} \right)
\end{equation}
where $\pi_{i+1}\colon \mathbb{P} \mathbb{E}^k_{g,n+1} \rightarrow \mathbb{P} \mathbb{E}^k_{g,n}$ is the map obtained by forgetting the point $P_{i+1}$, and $\delta_{0:\{i,i+1\}} $ is the class of the divisor in $\mathbb{P} \mathbb{E}^k_{g,n+1}$ which is the preimage of the divisor in $\ov{\M}_{g,n+1}$ whose general element is a nodal curve with two components such that one  component is rational and contains only two marked points, namely $P_{i}$ and $P_{i+1}$.

The non-zero terms arising in the expansion of the LHS of \eqref{eq:diffcollg} are obtained by colliding $P_{i}$ and $P_{i+1}$ on the terms of $\mathsf{F}^k_{g,\bm{m}^-}$ contributed by decorated graphs where the legs $i$ and $i+1$ are incident to the same vertex, say $v_i$. 
The argument  then continues as in the proof of Theorem \ref{thm:collidinggenus0}. 
Namely, the contributions to the LHS of \eqref{eq:diffcollg} obtained by decorated graphs where the valence of $v_i$ is three match  the contributions to $\mathsf{F}^k_{g,\bm{m}} $ where the degree of $\psi_i$ is positive, as in Case 1 there. Similarly, the contributions to the LHS of \eqref{eq:diffcollg} obtained by decorated graphs where the valence of $v_i$ is at least four match  the contributions to $\mathsf{F}^k_{g,\bm{m}} $ where the degree of $\psi_i$ is zero, as in Case 2 there. The statement follows.
\end{proof}


\section{Proof of Theorem \ref{thm:graphFor}}
\label{sec:proofgraphFor}

Here we complete the proof of Theorem \ref{thm:graphFor}. 
We prove more generally the following statement. 
Given an $n$-tuple $\bm{m}=(m_1,\dots,m_n)$ of positive integers, define 
\begin{equation}
\label{eq:Hkgm}
{\H}^k_{g,\bm{m}} \subset \mathbb{PE}^k_{g,n}
\end{equation}
as the locus consisting of  smooth $n$-pointed genus $g$ curves $(C,P_1, \dots, P_n)$ together with the class of a stable $k$-differential having a zero of order at least $m_\ell$ at the marked point $P_\ell$ for each $\ell=1,\dots, n$.

Let $n^+:=|\bm{m}|=\sum_{\ell=1}^n m_\ell$. Unless $k\geq 2$, $n^+ = k(2g-2)$ and $k \mid m_\ell$ for all $\ell$, the locus ${\H}^k_{g,\bm{m}}$ has pure codimension $n^+$ in $\mathbb{PE}^k_{g,n}$.

For $\bm{m}=(1,\dots,1)=1^n$, the locus ${\H}^k_{g,\bm{m}}$ specializes to ${\H}^k_{g,n}$ from \eqref{eq:Hkgndef}.

\begin{theorem}
\label{thm:graphFor-m}
Let $g\geq 2$ and $\bm{m}=(m_1,\dots,m_n)$ with $|\bm{m}|=n^+$. 
One has
\[
\left[\ov{\H}^k_{g,\bm{m}} \right] = \mathsf{F}^k_{g,\bm{m}}
\]
in:
\begin{enumerate}[(i)]
\item $A^{n^+}\left( \P\E^k_{g,n}\big|_{\M_{g,n}^\mathsf{rt}}  \right)$ when
\begin{equation}
\label{eq:hypknm}
\left\{
\begin{array}{l}
\mbox{$k=1$; or}\\[.1cm]
\mbox{$k\geq 2$ and $n^+ \leq k(2g-2)-1$;}\\[.1cm]
\end{array}
\right.
\end{equation}

\item $A^{n^+}\left( \P\E^k_{g,n} \right)$ when $n^+\leq k$. 
\end{enumerate}
\end{theorem}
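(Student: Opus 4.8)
The plan is to reduce the statement to the special case $\bm{m}=1^n$, which is Theorem~\ref{thm:graphFor}, and then to prove that case by induction on $n$ by showing that the cycle $\mathsf{F}^k_{g,n}$ of Definition~\ref{def:graphfor} satisfies the same recursion as $\bigl[\ov{\H}^k_{g,n}\bigr]$ established in Theorem~\ref{thm:pinZnrhonZ1plus}. First I would carry out the reduction. On the geometric side, whenever $\H^k_{g,\bm{m}}$ has pure codimension $n^+$ --- which holds throughout the ranges~\eqref{eq:hypknm} and $n^+\le k$ --- the locus $\ov{\H}^k_{g,\bm{m}}\subset\P\E^k_{g,n}$ is obtained from $\ov{\H}^k_{g,n^+}\subset\P\E^k_{g,n^+}$ by colliding the first $m_1$ marked points into $P_1$, the next $m_2$ into $P_2$, and so on, so its class is obtained from $\bigl[\ov{\H}^k_{g,n^+}\bigr]$ by the corresponding sequence of operations $\pi_*(\delta\cdot-)$. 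On the side of the formula, Theorem~\ref{thm:collidinggenusg} asserts precisely that $\mathsf{F}^k_{g,\bm{m}}$ is obtained from $\mathsf{F}^k_{g,n^+}$ by the very same colliding operations. Therefore $\bigl[\ov{\H}^k_{g,\bm{m}}\bigr]=\mathsf{F}^k_{g,\bm{m}}$ will follow at once from the equality $\bigl[\ov{\H}^k_{g,n^+}\bigr]=\mathsf{F}^k_{g,n^+}$, which holds in $A^{n^+}\!\left(\P\E^k_{g,n^+}\big|_{\M_{g,n^+}^{\mathsf{rt}}}\right)$ in case~(i) and in $A^{n^+}\!\left(\P\E^k_{g,n^+}\right)$ in case~(ii), by Theorem~\ref{thm:graphFor} applied with $n=n^+$.

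It thus remains to prove Theorem~\ref{thm:graphFor}. For the base case $n=1$ the only tree in $\mathsf{G}^{\mathsf{rt}}_{g,1}$ is the single genus $g$ vertex carrying one leg (a rational tail would violate stability), the decoration sum in Definition~\ref{def:graphfor} collapses to the trivial term, and one reads off $\mathsf{F}^k_{g,1}=k\,\omega_1-\eta$, which matches~\eqref{eq:Z1class}. For the inductive step, Theorem~\ref{thm:pinZnrhonZ1plus} gives
\[
\pi_n^*\bigl[\ov{\H}^k_{g,n-1}\bigr]\cdot\rho_n^*\bigl[\ov{\H}^k_{g,1}\bigr]=\bigl[\ov{\H}^k_{g,n}\bigr]+\sum_{\bm{I}}|\bm{I}|\,[E_{\bm{I}}],
\]
so it suffices to prove that $\mathsf{F}^k_{g,n}$ obeys the same identity; this is Theorem~\ref{thm:Frec}. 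I would establish it by expanding $\pi_n^*\mathsf{F}^k_{g,n-1}\cdot(k\,\omega_n-\eta)$ --- using $\rho_n^*\bigl[\ov{\H}^k_{g,1}\bigr]=k\,\omega_n-\eta$ by~\eqref{eq:Z1class} --- and sorting the resulting terms by the position of the new leg $n$ in each decorated graph: the terms where $n$ is absorbed into an existing vertex reassemble into $\mathsf{F}^k_{g,n}$, while those where $n$ spawns a new trivalent or external vertex produce the extra loci $E_{\bm{I}}$ together with correction terms coming from newly created rational tails. The entire bookkeeping on the rational subtrees of the graphs in $\mathsf{G}^{\mathsf{rt}}_{g,n}$ is governed by the genus-zero recursion of Theorem~\ref{thm:Decrec} for the generating polynomials $\mathsf{Dec}_n^{i}(D)$, with the formal variable $D$ specialized on each subtree to the divisor $k\,\omega_h-\eta$; the $\mathsf{E}_{\bm{I}}^i(D)$ and $\sigma_{0*}$-type terms there correspond respectively to the $E_{\bm{I}}$ and to the rational-tail corrections on the genus-$g$ side.

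I expect the main obstacle to be this combinatorial translation: keeping the signs $(-1)^{|\mathrm{E}(\Gamma)|}$, the weightings $c_{\Gamma,\bm{\psi}}$, and the inverse classes $\bm{\beta}_{\Gamma,\bm{\psi}}^{-1}$ coherently aligned as the rational subtrees are recombined, which I anticipate handling by a partition of decorated graphs parallel to the partition of $\mathsf{G}_{0,n+1}$ used for Theorem~\ref{thm:mainthmgenuszeroij}. A second, equally essential point is that inverting $\bm{\beta}_{\Gamma,\bm{\psi}}$ \emph{a priori} introduces ``overflow'' contributions from decorated graphs with $\deg\bm{\beta}_{\Gamma,\bm{\psi}}>n$; these must cancel both for $\mathsf{F}^k_{g,n}$ to land in pure degree $n$ and for the recursion to close, and they do so because they organize into the cycles $\mathsf{Z}(n,i,j)$ with $i,j\ge 1$, which vanish by Theorem~\ref{thm:van2}. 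Finally, the hypotheses on $(k,g,n^+)$ are exactly those under which Theorem~\ref{thm:pinZnrhonZ1plus} is available; in the excluded borderline case $k\ge 2$, $n^+=k(2g-2)$ the additional locus $E^{\mathrm{ab}}$ of Remark~\ref{rmk:1} appears on the right-hand side of the recursion, so there the formula computes $\bigl[\ov{\H}^k_{g,\bm{m}}\bigr]$ only up to that extra term.
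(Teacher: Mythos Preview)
Your proposal is correct and follows the same approach as the paper: reduce to $\bm{m}=1^{n^+}$ via Theorem~\ref{thm:collidinggenusg}, then induct on $n$ using Theorems~\ref{thm:pinZnrhonZ1plus} and~\ref{thm:Frec}, with Theorem~\ref{thm:Frec} proved by localizing to the maximal rational subtree containing leg~$n$ and invoking Theorem~\ref{thm:Decrec}. The one step you leave implicit is that comparing the two recursions requires $[E_{\bm{I}}]=\mathsf{E}_{\bm{I}}$, which is the general-$\bm{m}$ identity at level $n-1$; the paper obtains this by applying the colliding reduction to the inductive hypothesis $[\ov{\H}^k_{g,n-1}]=\mathsf{F}^k_{g,n-1}$ to get $[\ov{\H}^k_{g,(m,1^{n-1-m})}]=\mathsf{F}^k_{g,(m,1^{n-1-m})}$, and hence $[E_{\bm{I}}]=\gamma_*[\ov{\H}^k_{g,(m,1^{n-1-m})}]=\gamma_*\mathsf{F}^k_{g,(m,1^{n-1-m})}=\mathsf{E}_{\bm{I}}$, before concluding.
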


One recovers Theorem \ref{thm:graphFor} when $\bm{m}=(1, \dots,1)=1^n$.
When $k=1$ and $n^+=2g-2$, the statement gives all strata classes  in the projectivized Hodge bundle over curves with rational tails.

Sometimes, the locus \eqref{eq:Hkgm} is  the union of multiple  components. In this case, \eqref{eq:hypknm} implies that all components have the same dimension, and
 the class given in Theorem \ref{thm:graphFor-m} is a weighted sum of the classes of the components. For instance, see the example in \eqref{eq:H222}--\eqref{eq:F222}.

\smallskip

The key steps of the proof are provided by Theorems \ref{thm:pinZnrhonZ1plus}, \ref{thm:Decrec}, \ref{thm:collidinggenusg}, and the next Theorem \ref{thm:Frec}. This next statement is a recursive identity about the cycle $\mathsf{F}^k_{g,n}$ which is the counterpart of Theorem \ref{thm:pinZnrhonZ1plus}.
For this, we first define the cycles $\mathsf{E}_{\bm{I}}$.
For a non-empty $\bm{I}\subseteq \{1,\dots, n-1\}$ of size $|\bm{I}|=m$, recall the gluing map of degree one
$\gamma\colon \mathbb{PE}^k_{g,n-m} \times \ov{\M}_{0,\bm{I}\sqcup\{n,h_n\}} \rightarrow \mathbb{PE}^k_{g,n}$
from \eqref{eq:gamma0}.
Let
\begin{equation}
\label{eq:EIclass}
\mathsf{E}_{\bm{I}} := \gamma_* \,\mathsf{F}^k_{g,\bm{m}} \qquad \mbox{where}\qquad \bm{m}=\left(m, 1^{n-m-1}\right).
\end{equation}

Recall the map $\pi_n\colon \mathbb{PE}^k_{g,n} \rightarrow  \mathbb{PE}^k_{g,n-1}$ obtained by forgetting the point $P_n$, and the map $\rho_n\colon  \mathbb{PE}^k_{g,n} \rightarrow \mathbb{PE}^k_{g,1}$ obtained by forgetting all but the point $P_n$.

\begin{theorem}
\label{thm:Frec}
For $n\geq 2$, one has
\begin{equation}
\label{eq:recFkgn}
\pi_{n}^* \left( \mathsf{F}^k_{g,n-1} \right)\cdot \rho_{n}^* \left( \mathsf{F}^k_{g,1}\right) - \sum_{\bm{I}}|\bm{I}|\, \mathsf{E}_{\bm{I}} = \mathsf{F}^k_{g,n}
\quad \in R^n\left(\mathbb{P}\mathbb{E}^k_{g,n} \right)
\end{equation}
where the sum is over all non-empty $\bm{I}\subseteq \{1,\dots, n-1\}$.
\end{theorem}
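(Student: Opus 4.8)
## Proof plan for Theorem \ref{thm:Frec}

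The plan is to reduce the recursive identity \eqref{eq:recFkgn} for the genus-$g$ cycle $\mathsf{F}^k_{g,n}$ to the genus-$0$ recursion of Theorem \ref{thm:Decrec}, by unwinding the definition of $\mathsf{F}^k_{g,n}$ (Definition \ref{def:graphfor}) along the rational tails. The central observation is that a graph $\Gamma \in \mathsf{G}^\mathsf{rt}_{g,n}$ is a genus-$g$ vertex $v_g$ with a forest of rational rooted trees attached at the half-edges incident to $v_g$; grouping the legs $1,\dots,n$ by which maximal rational subtree (or $v_g$ itself) they lie in exhibits the summand of $\mathsf{F}^k_{g,n}$ indexed by $\Gamma$ as a product of a ``root contribution'' living on $\ov{\M}_{g,n(v_g)}$ and contributions of type $\mathsf{Dec}$ living on the $\ov{\M}_{0,\bullet+1}$ factors, glued together. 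More precisely, I will first establish a factorization lemma: under the correspondence between $(\Gamma,\bm\psi,w)$ and the data $(v_g\text{-part})\times\prod(\text{rational subtree parts})$, the weight $w(\Gamma,\bm\psi)$ factors as a product of the weights on each piece — this uses that the capacity $\ell_{h^+}$ of a head in $\Gamma$ equals the weighted capacity of the corresponding head in the rational subtree where it lives, that the $i$-weighting values at the heads incident to $v_g$ play the role of the special parameter $i = w(h_0,0)$ in $\mathsf{W}^{i}_{\mathrm{T},\bm\psi}$ for the attached subtree, and that the classes $\omega_h$ pull back compatibly through the gluing maps (as recorded in \S1.4 via \cite{CaT}). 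The factor $\bm\beta_{\Gamma,\bm\psi}^{-1}$ and the product $\prod_\ell(k\omega_\ell-\eta)$ likewise decompose multiplicatively over the pieces.

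With this factorization in hand, the next step is to identify the three terms of \eqref{eq:recFkgn} with the three terms of the genus-$0$ identity from Theorem \ref{thm:Decrec}, term by term, after summing over all ways of distributing the legs and all choices of the root data. The forgetful map $\pi_n$ on $\mathbb{PE}^k_{g,n}$ restricts, on each rational subtree containing the leg $n$, to the forgetful map $\pi_{n^{\rm ext}}$ on the corresponding $\ov{\M}_{0,n^{\rm ext}+1}$, so $\pi_n^*(\mathsf{F}^k_{g,n-1})$ matches (after the factorization) the $\pi_{n}^*(\mathsf{Dec}_{n-1}^i(D))$ term, with the variable $D$ specialized — this is where the trick of encoding the coefficients $c^{i,m}_{\mathrm{T},\bm\psi}$ in a polynomial in $D^{-1}$ pays off: $D$ gets replaced by divisor classes $k\omega_h-\eta$ on $\mathbb{PE}^k_\Gamma$ and the powers of $D^{-1}$ become the powers in $\bm\beta_{\Gamma,\bm\psi}^{-1}$, while the ``degree $n$'' truncation of $\mathsf{F}^k_{g,n}$ is exactly what makes only finitely many coefficients survive. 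The extra-locus term $\sum_{\bm I}|\bm I|\,\mathsf{E}_{\bm I}$, with $\mathsf{E}_{\bm I}=\gamma_*\mathsf{F}^k_{g,(m,1^{n-m-1})}$ and $\mathsf{F}^k_{g,(m,1^{n-m-1})}$ computed by colliding points via Theorem \ref{thm:collidinggenusg}, matches the $\sum_{\bm I}|\bm I|\,\mathsf{E}^i_{\bm I}(D)$ term through Lemma \ref{lemma:EIisgammapfwd}, which already identifies $\mathsf{E}^i_{\bm I}(D)$ as a pushforward of $\mathsf{Dec}^{i,m}_{n-m}(D)$ — the factor $\frac{1}{|\bm I|}$ in \eqref{eq:ditipsi} is cancelled by the weight $|\bm I|$ appearing in both \eqref{eq:recFkgn} and Theorem \ref{thm:pinZnrhonZ1plus}. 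Finally, the ``vertical'' term $\sum_{i^+>i} i\,\sigma_{0*}(\mathsf{Dec}_{n-1}^{i^+}(D))$ in Theorem \ref{thm:Decrec}, which arose in Lemma \ref{lem:DectZt} from triples violating the truncation property \eqref{eq:truncatedprop}, corresponds on the genus-$g$ side to graphs where the leg $n$ sits on a trivalent rational vertex adjacent to a vertex carrying a head whose weight exceeds $w$ at the node: but on $\mathbb{PE}^k_{g,n}$ such configurations are precisely the ones where the relevant divisor class $k\omega-\eta$ squared appears, and the combinatorics of reassembling $\mathsf{Dec}_n^i(D)$ from $\mathsf{Z}^{\rm t}$ plus these vertical terms is dictated by Lemma \ref{lem:DectZt}; I expect these to either be absorbed into $\pi_n^*(\mathsf{F}^k_{g,n-1})\cdot\rho_n^*(\mathsf{F}^k_{g,1})$ (the factor $\rho_n^*(\mathsf{F}^k_{g,1}) = \rho_n^*(k\omega-\eta)$ by \eqref{eq:Z1class}-style computation supplies exactly the ``extra'' divisor) or to cancel, a bookkeeping check analogous to the proof of Theorem \ref{thm:collidinggenus0}.

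The main obstacle I anticipate is the bookkeeping in the second and fourth steps: correctly matching the specialization of the formal variable $D$ with the divisor classes $k\omega_h-\eta$ across all the gluing maps $\zeta_{\mathrm T}$, and verifying that the ``vertical'' $\sigma_{0*}$ terms of Theorem \ref{thm:Decrec} conspire with the product $\rho_n^*(\mathsf{F}^k_{g,1})=k\omega-\eta$ (via \eqref{eq:Z1class}) so that the genus-$g$ recursion has exactly the shape \eqref{eq:recFkgn} with no leftover $\sigma_0$-type term. Concretely, one must check that when the leg $n$ lands on the genus-$g$ vertex, the contribution reproduces $\pi_n^*(\mathsf{F}^k_{g,n-1})$ times the divisor $k\omega_n-\eta = \rho_n^*(\mathsf{F}^k_{g,1})$, and when it lands deeper in a rational tail, the genus-$0$ recursion of Theorem \ref{thm:Decrec} applied to that subtree (with its own $\pi$, $\mathsf{E}_{\bm I}$, and $\sigma_0$ terms) reassembles, after pushforward to $\mathbb{PE}^k_{g,n}$, precisely the missing pieces — the $\sigma_0$ terms of the subtree recursion becoming genuine rational-tail graphs $\Gamma$ with an extra edge, hence contributing to $\mathsf{F}^k_{g,n}$ itself rather than to the correction terms. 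I would organize the proof, as in \S\ref{sec:mainthmgenuszero}, by partitioning the decorated graphs $(\Gamma,\bm\psi,w)$ according to the location of the leg $n$ and matching contributions part by part, invoking Theorem \ref{thm:Decrec} for the rational subtree containing $n$; the positive-genus vertex and the possibility of $\psi$-decorated legs introduce no essential new difficulty, exactly as in the proof of Theorem \ref{thm:collidinggenusg}.
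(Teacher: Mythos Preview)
Your plan matches the paper's proof: split on whether leg $n$ sits on the genus-$g$ vertex (trivial---multiply by $k\omega_n-\eta$) or on a rational vertex, and in the latter case apply Theorem~\ref{thm:Decrec} to the maximal rational subtree containing leg $n$, with $D$ specialized to $k\omega_{t_0}-\eta$ for $t_0$ the tail at the genus-$g$ vertex pointing toward that subtree. The factorization of weights and the identification of the $\pi_n^*\mathsf{Dec}$ and $\mathsf{E}_{\bm I}$ pieces proceed exactly as you outline.

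On your one uncertainty, the $\sigma_{0*}$ terms: your first guess is correct and your second is not. These terms are absorbed into $\pi_n^*(\mathsf{F}^k_{g,n-1})\cdot\rho_n^*(\mathsf{F}^k_{g,1})$, and the mechanism is the pull-back formula for $\psi$-classes as in \eqref{eq:pullbackpsi}. When computing $\pi_n^*\xi_{\ov\Gamma*}(\ov{\bm\psi})$ for $\ov\Gamma\in\mathsf{G}^{\rm rt}_{g,n-1}$ and attaching leg $n$ to the genus-$g$ vertex, if $\psi_{t_0}$ has positive degree in $\ov{\bm\psi}$ then one of the correction terms inserts a trivalent rational vertex carrying leg $n$ between the genus-$g$ vertex and the rational subtree $\ov{\mathrm{T}'}$, lowering the exponent of $\psi_{t_0}$ by one. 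The weighting $\ov w$ then decomposes as $w_\circ\cdot i\cdot \ov{w'}$ with $\ov{w'}\in\mathsf{W}^{i^+}_{\ov{\mathrm{T}'},\ov{\bm\psi'}}$ and $i^+>i>i_0$, and summing over these yields precisely $-\sum_{i^+>i}i\,\sigma_{0*}(\mathsf{Dec}_{n'-1}^{i^+}(D))$. So the $\sigma_{0*}$ terms belong to the LHS of \eqref{eq:recFkgn}, not to $\mathsf{F}^k_{g,n}$ itself; they do not create new RHS contributions with an extra edge as you suggest in your last paragraph.
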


\begin{proof}
We  verify that the contributions of each graph $\Gamma$ in $\mathsf{G}^{\rm rt}_{g,n}$ to the two sides of \eqref{eq:recFkgn} match, modulo tautological relations. For this, we distinguish two cases, depending on the position of the leg $n$ in each graph.

In the second case, it will be convenient to expand $\mathsf{F}^k_{g,n} \in R^n\left(\mathbb{P}\mathbb{E}^k_{g,n} \right)$   as
\begin{equation}
\label{eq:Fkgnw}
\mathsf{F}^k_{g,n} = \sum_{(\Gamma, \bm{\psi}, w)} \, (-1)^{|\mathrm{E}(\Gamma)|}  \, w(\Gamma, \bm{\psi})\, {\xi_{\Gamma}}_* \left[ 
 \bm{\psi} \, \bm{\beta}_{\Gamma, \bm{\psi}}^{-1} \,
\prod_{\ell=1}^n  (k \,\omega_\ell-\eta) 
\right]
\end{equation}
where the sum is over triples $(\Gamma, \bm{\psi}, w)$ such that $\Gamma\in \mathsf{G}^{\rm rt}_{g,n}$, $\bm{\psi}\in\Psi(\mathrm{T})$, and $w\in \mathsf{W}_{\Gamma, \bm{\psi}}$.
This formula follows by expanding the coefficients $c_{\Gamma, \bm{\psi}}$ in Definition \ref{def:graphfor}.

\smallskip

\noindent \textit{Case 1.} 
Consider $\Gamma\in\mathsf{G}^{\rm rt}_{g,n}$ where the leg $n$ is incident to the genus $g$ vertex.
On the LHS of \eqref{eq:recFkgn}, the graph $\Gamma$ contributes only to $\pi_{n}^* \left( \mathsf{F}^k_{g,n-1} \right)\cdot \rho_{n}^* \left( \mathsf{F}^k_{g,1}\right)$, and using Definition \ref{def:graphfor}, the contribution is
\[
(-1)^{|\mathrm{E}(\Gamma)|}\,{\xi_{\Gamma}}_* \!\left[\prod_{\ell=1}^{n-1} (k \,\omega_\ell-\eta) \!\! 
\sum_{\bm{\psi}\in \Psi(\Gamma)} c_{\Gamma, \bm{\psi}}\, \bm{\psi}\, \bm{\beta}_{\Gamma,\bm{\psi}}^{-1}
\right]
\cdot(k \,\omega_n-\eta).
\]
Rearranging factors, this is
\[
(-1)^{|\mathrm{E}(\Gamma)|}\,{\xi_{\Gamma}}_* \!\left[\prod_{\ell=1}^{n} (k \,\omega_\ell-\eta) \!\! 
\sum_{\bm{\psi}\in \Psi(\Gamma)} c_{\Gamma, \bm{\psi}}\, \bm{\psi}\, \bm{\beta}_{\Gamma,\bm{\psi}}^{-1}
\right]
\]
thus matching the contribution of $\Gamma$ to $\mathsf{F}^k_{g,n}$ in the RHS of \eqref{eq:recFkgn}.

\smallskip

\noindent \textit{Case 2.} Consider $\Gamma\in\mathsf{G}^{\rm rt}_{g,n}$ where the leg $n$ is incident to a rational vertex.  
We argue by applying Theorem \ref{thm:Decrec} locally to a rational subtree of $\Gamma$ containing the leg $n$.

Namely, let $\mathrm{T}$ be the maximal rational subtree of $\Gamma$ which contains the leg $n$, and let $\Gamma_\circ:=\Gamma \setminus \mathrm{T}$ be its complement. 
If $e= (h_0, t_0)\in \mathrm{E}(\Gamma)$ is the edge connecting the genus $g$ vertex in $\Gamma_\circ$ and the tree $\mathrm{T}$, then the head $h_0$ is in $\mathrm{T}$ and  the tail $t_0=\iota(h_0)$ is in $\Gamma_\circ$.

Let $\mathsf{G}_{\Gamma_\circ}$ be the subset of $\mathsf{G}_{g,n}^{\rm rt}$ consisting of graphs obtained by fixing $\Gamma_\circ$ and varying $\mathrm{T}$. This is:
\[
\mathsf{G}_{\Gamma_\circ} := \left\{ \widehat{\Gamma}:=\Gamma_\circ \sqcup_{e} \mathrm{T}'  \in \mathsf{G}_{g,n}^{\rm rt} \,\, |\,\,  \mathrm{T}'  \in \mathsf{G}_{0,n' +1} \right\}
\]
where $n'$ is the number of legs of $\Gamma$ within $\mathrm{T}$.
For each $\widehat{\Gamma}=\Gamma_\circ \sqcup_{e} \mathrm{T}' $ in $\mathsf{G}_{\Gamma_\circ}$, the gluing map $\xi_{\widehat{\Gamma}}$ from \eqref{eq:xiGamma} factors as $\xi_{\widehat{\Gamma}} = \zeta_{\Gamma} \circ \xi_{\mathrm{T}'}$ where $\xi_{\mathrm{T}'}$ is as in \eqref{eq:xiT} and
\[
\zeta_{\Gamma} \colon \ov{\M}_{0, n' + 1} \times \mathbb{PE}^k_{g,n(g)}  \times \prod_{v \in \mathrm{V}_0(\Gamma_\circ)} \ov{\M}_{0, n(v)} \longrightarrow  \mathbb{PE}^k_{g,n}
\]
is the gluing map of degree one defined by $\Gamma_\circ$ and the edge $e$. The map $\zeta_{\Gamma}$ extends the $k$-differentials of elements of $\mathbb{PE}^k_{g,n(g)}$ by zero on the attached rational tails. Recall that $n(g)$ is the valence of the genus $g$ vertex, and $\mathrm{V}_0(\Gamma_\circ)$ is the set of rational vertices of $\Gamma_\circ$.

Consider a triple $\left(\widehat{\Gamma}, \widehat{\bm{\psi}}, \widehat{w}\right)$ with $\widehat{\Gamma}\in \mathsf{G}_{\Gamma_\circ}$, $\widehat{\bm{\psi}}\in \Psi(\widehat{\Gamma})$, and $\widehat{w}\in \mathsf{W}_{\widehat{\Gamma}, \widehat{\bm{\psi}}}$.
The contribution of $\left(\widehat{\Gamma}, \widehat{\bm{\psi}}, \widehat{w}\right)$ to $\mathsf{F}^k_{g,n}$ as given in \eqref{eq:Fkgnw} is
\begin{equation}
\label{eq:contGammaetchat}
(-1)^{|\mathrm{E}(\widehat{\Gamma})|}  \,\, \widehat{w}\left(\widehat{\Gamma}, \widehat{\bm{\psi}}\right) \, {\xi_{\widehat{\Gamma}}}_* \left[ 
 \widehat{\bm{\psi}}  \,\, \bm{\beta}_{\widehat{\Gamma}, \widehat{\bm{\psi}}}^{-1} \,\,
\prod_{\ell=1}^n  (k \,\omega_\ell-\eta) 
\right].
\end{equation}
Given the decomposition $\widehat{\Gamma}=\Gamma_\circ \sqcup \mathrm{T}'$,
let $\bm{\psi}_\circ$ and  ${\bm{\psi}}'$ be the restrictions of $\widehat{\bm{\psi}}$ to $\Gamma_\circ$ and $\mathrm{T}'$ respectively. Similarly, let $w_\circ$ and $w'$ be the restrictions of $\widehat{w}$ to $\left(\Gamma_\circ, \bm{\psi}_\circ\right)$ and  $\left(\mathrm{T}', \bm{\psi}'\right)$, respectively. If $d_0$ is the degree of $\psi_{t_0}$ in $\bm{\psi}_\circ$, let
\begin{equation}
\label{eq:i0}
i_0 := w_\circ\left(t_0, d_0 \right).
\end{equation}
By definition of the weighting $\widehat{w}$, one has that $w'(h_0,0)=i$ for some $i>i_0$, i.e., $w'\in \mathsf{W}^i_{\mathrm{T}', \bm{\psi}'}$ for some $i>i_0$.

Decomposing as  
\begin{align*}
|\mathrm{E}(\widehat{\Gamma})| = |\mathrm{E}(\Gamma_\circ)| + |\mathrm{H}^+(\mathrm{T}')|, \qquad
 \widehat{w}\left(\widehat{\Gamma}, \widehat{\bm{\psi}}\right) =  w_\circ\left(\Gamma_\circ, \bm{\psi}_\circ\right) \,  w'\left(\mathrm{T}', \bm{\psi}'\right), \\
\xi_{\widehat{\Gamma}}= \zeta_{\Gamma} \circ \xi_{\mathrm{T}'}, \qquad
\widehat{\bm{\psi}} =  \bm{\psi}_\circ \,{\bm{\psi}}', \qquad
\bm{\beta}_{\widehat{\Gamma}, \widehat{\bm{\psi}}} = \bm{\beta}_{\Gamma_\circ, \bm{\psi}_\circ} \,\, \bm{\beta}_{\mathrm{T}', \bm{\psi}'}, 
\end{align*}
and using the identity
\[
{\xi_{\widehat{\Gamma}}}_* \left[ \bm{\alpha} \,\bm{\beta}_{\mathrm{T}', \bm{\psi}'}^{-1}\,\,\prod_{\ell=1}^n (k \,\omega_\ell-\eta) \right]= {\xi_{\widehat{\Gamma}}}_* \left[ \bm{\alpha}\,(k \,\omega_{t_0}-\eta)^{-|\mathrm{H}(\mathrm{T}', \bm{\psi}')|}\,\,\prod_{\ell=1}^n (k \,\omega_\ell-\eta) \right]  
\]
in $A^*\left(\mathbb{PE}^k_{g,n(g)}\right)$ for an arbitrary cycle $\bm{\alpha}$, the  contribution in \eqref{eq:contGammaetchat} is
\begin{multline*}
(-1)^{|\mathrm{E}(\Gamma_\circ)|} \, w_\circ\left(\Gamma_\circ, \bm{\psi}_\circ\right) \, {\zeta_{\Gamma}}_* \!\Bigg[
 \, \bm{\psi}_\circ  \,\, \bm{\beta}_{\Gamma_\circ, \bm{\psi}_\circ}^{-1} \,\, \prod_{\ell=1}^n (k \,\omega_\ell-\eta) \\
\times (-1)^{|\mathrm{H}^+(\mathrm{T}')|} \, w'\left(\mathrm{T}', \bm{\psi}'\right) \,{\xi_{\mathrm{T}'}}_* \!\left(  \bm{\psi}'  \right)  \, (k \,\omega_{t_0}-\eta)^{-|\mathrm{H}(\mathrm{T}', \bm{\psi}')|}
\Bigg].
\end{multline*}

Summing over all $(\bm{\psi}_\circ, w_\circ)$ and  $(\mathrm{T}', \bm{\psi}', w')$, one has
\begin{multline}
\label{eq:totalGovGamma}
\sum_{(\bm{\psi}_\circ, w_\circ)} (-1)^{|\mathrm{E}(\Gamma_\circ)|} \, w_\circ\left(\Gamma_\circ, \bm{\psi}_\circ\right) \, {\zeta_{\Gamma}}_* \!\Bigg[
 \, \bm{\psi}_\circ  \,\, \bm{\beta}_{\Gamma_\circ, \bm{\psi}_\circ}^{-1} \,\, \prod_{\ell=1}^n (k \,\omega_\ell-\eta) \\
\times  \sum _{i> i_0}   \mathsf{Dec}_{n'}^i(k \,\omega_{t_0}-\eta)  
\Bigg]
\end{multline}
where for each $i$ 
\[
\mathsf{Dec}_{n'}^i(D) = \sum_{(\mathrm{T}' , \bm{\psi}', w')} (-1)^{|\mathrm{H}^+(\mathrm{T}')|} \, w'\left(\mathrm{T}', \bm{\psi}'\right) \,{\xi_{\mathrm{T}'}}_* \!\left(  \bm{\psi}'  \right)   \, D^{-|\mathrm{H}(\mathrm{T}', \bm{\psi}')|} 
\]
in $A^*\left( \ov{\M}_{0,n'+1}\right)\left[D^{-1}\right]$, as in \eqref{eq:Decimdef} with $\mathsf{Dec}_{n'}^i(D) = \mathsf{Dec}_{n'}^{i,1}(D)$. The index $i_0$ appearing in \eqref{eq:totalGovGamma} is determined by $w_\circ$, as in~\eqref{eq:i0}.

From Theorem \ref{thm:Decrec}, we have
\begin{equation}
\label{eq:applicationofrec}
\pi_{n}^*\left( \mathsf{Dec}_{n'-1}^i(D) \right) 
- \sum_{\bm{I} } |\bm{I}| \, \mathsf{E}_{\bm{I}}^i(D)
-\sum_{i^+>i} i\,\sigma_{0*} \left(   \mathsf{Dec}_{n'-1}^{i^+}(D) \right)
= \mathsf{Dec}_{n'}^i(D).
\end{equation}
 
We next verify that the sum of the contributions of all $\widehat{\Gamma} \in \mathsf{G}_{\Gamma_\circ}$ to $\mathsf{F}^k_{g,n}$, as given by the expression obtained by
replacing \eqref{eq:applicationofrec} for  $i>i_0$ in \eqref{eq:totalGovGamma},  matches the sum of the contributions of such graphs to the LHS of~\eqref{eq:recFkgn}.

Indeed, the sum of the contributions of all $\widehat{\Gamma} \in \mathsf{G}_{\Gamma_\circ}$ to $\pi_{n}^* \left( \mathsf{F}^k_{g,n-1} \right)\cdot \rho_{n}^* \left( \mathsf{F}^k_{g,1}\right)$ is computed as follows.
Consider the term of $\pi_{n}^* \left( \mathsf{F}^k_{g,n-1} \right)\cdot \rho_{n}^* \left( \mathsf{F}^k_{g,1}\right)$ given by
\begin{equation}
\label{eq:pull-backtermbeforeexpanding}
\sum_{(\ov{\Gamma}, \ov{\bm{\psi}}, \ov{w})} (-1)^{|\mathrm{E}(\ov{\Gamma})|}  \, \ov{w}(\ov{\Gamma}, \ov{\bm{\psi}})\, 
\pi_n^* \left({\xi_{\ov{\Gamma}}}_* \left[ 
\ov{\bm{\psi}} \, \bm{\beta}_{\ov{\Gamma}, \ov{\bm{\psi}}}^{-1} \,
\prod_{\ell=1}^{n-1}  (k \,\omega_\ell-\eta) 
\right] \right) \cdot (k \,\omega_n-\eta) 
\end{equation}
where the sum is over: the graphs $\ov{\Gamma}$ obtained from the graphs $\widehat{\Gamma}\in \mathsf{G}_{\Gamma_\circ}$ by removing the leg $n$ (and contracting an edge if the vertex $v_n$ incident to the leg $n$ is trivalent); the decorations $\ov{\bm{\psi}}$ of $\ov{\Gamma}$; and the weightings $\ov{w}$ of $(\ov{\Gamma}, \ov{\bm{\psi}})$. 

For $\widehat{\Gamma}=\Gamma_\circ \sqcup_{e} \mathrm{T}' $ in $\mathsf{G}_{\Gamma_\circ}$, the graph obtained by removing the leg $n$ can be decomposed as $\ov{\Gamma}= \Gamma_\circ \sqcup \ov{\mathrm{T}'}$, where $\ov{\mathrm{T}'}$ is the tree obtained from $\mathrm{T}'$ by removing the leg $n$ (and contracting an edge when necessary, as before).

As in \eqref{eq:pullbackTbar} and \eqref{eq:pullbackpsi}, the expansion of \eqref{eq:pull-backtermbeforeexpanding} consists of a sum of the decorated graphs obtained by first adding the leg $n$ to all vertices of each $\ov{\Gamma}$, and then expanding the decorations $\pi^*(\ov{\bm{\psi}})$ accordingly.
The sum of the terms obtained by adding the leg $n$ to the subtree $\ov{\mathrm{T}'}$ of each $\ov{\Gamma}$, and expanding the decorations $\pi^*(\ov{\bm{\psi}})$ accordingly, is 
\begin{multline}
\label{eq:case2cont1}
\sum_{(\bm{\psi}_\circ, w_\circ)} (-1)^{|\mathrm{E}(\Gamma_\circ)|} \, w_\circ\left(\Gamma_\circ, \bm{\psi}_\circ\right) \, {\zeta_{\Gamma}}_* \!\Bigg[
 \, \bm{\psi}_\circ  \,\, \bm{\beta}_{\Gamma_\circ, \bm{\psi}_\circ}^{-1} \,\, \prod_{\ell=1}^{n} (k \,\omega_\ell-\eta) \\
\times  \sum _{i> i_0}   \pi_{n}^*\left( \mathsf{Dec}_{n'-1}^i(k \,\omega_{t_0}-\eta) \right) 
\Bigg] .
\end{multline}

When adding the leg $n$ to the genus $g$ vertex of a graph $\ov{\Gamma}$, the expansion of each decoration $\pi^*(\ov{\bm{\psi}})$ consists of $\ov{\bm{\psi}}$ decorating the graph with the leg $n$  attached to genus $g$ vertex (this case has been treated in Case 1), \textit{minus} additional  terms contributed by the pull-back of $\psi$-classes decorating the tails incident to the genus $g$ vertex. 
In particular, when the degree of $\psi_{t_0}$ in $\ov{\bm{\psi}}$ is positive, the expansion of $\pi^*(\ov{\bm{\psi}})$ includes the term corresponding to a decorated graph $(\widehat{\Gamma}, \widehat{\bm{\psi}})$ where $\widehat{\Gamma}=\Gamma_\circ \sqcup v_n\sqcup \ov{\mathrm{T}'}$ is obtained from $\ov{\Gamma}=\Gamma_\circ \sqcup_e \ov{\mathrm{T}'}$ with $e=(\ov{h}_0, t_0)$ by  inserting a trivalent vertex $v_n$ incident to the leg $n$, attached to the genus $g$ vertex in $\Gamma_\circ$ via an edge with tail equal to $t_0$, and attached to $\ov{\mathrm{T}'}$ via an edge with head equal to $\ov{h}_0$. The decoration $\widehat{\bm{\psi}}$ is obtained from $\ov{\bm{\psi}}$ by decreasing the degree of $\psi_{t_0}$ by one, i.e., $\widehat{\bm{\psi}}\cdot \psi_{t_0}= \ov{\bm{\psi}}$. 

A weighting $\ov{w}$ of such $(\ov{\Gamma}, \ov{\bm{\psi}})$ can be decomposed as follows.
Let $\bm{\psi}_\circ$ and $\ov{\bm{\psi}'}$ be the restrictions of $\widehat{\bm{\psi}}$ to $\Gamma_\circ$ and $\ov{\mathrm{T}'}$, respectively, so that $\widehat{\bm{\psi}}=\bm{\psi}_\circ \,\ov{\bm{\psi}'}$, and thus $\ov{\bm{\psi}}=\bm{\psi}_\circ \, \psi_{t_0}\,\ov{\bm{\psi}'}$. 
Let $d_0$ be the degree of $\psi_{t_0}$ in $\bm{\psi}_\circ$, so that $d_0+1$ is the degree of $\psi_{t_0}$ in $\ov{\bm{\psi}}$.
Similarly, let $w_\circ$ and $\ov{w'}$ be the restrictions of $\ov{w}$ to $(\Gamma_\circ, \bm{\psi}_\circ)$ and $(\ov{\mathrm{T}'}, \ov{\bm{\psi}'})$, respectively.
One has the decomposition
\[
\ov{w}(\ov{\Gamma}, \ov{\bm{\psi}}) = w_\circ\left(\Gamma_\circ, \bm{\psi}_\circ\right) \cdot i\cdot \ov{w'}(\ov{\mathrm{T}'}, \ov{\bm{\psi}'}) \qquad \mbox{for some $i> i_0$}
\]
where $i_0=\ov{w}(t_0, d_0)$ as in \eqref{eq:i0} and $i= \ov{w}(t_0, d_0+1)$. Moreover, one concludes that 
\[
\ov{w'}\in \mathsf{W}^{i^+}_{\ov{\mathrm{T}'}, \ov{\bm{\psi}'}} \qquad \mbox{for some $i^+>i$.} 
\]
The inequalities $i^+ > i > i_0$ follow from the definition of the weighting $\ov{w}$.

The gluing map $\xi_{\widehat{\Gamma}}$ for such $\widehat{\Gamma}=\Gamma_\circ \sqcup v_n\sqcup \ov{\mathrm{T}'}$ factors as 
\[
\xi_{\widehat{\Gamma}} = \zeta_{\Gamma} \circ \sigma_0 \circ \xi_{\ov{\mathrm{T}'}},
\]
where $\sigma_0\colon \ov{\M}_{0,n'}\rightarrow \ov{\M}_{0,n'+1}$ is the map obtained by attaching at $\ov{h}_0$ a rational component containing the leg $n$ and $h_0$.

Summing over all $(\bm{\psi}_\circ, w_\circ)$ and  $(\ov{\mathrm{T}'}, \ov{\bm{\psi}'}, \ov{w'})$, one obtains
\begin{multline}
\label{eq:case2cont2}
\sum_{(\bm{\psi}_\circ, w_\circ)} (-1)^{|\mathrm{E}(\Gamma_\circ)|} \, w_\circ\left(\Gamma_\circ, \bm{\psi}_\circ\right) \, {\zeta_{\Gamma}}_* \!\Bigg[
 \, \bm{\psi}_\circ  \,\, \bm{\beta}_{\Gamma_\circ, \bm{\psi}_\circ}^{-1} \,\, \prod_{\ell=1}^{n} (k \,\omega_\ell-\eta) \\
\times  \Bigg(-\sum _{i^+> i > i_0}  i\,\sigma_{0*} \left(   \mathsf{Dec}_{n'-1}^{i^+}(D) \right)\Bigg)
\Bigg] .
\end{multline}

Finally, after \eqref{eq:EIclass} and Definition \ref{def:Fm}, and applying Lemma \ref{lemma:EIisgammapfwd} when $|\bm{I}|<n'-1$, the sum of the contributions of all $\Gamma' \in \mathsf{G}_{\Gamma_\circ}$ to $\mathsf{E}_{\bm{I}}$ is
\begin{equation}
\label{eq:case2cont3}
\sum_{(\bm{\psi}_\circ, w_\circ)} (-1)^{|\mathrm{E}(\Gamma_\circ)|} \, w_\circ\left(\Gamma_\circ, \bm{\psi}_\circ\right) \, {\zeta_{\Gamma}}_* \!\Bigg[
 \, \bm{\psi}_\circ  \,\, \bm{\beta}_{\Gamma_\circ, \bm{\psi}_\circ}^{-1} \,\, \prod_{\ell=1}^{n} (k \,\omega_\ell-\eta) 
  \sum _{i> i_0}    \mathsf{E}_{\bm{I}}^i (k \,\omega_{t_0}-\eta)
\Bigg]  .
\end{equation}

The sum of \eqref{eq:case2cont1} and \eqref{eq:case2cont2} gives the contribution of all $\widehat{\Gamma} \in \mathsf{G}_{\Gamma_\circ}$  to $\pi_{n}^* \left( \mathsf{F}^k_{g,n-1} \right)\cdot \rho_{n}^* \left( \mathsf{F}^k_{g,1}\right)$. Additionally, using  \eqref{eq:case2cont3} for each $\bm{I}$, one obtains that
the sum of the contributions of all $\widehat{\Gamma} \in \mathsf{G}_{\Gamma_\circ}$  to the LHS of~\eqref{eq:recFkgn} matches
the sum of the contributions of such graphs to $\mathsf{F}^k_{g,n}$, as given by the expression obtained by replacing \eqref{eq:applicationofrec} for  $i>i_0$ in \eqref{eq:totalGovGamma}.
This ends the proof.
\end{proof}

We now complete the proof of Theorem \ref{thm:graphFor-m}:

\begin{proof}[Proof of Theorem \ref{thm:graphFor-m}]
First we argue that the case  $\bm{m}=(1,\dots,1)$  implies the case of arbitrary  $\bm{m}$.
Precisely, assume the statement holds for a given $n$ and $\bm{m}=(1,\dots,1)$, so that if \eqref{eq:hypknm} holds, then one has
\begin{equation}
\label{eq:rechypm11}
\left[\ov{\H}^k_{g,n} \right] = \mathsf{F}^k_{g,n} \quad \in A^{n}\left( \P\E^k_{g,n}\big|_{\M_{g,n}^\mathsf{rt}}  \right)
\end{equation}
and if $n\leq k$, then \eqref{eq:rechypm11} extends over $\ov{\M}_{g,n}$.
Given a sequence $\bm{m}=(m_1,\dots,m_{n^-})$ with $|\bm{m}|=n$ of length $n^-< n$,
 the locus $\ov{\H}^k_{g,\bm{m}}$ in $\P\E^k_{g,n^-}$ is obtained from the locus $\ov{\H}^k_{g,n}$ by colliding the first $m_1$ marked points into the marked point $P_1$, the next $m_2$ marked points into the marked point $P_2$, etc. From Theorem \ref{thm:collidinggenusg}, our assumption implies the statement for~$\bm{m}$. 

\smallskip

Next, we proceed by recursion on $n$ to show the case $\bm{m}=(1,\dots,1)$. The base case $n=1$ is established by  \eqref{eq:Z1class}. 
Assume \eqref{eq:hypknm} and assume the statement holds for $n-1$, i.e., one has
\[
\left[\ov{\H}^k_{g,n-1} \right] = \mathsf{F}^k_{g,n-1} \quad \in A^{n-1}\left( \P\E^k_{g,n-1}\big|_{\M_{g,n-1}^\mathsf{rt}}  \right)
\]
and this extends over $\ov{\M}_{g,n-1}$ when  $n-1\leq k$.
 The argument in the previous paragraph shows that given $\bm{m}=(m,1,\dots,1)$ with $|\bm{m}|=n-1$ of length $(n-1)^-=n-m\leq n-1$, one has
\begin{equation}
\label{eq:HmisFm}
\left[ \ov{\H}^k_{g,\bm{m}} \right] = \mathsf{F}^k_{g,\bm{m}}  \quad \in A^{n-1}\left( \P\E^k_{g,n-m}\big|_{\M_{g,n-m}^\mathsf{rt}}  \right)
\end{equation}
and this extends over $\ov{\M}_{g,n-m}$ when  $n-1\leq k$.
Recall the loci ${E}_{\bm{I}}$ from \eqref{eq:EI} defined as ${E}_{\bm{I}}=\gamma_*\, \ov{\H}^k_{g,\bm{m}}$, where the map $\gamma$ is as in \eqref{eq:gamma0}.
From \eqref{eq:HmisFm} and \eqref{eq:EIclass}, it follows that 
\[
 [{E}_{\bm{I}}]  = \mathsf{E}_{\bm{I}} \quad \in A^{n}\left( \P\E^k_{g,n}\big|_{\M_{g,n}^\mathsf{rt}}  \right)
\]
and over $\ov{\M}_{g,n}$ when  $n\leq k$. The assumption \eqref{eq:hypknm} allows us to use Theorem \ref{thm:pinZnrhonZ1plus}.
 Then the statement for  $n$ and  $\bm{m}=(1,\dots,1)$ follows from Theorems \ref{thm:pinZnrhonZ1plus} and \ref{thm:Frec}. This ends the proof.
\end{proof}

\begin{remark}
\label{rmk:2}
In the case $k\geq 2$, $n=k(2g-2)$, and $\bm{m}=(1,\dots,1)$,
the proof of Theorem \ref{thm:graphFor-m} together with \eqref{eq:k2plusnmax} show that
\[
\mathsf{F}^k_{g,n} = \left[\ov{\H}_{g, n}^k \right] + \left[ E_{\rm{max}}^{\textrm{ab}} \right] \quad \in A^{n}\left( \P\E^k_{g,n}\big|_{\M_{g,n}^\mathsf{rt}}  \right).
\]
\end{remark}


\section{Examples}
\label{sec:ex}

We collect here some examples of the cycles $\mathsf{F}^k_{g,n}$ from Definition \ref{def:graphfor} and   $\mathsf{F}^k_{g,\bm{m}}$ from Definition \ref{def:Fm}, and verify our results in some special cases.

\subsection{The case $n=3$}
The cycle $\mathsf{F}^k_{g,3} \in R^3\left(\P\E^k_{g,3} \right)$ is
\begin{align*}
\mathsf{F}^k_{g,3} = &
\mbox{
\begin{tikzpicture}[baseline={([yshift=-.5ex]current bounding box.center)}]
      \path(0,0) ellipse (2 and 3);
      \tikzstyle{level 1}=[counterclockwise from=30,level distance=15mm,sibling angle=120]
      \node [draw,circle,inner sep=2.5] (A0) at (0:0) {$\scriptstyle{g}$}
            child {node [label=30: {$k\,\omega_2 - \eta$}]{}}
            child {node [label=150: {$k\,\omega_1 - \eta$}]{}}
            child {node [label=-90: {$k\,\omega_3 - \eta$}]{}};
    \end{tikzpicture}
}
-
\mbox{
\begin{tikzpicture}[baseline={([yshift=-.5ex]current bounding box.center)}]
      \path(0,0) ellipse (2 and 3);
      \tikzstyle{level 1}=[counterclockwise from=-90,level distance=15mm,sibling angle=120]
      \node [draw,circle,inner sep=2.5] (A0) at (180:2) {$\scriptstyle{g}$}
	    child {node [label=-90: {$k\,\omega - \eta$}]{}};
      \tikzstyle{level 1}=[counterclockwise from=-60,level distance=12mm,sibling angle=120]
      \node [draw,circle,fill] (A1) at (0:2) {$\scriptstyle{}$}
            child {node [label=-60: {$\scriptstyle{}$}]{}}
	    child {node [label=60: {$\scriptstyle{}$}]{}};
      \path (A0) edge [] node[auto, above=0.5, label={180:${k\,\omega-\eta}$}]{} (A1);
    \end{tikzpicture}
}  
-3
\mbox{
\begin{tikzpicture}[baseline={([yshift=-.5ex]current bounding box.center)}]
      \path(0,0) ellipse (2 and 3);
      \tikzstyle{level 1}=[counterclockwise from=-90,level distance=15mm,sibling angle=120]
      \node [draw,circle,inner sep=2.5] (A0) at (180:2) {$\scriptstyle{g}$};
      \tikzstyle{level 1}=[counterclockwise from=-60,level distance=12mm,sibling angle=60]
      \node [draw,circle,fill] (A1) at (0:2) {}
            child {node [label=-60: {}]{}}
	    child {node [label=0: {}]{}}
    	    child {node [label=60: {}]{}};
      \path (A0) edge [] node[auto, above=0.5, label={180:$(k\,\omega-\eta)^2$}]{} (A1);
    \end{tikzpicture}
}  \\
& 
-7
\mbox{
\begin{tikzpicture}[baseline={([yshift=-.5ex]current bounding box.center)}]
      \path(0,0) ellipse (2 and 3);
      \tikzstyle{level 1}=[counterclockwise from=-90,level distance=15mm,sibling angle=120]
      \node [draw,circle,inner sep=2.5] (A0) at (180:2) {$\scriptstyle{g}$};
      \tikzstyle{level 1}=[counterclockwise from=-60,level distance=12mm,sibling angle=60]
      \node [draw,circle,fill] (A1) at (0:2) {}
            child {node [label=-60: {}]{}}
	    child {node [label=0: {}]{}}
    	    child {node [label=60: {}]{}};
      \path (A0) edge []  node[auto, above=0.5, label={180:$k\,\omega-\eta$}]{}
      				  node[near end, below=0.1, label={-90:$\psi$}]{} (A1);
    \end{tikzpicture}
} 
-2
\mbox{
\begin{tikzpicture}[baseline={([yshift=-.5ex]current bounding box.center)}]
      \path(0,0) ellipse (2 and 3);
      \tikzstyle{level 1}=[counterclockwise from=-90,level distance=15mm,sibling angle=120]
      \node [draw,circle,inner sep=2.5] (A0) at (180:2) {$\scriptstyle{g}$};
      \tikzstyle{level 1}=[counterclockwise from=-60,level distance=12mm,sibling angle=60]
      \node [draw,circle,fill] (A1) at (0:2) {}
            child {node [label=-60: {}]{}}
	    child {node [label=0: {}]{}}
    	    child {node [label=60: {}]{}};
      \path (A0) edge []  node[auto, above=0.5, label={180:$\psi(k\,\omega-\eta)$}]{} (A1);
    \end{tikzpicture}
} 
-6
\mbox{
\begin{tikzpicture}[baseline={([yshift=-.5ex]current bounding box.center)}]
      \path(0,0) ellipse (2 and 3);
      \tikzstyle{level 1}=[counterclockwise from=-90,level distance=15mm,sibling angle=120]
      \node [draw,circle,inner sep=2.5] (A0) at (180:2) {$\scriptstyle{g}$};
      \tikzstyle{level 1}=[counterclockwise from=-60,level distance=12mm,sibling angle=60]
      \node [draw,circle,fill] (A1) at (0:2) {}
            child {node [label=-60: {}]{}}
	    child {node [label=0: {}]{}}
    	    child {node [label=60: {}]{}};
      \path (A0) edge []  node[near start, above=0.1, label={90:$\psi$}]{}
      				  node[near end, below=0.1, label={-90:$\psi$}]{} (A1);
    \end{tikzpicture}
} \\
&
+3
\mbox{
\begin{tikzpicture}[baseline={([yshift=-.5ex]current bounding box.center)}]
      \path(0,0) ellipse (3 and 2);
      \tikzstyle{level 1}=[counterclockwise from=-60, level distance=12mm, sibling angle=120]
      \node [draw,circle,fill] (A0) at (0:2.5) {}
	    child  {node [label=-60: {$\scriptstyle{}$}]{}}
    	    child  {node [label=60: {$\scriptstyle{}$}]{}};
       \tikzstyle{level 1}=[counterclockwise from=-90,level distance=12mm,sibling angle=60]
        \node [draw,circle,fill] (A1) at (0:0) {}
    	    child {node [label=-90: {$\scriptstyle{}$}]{}};
       \tikzstyle{level 1}=[counterclockwise from=120,level distance=12mm,sibling angle=120]
             \node [draw,circle,inner sep=2.5] (A2) at (180:3) {$\scriptstyle{g}$};
        \path (A0) edge [] node[]{} (A1);
        \path (A1) edge [] node[auto, above=0.5, label={180:$k\,\omega-\eta$}]{} (A2);        
    \end{tikzpicture}
} 
+2
\mbox{
\begin{tikzpicture}[baseline={([yshift=-.5ex]current bounding box.center)}]
      \path(0,0) ellipse (3 and 2);
      \tikzstyle{level 1}=[counterclockwise from=-60, level distance=12mm, sibling angle=120]
      \node [draw,circle,fill] (A0) at (0:2.5) {}
	    child  {node [label=-60: {$\scriptstyle{}$}]{}}
    	    child  {node [label=60: {$\scriptstyle{}$}]{}};
       \tikzstyle{level 1}=[counterclockwise from=-90,level distance=12mm,sibling angle=60]
        \node [draw,circle,fill] (A1) at (0:0) {}
    	    child {node [label=-90: {$\scriptstyle{}$}]{}};
       \tikzstyle{level 1}=[counterclockwise from=120,level distance=12mm,sibling angle=120]
             \node [draw,circle,inner sep=2.5] (A2) at (180:3) {$\scriptstyle{g}$};
        \path (A0) edge [] node[]{} (A1);
        \path (A1) edge [] node[near end, above=0.1, label={90:$\psi$}]{} (A2);        
    \end{tikzpicture}
} .
\end{align*}

\subsection{A vanishing cycle}
\label{sec:BP}
Specializing the above cycle at $g=2$ and $k=1$, we obtain the vanishing cycle
\[
\mathsf{F}^1_{2,3}=0 \qquad \in R^3\left( \P\E^1_{2,3}\big|_{\M_{2,3}^\mathsf{rt}} \right).
\]
This is a special case of Corollary \ref{cor:tautrel}, and can be verified as follows.
Since $\eta^2 = \eta\,\lambda_1-\lambda_2$ in $A^*(\P\E^1_{2,3})$ where $\lambda_i:=c_i(\E^1_{2,3})=0$ in $A^*(\M_{2,3}^\mathsf{rt})$, all terms with a factor of $\eta^2$ vanish. 
The coefficient of $\eta$ is a cycle in $R^2(\M_{2,3}^\mathsf{rt})$ which recovers the restriction of the  relation in $A^2(\ov{\M}_{2,3})$ from \cite{belorousski2000descendent} plus the pull-back of a relation in $A^2(\M_{2,2}^\mathsf{rt})$ from \cite{MR1672112}.
Finally, the constant term in $\eta$ is in $R^3(\M_{2,3}^\mathsf{rt})$
and vanishes due to known relations (see e.g.,~\cite{tavakol2014tautological}).

\subsection{The Logan divisor}
\label{eq:Logandiv}
Here we verify how the cycle $\mathsf{F}^k_{g,n}$ for $n=g$ and $k=1$ recovers a well-known divisor on moduli spaces of curves. 

The \textit{Logan divisor} $\mathfrak{L}_g$  in ${\M}_{g,g}$ is the locus of $g$-pointed genus $g$ curves $(C,P_1,\dots,P_g)$ such that the divisor $P_1+\cdots+P_g$ moves in a pencil \cite{MR1953519}.
Equivalently, via the Riemann-Roch theorem, the $g$ marked points are required to be zeros of a common abelian differential. 
Hence the Logan divisor is the push-forward of the incidence locus $\ov{\H}^1_{g,g}$ via the forgetful map $\varphi \colon \mathbb{PE}^1_{g,g}\rightarrow \ov{\M}_{g,g}$, and one has
\[
\Big[ \ov{\mathfrak{L}}_g \Big] = \varphi_* \left[ \ov{\H}^1_{g,g} \right] \in \mathrm{Pic}\left( \ov{\M}_{g,g} \right).
\]

As a check on Theorem \ref{thm:graphFor}, we verify that $\varphi_*\, \mathsf{F}^1_{g,g} \equiv \ov{\mathfrak{L}}_g$ on $\M^{\rm rt}_{g,g}$.
The cycle $\mathsf{F}^1_{g,g}$ is a polynomial in $\eta$ of degree $g$. First, we replace $(-\eta)^g$ via the relation $(-\eta)^g = - \lambda_1 (-\eta)^{g-1} + \cdots$ in $A^*\left(\P\E^1_{g,g} \right)$. Then we use that $\varphi_*\left((-\eta)^d\right)=1$ if $d=g-1$, and vanishes if $d<g-1$. It follows that
\[
\varphi_*\, \mathsf{F}^1_{g,g} = \left[ \mathsf{F}^1_{g,g} \right]_{g-1} -\lambda_1 \, \left[ \mathsf{F}^1_{g,g} \right]_{g}
\]
where $[X]_i$ if the coefficient of $(-\eta)^i$ in $X$. Hence we have
\[
\varphi_*\, \mathsf{F}^1_{g,g} = \sum_{i=1}^g \omega _i - \lambda_1 - \sum_{i=2}^g {i \choose 2} \,\delta_{0:i} \in \mathrm{Pic}\left( \M^{\rm rt}_{g,g} \right).
\]
This checks with the restriction on $\M^{\rm rt}_{g,g}$ of the  class of $\mathfrak{L}_g$ from \cite{MR1953519}. Here $\delta_{0:i}$ is the class of the divisor whose general element has an elliptic tail containing $i$ marked points.
Additionally, for $g=2$, using Theorem \ref{thm:classZ2}, one verifies  $\varphi_*\,\mathsf{F}^1_{2,2} \equiv \ov{\mathfrak{L}}_2$
over $\ov{\M}_{2,2}$, as in \eqref{eq:phipfwdH122}.

\subsection{The case of one heavy marked point}
When $\bm{m}=(a)$ for some positive~$a$, the cycle $\mathsf{F}^k_{g,\bm{m}}$ from Definition \ref{def:Fm} is
\[
\mathsf{F}^k_{g,(a)}= \sum_{b=0}^{a-1} e_b\left(1, \dots,  a-1\right)\,\left(k\,\omega_1 - \eta\right)^{a-b} \, \psi_1^{b} \qquad \in R^a\left(\P\E^k_{g,1} \right)
\]
where $e_b(x_1,\dots,x_{a-1})$ is the $b$-th elementary symmetric function in $a-1$ variables, here evaluated at $1,\dots, a-1$. Since $\omega_1=\psi_1$ in $R^*\left(\P\E^k_{g,1} \right)$, this cycle can be rewritten as
\begin{equation}
\label{eq:Fkga}
\mathsf{F}^k_{g,(a)} = \prod_{b=0}^{a-1}\left((k+b)\psi_1 - \eta\right) \qquad \in R^a\left(\P\E^k_{g,1} \right).
\end{equation}
Theorem \ref{thm:graphFor-m} shows that
\begin{equation}
\label{eq:HisFkga}
\left[ \ov{\H}^k_{g,(a)} \right] = \mathsf{F}^k_{g,(a)} \quad \in A^{a}\left( \P\E^k_{g,1}\big|_{\M_{g,1}}  \right)
\end{equation}
when either $k=1$, or $k\geq 2$ and $a \leq k(2g-2)-1$, and this holds in $A^{a}\left( \P\E^k_{g,1} \right)$ when $a\leq k$.

For instance, when $a=k=g=2$, the  locus $\ov{\H}^2_{2,(2)}$ consists of two equidimensional components:
\begin{equation}
\label{eq:H222}
\ov{\H}^2_{2,(2)} = \ov{\H}^2_{2,1}(2,1,1) \cup \ov{\H}^{2, \mathrm{ab}}_{2,1}(2,2) \subseteq\P\E^2_{2,1}
\end{equation}
where $\ov{\H}^2_{2,1}(2,1,1)$ consists of quadratic differentials that  have generically a double zero at the marked point and two simple zeros elsewhere, and $\ov{\H}^{2, \mathrm{ab}}_{2,1}(2,2)$ consists of squares of abelian differentials that have generically double zeros at both the marked point and  its (unmarked) hyperelliptic conjugate point. In this case, the class of $\ov{\H}^2_{2,(2)}$ decomposes as:
\begin{equation}
\label{eq:F222}
\mathsf{F}^2_{2,(2)} = \left[ \ov{\H}^2_{2,1}(2,1,1) \right] +  \left[ \ov{\H}^{2, \mathrm{ab}}_{2,1}(2,2) \right] \quad\in A^2\left( \P\E^2_{2,1} \right).
\end{equation}
As a check, one has $\varphi_* \,\mathsf{F}^2_{2,(2)} = \varphi_*(\eta^2)=1$, where $\varphi\colon \P\E^2_{2,1} \rightarrow \ov{\M}_{2,1}$ is the forgetful map. Indeed, the restriction of $\varphi$ to $\ov{\H}^{2, \mathrm{ab}}_{2,1}(2,2)$ has  degree one, while $\varphi$ has  positive-dimensional fibers when restricted to the other component.

\subsection{The case of one heavy unmarked point}
By forgetting the marked point, the locus ${\H}^k_{g,(a)}$ gives rise to the stratum 
\[
{\H}^k_{g}(a)\subseteq\P\E^k_g 
\]
consisting of $k$-differentials vanishing with order at least $a$ at some point. 
Let $\pi\colon \P\E^k_{g,1} \rightarrow \P\E^k_g$ be the map obtained by forgetting the marked point, and let $\kappa_b:=\pi_*\left(\psi^{b+1} \right)$.
We deduce:

\begin{corollary}
\label{cor:heavypt}
For $g\geq 2$, one has
\[
\left[\ov{\H}^k_{g}(a)\right] = \sum_{b=1}^{a} (-1)^{a-b} \, e_b(k,\dots,k+a-1) \, \kappa_{b-1} \,\eta^{a-b} 
\]
in:
\begin{enumerate}[(i)]
\item $A^{a-1}\left( \P\E^k_{g}\big|_{\M_{g}}  \right)$ when
\[
\left\{
\begin{array}{l}
\mbox{$k=1$; or}\\[.1cm]
\mbox{$k\geq 2$ and $a \leq k(2g-2)-1$;}
\end{array}
\right.
\]
\item $A^{a-1}\left( \P\E^k_{g} \right)$ when $a\leq k$. 
\end{enumerate}
\end{corollary}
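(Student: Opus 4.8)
The plan is to deduce Corollary \ref{cor:heavypt} from the case $\bm{m}=(a)$ of Theorem \ref{thm:graphFor-m} by pushing forward along $\pi\colon \P\E^k_{g,1}\to\P\E^k_g$. First I would note that $\pi$ restricts to a generically finite map from $\ov{\H}^k_{g,(a)}$ onto its image $\ov{\H}^k_{g}(a)$: under the codimension hypotheses of Theorem \ref{thm:graphFor-m} every component of $\ov{\H}^k_{g,(a)}$ has codimension $a$ in $\P\E^k_{g,1}$, so $\pi$ contracts no component, and on the principal component it is birational, a generic $k$-differential with a zero of order $\geq a$ having exactly one such zero. Hence
\[
\left[\ov{\H}^k_{g}(a)\right] = \pi_*\left[\ov{\H}^k_{g,(a)}\right]
\]
in the relevant Chow group, with the same weighted-class convention as in Theorem \ref{thm:graphFor-m} over the sporadic imprimitive components, where the weight is the degree of $\pi$. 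Combining this with \eqref{eq:HisFkga} reduces the statement to the computation of $\pi_*\mathsf{F}^k_{g,(a)}$, and the two ranges in (i) and (ii) are precisely those of Theorem \ref{thm:graphFor-m} with $n^+=|\bm{m}|=a$.

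For the computation I would start from the closed form \eqref{eq:Fkga}, $\mathsf{F}^k_{g,(a)}=\prod_{b=0}^{a-1}\big((k+b)\psi_1-\eta\big)$, and expand it in powers of $\eta$:
\[
\mathsf{F}^k_{g,(a)} = \sum_{b=0}^{a} e_b(k,k+1,\dots,k+a-1)\,\psi_1^{\,b}\,(-\eta)^{a-b}.
\]
Since the $k$-th Hodge bundle $\E^k_{g,1}$ is pulled back from $\ov{\M}_g$, one has $\eta=\pi^*\eta$ on $\P\E^k_{g,1}$, so the projection formula gives
\[
\pi_*\mathsf{F}^k_{g,(a)} = \sum_{b=0}^{a} e_b(k,\dots,k+a-1)\,(-\eta)^{a-b}\,\pi_*\!\left(\psi_1^{\,b}\right).
\]
Now $\pi_*(\psi_1^{0})=\pi_*(1)=0$ since $\pi$ has relative dimension one, while $\pi_*(\psi_1^{\,b})=\kappa_{b-1}$ for $b\geq 1$ by the definition $\kappa_{b}=\pi_*(\psi^{b+1})$. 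Writing $(-\eta)^{a-b}=(-1)^{a-b}\eta^{a-b}$ and dropping the vanishing $b=0$ term leaves
\[
\pi_*\mathsf{F}^k_{g,(a)} = \sum_{b=1}^{a}(-1)^{a-b}\,e_b(k,\dots,k+a-1)\,\kappa_{b-1}\,\eta^{a-b},
\]
which is the asserted formula.

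The genuinely delicate point I expect is the first step, namely pinning down the exact sense of $[\ov{\H}^k_{g}(a)]=\pi_*[\ov{\H}^k_{g,(a)}]$: one must check that no component of $\ov{\H}^k_{g,(a)}$ is contracted by $\pi$ and keep track of the multiplicity, that is, the generic number of marked points realizing the order-$\geq a$ condition, on each component, exactly the phenomenon visible in \eqref{eq:H222}. The case $a=1$ should also be singled out, since there $\ov{\H}^k_{g}(a)=\P\E^k_g$ while $\pi$ has degree $k(2g-2)$ on $\ov{\H}^k_{g,(1)}$, in agreement with $\kappa_0=2g-2$. Everything else is the routine projection-formula bookkeeping carried out above.
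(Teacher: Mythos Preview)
Your proof is correct and follows essentially the same route as the paper: invoke \eqref{eq:HisFkga}, expand \eqref{eq:Fkga} as a polynomial in $\psi_1$, and push forward along $\pi$ using $\pi_*(\psi_1^{b})=\kappa_{b-1}$. The paper's proof simply writes ``it remains to compute the push-forward via $\pi$'' without justifying the identity $\left[\ov{\H}^k_g(a)\right]=\pi_*\left[\ov{\H}^k_{g,(a)}\right]$; your discussion of this step (birational on the principal component, weighted by the degree of $\pi$ on extra components, and the $a=1$ check) is a welcome addition and is consistent with the paper's own treatment of the $a=k=g=2$ example immediately after the corollary.
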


\begin{proof}
The hypotheses allow one to use Theorem \ref{thm:graphFor-m} to conclude that \eqref{eq:HisFkga} holds
when either $k=1$, or $k\geq 2$ and $a \leq k(2g-2)-1$, and holds in $A^{a}\left( \P\E^k_{g,1} \right)$ when $a\leq k$.
 It  remains to compute the push-forward  via $\pi$. 
Expanding $\mathsf{F}^k_{g,(a)}$ from \eqref{eq:Fkga} as a polynomial in $\psi_1$, one has
\[
\pi_* \left( \mathsf{F}^k_{g,(a)} \right) = \sum_{b=1}^{a} (-1)^{a-b} \, e_b(k,\dots,k+a-1) \, \kappa_{b-1} \,\eta^{a-b} \quad \in R^{a-1}\left(\P\E^k_g \right).
\]
Hence the statement.
\end{proof}

For instance, when $a=2$ and $k\geq 2$, one has 
\begin{equation}
\label{eq:Hkg2}
\left[\ov{\H}^k_{g}(2)\right] = k(k+1)\, \kappa_1 - (2k+1)(2g-2)\,\eta \quad
\in \mathrm{Pic}\left( \P\E^k_{g} \right).
\end{equation}
This recovers the class of the divisorial stratum for $k\geq 2$ computed in \cite{korotkin2019tau}.
When $a=k=g=2$, the locus $\ov{\H}^2_{2}(2)$ is the union of the two divisors 
$\ov{\H}^2_{2}(2,1,1)$ and $\ov{\H}^{2, \mathrm{ab}}_{2}(2,2)$ obtained by forgetting the marked point on the two components in \eqref{eq:H222}, and \eqref{eq:Hkg2} decomposes as the weighted sum 
\[
\left[\ov{\H}^2_{2}(2)\right] = \left[ \ov{\H}^2_{2}(2,1,1) \right] +  2\left[ \ov{\H}^{2, \mathrm{ab}}_{2}(2,2) \right] 
\quad\in \mathrm{Pic}\left( \P\E^2_{2} \right).
\]
 Here the factor of $2$ in front of the second summand is due to the fact that the restriction of $\pi$ to \mbox{$\ov{\H}^2_{2,1}(2,2)\rightarrow \ov{\H}^2_{2}(2,2)$} has generically degree $2$, corresponding to the $2$ zeros of the square differentials.


%



\newpage

\appendix


\section{The incidence locus for $n=2$}
\label{sec:Hkg2}
The class of the incidence locus for $n=1$ is given by:

\begin{lemma}[{\cite[\S 1.6]{sauvagetcohomology}, \cite[\S4]{korotkin2019tau}}]
\label{lem:Z1class}
For $k\geq 1$ and $g\geq 2$, one has 
\[
\ov{\H}_{g, 1}^k \equiv k \,\omega -\eta \quad \in \mathrm{Pic}\left(\P\E^k_{g,1} \right).
\]
\end{lemma}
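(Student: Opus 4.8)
\textbf{Proof plan for Lemma \ref{lem:Z1class}.}
The statement to establish is the base case $n=1$: that in $\mathrm{Pic}\left(\P\E^k_{g,1}\right)$ one has $\ov{\H}^k_{g,1}\equiv k\,\omega-\eta$, where $\H^k_{g,1}$ is the locus of pairs $(C,P)$ together with the class of a stable $k$-differential vanishing at $P$. Since $\mathrm{Pic}\left(\P\E^k_{g,1}\right)$ is generated (after restriction to $\M_{g,1}$, and in fact over $\ov{\M}_{g,1}$ by \eqref{eq:APE}) by $\eta$, the tautological classes $\omega=\psi$, $\lambda_i$, and boundary divisors, it suffices to identify $\ov{\H}^k_{g,1}$ as an effective divisor and pin down its class by a direct geometric/intersection argument. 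The plan is to realize $\ov{\H}^k_{g,1}$ as a degeneracy locus: on $\P\E^k_{g,1}$ there is a tautological section of a line bundle that vanishes exactly when the marked point is a zero of the $k$-differential, and the lemma amounts to computing the first Chern class of that line bundle.

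Concretely, first I would set up the evaluation map. Over $\P\E^k_{g,1}$, let $\pi\colon\mathcal{C}\to\P\E^k_{g,1}$ be the universal curve with universal section $\sigma$ corresponding to the marked point, and let $\mathscr{O}_{\P\E^k_{g,1}}(-1)\hookrightarrow \pi_*\omega_\pi^{\otimes k}$ be the tautological inclusion whose image is the line spanned by the chosen $k$-differential. Pulling back the universal $k$-differential along $\sigma$ and evaluating gives a morphism of line bundles
\[
\mathscr{O}_{\P\E^k_{g,1}}(-1)\longrightarrow \sigma^*\omega_\pi^{\otimes k}.
\]
The key observation is that $\sigma^*\omega_\pi=\mathbb{L}$ is the cotangent line at the marked point, so $\sigma^*\omega_\pi^{\otimes k}$ has first Chern class $k\,\psi=k\,\omega$. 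This map is a section of $\mathscr{O}_{\P\E^k_{g,1}}(1)\otimes \sigma^*\omega_\pi^{\otimes k}$, and its vanishing locus is precisely the set of points where the $k$-differential vanishes at $P$, i.e.\ $\ov{\H}^k_{g,1}$. Hence as a Cartier divisor
\[
\ov{\H}^k_{g,1}\equiv c_1\!\left(\mathscr{O}_{\P\E^k_{g,1}}(1)\otimes \sigma^*\omega_\pi^{\otimes k}\right)=-\eta+k\,\omega,
\]
using $\eta=c_1\!\left(\mathscr{O}_{\P\E^k_{g,1}}(-1)\right)$. This gives the formula on all of $\P\E^k_{g,1}$, a fortiori in $\mathrm{Pic}\left(\P\E^k_{g,1}\right)$.

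The main obstacle is the scheme-theoretic verification that this section really cuts out $\ov{\H}^k_{g,1}$ with multiplicity one — i.e.\ that the evaluation section vanishes transversally along the closure, including over the boundary, so that no correction terms from boundary divisors or from $\lambda$-classes enter. Over the open locus $\M_{g,1}$ this is immediate because requiring a single simple vanishing at a point is codimension one and the generic differential in the fiber does not vanish there. The subtle point is behavior over stable curves with rational tails: one must check that the tautological section does not acquire spurious zeros along boundary divisors (for instance where the marked point lies on a rational component), which would force a correction $\sum c_i\delta_i$. Since the lemma is cited from \cite{sauvagetcohomology} and \cite{korotkin2019tau}, I would invoke their analysis, but if reproving it I would argue that both $\ov{\H}^k_{g,1}$ and $k\,\omega-\eta$ restrict compatibly to the boundary and agree on the dense open stratum, hence agree as divisor classes; alternatively one can test the identity by pushing forward along $\varphi\colon\P\E^k_{g,1}\to\ov{\M}_{g,1}$ and matching with the known class of the corresponding locus in $\ov{\M}_{g,1}$, using $\varphi_*(\eta^{r-1})=1$ and $\varphi_*(\eta^j)=0$ for $j<r-1$ together with the Chern classes of $\E^k_{g,1}$.
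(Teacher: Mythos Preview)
The paper does not give its own proof of this lemma: it is stated with a citation to \cite{sauvagetcohomology} and \cite{korotkin2019tau} and used as input. So there is nothing to compare against in the paper itself. Your evaluation-map argument is exactly the standard way this identity is established (and is essentially what one finds in the cited references): the tautological inclusion $\mathscr{O}(-1)\hookrightarrow\pi_*\omega_\pi^{\otimes k}$ followed by evaluation at the section gives a section of $\mathscr{O}(1)\otimes\sigma^*\omega_\pi^{\otimes k}$ whose zero locus is set-theoretically $\ov{\H}^k_{g,1}$, and $c_1$ of that bundle is $k\,\omega-\eta$ since $\sigma^*\omega_\pi$ is the cotangent line (and $\psi=\omega$ when $n=1$).

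Your one flagged obstacle---ruling out spurious vanishing along boundary divisors---is the only thing that needs a word, and it is easily dispatched for $n=1$. The zero scheme of the section is an effective Cartier divisor $Z$ with $[Z]=k\,\omega-\eta$; over $\M_{g,1}$ it agrees with $\H^k_{g,1}$, so $Z=\ov{\H}^k_{g,1}+D$ with $D$ effective and supported on $\partial\ov{\M}_{g,1}$. But on a general point of any boundary divisor $\Delta_0$ or $\Delta_i$ the marked point is a general smooth point of its component and the general stable $k$-differential does not vanish there (note that for $n=1$ the marked point can never sit on a rational tail, only on a positive-genus component or a rational bridge, and on a rational bridge the restriction of a general stable $k$-differential is a nonzero section with poles at the nodes and no zeros). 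Hence the evaluation section is generically nonzero on every boundary divisor, so $D=0$. This is simpler and more direct than the push-forward test you mention as an alternative.
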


For $n=2$,  the class of the locus $\ov{\H}_{g, 2}^k$ in $\P\E^k_{g,2}$ follows from \cite{sauvagetcohomology, sauvaget2020volumes}. 
In this section, we provide an alternative proof   by applying the incidence variety compactification from \cite{bcggm1, bcggm}.

First we set some notation. 
Recall the locus $E_{\bm{I}}$ from \eqref{eq:EI}. Here we need the case $\bm{I}=\{1\}$. 
Its class   is the push-forward of the class  in Lemma \ref{lem:Z1class} via the map $\gamma$ in \eqref{eq:gamma0}.
Moreover, let
\[
\mbox{
\begin{tikzpicture}[baseline={([yshift=-.5ex]current bounding box.center)}]
      \path(0,0) ellipse (2 and 2);
      \tikzstyle{level 1}=[counterclockwise from=-60,level distance=15mm,sibling angle=120]
      \node [draw,circle,inner sep=2.5] (A0) at (180:1.5) {$\scriptstyle{g-1}$};
      \tikzstyle{level 1}=[counterclockwise from=-60,level distance=15mm,sibling angle=120]
      \node [draw,circle,inner sep=2.5] (A1) at (0:1.5) {$\scriptstyle{1}$}
            child {node [label=-60: {$\scriptstyle{2}$}]{}}
	    child {node [label=60: {$\scriptstyle{1}$}]{}};
      \path (A0) edge [] node[auto, below=0.5, label={-90:$\scriptstyle{\mu_1=0}$}]{} (A1);
    \end{tikzpicture}
}
\quad\subset \P\E^k_{g,2}
\]
be the locus consisting of curves with an elliptic tail containing both marked points together with a class of a stable $k$-differential $\mu$ 
whose restriction $\mu_1$ to the elliptic component is identically zero.
 Such a stable $k$-differential $\mu$ has a pole of order at most $k-1$ at the preimage of the node in the genus $g-1$ component. 
This locus has codimension $k+1$ in $\P\E^k_{g,2}$. 

\begin{theorem}[{\cite{sauvagetcohomology, sauvaget2020volumes}}]
\label{thm:classZ2}
Let $g\geq 2$.
For $k=1$, one has
\begin{multline}
\ov{\H}_{g, 2}^1 \equiv \left(\omega_1-\eta\right)\left(\omega_2-\eta\right) 
- E_{\{1\}}
-
\mbox{
\begin{tikzpicture}[baseline={([yshift=-.5ex]current bounding box.center)}]
      \path(0,0) ellipse (2 and 2);
      \tikzstyle{level 1}=[counterclockwise from=-60,level distance=15mm,sibling angle=120]
      \node [draw,circle,inner sep=2.5] (A0) at (180:1.5) {$\scriptstyle{g-1}$};
      \tikzstyle{level 1}=[counterclockwise from=-60,level distance=15mm,sibling angle=120]
      \node [draw,circle,inner sep=2.5] (A1) at (0:1.5) {$\scriptstyle{1}$}
            child {node [label=-60: {$\scriptstyle{2}$}]{}}
	    child {node [label=60: {$\scriptstyle{1}$}]{}};
      \path (A0) edge [] node[auto,below=0.4, label={-90:$\scriptstyle{\mu_1=0}$}]{} (A1);
    \end{tikzpicture}
}
-\sum_{i=1}^{\lfloor g/2\rfloor}
\mbox{
\begin{tikzpicture}[baseline={([yshift=-.5ex]current bounding box.center)}]
      \path(0,0) ellipse (2 and 2);
      \tikzstyle{level 1}=[counterclockwise from=-60,level distance=15mm,sibling angle=120]
      \node [draw,circle,inner sep=2.5] (A0) at (120:1.5) {$\scriptstyle{i}$};
      \tikzstyle{level 1}=[counterclockwise from=-45,level distance=12mm,sibling angle=90]
      \node [draw,circle,fill] (A1) at (0:1.5) {$\scriptstyle{}$}
      child {node [label=-45: {$\scriptstyle{2}$}]{}}
      child {node [label=45: {$\scriptstyle{1}$}]{}};
      \tikzstyle{level 1}=[counterclockwise from=-60,level distance=15mm,sibling angle=60]
      \node [draw,circle,inner sep=2.5] (A2) at (240:1.5) {$\scriptstyle{g-i}$};
      \path (A0) edge [] (A1);
      \path (A1) edge [] (A2);
    \end{tikzpicture}
} 
\end{multline}
in $A^2\left( \P\E^1_{g,2} \right)$, and for $k\geq 2$, one has
\[
 \ov{\H}_{g, 2}^k \equiv \left(k\,\omega_1-\eta\right)\left(k\,\omega_2-\eta\right) 
- E_{\{1\}}
\quad \in A^2\left(\P\E^k_{g,2} \right).
\]
\end{theorem}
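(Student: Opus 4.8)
The statement is the $n=2$ base case that feeds the recursion of Theorem \ref{thm:pinZnrhonZ1plus}, so rather than re-deriving it by the recursion I would prove it directly by an intersection-theoretic argument on the incidence variety compactification (IVC) of \cite{bcggm1, bcggm}, as announced in the introduction to \S\ref{sec:Hkg2}. The overall strategy is: degenerate $\ov{\H}_{g,2}^k$ along the boundary of $\P\E^k_{g,2}$, identify which boundary components appear and with what multiplicities, and then assemble the class from the "main term" $(k\omega_1-\eta)(k\omega_2-\eta)$ by subtracting the unwanted boundary contributions. The main term arises because $\ov{\H}_{g,1}^k \equiv k\omega-\eta$ (Lemma \ref{lem:Z1class}), so naively imposing a zero at $P_1$ and a zero at $P_2$ independently suggests the product $(k\omega_1-\eta)(k\omega_2-\eta)$; the content of the theorem is precisely the discrepancy between this product and the actual incidence locus, which is supported on loci where the two conditions ``interact'' on the boundary.

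\textbf{Key steps.} First I would set up the excess-intersection comparison: let $\rho_1,\rho_2\colon \P\E^k_{g,2}\to\P\E^k_{g,1}$ be the two maps forgetting all but $P_1$ (resp.\ $P_2$), so $\rho_i^*[\ov{\H}_{g,1}^k] = k\omega_i-\eta$. The product $\rho_1^*[\ov{\H}_{g,1}^k]\cdot\rho_2^*[\ov{\H}_{g,1}^k]$ is represented by the locus of $(C,P_1,P_2,\mu)$ where $\mu$ has a zero at $P_1$ \emph{on the curve obtained by forgetting} $P_2$ and simultaneously a zero at $P_2$ after forgetting $P_1$. Over $\M_{g,2}$ this is exactly $\H_{g,2}^k$, so the two classes agree away from the boundary, and the difference is an effective combination of boundary strata in $\P\E^k_{g,2}|_{\M_{g,2}^{\rm rt}}$ together with (for the full statement over $\ov\M_{g,2}$) strata with a separating node. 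Second, I would enumerate these strata: the rational tail carrying $\{1,2\}$ gives $E_{\{1\}}$; the elliptic tail carrying $\{1,2\}$ on which $\mu$ restricts to zero gives the locus drawn with $\mu_1=0$; and (only for $k=1$) the separating-node strata $\delta_{i:\{1,2\}}$ for $1\le i\le\lfloor g/2\rfloor$ appear, reflecting that a genus-$1$ or higher component carrying both points forces a vanishing differential only in the $k=1$ holomorphic case. Third, for each such stratum I would compute its multiplicity in the difference by a local analysis of the IVC: restrict the universal $k$-differential near the node, expand in a local coordinate, and count the order of contact of the ``zero at $P_1$'' and ``zero at $P_2$'' conditions with the given boundary component. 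For the pure rational-tail term this multiplicity is $1$ (matching the coefficient of $|\bm I|=1$ in Theorem \ref{thm:pinZnrhonZ1plus}); for the $\mu_1=0$ elliptic-tail term and the $\delta_{i:\{1,2\}}$ terms one checks the contact is again simple. Finally, I would observe that for $k\ge2$ the elliptic-tail and $\delta_{i:\{1,2\}}$ strata do not occur: a $k$-differential with $k\ge2$ has enough room (a pole of order $\le k-1$ at the node) to avoid being forced to vanish identically, so the only correction is $E_{\{1\}}$, giving the clean formula $(k\omega_1-\eta)(k\omega_2-\eta)-E_{\{1\}}$.

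\textbf{Main obstacle.} The delicate point is the \emph{multiplicity computation} on the boundary, i.e.\ proving that each correction term enters with coefficient exactly $1$ (and that no further, deeper boundary strata contribute, nor any non-reduced structure). This requires a careful description of the IVC near the relevant boundary divisors: one must understand how the condition ``$\mu$ vanishes at $P_i$'' extends across the node in terms of the twisted differentials / level structure of \cite{bcggm}, and verify the transversality (or compute the intersection multiplicity) of the two conditions along each component. The $k=1$ case is subtler because the holomorphicity constraint makes the differential vanish on an entire elliptic or higher-genus tail, so the ``zero at $P_i$'' condition degenerates in a more global way; there one needs to track the residue/vanishing conditions on the genus $g-1$ (resp.\ genus $g-i$) component to see that the correction is still simple. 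I expect that once the local model at each boundary stratum is written down, the multiplicities fall out, but isolating the right local coordinates and the right normal form for $\mu$ near the node is where the real work lies. An alternative cross-check, which I would include, is to push forward via $\varphi\colon\P\E^k_{g,2}\to\ov\M_{g,2}$ and compare with known divisor classes (as is done for $g=2$ via $\varphi_*\mathsf F^1_{2,2}\equiv\ov{\mathfrak L}_2$ and $\varphi_*\mathsf F^2_{2,(2)}=1$ in \S\ref{eq:Logandiv} and \S\ref{sec:ex}), which pins down the coefficients independently.
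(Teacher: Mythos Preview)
Your overall strategy---compare $\rho_1^*[\ov{\H}_{g,1}^k]\cdot\rho_2^*[\ov{\H}_{g,1}^k]$ with $[\ov{\H}_{g,2}^k]$, enumerate boundary components of the difference using the IVC, then determine multiplicities---is exactly the paper's approach in \S\ref{sec:Hkg2}. The set-theoretic step is Proposition~\ref{prop:pi1Z1pi2Z1loci}, and the multiplicities are Proposition~\ref{prop:pi1Z1pi2Z1}.

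However, there are two concrete points where your plan diverges from, or mis-states, what actually happens.

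\textbf{Wrong boundary loci for $k=1$.} The last family of correction terms are \emph{not} the separating-node divisors $\delta_{i:\{1,2\}}$: those have codimension~$1$, not~$2$, and for $i\ge2$ a genus-$i$ tail with $\mu\equiv0$ has codimension $i+1>2$. The correct loci are codimension-$2$ \emph{rational bridges}: a rational component carrying $P_1,P_2$ attached at two nodes to components of genera $i$ and $g-i$. On such a curve the abelian differential is generically nonzero and holomorphic on both positive-genus pieces, and each $P_j$ is a zero of some twisted differential on the bridge (with poles of order $2$ at the nodes) satisfying the global residue condition---but no single such twisted differential vanishes at both $P_j$, so these loci lie in the intersection but not in $\ov{\H}_{g,2}^1$. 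For $k\ge2$ they drop out because the restriction of $\mu$ to the positive-genus pieces has poles of order $k$ at the nodes, forcing the aspect on the bridge to have no zeros at all. Your explanation (``enough room to avoid vanishing identically'') is aimed at the wrong mechanism.

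\textbf{Multiplicity computation.} The paper does \emph{not} carry out a local analysis of contact order on the IVC, and you are right to flag that as the hard part of your plan. Instead it determines the coefficients by global arguments: the coefficient of $E_{\{1\}}$ is obtained by pushing forward to $\P\E^k_g$ and counting degrees; the elliptic-tail and rational-bridge coefficients are pinned down by intersecting with explicit two-dimensional test families (a pencil of plane cubics glued to a fixed genus $g{-}1$ curve; and, for each $i$, varying both marked points on a fixed general element of $\Delta_{i:\{1\}}$). Each test surface meets one correction locus transversally and misses the others, so the coefficients are read off directly. This replaces your local-model computation with concrete enumerative counts, and is the part your proposal is missing. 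Your suggested cross-check via push-forward to $\ov{\M}_{g,2}$ is used in the paper only for the special case $k=1$, $g=2$ (to show independence of the classes), not to determine the coefficients in general.
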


\noindent We deduce Theorem \ref{thm:classZ2}  from the next  Proposition \ref{prop:pi1Z1pi2Z1} and Lemma~\ref{lem:Z1class}.

\subsection{Preliminaries}
\label{sec:IVCrecap}
In this and the next section, we use the incidence variety compactification from \cite{bcggm1, bcggm}.  We briefly review it here. 

Let $C$ be a nodal curve and $\widetilde{C}\rightarrow C$  its normalization. Let  $Q_1, \dots, Q_s$ be the nodes in $C$, and for each $Q_i$, let $Q_i^+$ and $Q_i^-$ be its two preimages in~$\widetilde{C}$.
Stable $k$-differentials on $C$ are identified via pull-back with  $k$-differentials on $\widetilde{C}$ admitting poles of order at most $k$ at the preimages of the nodes and satisfying the \textit{$k$-residue condition} at $Q_i^+$ and $Q_i^-$ for each $i$. 

A \textit{twisted $k$-differential} $\tau$ on a nodal curve $C$ is a collection $\tau=\{\tau_i\}_i$ of (possibly meromorphic) $k$-differentials, one for each irreducible component of $C$. We will refer to the $\tau_i$  as the \textit{aspects} of $\tau$. 

The closure of strata of smooth pointed curves and $k$-differentials with all zeros and poles marked with prescribed multiplicities 
 is described for $k=1$ in \cite[Thm 1.3]{bcggm1} and for $k\geq 2$ in \cite[Thm 1.5]{bcggm}. 
 An element $(C,P_1,\dots,P_n, \mu)$, where $(C,P_1,\dots,P_n)$ is a stable pointed curve and $\mu$ is a (possibly meromorphic) stable $k$-differential, is in the closure of such strata if and only if it admits a   twisted $k$-differential compatible with a certain full order on the dual graph of $C$. 
The maxima in the full order are required to correspond to the irreducible components where $\mu$ is not identically zero. The twisted $k$-differential is required to: coincide with $\mu$ on the maxima; have zeros and poles at the marked points with prescribed multiplicities; satisfy the matching zero and pole order property at \textit{vertical nodes};
satisfy the $k$-residue condition at \textit{horizontal nodes}; and satisfy the \textit{global $k$-residue condition}.
We refer the reader to \cite{bcggm1, bcggm} for the description of these compatibility conditions.

\subsection{A set-theoretic statement}
The following  statement is a preliminary step toward the proof of Theorem \ref{thm:classZ2}. For $i\in\{1,2\}$, let $\pi_i\colon\P\E^k_{g,2} \rightarrow\P\E^k_{g,1}$ be the map obtained by forgetting the $i$-th marked point (and relabeling the remaining marked point as $P_1$).

\begin{proposition}
\label{prop:pi1Z1pi2Z1loci}
The intersection 
\begin{equation}
\label{eq:pi1pi2Hkg1}
\pi_1^{-1}\left( \ov{\H}_{g, 1}^k\right) \cap \pi_2^{-1}\left( \ov{\H}_{g, 1}^k \right) \quad\subset \P\E^k_{g,2}
\end{equation}
consists of the following components for $k=1$:
\begin{multline}
\label{eq:listZ1Z1}
\ov{\H}_{g, 2}^1, \quad
E_{\{1\}},
\quad
\mbox{
\begin{tikzpicture}[baseline={([yshift=-.5ex]current bounding box.center)}]
      \path(0,0) ellipse (2 and 2);
      \tikzstyle{level 1}=[counterclockwise from=-60,level distance=15mm,sibling angle=120]
      \node [draw,circle,inner sep=2.5] (A0) at (180:1.5) {$\scriptstyle{g-1}$};
      \tikzstyle{level 1}=[counterclockwise from=-60,level distance=15mm,sibling angle=120]
      \node [draw,circle,inner sep=2.5] (A1) at (0:1.5) {$\scriptstyle{1}$}
            child {node [label=-60: {$\scriptstyle{2}$}]{}}
	    child {node [label=60: {$\scriptstyle{1}$}]{}};
      \path (A0) edge [] node[auto,below=0.4, label={-90:$\scriptstyle{\mu_1=0}$}]{} (A1);
    \end{tikzpicture}
}, \quad
\mbox{
\begin{tikzpicture}[baseline={([yshift=-.5ex]current bounding box.center)}]
      \path(0,0) ellipse (2 and 2);
      \tikzstyle{level 1}=[counterclockwise from=-60,level distance=15mm,sibling angle=120]
      \node [draw,circle,inner sep=2.5] (A0) at (120:1.5) {$\scriptstyle{i}$};
      \tikzstyle{level 1}=[counterclockwise from=-45,level distance=12mm,sibling angle=90]
      \node [draw,circle,fill] (A1) at (0:1.5) {$\scriptstyle{}$}
      child {node [label=-45: {$\scriptstyle{2}$}]{}}
      child {node [label=45: {$\scriptstyle{1}$}]{}};
      \tikzstyle{level 1}=[counterclockwise from=-60,level distance=15mm,sibling angle=60]
      \node [draw,circle,inner sep=2.5] (A2) at (240:1.5) {$\scriptstyle{g-i}$};
      \path (A0) edge [] (A1);
      \path (A1) edge [] (A2);
    \end{tikzpicture}
} \quad\mbox{for $1\leq i\leq \lfloor g/2\rfloor$,}
\end{multline}
and for $k\geq 2$:
\begin{equation}
\label{eq:listZ1Z1kgeq2}
\ov{\H}_{g, 2}^k  \quad \mbox{and} \quad 
E_{\{1\}}.
\end{equation}
\end{proposition}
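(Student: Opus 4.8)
The strategy is to compute the set-theoretic intersection \eqref{eq:pi1pi2Hkg1} by analyzing which elements $(C, P_1, P_2, \mu)$ of $\P\E^k_{g,2}$ have the property that, after forgetting each of the two marked points in turn, the resulting element of $\P\E^k_{g,1}$ lies in $\ov{\H}^k_{g,1}$. By Lemma~\ref{lem:Z1class}, membership in $\ov{\H}^k_{g,1}$ is a divisorial condition, but more importantly it has the set-theoretic description provided by the incidence variety compactification of \cite{bcggm1, bcggm} recalled in \S\ref{sec:IVCrecap}: $(C', P_1', \mu')$ lies in $\ov{\H}^k_{g,1}$ precisely when $C'$ carries a twisted $k$-differential compatible with a full order on the dual graph, whose top-level aspect is $\mu'$ and which has a zero of order at least $1$ (respectively $k$, if we track the marked point with its natural $k$-differential multiplicity) at $P_1'$. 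The first step is therefore to set up, for a general element of each putative component in \eqref{eq:listZ1Z1}--\eqref{eq:listZ1Z1kgeq2}, the explicit twisted $k$-differential and full order witnessing membership in both $\pi_1^{-1}(\ov{\H}^k_{g,1})$ and $\pi_2^{-1}(\ov{\H}^k_{g,1})$, thereby showing each listed locus is contained in \eqref{eq:pi1pi2Hkg1}.

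\textbf{Key steps.} First I would handle the ``open'' component: on the locus of smooth curves, $\pi_i^{-1}(\ov{\H}^k_{g,1})$ is just the condition that $\mu$ vanishes at $P_i$, so the intersection over smooth curves is exactly ${\H}^k_{g,2}$, whose closure is $\ov{\H}^k_{g,2}$; this gives the first component. Second, I would enumerate the boundary contributions by running through the possible degenerations of the dual graph of $C$. When we forget $P_2$, the stability of $(C, P_1)$ can force a rational component to contract; the condition ``$\pi_2(C,P_1,P_2,\mu) \in \ov{\H}^k_{g,1}$'' then translates, via the compatibility conditions of \cite{bcggm1,bcggm}, into constraints on the aspects of a twisted $k$-differential on the (partially stabilized) curve. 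Doing this symmetrically for $\pi_1$ and intersecting the two sets of constraints, the only new loci that survive are: (a) the locus $E_{\{1\}}$ where a rational tail carries $P_1, P_2$ and the node, the $k$-differential extended by zero on the tail and vanishing to high order at the node on the genus $g$ side (this is exactly where forgetting either point produces $\ov{\H}^k_{g,1}$ by the ``extend by zero'' mechanism); (b) for $k=1$ only, the locus with an elliptic tail containing both $P_1, P_2$ and $\mu_1 \equiv 0$, since on an elliptic curve a nonzero holomorphic $1$-differential is nowhere vanishing, so $\mu$ must vanish on the elliptic component entirely for \emph{both} forgetful pushforwards to land in $\ov{\H}^1_{g,1}$; and (c) for $k=1$ only, the loci with a rational bridge carrying $P_1, P_2$ joining a genus $i$ and a genus $g-i$ component — these arise because forgetting either point contracts the rational bridge to a node joining the two positive-genus pieces, and the resulting configuration automatically satisfies the twisted differential conditions for $\ov{\H}^1_{g,1}$ for degree reasons. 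I would verify carefully that for $k \geq 2$ the loci in (b) and (c) do \emph{not} appear: the point is that a $k$-differential for $k\geq 2$ may have poles of order up to $k$ at the node, so the vanishing-forcing argument from the elliptic case breaks, and a direct check using the matching order and global $k$-residue conditions shows these configurations do not lie in both preimages, or lie in lower-dimensional loci already contained in $\ov{\H}^k_{g,2}$.

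\textbf{The main obstacle.} The delicate part is the converse containment: showing that \eqref{eq:pi1pi2Hkg1} contains \emph{no other} components beyond those listed. This requires a systematic case analysis over all stable dual graphs $\Gamma$ of genus $g$ with two legs, checking for each stratum of $\P\E^k_{g,2}$ whether its general point can simultaneously satisfy the twisted-$k$-differential compatibility conditions after forgetting $P_1$ and after forgetting $P_2$. The subtlety is that $\pi_i$ is not flat and the twisted differential witnessing $\pi_i(x) \in \ov{\H}^k_{g,1}$ need not be the restriction of one witnessing $\pi_{3-i}(x) \in \ov{\H}^k_{g,1}$; one must argue that the two full orders on the (stabilized) dual graphs are compatible with a single coherent structure on $C$, and that the global $k$-residue condition — which is the non-local, hardest-to-control part of the \cite{bcggm1, bcggm} criterion — is satisfied. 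I would organize this by first bounding the number of components on which $\mu$ can be identically zero (using that forgetting a point cannot increase the locus where the differential is nonzero), then treating the horizontal-node versus vertical-node degenerations separately, and finally invoking dimension counts to rule out strata that are too small to be components (i.e., already contained in the closure of the listed ones). The case $k = 1$ versus $k \geq 2$ genuinely diverges here, so the two cases are written up in parallel.
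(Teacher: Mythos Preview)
Your overall approach---use the incidence variety compactification of \cite{bcggm1,bcggm} to verify membership of each listed locus, then run a dual-graph case analysis for the converse---is the same as the paper's.  However, your mechanism for case~(c), the rational bridge loci, is wrong. You claim that forgetting either $P_i$ contracts the rational bridge to a node. It does not: the bridge carries two nodes and two markings, hence four special points; after forgetting one marking it still has three and remains stable.

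The actual argument is different. After forgetting $P_i$, the bridge still carries $P_{3-i}$ and both nodes. A general stable abelian differential $\mu$ is holomorphic and nonzero at the node preimages on the two positive-genus components, so on the bridge $\mu$ is identically zero and the bridge sits at a strictly lower level. One then produces a twisted abelian differential whose aspect on the bridge has poles of order $2$ at both nodes, a simple zero at $P_{3-i}$, and one further zero elsewhere; the global residue condition \cite[Def.~1.2(4)]{bcggm1} is satisfied because the two residues are opposite. This places the image under $\pi_i$ in $\ov{\H}^1_{g,1}$, for each $i$. For $k\geq 2$ the picture reverses: a general $\mu$ now has poles of order $k$ at the node preimages on the positive-genus sides, forcing $\mu$ to be \emph{nonzero} on the bridge (with poles of order $k$ at both nodes and hence no zeros, since $-2k$ already exhausts the degree). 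The bridge is then at the top level, its aspect is $\mu$ itself, and no twisted differential of type $\mu$ can vanish at either marked point---so the locus does not lie in \eqref{eq:pi1pi2Hkg1} for $k\geq 2$.

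A smaller slip: in case~(b), the restriction $\mu_1$ of a stable differential to the elliptic tail lies in $\omega_E(Q)$ (degree $1$), not $\omega_E$; so a nonzero $\mu_1$ is not ``nowhere vanishing'' but rather vanishes at a single point of $E$, generically neither $P_1$ nor $P_2$. The conclusion that $\mu_1=0$ is forced is still correct, but the reason you give needs adjustment.
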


\begin{proof}
By definition, $\ov{\H}_{g, 2}^k$ is  the only component of the intersection \eqref{eq:pi1pi2Hkg1} which intersect the locus of smooth curves for all $k\geq 1$.
To study the additional components over the locus of nodal curves, we consider the incidence variety compactification of  
${\H}_{g, k(2g-2)}^k \subset \P\E^k_{g,k(2g-2)}$
from \cite{bcggm1, bcggm} and regard $\ov{\H}_{g, 1}^k$ and $\ov{\H}_{g, 2}^k$  as the images of $\ov{\H}_{g, k(2g-2)}^k$ under the  maps 
\[
\P\E^k_{g,k(2g-2)}\rightarrow\P\E^k_{g,1} \qquad \mbox{and}\qquad \P\E^k_{g,k(2g-2)}\rightarrow \P\E^k_{g,2} 
\]
obtained by forgetting all but the first one or  two marked points, respectively. This provides the description of the boundaries of $\ov{\H}_{g, 1}^k$ and $\ov{\H}_{g, 2}^k$ in terms of twisted $k$-differentials that we use below.

The intersection \eqref{eq:pi1pi2Hkg1} consists of $(C,P_1, P_2, \mu)$, where $(C,P_1, P_2)$ is a stable two-pointed curve and $\mu$ is the class of a stable $k$-differential, such that each marked point $P_i$ is a  zero of some twisted $k$-differential of type~$\mu$. 

The locus $\ov{\H}_{g, 2}^k$ consists of $(C,P_1, P_2, \mu)$ where \textit{both} marked points are  zeros of the \textit{same} twisted $k$-differential of type $\mu$ for all $k\geq 1$.

For a general $(C,P_1, P_2, \mu)$ in $E_{\{1\}}$ with $k\geq 1$,  the stable $k$-differential $\mu$ has a simple zero at the preimage of the node on the genus $g$  component. Then each marked point is a simple zero of a twisted $k$-differential of type  $\mu$ whose aspect on the rational tail has a pole of order $2k+1$ at the node.

When $k=1$, consider the third locus listed in \eqref{eq:listZ1Z1} and select a general element in it, with  elliptic tail denoted $E$. Since we can always find a twisted abelian differential whose aspect on  $E$ has 
a pole of order $2$ at the node and a zero at one of the marked points, the image of the third locus under  $\pi_i$  is in $\ov{\H}_{g, 1}^1$, for $i\in\{1,2\}$. The aspect of such a twisted abelian differential on  $E$ does not vanish generically at both marked points, hence such a locus is not in $\ov{\H}_{g, 2}^1$.
However, when $k\geq 2$, such a locus has codimension $k+1$ and is contained in $\ov{\H}_{g, 2}^k$. Indeed,  one can always find a twisted $k$-differential whose aspects on $E$ has a pole of order at least $k+1$ at the node and vanishes at \textit{both}  marked points.

For the remaining types of loci in \eqref{eq:listZ1Z1}, consider their general element. The restriction of a general abelian differential $\mu$ to the two components of positive genus is holomorphic and non-zero at the nodes. 
The global residue condition \cite[Def.~1.2(4)]{bcggm1} is satisfied by a twisted abelian differential of type $\mu$ whose aspect on the rational component has simple zeros at one of the marked points and at a smooth point generically away from the other marked point, and poles of order $2$ at both nodes. 
This implies that the last  loci in \eqref{eq:listZ1Z1} are in \eqref{eq:pi1pi2Hkg1} when $k=1$. However, when $k\geq 2$, the restriction of a general stable $k$-differential $\mu$ to the two components of positive genus has a pole of order $k$ at each node, hence the aspect on the rational component of a twisted $k$-differential of type $\mu$ cannot have any zeros. It follows that the last loci in \eqref{eq:listZ1Z1} do not lie in the intersection \eqref{eq:pi1pi2Hkg1} for $k\geq 2$.

Next, we show that the loci in \eqref{eq:listZ1Z1} and \eqref{eq:listZ1Z1kgeq2}  are the \textit{only} ones in \eqref{eq:pi1pi2Hkg1} for $k=1$ and $k\geq 2$, respectively.
To detect components of \eqref{eq:pi1pi2Hkg1} over the locus of nodal curves, we analyze  a general element $(C,P_1, P_2, \mu)$ in the inverse image of the locus of nodal curves from $\ov{\M}_{g,2}$ and in \eqref{eq:pi1pi2Hkg1}.
It suffices to consider the following cases: (a) the curve $C$ has exactly one node; or (b) the curve $C$ has two nodes. 

For the one-nodal case, we  show that $(C, P_1, P_2, \mu)$ is in one of the first three loci in \eqref{eq:listZ1Z1} when $k=1$, or in one of the loci in \eqref{eq:listZ1Z1kgeq2} when $k\geq 2$. 

If $C$ is one-nodal with a non-disconnecting node, then both marked points need to be in the support of $\mu$. Hence $(C, P_1, P_2, \mu)$ is in fact in $\ov{\H}_{g, 2}^k$. 

If $C$ is one-nodal with a rational tail containing both marked points, then necessarily $\mu$ vanishes at the preimage of the node in the genus $g$ component. Hence $(C, P_1, P_2, \mu)$ is in $E_{\{1\}}$. 

In the remaining cases of one-nodal $C$ with a disconnecting node, assume first that each marked point is in a component of $C$ where $\mu$ is not identically zero. Then
 $\mu$ vanishes at both marked points, hence $(C, P_1, P_2, \mu)$ is in $\ov{\H}_{g, 2}^k$. 

We are left with the case when $C$ is one-nodal, and one or both marked points are on a component of $C$ of positive genus where $\mu$ is identically zero. 
If $k=1$ and $\mu$ is identically zero on an elliptic component  containing both marked points, then $(C, P_1, P_2, \mu)$ is in the third locus listed in \eqref{eq:listZ1Z1}. 
Otherwise, $(C, P_1, P_2, \mu)$ is the general element of a locus of codimension higher than two inside 
 $\ov{\H}_{g, 2}^k$. Indeed, there exists a twisted $k$-differential of type $\mu$ whose aspect on the component of $C$ where $\mu$ is identically zero has a zero at the one or two marked points in there and has a pole of order at least $k+1$ at the node; in case one marked point is on the component where $\mu$ is not identically zero, then such point must be a zero  of $\mu$.

Finally, we analyze the two-nodal case, 
and show that necessarily $k=1$ and  $(C, P_1, P_2, \mu)$ is in the  loci of the fourth type listed in \eqref{eq:listZ1Z1}. 

If $C$ has only disconnecting nodes and nonrational components, then both marked points  are generically away from the zeros of $\mu$, hence such an element  is not in \eqref{eq:pi1pi2Hkg1}. Likewise, a general element of a locus of two-nodal curves with a twice marked rational tail is not in  \eqref{eq:pi1pi2Hkg1}, since the support of $\mu$ does not generically contain the preimage of the node in the nonrational component adjacent to the rational tail. 
If $C$ contains a rational bridge with one marked point, then the other marked point is generically away from the support of~$\mu$.
If instead $C$ has a twice marked rational bridge, then $(C, P_1, P_2, \mu)$ is in one of the loci of the fourth type listed in \eqref{eq:listZ1Z1}, and as discussed at the beginning of the proof, such loci are in \eqref{eq:pi1pi2Hkg1} but not in $\ov{\H}_{g, 2}^k$ only for $k=1$.

Similar arguments cover the case when $C$ is irreducible with two non-disconnecting nodes, as well as the case when $C$ has both a non-disconnecting node and a disconnecting node. We are left with the case when $C$ has two components meeting in two points.  If both  components of $C$ are nonrational,  the restriction of $\mu$ to each component has generically  poles of order $k$ at both nodes, and the marked points are generically not in the support  of $\mu$. If $C$ has a (marked) rational component, the restriction of $\mu$ to the genus $g-1$ component  has generically  poles of order $k$ at the two nodes, forcing there to be no zeros of $\mu$ on the rational component. 
\end{proof}

\subsection{Computation of multiplicities}
The intersection \eqref{eq:pi1pi2Hkg1} is generically transverse over $\M_{g,2}$, hence the component $\ov{\H}_{g, 2}^k$ has multiplicity one in the intersection for all $k\geq 1$.
Then Proposition \ref{prop:pi1Z1pi2Z1loci} implies that there exist $a,b,c_i\in \mathbb{Q}$, with $i=1,\dots, \lfloor  g/2 \rfloor$, such that for $k=1$, one has
\begin{multline}
\label{eq:aAbBciCi}
\pi_1^*\left[ \ov{\H}_{g, 1}^1\right] \cdot \pi_2^*\left[ \ov{\H}_{g, 1}^1 \right] = \left[\ov{\H}_{g, 2}^1 \right]
+ a \left[ E_{\{1\}} \right]
 + b \mbox{
\begin{tikzpicture}[baseline={([yshift=-.5ex]current bounding box.center)}]
      \path(0,0) ellipse (2 and 2);
      \tikzstyle{level 1}=[counterclockwise from=-60,level distance=15mm,sibling angle=120]
      \node [draw,circle,inner sep=2.5] (A0) at (180:1.5) {$\scriptstyle{g-1}$};
      \tikzstyle{level 1}=[counterclockwise from=-60,level distance=15mm,sibling angle=120]
      \node [draw,circle,inner sep=2.5] (A1) at (0:1.5) {$\scriptstyle{1}$}
            child {node [label=-60: {$\scriptstyle{2}$}]{}}
	    child {node [label=60: {$\scriptstyle{1}$}]{}};
      \path (A0) edge [] node[auto,below=0.4, label={-90:$\scriptstyle{\mu_1=0}$}]{} (A1);
    \end{tikzpicture}
} 
+ \sum_{i=1}^{\lfloor  g/2 \rfloor} c_i \mbox{
\begin{tikzpicture}[baseline={([yshift=-.5ex]current bounding box.center)}]
      \path(0,0) ellipse (2 and 2);
      \tikzstyle{level 1}=[counterclockwise from=-60,level distance=15mm,sibling angle=120]
      \node [draw,circle,inner sep=2.5] (A0) at (120:1.5) {$\scriptstyle{i}$};
      \tikzstyle{level 1}=[counterclockwise from=-45,level distance=12mm,sibling angle=90]
      \node [draw,circle,fill] (A1) at (0:1.5) {$\scriptstyle{}$}
      child {node [label=-45: {$\scriptstyle{2}$}]{}}
      child {node [label=45: {$\scriptstyle{1}$}]{}};
      \tikzstyle{level 1}=[counterclockwise from=-60,level distance=15mm,sibling angle=60]
      \node [draw,circle,inner sep=2.5] (A2) at (240:1.5) {$\scriptstyle{g-i}$};
      \path (A0) edge [] (A1);
      \path (A1) edge [] (A2);
    \end{tikzpicture}
}
\end{multline}
in $A^2\left( \P\E^1_{g,2}\right)$, and for $k\geq 2$, one has
\begin{equation}
\label{eq:aAbBciCik2}
\pi_1^*\left[ \ov{\H}_{g, 1}^k\right] \cdot \pi_2^*\left[ \ov{\H}_{g, 1}^k \right] = \left[\ov{\H}_{g, 2}^k \right]
+ a \left[ E_{\{1\}} \right]
\in A^2\left(\P\E^k_{g,2}\right).
\end{equation}

\begin{proposition}
\label{prop:pi1Z1pi2Z1}
One has
\begin{align*}
a&=1 & \mbox{for  $k\geq 1$,} &&\mbox{and}&&
b=c_i &=1 &\mbox{for all $i$ and $k=1$.}
\end{align*}
\end{proposition}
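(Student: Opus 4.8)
The plan is to determine the coefficients $a$, $b$, and $c_i$ in \eqref{eq:aAbBciCi} and \eqref{eq:aAbBciCik2} one at a time by restricting the identities to suitable test families or by applying suitable forgetful pushforwards, exploiting that all the loci involved are effective and that $\ov{\H}_{g,2}^k$ appears with multiplicity one. For the coefficient $a$ of $E_{\{1\}}$, I would localize the computation near a general point of $E_{\{1\}}$: a general such point is a smooth genus $g$ curve $C$ with a stable abelian (resp.\ $k$-) differential $\mu$ having a simple zero at a point $Q\in C$, with a rational tail carrying $P_1,P_2$ attached at $Q$. Since $E_{\{1\}}$ has codimension $2$ and meets neither of the other boundary loci in \eqref{eq:listZ1Z1} generically, it suffices to compute the local intersection multiplicity of $\pi_1^{-1}(\ov{\H}_{g,1}^k)$ and $\pi_2^{-1}(\ov{\H}_{g,1}^k)$ along $E_{\{1\}}$. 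The key point is that forgetting $P_2$ (resp.\ $P_1$) contracts the rational tail, carrying $E_{\{1\}}$ isomorphically onto $\ov{\H}_{g,1}^k$, and near the generic point the two branches $\pi_1^{-1}(\ov{\H}_{g,1}^k)$, $\pi_2^{-1}(\ov{\H}_{g,1}^k)$ are smooth and meet transversally in the normal directions (one branch controls the position of $P_1$ relative to the moving zero, the other $P_2$); this gives $a=1$. This argument is independent of $k$, so it settles both \eqref{eq:aAbBciCi} and \eqref{eq:aAbBciCik2}.

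For $b$ and the $c_i$, which occur only when $k=1$, I would use a pushforward/normal-bundle argument on each boundary stratum separately. Fix $i$ with $1\le i\le\lfloor g/2\rfloor$ and let $\Delta_i$ denote the corresponding locus in \eqref{eq:listZ1Z1} (a curve of genus $i$ glued to a curve of genus $g-i$ along a rational bridge carrying $P_1,P_2$); similarly let $\Delta_{\rm ell}$ be the elliptic-tail locus with $\mu_1=0$. These strata are disjoint from $\ov{\H}_{g,2}^1$ and $E_{\{1\}}$ at their generic points, so restricting the class equality \eqref{eq:aAbBciCi} to a general slice transverse to, say, $\Delta_i$ isolates the coefficient $c_i$ as a local intersection number: I would take a two-parameter family in $\P\E^1_{g,2}$ meeting $\Delta_i$ transversally at one point and compute how many times (with multiplicity) the conditions ``$P_1$ is a zero of a twisted differential'' and ``$P_2$ is a zero of a twisted differential'' are simultaneously met. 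Here the twisted-differential description from \cite{bcggm1} enters: on the rational bridge the aspect is forced to have poles of order $2$ at both nodes and two zeros, one at each marked point position that can be made to move, and a residue-matching computation on the $\P^1$ shows the branches meet transversally, giving $c_i=1$; the identical analysis on $\Delta_{\rm ell}$ gives $b=1$.

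An alternative, perhaps cleaner, route for $b$ and the $c_i$ is to apply the forgetful pushforward $\varphi_*\colon A^*(\P\E^1_{g,2})\to A^*(\ov{\M}_{g,2})$ to \eqref{eq:aAbBciCi}, or to push forward further along $\ov{\M}_{g,2}\to\ov{\M}_{g,1}$ or $\to\ov{\M}_g$, and compare with the known Picard-class expression for the Logan divisor $\mathfrak{L}_g$ (cf.\ \S\ref{eq:Logandiv} and \cite{MR1953519}): since the coefficients of the boundary divisors $\delta_{0:2}$, $\delta_{i}$, etc.\ in $[\mathfrak{L}_g]$ are pinned down in the literature, and since $\varphi_*[\ov{\H}_{g,1}^1]$ and the pushforwards of the $E$-type loci are computable via $\varphi_*(\eta^d)$ and the projection formula, matching coefficients forces $b=c_i=1$. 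I expect the \emph{main obstacle} to be the transversality/multiplicity-one claims along the boundary strata $\Delta_i$ and $\Delta_{\rm ell}$: one must verify carefully, using the compatibility conditions of the incidence variety compactification (matching orders at vertical nodes, the global residue condition), that the two pulled-back divisors $\pi_1^*\ov{\H}_{g,1}^1$ and $\pi_2^*\ov{\H}_{g,1}^1$ are each reduced and smooth near the generic point of these strata and cross transversally there, rather than being tangent or non-reduced — this is where the bulk of the bookkeeping with twisted differentials will live, and it is also the step most sensitive to the distinction between $k=1$ and $k\ge 2$.
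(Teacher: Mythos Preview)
Your proposal differs substantially from the paper's proof, and while parts are viable, others have gaps.

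For $a$, the paper does not compute a local multiplicity at all. Instead it pushes \eqref{eq:aAbBciCi}--\eqref{eq:aAbBciCik2} forward along $\pi\colon\P\E^k_{g,2}\to\P\E^k_g$ (forgetting both points). The degrees are immediate: $\pi_*$ of the left-hand side is $k^2(2g-2)^2[\P\E^k_g]$, of $[\ov{\H}^k_{g,2}]$ is $k(2g-2)(k(2g-2)-1)[\P\E^k_g]$, of $[E_{\{1\}}]$ is $k(2g-2)[\P\E^k_g]$, and the other terms push to zero; this forces $a=1$. Your local-transversality argument could presumably be made to work, but it is more delicate than needed.

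For $b$ and the $c_i$, the paper again avoids proving transversality \emph{along} the boundary strata. It instead restricts \eqref{eq:aAbBciCi} to explicit two-dimensional test surfaces: for $b$, a pencil of plane cubics (with a section of the Hodge bundle) glued at a general point of a fixed general genus $g-1$ curve with general differential, with $P_1$ moving on the genus $g-1$ component and $P_2$ a basepoint; for $c_i$, a general point of $\Delta_{i:\{1\}}$ with both marked points moving. On each surface the three relevant intersection numbers are computed by \emph{finite counting} of configurations admitting the required twisted differentials, with transversality checked at isolated points (much easier than along a stratum). The residue bookkeeping you anticipate does appear, but only to rule out a single contribution to $S_i\cdot[\ov{\H}^1_{g,2}]$ at the ``double-collision'' element.

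Your alternative pushforward route has a genuine problem. The Logan divisor lives in $\mathrm{Pic}(\ov{\M}_{g,g})$, not $\mathrm{Pic}(\ov{\M}_{g,2})$, so the comparison you cite does not apply directly. More seriously, $\varphi\colon\P\E^1_{g,2}\to\ov{\M}_{g,2}$ has relative dimension $g-1$, so $\varphi_*$ sends a codimension-$2$ cycle to codimension $3-g$; for $g\ge 3$ this is non-positive and the pushforward carries no information about $b$ or the $c_i$. Only for $g=2$ does one land in $\mathrm{Pic}(\ov{\M}_{2,2})$, and there the paper does use this (in a different lemma) to check independence, not to determine coefficients.

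In short: your first route (local multiplicities) is in principle sound but is exactly the delicate computation the paper circumvents via test surfaces; your second route fails for $g\ge 3$.
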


\begin{proof}
To find the coefficient $a$, let $\pi\colon \P\E^k_{g,2} \ra \P\E^k_{g}$ be the forgetful map and consider $\pi_*$ of \eqref{eq:aAbBciCi} and \eqref{eq:aAbBciCik2}.
One has 
\begin{align*}
&\pi_*\left(\pi_1^*\left[ \ov{\H}_{g, 1}^k\right] \cdot \pi_2^*\left[ \ov{\H}_{g, 1}^k \right]\right) &&= k^2(2g-2)^2\left[\P\E^k_{g}\right], \\ 
&\pi_*\left(\ov{\H}_{g, 2}^k\right) &&= k(2g-2)(k(2g-2)-1)\left[\P\E^k_{g}\right], \\
&\pi_*\left(
E_{\{1\}}
\right) &&= k(2g-2)\left[\P\E^k_{g}\right], 
\end{align*}
for all $k\geq 1$, and the push-forward of the other classes in \eqref{eq:aAbBciCi} vanishes. Hence $a=1$. 
This  concludes the proof of the statement for $k\geq 2$.

\smallskip

For $k=1$, to find the coefficient $b$, we restrict to a test surface $S$ in $\P\E^1_{g,2}$ defined as follows: consider a pencil of plane cubics of degree $12$ along with a section of the Hodge bundle over $\ov{\M}_{1,1}$,  identify one of its basepoints with a general point on a fixed general genus $g-1$ curve with a general abelian differential, vary the first marked point along the genus $g-1$ component, and let the second marked point be one of the other basepoints of the pencil of plane cubics. 
We compute the following intersections below:
\begin{align}
\label{eq:Sint1}
&S \cdot \pi_1^*\left[ \ov{\H}_{g, 1}^1\right] \cdot \pi_2^*\left[ \ov{\H}_{g, 1}^1 \right] &&= 2(g-1) -1, \\
\label{eq:Sint2}
&S \cdot \left[\ov{\H}_{g, 2}^1 \right] &&= 2(g-1) -2, \\
\label{eq:Sint3}
&S \cdot \mbox{
\begin{tikzpicture}[baseline={([yshift=-.5ex]current bounding box.center)}]
      \path(0,0) ellipse (2 and 2);
      \tikzstyle{level 1}=[counterclockwise from=-60,level distance=15mm,sibling angle=120]
      \node [draw,circle,inner sep=2.5] (A0) at (180:1.5) {$\scriptstyle{g-1}$};
      \tikzstyle{level 1}=[counterclockwise from=-60,level distance=15mm,sibling angle=120]
      \node [draw,circle,inner sep=2.5] (A1) at (0:1.5) {$\scriptstyle{1}$}
            child {node [label=-60: {$\scriptstyle{2}$}]{}}
	    child {node [label=60: {$\scriptstyle{1}$}]{}};
      \path (A0) edge [] node[auto,below=0.4, label={-90:$\scriptstyle{\mu_1=0}$}]{} (A1);
    \end{tikzpicture}
} &&= 1.
\end{align}
Moreover, $S$ has empty intersection with the remaining classes in \eqref{eq:aAbBciCi}. These intersections imply that $b=1$.

Since $\lambda_1=1$ on the elliptic pencil, where $\lambda_1$ is the first Chern class of the Hodge bundle, the section of the Hodge bundle over $\ov{\M}_{1,1}$ assigns the zero abelian differential to precisely one elliptic curve in the elliptic pencil. 

The intersection in \eqref{eq:Sint3} is contributed by the element of $S$ where the elliptic tail has the zero abelian differential, and the marked point in the genus $g-1$ component collides with the node, creating a rational bridge.

The remaining intersections can be computed using the description of the boundaries of these loci provided by the incidence variety compactification. 

The intersection in \eqref{eq:Sint2} is contributed by the elements of $S$ where the elliptic tail has the zero abelian differential, and  the marked point in the genus $g-1$ component coincides with one of the zeros of the  differential (such zeros are generically away from the node). For each such element of $S$, there exists a twisted abelian differential whose aspect on the elliptic tail has a zero at the marked point and a pole of order $2$ at the node. The intersection is transverse along all such contributions.

The intersection in \eqref{eq:Sint1} consists of the sum of the contributions to \eqref{eq:Sint2} and \eqref{eq:Sint3}. Indeed, the contribution to \eqref{eq:Sint3} is clearly in $\pi_1^{-1}(\ov{\H}_{g, 1}^1)$, because the second marked point is in the elliptic tail with a zero differential, so that a suitable twisted differential can be found as above, and it is also seen to be in $\pi_2^{-1}(\ov{\H}_{g, 1}^1)$ by taking a twisted differential whose aspect on the rational bridge has poles of order $2$ at both nodes  and simple zeros at the first marked point and at some other smooth point. Such a twisted differential satisfies the global residue condition of the incidence variety compactification \cite[Def.~1.2(4)]{bcggm1}. The intersection is transverse along all such contributions.

We remark that the contribution to \eqref{eq:Sint3} is not in $\ov{\H}_{g, 2}^1$:  a twisted differential vanishing at the second marked point has order $-2$ at the preimage of the node in the elliptic component, and generically order $0$ at the preimage of the node in the genus $g-1$ component. It follows that the aspect on the rational bridge has orders $0$ and $-2$ at the nodes, hence does not vanishes at the first marked point.

\smallskip

To find the coefficient $c_i$, for $1 \leq i \leq \lfloor g/2 \rfloor$, consider the test surface $S_i$ in $\P\E^1_{g,2}$ obtained as follows: select a general element of the boundary divisor $\Delta_{i: \{ 1\}}$ in $\P\E^1_{g,2}$ consisting of curves with a genus $i$ component containing precisely the first marked point, and vary the two marked points in their corresponding components. In particular, elements of $S_i$ have a fixed general stable differential $\mu$. When a marked point collides with the node, it gives rise to a rational bridge, and $\mu$ is extended by zero on such rational bridges.
We show the following intersections below:
\begin{align}
\label{eq:Siint1}
&S_i \cdot \pi_1^*\left[ \ov{\H}_{g, 1}^1\right] \cdot \pi_2^*\left[ \ov{\H}_{g, 1}^1 \right] &&= (2i-1)(2(g-i) -1), \\
\label{eq:Siint2}
&S_i \cdot \ov{\H}_{g, 2}^1&&= (2i-1)(2(g-i) -1)-1, \\
\label{eq:Siint3}
&S_i \cdot \mbox{
\begin{tikzpicture}[baseline={([yshift=-.5ex]current bounding box.center)}]
      \path(0,0) ellipse (2 and 2);
      \tikzstyle{level 1}=[counterclockwise from=-60,level distance=15mm,sibling angle=120]
      \node [draw,circle,inner sep=2.5] (A0) at (120:1.5) {$\scriptstyle{i}$};
      \tikzstyle{level 1}=[counterclockwise from=-45,level distance=12mm,sibling angle=90]
      \node [draw,circle,fill] (A1) at (0:1.5) {$\scriptstyle{}$}
      child {node [label=-45: {$\scriptstyle{2}$}]{}}
      child {node [label=45: {$\scriptstyle{1}$}]{}};
      \tikzstyle{level 1}=[counterclockwise from=-60,level distance=15mm,sibling angle=60]
      \node [draw,circle,inner sep=2.5] (A2) at (240:1.5) {$\scriptstyle{g-i}$};
      \path (A0) edge [] (A1);
      \path (A1) edge [] (A2);
    \end{tikzpicture}
}  &&= 1.
\end{align}
Moreover, $S_i$ has empty intersection with the remaining classes in \eqref{eq:aAbBciCi}. It follows that $c_i =1$, for all $1 \leq i \leq \lfloor g/2 \rfloor$.

The intersection \eqref{eq:Siint3} is contributed by the element of $S_i$ where both marked points collide with the node, creating two rational bridges.

The intersection \eqref{eq:Siint2} is contributed by those elements of $S_i$ where either  both marked points  coincide with zeros of the differential, or where exactly one of the marked points collides with the node and the other marked point is a zero of the differential. In the cases when exactly one of the marked points collides with the node, creating a rational bridge, there exists a twisted differential of type $\mu$ whose aspect on the rational bridge  has zeros at the marked point and at some other smooth point, poles of order $2$ at the nodes, and satisfies the global residue condition \cite[Def.~1.2(4)]{bcggm1}. The intersection is transverse along all such contributions.

The intersection \eqref{eq:Siint1} is the sum of the contribution to \eqref{eq:Siint2}  and \eqref{eq:Siint3}. 
In fact, the contribution to \eqref{eq:Siint3} is in \eqref{eq:Siint1} as well, since each marked point is the zero of some  twisted differential of type $\mu$. 

However, the contribution to \eqref{eq:Siint3} is not in \eqref{eq:Siint2}, since there exist no twisted differential of type $\mu$ vanishing  at \textit{both} marked points. For this, 
assume there exists a twisted differential $\tau$ of type $\mu$ whose aspects $\tau_1$ and $\tau_2$ on the two rational bridges have zeros at the two marked points. 
Since $\mu$ is holomorphic and non-zero at the preimages of the nodes in the components of genus $i$ and $g-i$, it follows that $\tau_1$ and $\tau_2$ have a pole of order $2$ at the nodes $Q_1$ and $Q_2$ where the rational bridges meet the genus $i$ and $g-i$ components, respectively.
Then, since $\tau_1$ and $\tau_2$ vanish at the marked points, for degree reasons one has that $\tau_1$ and $\tau_2$ must each have a pole of order  at least $1$ at the node $Q_0$ where the two rational bridges meet. As the sum of the order of the poles at $Q_0$ has to be equal to $2$, one  has that $\tau_1$ and $\tau_2$ must each have a simple pole  at $Q_0$. The global residue condition \cite[Def.~1.2(4)]{bcggm1} requires that $\mathrm{Res}_{Q_1} (\tau_1) = \mathrm{Res}_{Q_2}(\tau_2) = 0$. By the residue theorem, this   forces the residues at $Q_0$ to be zero, a contradiction. 
 \end{proof}
 
 Theorem \ref{thm:classZ2}  follows from \eqref{eq:aAbBciCi}, \eqref{eq:aAbBciCik2},  Proposition \ref{prop:pi1Z1pi2Z1}, and Lemma \ref{lem:Z1class}.


\section{A recursive identity for incidence loci} 
\label{sec:recidinc}

Theorem \ref{thm:pinZnrhonZ1plus}  provides a recursive identity for the class of the incidence locus $\ov{\H}_{g, n}^k$ over curves with rational tails. 
For $k=1$, Theorem \ref{thm:pinZnrhonZ1plus} and Remark \ref{rmk:1} are specializations of  results by Sauvaget \cite{sauvagetcohomology} valid over $\ov{\M}_{g,n}$. The techniques of \cite{sauvaget2020volumes} could  be used to extend the argument for \mbox{$k\geq 1$.} 
Here we provide an alternative proof   by applying the incidence variety compactification from \cite{bcggm1, bcggm}.

We start by proving the next Theorem \ref{thm:pinZnrhonZ1}. Recall the loci $E_{\bm{I}}$ and the natural maps
 $\pi_n\colon \P\E^k_{g,n} \rightarrow \P\E^k_{g,n-1}$  and  $\rho_n\colon \P\E^k_{g,n} \rightarrow\P\E^k_{g,1}$ from \S\ref{sec:recidincinit}.
 
\begin{theorem}[{\cite{sauvagetcohomology, sauvaget2020volumes}}]
\label{thm:pinZnrhonZ1}
For $g\geq 2$ and $n\geq 2$, the identity
\[
\pi_n^* \left[\ov{\H}_{g, n-1}^k \right] \cdot \rho_n^* \left[\ov{\H}_{g, 1}^k \right] = \left[\ov{\H}_{g, n}^k \right] + \sum_{\bm{I}} |\bm{I}|[E_{\bm{I}}]
\] 
where the sum is over all non-empty $\bm{I}\subseteq \{1,\dots, n-1\}$, holds in:
\begin{enumerate}[(i)]
\item $A^{n}\left( \P\E^k_{g,n}\big|_{\M_{g,n}^\mathsf{rt}}  \right)$ when
\begin{equation}
\label{eq:hypkgn}
\left\{
\begin{array}{l}
\mbox{$k=1$; or}\\[.1cm]
\mbox{$k=2$ and $n \leq 2(2g-2)-2$; or}\\[.1cm]
\mbox{$k\geq 3$ and $n \leq k(2g-2)-1$;}
\end{array}
\right.
\end{equation}
\item $A^{n}\left( \P\E^k_{g,n} \right)$ when $n\leq k$.
\end{enumerate}
\end{theorem}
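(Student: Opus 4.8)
The strategy is to reduce the statement to a geometric analysis of the intersection $\pi_n^*\left[\ov{\H}_{g,n-1}^k\right]\cdot\rho_n^*\left[\ov{\H}_{g,1}^k\right]$ in $\P\E^k_{g,n}$ (restricted to $\M_{g,n}^{\rm rt}$ in case (i)), identifying set-theoretically the irreducible components of this intersection and then computing the multiplicity with which each appears. The overall template is exactly that of Appendix \S\ref{sec:Hkg2}: Proposition \ref{prop:pi1Z1pi2Z1loci} plus Proposition \ref{prop:pi1Z1pi2Z1} handle the base case $n=2$, and the general $n$ case proceeds by the same mechanism, now keeping track of all the boundary loci $E_{\bm{I}}$ indexed by subsets $\bm{I}\subseteq\{1,\dots,n-1\}$. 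I would first observe that a point $(C,P_1,\dots,P_n,\mu)$ lies in $\pi_n^{-1}(\ov{\H}_{g,n-1}^k)\cap\rho_n^{-1}(\ov{\H}_{g,1}^k)$ precisely when each of the first $n-1$ marked points is a zero of some twisted $k$-differential of type $\mu$ (by $\pi_n^*$) \emph{and simultaneously} the same holds for $P_n$ via $\rho_n^*$ --- but crucially these conditions need not be witnessed by the \emph{same} twisted differential. The locus $\ov{\H}_{g,n}^k$ is the main component where all $n$ conditions hold for one common twisted differential; the extra components arise over the boundary, when some of the marked points sit on a rational tail to which $\mu$ restricts to zero, forcing extra vanishing at the node in the genus $g$ component.

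\textbf{Key steps, in order.} (1) Set up the incidence variety compactification of \cite{bcggm1,bcggm} as in \S\ref{sec:IVCrecap}, viewing $\ov{\H}_{g,n}^k$, $\ov{\H}_{g,n-1}^k$, and $\ov{\H}_{g,1}^k$ as images of $\ov{\H}_{g,k(2g-2)}^k$ under forgetful maps, so that their boundary strata are described by twisted $k$-differentials compatible with full orders on dual graphs. (2) Prove the set-theoretic statement: the intersection $\pi_n^{-1}(\ov{\H}_{g,n-1}^k)\cap\rho_n^{-1}(\ov{\H}_{g,1}^k)$ over $\M_{g,n}^{\rm rt}$ consists of $\ov{\H}_{g,n}^k$ together with the loci $E_{\bm{I}}$ for $\varnothing\neq\bm{I}\subseteq\{1,\dots,n-1\}$ --- and, in the excluded range $k\geq 2$, $n=k(2g-2)$, also the locus $E^{\rm ab}$ of Remark \ref{rmk:1}. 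The argument mirrors the proof of Proposition \ref{prop:pi1Z1pi2Z1loci}: take a general element in the preimage of the nodal locus, and case-analyze by the dual graph (one node, disconnecting or not; rational tail carrying which subset of marked points; etc.). The point is that a rational tail carrying exactly the marked points $\bm{I}\sqcup\{n\}$ in case (i) forces $\mu$ to vanish to order exactly $|\bm{I}|$ at the preimage of the node in the genus $g$ component: the aspect on the rational tail must have a pole of order $k|\bm{I}|+k$ (to make all $|\bm{I}|+1$ points zeros, counting the marked point $P_n$ and the points of $\bm{I}$), and matching the pole order at the node against the genus $g$ side gives the zero of order $|\bm{I}|$; this is exactly the defining condition of $E_{\bm{I}}=\gamma_*\ov{\H}^k_{g,(|\bm{I}|,1^{n-|\bm{I}|-1})}$. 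The hypotheses \eqref{eq:hypkgn} are what guarantee these are the only components and that they are all of the expected codimension $n$ (for $k\geq 2$ the restriction $n\leq k(2g-2)-1$, with the extra $n\leq 2(2g-2)-2$ when $k=2$, is precisely what excludes the "square of an abelian differential" behavior from proliferating). (3) Compute multiplicities: $\ov{\H}_{g,n}^k$ has multiplicity one because the intersection is generically transverse over $\M_{g,n}$. For each $E_{\bm{I}}$ I expect the coefficient to be $|\bm{I}|$; this is established by restricting to suitably chosen test surfaces (generalizing $S$ and $S_i$ from the proof of Proposition \ref{prop:pi1Z1pi2Z1}) obtained by attaching rational bridges carrying the points of $\bm{I}\sqcup\{n\}$ and varying the points, and computing the three relevant intersection numbers (with $\pi_n^*\cdot\rho_n^*$, with $\ov{\H}_{g,n}^k$, and with $E_{\bm{I}}$) via the boundary description. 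The coefficient $|\bm{I}|$ reflects that $P_n$ and the $|\bm{I}|$ points of $\bm{I}$, once collided to a common node, contribute a length-$|\bm{I}|$ "chain" worth of discrepancy between the two twisted-differential conditions, exactly as the factor $2i-1$ appeared in \eqref{eq:Siint1}. (4) Finally, invoke the base case $n=1$, \eqref{eq:Z1class}, and conclude.

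\textbf{Main obstacle.} The hardest part will be step (3): the multiplicity computation for general $\bm{I}$. For $n=2$ only two test surfaces were needed and the intersection numbers were small integers computed by hand; for general $n$ and general $\bm{I}$ one must produce a family of test surfaces that isolates each $E_{\bm{I}}$, and then carefully run the global residue condition \cite[Def.~1.2(4)]{bcggm1} on dual graphs with a chain of rational bridges to verify both that the contributions to $\pi_n^*\cdot\rho_n^*$ split as the sum of the $\ov{\H}_{g,n}^k$-contribution and the $E_{\bm{I}}$-contribution with the right weight, and that the intersections are transverse along each. A subtlety is bookkeeping the codimension count uniformly across the case split in \eqref{eq:hypkgn} --- in particular ensuring that in the $k=2$ sub-range $n\leq 2(2g-2)-2$ no "squared abelian differential" locus sneaks in with smaller codimension --- and correctly handling the case $k\geq 2$, $n=k(2g-2)$ separately to recover \eqref{eq:k2plusnmax} of Remark \ref{rmk:1}. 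I expect the residue-theorem argument at the end of the proof of Proposition \ref{prop:pi1Z1pi2Z1} (showing the two-rational-bridge contribution is not in $\ov{\H}_{g,n}^k$) to generalize, but verifying it along a whole chain of bridges, with the residues forced to propagate and cancel, is where the real work lies.
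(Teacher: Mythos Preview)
Your overall strategy is sound and matches the paper's: identify the components of the intersection set-theoretically, then determine the multiplicities. The difference lies in \emph{how} the paper executes steps (2) and (3), and this difference is significant enough to be worth spelling out.

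For step (2), you propose a direct case analysis by dual-graph type, as in Proposition~\ref{prop:pi1Z1pi2Z1loci}. The paper instead proves this inductively (Proposition~\ref{prop:EnonlyEI}): for $1\leq i\leq n-1$ one checks that $\pi_i$ maps the intersection \eqref{eq:pinrhon} into the analogous intersection at stage $n-1$, and then the inductive hypothesis forces $\pi_i(B)$ into either $\ov{\H}^k_{g,n-1}$ or some $E_{\ov{\bm I}}$, from which one reconstructs $B$. This avoids an exhaustive dual-graph analysis at each $n$.

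For step (3), you propose to build a test surface for each $E_{\bm I}$ and compute three intersection numbers on it, generalizing the surfaces $S$ and $S_i$. The paper does \emph{not} do this. Instead it again uses $(\pi_i)_*$: applying $(\pi_i)_*$ to the equation $\mathsf{L}_n=[\ov{\H}^k_{g,n}]+\sum_{\bm I}a_{n,\bm I}[E_{\bm I}]+\mathsf{B}_n$ and using the explicit pushforwards of Lemma~\ref{lemma:pushforwardEn} together with the inductive hypothesis reduces to a linear system in the $a_{n,\bm I}$. A linear-independence lemma (Lemma~\ref{lemma:ZnandEIindependent}) then lets one read off $a_{n,\bm I}=|\bm I|$ for all $\bm I$ with $|\bm I|<n-1$ by comparing coefficients. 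Only the single remaining coefficient $a_{n,\{1,\dots,n-1\}}$ is computed via one $n$-dimensional test space $T$ (Lemma~\ref{lemma:claimtestndimspace}).

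So what you identify as the ``main obstacle'' --- a family of test surfaces isolating each $E_{\bm I}$, with delicate residue-propagation along chains of rational bridges --- is bypassed entirely. Your approach is not wrong, but it would be substantially more work; the paper's inductive mechanism via $(\pi_i)_*$ is the key simplification you are missing.
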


\subsection{On the hypothesis \eqref{eq:hypkgn}}
\smallskip

We start with the following Lemma, which explains how we use the hypothesis \eqref{eq:hypkgn}:

\begin{lemma}
\label{lemma:hypkgn}
Let $k\geq 2$,  let $g$ and $n$ be such that \eqref{eq:hypkgn} holds, and $m\leq n-1$.
The locus of $k$-differentials that are $k$-th powers of abelian differentials has 
positive codimension inside the closure of ${\H}^k_{g,\bm{m}} \subset \mathbb{PE}^k_{g,n-m}$ from \eqref{eq:H1heavypt}. 
\end{lemma}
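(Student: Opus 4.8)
\textbf{Proof plan for Lemma~\ref{lemma:hypkgn}.} The plan is to compare dimensions. Fix $k\geq 2$, and fix $g,n$ satisfying \eqref{eq:hypkgn}, together with $m\leq n-1$ and the weight vector $\bm{m}=(m,1^{n-m-1})$. Set $n^-:=n-m$, the number of marked points on elements of $\P\E^k_{g,n^-}$. By definition, ${\H}^k_{g,\bm{m}}\subset \P\E^k_{g,n^-}$ has codimension $n-1$ (it is obtained from ${\H}^k_{g,n^-}$ of codimension $n^-$ by further colliding $m$ of the $n^-$ points, raising the codimension by $m-1$; alternatively it is the locus where the stable $k$-differential has a zero of order $\geq m$ at $P_1$ and simple zeros at the remaining $n^--1$ points). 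So its closure has dimension $\dim \P\E^k_{g,n^-} - (n-1) = \big(3g-3+n^- + k(2g-2)\big) - (n-1)$, since the fibre of $\P\E^k_{g,n^-}\to \ov{\M}_{g,n^-}$ over a smooth curve has dimension $k(2g-2)$ (the rank of $\mathbb{E}^k_{g,n^-}$ over $\M_{g,n^-}$ is $k(2g-2)+1-g$ when $k\geq 2$ by Riemann--Roch, minus one for the projectivization). I would simplify: $\dim \ov{\H}^k_{g,\bm{m}} = 2g-2 + n^- + k(2g-2) - n + 1 = 2g-3 + (n^--n) + k(2g-2) = 2g-3 - m + k(2g-2)$.

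The next step is to bound the dimension of the sublocus of $k$-th powers of abelian differentials. A $k$-th power of an abelian differential on a smooth genus $g$ curve has exactly $2g-2$ zeros (counted with multiplicity, as a $k$-differential each simple zero of the abelian differential becomes a zero of order $k$). For such a point to lie in ${\H}^k_{g,\bm{m}}$, we need the $k$-th power $\omega^{\otimes k}$ to vanish to order $\geq m$ at $P_1$ and to vanish at each of the other $n^--1$ points. Since orders of vanishing of $\omega^{\otimes k}$ are multiples of $k$, the condition at $P_1$ forces $\omega$ to vanish to order $\geq \lceil m/k\rceil$ there; at each other point $\omega$ must vanish. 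The relevant parameter space is thus a stratum (or union of strata) of abelian differentials with prescribed vanishing, inside $\P\Omega\overline{\M}_g$, together with the choice of marked points among the zeros; its dimension is at most $\dim \P\Omega\M_g - (\text{number of imposed zero conditions}) + (\text{freedom in placing marked points at remaining zeros})$. The cleanest route: the locus of $k$-th powers of abelian differentials inside $\P\E^k_{g,n^-}$ that additionally pass through ${\H}^k_{g,\bm{m}}$ maps to $\P\Omega\M_g$ (forgetting marked points) with fibres of dimension bounded by the number of zeros of $\omega$ that can serve as marked points, which is at most $2g-2-$(multiplicity absorbed at $P_1$)$+\dots$; and $\dim \P\Omega\M_g = 2g-2 + (3g-3) = 4g-4$ wait --- rather, the image has dimension at most $\dim\P\Omega\M_g$ but cut down by the condition that at least $n^--1+\lceil m/k\rceil$ of the $2g-2$ zeros of $\omega$ coincide with marked points, i.e. impose constraints. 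I would carry out the bookkeeping carefully to get: $\dim(\text{ab.\ powers in }\ov{\H}^k_{g,\bm{m}}) \leq 2g-3 - \lceil m/k\rceil + n^- - n + 1 + (\text{const})$, and then compare.

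The main obstacle --- and the step I expect to demand the most care --- is making the dimension count for the abelian locus genuinely tight, because the abelian differential is itself a $k$-th root and the constraint ``$\omega^{\otimes k}$ vanishes to order $\geq m$ at $P_1$'' is weaker than ``$\omega$ vanishes to order $m$ at $P_1$'' by roughly a factor of $k$. This is exactly where hypothesis \eqref{eq:hypkgn} enters: the inequality $n\leq k(2g-2)-1$ (and the slightly stronger bound for $k=2$) is what guarantees that the ``saving'' from passing to the $k$-th root, namely that fewer zeros of $\omega$ are needed than zeros of $\omega^{\otimes k}$, is not enough to make the abelian locus equidimensional with $\ov{\H}^k_{g,\bm{m}}$. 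Concretely I expect the comparison to reduce to something like $2g-2 - \lceil m/k\rceil > $ (number of conditions), and then \eqref{eq:hypkgn} forces strict positivity of the codimension. I would handle the general case first and then separately verify the boundary case $k=2$, $n=2(2g-2)-2$ where the bound is tightest, checking that even there the abelian locus drops dimension by at least one. An alternative, possibly cleaner approach would be to use the incidence variety compactification of \cite{bcggm1, bcggm} directly: the abelian locus corresponds to twisted $k$-differentials all of whose aspects are $k$-th powers, a closed condition of positive codimension in each stratum unless the stratum dimension is small, and \eqref{eq:hypkgn} rules out the exceptional small strata. I would pursue whichever of the two bookkeeping schemes produces the cleaner inequality.
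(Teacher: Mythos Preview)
Your approach is the same as the paper's: compare the dimension of the full locus with that of its $k$-th-power sublocus. The paper executes it more cleanly by citing the stratum dimension formula \cite[Thm~1.1]{bcggm} directly for both loci, rather than computing $\dim\P\E^k_{g,n^-}$ and subtracting codimension --- note that your formula $3g-3+n^-+k(2g-2)$ for $\dim\P\E^k_{g,n^-}$ is off by $g$ (the fibre has dimension $(2k-1)(g-1)-1$, not $k(2g-2)$), and your subsequent simplifications contain further arithmetic slips. Once the two dimensions are written as $\dim\mathbb{A}=2(k+1)(g-1)-m$ for the primitive part and $\dim\mathbb{B}=4(g-1)-\lceil m/k\rceil+1$ for the $k$-th-power part, equality reads $(k-1)\cdot 2(g-1)=m-\lceil m/k\rceil+1$; a brief case check ($k\mid m$ forces $k=2$, $m=4g-6$, while $k\nmid m$ forces $m=k(2g-2)-1$) shows this is excluded under \eqref{eq:hypkgn}. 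Your instinct that $k=2$ is the tightest case is correct, and your muddled fibre-over-$\P\Omega\M_g$ description is unnecessary --- the abelian locus is itself (the image of) an abelian stratum, so its dimension is read off directly.
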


Here are some examples showing how the lemma fails when \eqref{eq:hypkgn} fails. 
When $k=2$ and $n=4g-5$,  consider the locus 
\begin{equation}
\label{eq:counterexk2}
\ov{\H}^2_{g,\bm{m}} \subset \P\E^2_{g,1} \qquad \mbox{where} \qquad \bm{m}=\left(4g-6\right). 
\end{equation}
This locus has two components:
a component parametrizing quadratic differentials generically with  a zero of order $4g-6$ at the marked point and simple zeros at two distinct unmarked points; and a component parametrizing  \textit{squares of abelian differentials}  generically with a zero of order $4g-6$ at the marked point and a double zero at an unmarked point.
 These two components have equal dimension by \cite[Theorem 1.1]{bcggm} or \cite{schmitttwisted}. 

Similarly, when $k \geq 3$ and $n=k(2g-2)$, consider the locus 
\begin{equation}
\label{eq:counterexk3}
\ov{\H}^k_{g,\bm{m}} \subset \P\E^k_{g,1} \qquad \mbox{where} \qquad\bm{m}= \left(k(2g-2)-1 \right).
\end{equation}
This locus has two equidimensional  components: a component pa\-ram\-e\-trizing $k$-differentials with  a zero of order $k(2g-2)-1$ at the marked point and a simple zero elsewhere; and a component parametrizing \textit{$k$-th powers of abelian differentials} vanishing with order $k(2g-2)$ at the marked point.

\begin{proof}[Proof of Lemma \ref{lemma:hypkgn}]
For $k\geq 2$ and $m\leq n-1$, consider a locus $\ov{\H}^k_{g,\bm{m}}$ in $\mathbb{PE}^k_{g,n-m}$ as in \eqref{eq:H1heavypt}, and write
$\ov{\H}^k_{g,\bm{m}}=\mathbb{A} \cup \mathbb{B}$,
where $\mathbb{A}$ (respectively, $\mathbb{B}$) is the closure of the locus of differentials in $\ov{\H}^k_{g,\bm{m}}$ that \textit{are not} $k$-th powers of abelian differentials,
(resp., \textit{are} $k$-th powers of abelian differentials).

Differentials in $\mathbb{A}$ have generically a zero of order $m$ and $k(2g-2)-m$ simple zeros. 
Differentials in $\mathbb{B}$ have generically a zero of order $k\lceil\frac{m}{k}\rceil$ (i.e., the smallest multiple of $k$ which is greater than or equal to $m$) and $(2g-2)-\lceil\frac{m}{k}\rceil$ zeros of order $k$. 
From \cite[Theorem 1.1]{bcggm}, one has 
\begin{align*}
\dim \mathbb{A} & =2(k+1)(g-1)-m, & \dim \mathbb{B} & = 4 (g-1) - \left\lceil\frac{m}{k}\right\rceil + 1.
\end{align*}
It follows that for values of $m$ small enough, $\mathbb{B}$ has positive codimension in $\ov{\H}^k_{g,\bm{m}}$, and  the loci $\mathbb{A}$ and $\mathbb{B}$ have equal dimension for values of $m$ such that
\begin{equation}
\label{eq:ABequaldim}
(k-1)2(g-1) = m - \left\lceil\frac{m}{k}\right\rceil + 1.
\end{equation}
A case study shows that \eqref{eq:ABequaldim} fails when \eqref{eq:hypkgn} holds. Indeed, if $k\mid m$, then \eqref{eq:ABequaldim} only holds for $k=2$ and $m=4g-6$. This implies $n\geq 4g-5$, e.g., the case given by \eqref{eq:counterexk2}. Otherwise, if $k \nmid m$, then $m\equiv k-1$ mod $k$, hence $m=k(2g-2)-1$. This is the case given by \eqref{eq:counterexk3}. The statement follows.
\end{proof}

\subsection{A set-theoretic study}
To prepare for the proof of Theorem \ref{thm:pinZnrhonZ1}, we study the components of the intersection 
\begin{equation}
\label{eq:pinrhon}
\pi_n^{-1} \left(\ov{\H}_{g, n-1}^k \right) \cap \rho_n^{-1} \left(\ov{\H}_{g, 1}^k \right) \quad\subset \P\E^k_{g,n}.
\end{equation}
By definition, ${\H}_{g, n}^k$ is the only component of \eqref{eq:pinrhon} over $\M_{g,n}$.

\begin{lemma} 
\label{lemma:EIinsupportEn} 
Let $k$, $g$, and $n$ be such that \eqref{eq:hypkgn} holds.
The loci $E_{\bm{I}}\subset \P\E^k_{g,n}$ for all non-empty $\bm{I}\subseteq \{1,\dots, n-1\}$ are contained in the intersection 
\eqref{eq:pinrhon}, but not in $\ov{\H}_{g, n}^k$.
\end{lemma}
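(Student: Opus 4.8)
The statement has two parts: (1) each $E_{\bm{I}}$ lies in the intersection \eqref{eq:pinrhon}, and (2) no $E_{\bm{I}}$ lies in $\ov{\H}_{g,n}^k$. For part (1), I would unwind the definitions: a general element of $E_{\bm{I}}=\gamma_*\,\ov{\H}^k_{g,\bm{m}}$ consists of a stable pointed curve with a rational tail $R$ carrying the marked points labelled by $\bm{I}\sqcup\{n\}$, a genus $g$ component $C_0$ carrying the remaining marked points, and a stable $k$-differential $\mu$ which is identically zero on $R$, vanishes to order $m=|\bm{I}|$ at the node on $C_0$, and vanishes at all the marked points lying on $C_0$. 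To show this element lies in $\rho_n^{-1}(\ov{\H}^k_{g,1})$, I must exhibit that $P_n$ is a zero of some twisted $k$-differential of type $\mu$ via the incidence variety compactification of \cite{bcggm1,bcggm} recalled in \S\ref{sec:IVCrecap}: one takes the aspect on $C_0$ equal to $\mu|_{C_0}$ (a maximum of the level order), and on $R$ a meromorphic $k$-differential with a pole of order $m+k$ at the node and a zero of order $\geq 1$ at $P_n$ (and whatever order is forced at the other legs of $R$), which exists by a dimension/degree count on $\mathbb{P}^1$ and satisfies the matching-order condition at the vertical node and the global $k$-residue condition trivially since $R$ carries no genus. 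Hence $P_n$ is a zero of a twisted $k$-differential compatible with a full order, so the element is in $\rho_n^{-1}(\ov{\H}^k_{g,1})$. The same argument with $P_n$ replaced by any other leg on $R$, together with the observation that forgetting $P_n$ already gives an element of $\ov{\H}^k_{g,n-1}$ (this is essentially how $E_{\bm{I}}$ is built: $\pi_n(E_{\bm{I}})$ sits inside $\ov{\H}^k_{g,n-1}$, using that colliding points preserves the incidence condition and that $\ov{\H}^k_{g,\bm{m}}$ with $\bm{m}=(m,1^{n-m-1})$ is obtained from $\ov{\H}^k_{g,n-1}$ by colliding points), shows the element is in $\pi_n^{-1}(\ov{\H}^k_{g,n-1})$ as well.

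For part (2), the key point is that the differential $\mu$ on a general element of $E_{\bm{I}}$ does \emph{not} vanish at $P_n$ (nor at the other legs on $R$) as an honest stable $k$-differential: it is identically zero there, which is a different, deeper-codimension degeneration than having a zero, so membership in $\ov{\H}^k_{g,n}$ must be tested via twisted differentials. A twisted $k$-differential of type $\mu$ vanishing at \emph{all} of $P_1,\dots,P_n$ simultaneously would have to have an aspect on $C_0$ equal to $\mu|_{C_0}$, hence order exactly $m$ at the node on $C_0$; by the matching-order condition the aspect on $R$ has a pole of order exactly $m+k$ at that node; but then, for degree reasons on $\mathbb{P}^1$, requiring this aspect on $R$ to vanish at all the legs of $R$ in $\bm{I}\sqcup\{n\}$ together with having a single pole of order $m+k$ forces the total zero order to exceed $m+k-2$ unless $|\bm{I}|+1$ is small, and a careful count shows this is impossible when the rational tail carries at least two legs — or, more precisely, it forces $\mu|_{C_0}$ itself to vanish further at the node, contradicting generality. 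This is where the hypothesis \eqref{eq:hypkgn} enters through Lemma \ref{lemma:hypkgn}: when $k\geq 2$ one must rule out that $\mu$ is a $k$-th power of an abelian differential (which could vanish to higher order at the node with room to spare on $R$), and Lemma \ref{lemma:hypkgn} guarantees that locus has positive codimension in $\ov{\H}^k_{g,\bm{m}}$, so the generic element of $E_{\bm{I}}$ is not of that type.

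The main obstacle I anticipate is the bookkeeping in part (2) for rational tails $R$ that are themselves reducible (when $\bm{I}\sqcup\{n\}$ is distributed over a chain or tree of $\mathbb{P}^1$'s, as occurs in $\ov{\H}^k_{g,\bm{m}}$): one has to run the degree/residue argument node-by-node along $R$, tracking pole orders at internal nodes and invoking the global $k$-residue condition at horizontal/vertical nodes, to conclude that no twisted differential vanishing at every leg on $R$ exists. This is the same style of argument as in Proposition \ref{prop:pi1Z1pi2Z1loci} and its multiplicity computations (\S\ref{sec:Hkg2}), so I would organize it as: first handle the case where $R$ is irreducible (one $\mathbb{P}^1$ with $|\bm{I}|+1$ legs), then reduce the reducible case to the irreducible one by observing that the aspect on the $\mathbb{P}^1$ containing $P_n$ and adjacent to $C_0$ already fails, or propagate the obstruction from the leaves inward. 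The holomorphy of $\mu$ at the node on $C_0$ for a \emph{general} element of $E_{\bm{I}}$ (more precisely: order exactly $m$, no more) is what makes the degree count tight, and establishing that genericity statement — that on an open dense subset of $E_{\bm{I}}$ the differential $\mu|_{C_0}$ has order exactly $m$ at the node — is the one auxiliary fact I would need to pin down first, using the parametrization of $\ov{\H}^k_{g,\bm{m}}$ by colliding points from $\ov{\H}^k_{g,n-1}$.
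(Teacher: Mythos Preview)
Your approach matches the paper's: construct a twisted $k$-differential on each factor for part (1), and run a degree count on the rational tail for part (2). Three corrections are needed.

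First, the pole order on $R$ is $2k+m$, not $m+k$. At a vertical node the orders on the two branches of a twisted $k$-differential sum to $-2k$, so order $m$ on $C_0$ forces a pole of order $m+2k$ on $R$. With this, the aspect on $R$ has total zero order $m$, which is exactly enough for the $m$ legs in $\bm{I}$ (part (1)) but one short of the $m+1$ legs in $\bm{I}\sqcup\{n\}$ (part (2)).

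Second, and more substantively: the global $k$-residue condition is \emph{not} ``trivially'' satisfied because $R$ has genus $0$. For $k=1$ the paper observes that the aspect on $R$ has a single pole, so its residue vanishes by the residue theorem. For $k\ge 2$ the paper invokes Lemma~\ref{lemma:hypkgn}: under hypothesis~\eqref{eq:hypkgn}, the restriction $\mu|_{C_0}$ is generically not the $k$-th power of an abelian differential, and this is precisely what makes condition (ii) of \cite[Def.~1.4(4)]{bcggm} apply, bypassing any residue constraint. So Lemma~\ref{lemma:hypkgn} enters already in part (1), not only in part (2); your placement of it only in part (2) misses this.

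Third, you need not worry about reducible $R$. Since $\ov{\H}^k_{g,n}$ is closed, it suffices to show that a \emph{general} point of $E_{\bm{I}}$ is not in it, and the general point of $\gamma_*\,\ov{\H}^k_{g,\bm{m}}$ has $R$ a single $\mathbb{P}^1$ (a general point of $\ov{\M}_{0,\bm{I}\sqcup\{n,h_n\}}$). The paper's proof of part (2) is accordingly a single sentence.
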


\begin{proof}
The argument is similar to the one used for Proposition \ref{prop:pi1Z1pi2Z1loci}.
For a non-empty $\bm{I}\subseteq \{1,\dots, n-1\}$, consider a general element $(C, P_1, \dots, P_n,\mu)$ in $E_{\bm{I}}$. By definition, the curve $C$ has a rational tail containing the marked points with labels in $\bm{I}\sqcup \{n\}$, and  the stable $k$-differential $\mu$ has a zero of order $|\bm{I}|$ at the preimage of the node in the genus $g$ component of $C$ and zeros at the marked points in the genus $g$ component.

To see that $E_{\bm{I}}$ is in $\pi_n^{-1}\left(\ov{\H}_{g, n-1}^k \right)$, we need to construct a twisted $k$-differential of type $\mu$ satisfying the conditions of the incidence variety compactification and vanishing at the first $n-1$ marked points. For this, consider the twisted $k$-differential whose aspect on the genus $g$ component will be the restriction of $\mu$ on that component, while the aspect on the rational component will have a pole of order $2k + |\bm{I}|$ at the node and zeros at the marked points with labels in $\bm{I}$. When $k=1$, the global residue condition \cite[Def.~1.2(4)]{bcggm1} is satisfied since the residue of the twisted abelian differential at the preimage of the node in the rational component is zero. When $k \geq 2$, using Lemma \ref{lemma:hypkgn}, the hypothesis \eqref{eq:hypkgn} implies that the restriction of $\mu$ to the genus $g$ component is generically not the $k$-th power of an abelian differential.
It follows that the global $k$-residue condition is automatically satisfied \cite[Def.~1.4(4)(ii)]{bcggm}. 

In order for $E_{\bm{I}}$ to be in $\rho_n^{-1} \left(\ov{\H}_{g, 1}^k \right)$, we need to construct a twisted $k$-differential of type $\mu$ satisfying the conditions of the incidence variety compactification and vanishing at the last marked point. This is as in the previous paragraph, just simpler, and is made possible by the fact that $\mu$ vanishes with order $|\bm{I}| \geq 1$ at the preimage of the node in the genus $g$ component.  

There is no single twisted $k$-differential of type $\mu$ vanishing at \textit{all} marked points, hence $E_{\bm{I}}$ is not in $\ov{\H}_{g, n}^k$. The statement follows.
\end{proof}

Next, we prove the converse of Lemma \ref{lemma:EIinsupportEn} over the locus of curves with rational tails. We use an inductive argument by means of the following maps:
For $1\leq i\leq n-1$, the map $\pi_i\colon \P\E^k_{g,n} \rightarrow \P\E^k_{g,n-1}$ is obtained by forgetting the marked point $P_i$, and relabeling the marked points $P_j$ for $j>i$ as $P_{j-1}$.

\begin{proposition} \label{prop:EnonlyEI} 
Let $k$, $g$, and $n$ be such that \eqref{eq:hypkgn} holds.
The locus $\ov{\H}_{g, n}^k$ and the extra loci $E_{\bm{I}}\subset \P\E^k_{g,n}$ for all non-empty  $\bm{I}\subseteq \{1,\dots, n-1\}$ are the \textit{only} components of \eqref{eq:pinrhon} over $\M_{g,n}^\mathsf{rt}$. 
\end{proposition}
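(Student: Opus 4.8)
\textbf{Proof plan for Proposition \ref{prop:EnonlyEI}.}

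The plan is to classify all boundary components of the intersection \eqref{eq:pinrhon} lying over $\M_{g,n}^\mathsf{rt}$ by a case analysis on the dual graph of a general element $(C,P_1,\dots,P_n,\mu)$ in such a component, combined with an induction on $n$ using the forgetful maps $\pi_i\colon \P\E^k_{g,n}\rightarrow\P\E^k_{g,n-1}$ for $1\le i\le n-1$. The base case $n=2$ is Proposition \ref{prop:pi1Z1pi2Z1loci}, which already gives the complete list of components of $\pi_1^{-1}(\ov{\H}^k_{g,1})\cap\pi_2^{-1}(\ov{\H}^k_{g,1})$; since we restrict to curves with rational tails, only $\ov{\H}^k_{g,2}$ and $E_{\{1\}}$ survive. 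For the inductive step, let $Z$ be a component of \eqref{eq:pinrhon} over $\M_{g,n}^\mathsf{rt}$ other than $\ov{\H}^k_{g,n}$; then $Z$ is supported on the boundary, so its general element $(C,P_1,\dots,P_n,\mu)$ has $C$ a curve of genus $g$ with at least one rational tail. I would first dispose of the case where the last marked point $P_n$ lies on the genus $g$ vertex: in that case $\pi_n$ does not contract any component of $C$, so $\pi_n(Z)$ is a component of $\pi_n^{-1}(\ov{\H}^k_{g,n-1})\cap\rho_n^{-1}(\ov{\H}^k_{g,1})$ inside $\P\E^k_{g,n-1}$ — wait, this needs care — instead I would compare $Z$ directly with the hypotheses by showing that both $\pi_n$-compatible and $\rho_n$-compatible twisted $k$-differentials must then simultaneously vanish at all of $P_1,\dots,P_n$, forcing $Z\subseteq\ov{\H}^k_{g,n}$, a contradiction. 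This uses the incidence variety compactification of \cite{bcggm1, bcggm} exactly as in the proofs of Proposition \ref{prop:pi1Z1pi2Z1loci} and Lemma \ref{lemma:EIinsupportEn}: the existence of a twisted $k$-differential vanishing at $P_n$ on the genus $g$ aspect, together with one vanishing at $P_1,\dots,P_{n-1}$, can be glued to one vanishing at all marked points because on the genus $g$ vertex there is no additional node to obstruct the vanishing.

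The remaining case is when $P_n$ lies on a rational vertex $v$. Let $\bm{I}\sqcup\{n\}$ be the set of legs lying on the maximal rational subtree $\mathrm{T}$ containing $P_n$ after we contract everything above the edge $e$ joining $\mathrm{T}$ to the rest of $C$; more precisely, I would argue first that $v$ must be adjacent to the genus $g$ vertex, i.e.\ $\mathrm{T}$ is a single rational tail attached to the genus $g$ component. Indeed, if $\mathrm{T}$ had a further rational vertex between $v$ and the genus $g$ vertex, then the restriction of $\mu$ to the intermediate rational components is identically zero, and one checks — using the matching order condition at vertical nodes and the global ($k$-)residue condition — that no twisted $k$-differential of type $\mu$ can vanish at $P_n$ while being compatible with $\pi_n^{-1}(\ov{\H}^k_{g,n-1})$; this is the same mechanism that kills the ``nested rational bridge'' loci in Proposition \ref{prop:pi1Z1pi2Z1loci}. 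So $C$ has a rational tail $R$ containing exactly the marked points with labels in $\bm{I}\sqcup\{n\}$ (and possibly other components attached elsewhere that carry the rest of the structure). Then, over $\M_{g,n}^\mathsf{rt}$, forgetting one of the extra legs if $\bm{I}\neq\{1,\dots,n-1\}$ and applying the inductive hypothesis to $\P\E^k_{g,n-1}$ shows that the ``inner'' data (the genus $g$ component with its remaining marked points and $\mu$) must lie in $\ov{\H}^k_{g,n-1}$ or in one of its $E_{\bm{J}}$; combined with the requirement that $\mu$ vanish at the preimage of the node of $R$ — forced by the global $k$-residue/residue condition since the rational aspect must have a high-order pole there — one concludes $Z=E_{\bm{I}}$, using Lemma \ref{lemma:hypkgn} and the hypothesis \eqref{eq:hypkgn} exactly to ensure the relevant loci have the expected dimension and that the $k$-th-power-of-abelian locus does not intrude.

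The main obstacle I anticipate is the bookkeeping in the nested-rational-tail case: making rigorous the claim that $\mathrm{T}$ must be a single tail adjacent to the genus $g$ vertex, and more generally that the only way to be compatible with \emph{both} $\pi_n^{-1}(\ov{\H}^k_{g,n-1})$ and $\rho_n^{-1}(\ov{\H}^k_{g,1})$ on a multi-component rational tree is the configuration $E_{\bm{I}}$. This requires a careful analysis of which twisted $k$-differentials of type $\mu$ can be built on the rational subtree — tracking pole orders at the vertical nodes, using that the genus $g$ aspect coincides with $\mu$ there, and invoking the global residue condition \cite[Def.~1.2(4)]{bcggm1} (for $k=1$) or \cite[Def.~1.4(4)]{bcggm} (for $k\ge2$) — and it is precisely here that the hypothesis \eqref{eq:hypkgn} is used, via Lemma \ref{lemma:hypkgn}, to rule out the degenerate $k$-th-power-of-abelian behaviour that would otherwise create extra components (the genuine counterexamples \eqref{eq:counterexk2} and \eqref{eq:counterexk3}). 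A secondary point requiring attention is that the induction is on $n$ but the forgetful maps $\pi_i$ can contract components, so one must verify that the image of $Z$ under $\pi_i$ is genuinely a component of the corresponding intersection in $\P\E^k_{g,n-1}$ over $\M_{g,n-1}^\mathsf{rt}$ and not something of excess dimension — again a dimension count that \eqref{eq:hypkgn} makes clean.
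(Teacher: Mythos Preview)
Your scaffolding matches the paper's --- induction on $n$ with base case $n=2$ given by Proposition~\ref{prop:pi1Z1pi2Z1loci} --- but the inductive step is organized differently, and the difference is exactly where your acknowledged ``bookkeeping'' obstacle bites.

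The paper does \emph{not} split on the position of $P_n$. Instead, for each $1\le i\le n-1$ it applies $\pi_i$ (forget one of the \emph{first} $n-1$ points) and uses the identity
\[
\pi_i\Bigl(\pi_n^{-1}\bigl(\ov{\H}^k_{g,n-1}\bigr)\cap\rho_n^{-1}\bigl(\ov{\H}^k_{g,1}\bigr)\Bigr)
=
\pi_{n-1}^{-1}\bigl(\ov{\H}^k_{g,n-2}\bigr)\cap\rho_{n-1}^{-1}\bigl(\ov{\H}^k_{g,1}\bigr),
\]
which holds because $\rho_n=\rho_{n-1}\circ\pi_i$ and $\pi_i\,\pi_n^{-1}=\pi_{n-1}^{-1}\,\pi_i$ after relabelling. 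Thus $\pi_i(B)$ lands in the level-$(n-1)$ intersection, and by induction lies in $\ov{\H}^k_{g,n-1}$ or some $E_{\ov{\bm{I}}}$. The remainder is a short case split on which of these holds and on whether $P_i$ sits in the rational tail $R$ containing $P_n$; crucially, one never needs to know in advance that the dual graph of a general element of $B$ is simple. You touch on this move (``forgetting one of the extra legs if $\bm{I}\neq\{1,\dots,n-1\}$''), but only as a sub-step inside Case~B rather than as the organizing principle.

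Your direct approach has a genuine gap in Case~B. The assertion that the rational subtree $\mathrm{T}$ containing $P_n$ must be a single tail because otherwise ``no twisted $k$-differential of type $\mu$ can vanish at $P_n$'' is not correct: chains of rational components \emph{do} admit compatible twisted differentials (one propagates pole orders through the vertical nodes), and such configurations sit in the closure of the relevant $E_{\bm{I}}$. What excludes them as separate components is not an existence obstruction but a dimension/containment argument --- precisely what the paper's $\pi_i$-reduction supplies for free. Your Case~A argument (when $P_n$ lies on the genus $g$ component) is essentially fine and parallels the paper's subcase ``$\pi_i(B)\subseteq\ov{\H}^k_{g,n-1}$ with $P_n$ on the genus $g$ component''; it is only the nested-rational-tail analysis that is incomplete.
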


\begin{proof}
We proceed by induction on $n$. The base case $n=2$ is treated by Proposition \ref{prop:pi1Z1pi2Z1loci}.
For $n\geq 3$, let $B$ be an irreducible component of \eqref{eq:pinrhon} over $\M_{g,n}^\mathsf{rt}$.
For $1\leq i\leq n-1$, the image of \eqref{eq:pinrhon} under $\pi_i$ is 
\begin{align}
\begin{split}
\label{eq:piiint}
\pi_{i} \left(\pi_n^{-1} \left(\ov{\H}_{g, n-1}^k \right) \cap \rho_n^{-1} \left(\ov{\H}_{g, 1}^k \right) \right) 
&= \pi_{i} \left(\pi_n^{-1}\left(\ov{\H}_{g, n-1}^k\right) \right) \cap \rho_{n-1}^{-1} \left(\ov{\H}_{g, 1}^k\right) \\
&=  \pi_{n-1}^{-1} \left(\ov{\H}_{g, n-2}^k \right) \cap \rho_{n-1}^{-1} \left(\ov{\H}_{g, 1}^k \right).
\end{split}
\end{align}
By the induction hypothesis, it follows that $\pi_i\,(B)$ is either in $\ov{\H}_{g, n-1}^k$ or in one of the loci
\begin{equation}
\label{eq:Lnminus1}
E_{\ov{\bm{I}}} \,\subset \P\E^k_{g,n-1} \quad \mbox{for } \varnothing\neq \ov{\bm{I}} \subseteq \{1, \dots, n-2\}.
\end{equation}
The rational tail in a general element of one of such $E_{\ov{\bm{I}}}$ contains the marked points with labels in $\ov{\bm{I}}\sqcup\{n-1\}$. 

Consider a general element $(C, P_1, \dots, P_n,\mu)$ in $B$, and assume  $B\not\subseteq \ov{\H}_{g, n}^k$. This implies that there is no twisted $k$-differential of type $\mu$ vanishing at all marked points. We show below that $B\subseteq E_{\bm{I}}$ for some $\bm{I}$.

If $\pi_i\,(B)\subseteq \ov{\H}_{g, n-1}^k$, then $C$ has a twisted $k$-differential of type $\mu$ vanishing at all points $P_j$ with $j\neq i$. This implies that $\mu$ vanishes at those marked points and preimages of nodes that are in the genus $g$ component of $C$; and moreover, for each preimage of a node in the genus $g$ component, the order of vanishing  is at least equal to the number of marked points $P_j$ with $j\neq i$ contained in the maximal rational subcurve of $C$ intersecting the genus $g$ component at that node. 

For $B$ to be in \eqref{eq:pinrhon} while $B\not\subseteq \ov{\H}_{g, n}^k$, the point $P_i$ needs to replace $P_n$ as a zero of the twisted $k$-differential in the following sense. Assume that $P_n$ is in the genus $g$ component of $\pi_i(C)$. Then the only possibility for $P_i$ in $C$ is to be in a rational tail containing only the marked points $P_i$ and~$P_n$. In this case, $B\subseteq E_{\bm{I}}$ with $\bm{I}=\{i\}$. Otherwise, assume that $P_n$ is in a rational component of $\pi_i(C)$, and let $R$ be the maximal rational subcurve of $C$ containing $P_n$. Then $P_i$ is necessarily in $R$. In this case, $B\subseteq E_{\bm{I}}$ with $\bm{I}\sqcup \{n\}$ equal to the set of markings in $R$.

Next, assume  $\pi_i\,(B)\subseteq E_{\ov{\bm{I}}}$ for some $\ov{\bm{I}}$ as in \eqref{eq:Lnminus1}. Then, the point $P_n$ is in a rational component of $C$. 
Let $R$ be the maximal rational subcurve of $C$ containing $P_n$. In particular, $\ov{\bm{I}}\sqcup \{n-1\}$ is the set of markings in $\pi_i(R)$.

We argue that the point $P_i$ is not in $R$. Let $Q$ be the node of $C$ where the genus $g$ component intersects $R$.
Since $\pi_i\,(B)\subseteq E_{\ov{\bm{I}}}$ while $B\not\subseteq \ov{\H}_{g, n}^k$, the stable $k$-differential $\mu$ vanishes at the preimage of $Q$ in the genus $g$ component with order \textit{precisely} equal to $\left|\ov{\bm{I}}\right|$. It follows that if $P_i$ is in $R$, then there is no twisted $k$-differential of type $\mu$ vanishing at all the first $n-1$ marked points, a contradiction to $B$ being in \eqref{eq:pinrhon}. 

Hence the point $P_i$ is in the complement of $R$ in $C$. We deduce that $B\subseteq E_{\bm{I}}$ where $\bm{I}$ is obtained from $\ov{\bm{I}}$ after increasing by one all  $j\in \ov{\bm{I}}$ such that $j\geq i$ (this shift is due to the relabeling of points given by the map $\pi_i$).

The statement follows.
\end{proof}

Next, we show that when $n \leq k$, Proposition \ref{prop:EnonlyEI} extends over $\ov{\M}_{g,n}$:

\begin{proposition} \label{prop:EnonlyRT} 
When $n \leq k$, the intersection \eqref{eq:pinrhon}  is  contained \textit{only} over the locus of curves with rational tails. 
\end{proposition}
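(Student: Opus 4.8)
The claim is that when $n\le k$, the intersection $\pi_n^{-1}(\ov{\H}_{g,n-1}^k)\cap \rho_n^{-1}(\ov{\H}_{g,1}^k)$ is supported entirely over the rational-tails locus $\M_{g,n}^{\mathsf{rt}}$, i.e. no component lies over a curve with a non-separating node or a node separating off a subcurve of positive genus. The plan is to argue by contradiction: suppose a component $B$ of \eqref{eq:pinrhon} has general element $(C,P_1,\dots,P_n,\mu)$ with $C$ \emph{not} of rational-tails type, and derive an impossibility from a degree/order count on the aspects of a twisted $k$-differential compatible with $\mu$. The key mechanism is that each of the conditions $\pi_n(B)\subseteq \ov{\H}_{g,n-1}^k$ and $\rho_n(B)\subseteq\ov{\H}_{g,1}^k$ forces the existence of twisted $k$-differentials (in the sense of \S\ref{sec:IVCrecap}) vanishing at a prescribed set of marked points, and this forces the stable $k$-differential $\mu$ to vanish to high order at preimages of nodes lying on the components that carry $\mu$; when $n\le k$, the total order these vanishing conditions demand will be shown to be impossible on a curve that is not of rational-tails type.

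\textbf{Key steps, in order.} First I would reduce to analyzing $(C,\mu)$ where $C$ has at least one node that either is non-separating or separates off a positive-genus subcurve; call such a node ``bad''. Since $\pi_n(B)$ lies in the intersection studied in Proposition~\ref{prop:EnonlyEI} (via the identity \eqref{eq:piiint}-type computation, applied with $\pi_n$ in place of $\pi_i$, or directly from $\pi_n(B)\subseteq\ov{\H}_{g,n-1}^k$ when $B\not\subseteq\ov{\H}_{g,n}^k$), one gets a twisted $k$-differential $\tau$ of type $\mu$ vanishing at $P_1,\dots,P_{n-1}$; and from $\rho_n(B)\subseteq\ov{\H}_{g,1}^k$ one gets a twisted $k$-differential vanishing at $P_n$. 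Second, I would use the compatibility conditions of \cite{bcggm1,bcggm}: at a bad node, the matching order condition at a vertical node (or the $k$-residue/global $k$-residue condition at a horizontal node) pins down the orders of the aspects of $\tau$ at the two preimages; in particular the aspect of $\tau$ on a positive-genus component carrying $\mu$ coincides with (a rescaling of) $\mu$ there, so its order at a node preimage is the order of $\mu$, which is $\ge 0$ there. Third — the heart of the count — I would add up, over the components where $\mu$ is supported, the contributions: the $n$ marked points contribute at least $n$ to the total number of zeros (counted with multiplicity) of $\mu$ plus the extra vanishing forced at node preimages by the twisted differentials, and compare with $\deg(\omega_C^{\otimes k}|_{\text{component}})=k(2g_v-2+\#\text{nodes})$ on each such component $v$. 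Because $n\le k$, distributing even a single unit of the marked-point budget across components already over-saturates a component of low degree, or — more precisely — forces every marked point to sit on a rational tail, contradicting that $C$ has a bad node. I would run this as a short case analysis on the type of bad node (non-separating; separating with both sides positive genus; a chain of rational components bridging two positive-genus pieces), in each case producing a numerical contradiction.

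\textbf{Main obstacle.} The delicate point is the bookkeeping of the \emph{global $k$-residue condition} of \cite[Def.~1.4(4)]{bcggm} at horizontal nodes and the residue-theorem constraints it imposes: one must be careful that the twisted $k$-differential we are forced to produce on $C$ does not secretly exist by exploiting $k$-th powers of abelian differentials, which was exactly the failure mode isolated in Lemma~\ref{lemma:hypkgn} and Remark~\ref{rmk:1}. Here, though, the hypothesis $n\le k$ is much stronger than \eqref{eq:hypkgn}, so I expect that the bound $n\le k$ makes the naive order count already fail before the global $k$-residue subtlety even enters — the abelian-power exceptions occur only for $n$ comparable to $k(2g-2)$. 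Thus the real work is organizing the case analysis cleanly and making sure the ``each marked point costs a unit of degree on the component it lies on'' estimate is applied on the correct (maximal positive-genus) subcurve, using that $\mu$ is genuinely supported there. I would also double-check the base of the induction is not needed: unlike Proposition~\ref{prop:EnonlyEI}, this statement does not obviously require induction on $n$, since the count is uniform; but if a clean direct argument resists, I would fall back on the same inductive scheme via the maps $\pi_i$, invoking Proposition~\ref{prop:EnonlyEI} for the image.
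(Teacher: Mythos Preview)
Your direct degree-count approach has a genuine gap. The intersection \eqref{eq:pinrhon} is \emph{not} empty over curves with bad nodes; for instance, if $C$ is irreducible with one non-separating node and $\mu$ is a stable $k$-differential vanishing at all $n$ marked points (these are $n$ linear conditions on a $\P^{(2k-1)(g-1)-1}$, easily satisfied when $n \leq k$), then $(C, P_\bullet, \mu)$ lies in the intersection --- indeed in $\ov{\H}_{g,n}^k$. So your claim that the order bookkeeping ``forces every marked point to sit on a rational tail'' is false. The statement is not that the intersection misses the non-rational-tails locus set-theoretically; it is that no \emph{irreducible component} of the intersection is supported there. That calls for a codimension/containment argument, not a contradiction of existence.

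The paper proceeds by induction on $n$ (base case $n=2$ from Proposition~\ref{prop:pi1Z1pi2Z1loci}), pushes a putative bad component $B$ forward via some $\pi_i$ to land inside a known component by the inductive hypothesis, and then runs a case analysis showing $B$ is contained with positive codimension in $\ov{\H}_{g,n}^k$ or some $E_{\bm{I}}$. The key numerical input is not a degree bound on $\mu$ but an $h^0$ bound: if $\mu$ vanishes identically on a genus-$a$ subcurve $C_a$ meeting the rest at one node $Q$, that locus has codimension at least $N = h^0\!\left(C_a, \omega_{C_a}^{\otimes k}(kQ)\right) + 1 = k(2a-1) - a + 2$, and $n \leq k$ gives $N - n \geq (2k-1)(a-1) + 1 > 0$. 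This both shows $B$ has codimension larger than $n$ and leaves room to produce a twisted $k$-differential on $C_a$ vanishing at \emph{all} marked points there, placing $B$ inside $\ov{\H}_{g,n}^k$. Your fallback idea of induction via the maps $\pi_i$ is exactly the paper's route, but the engine that makes it run is this $h^0$-based codimension count, not the degree-of-$\mu$ count you outline.
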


\begin{proof}
We again proceed by induction on $n$. The base case $n=2$ is treated by Proposition \ref{prop:pi1Z1pi2Z1loci}. For $n \geq 3$, let $B$ be an irreducible component of \eqref{eq:pinrhon} which does not lie over $\M_{g,n}^\mathsf{rt}$. We will show that each such $B$ has positive codimension in either $\ov{\H}_{g, n}^k$ or one of the extra loci $E_{\bm{I}}\subset \P\E^k_{g,n}$. 

Let $1\leq i \leq n-1$. Using  \eqref{eq:piiint} as in the proof of Proposition \ref{prop:EnonlyEI}, the induction hypothesis implies that $\pi_i\,(B)$ is in either $\ov{\H}_{g, n-1}^k$ or  one of the loci $E_{\ov{\bm{I}}}\subset \P\E^k_{g,n-1}$ for some $\ov{\bm{I}}$ as in \eqref{eq:Lnminus1}.
Additionally, since $B$ does not lie over $\M_{g,n}^\mathsf{rt}$, one has that $\pi_i(B)$ is strictly contained in either $\ov{\H}_{g, n-1}^k$ or one of the $E_{\ov{\bm{I}}}$.
Let $(C, P_1, \dots, P_n,\mu)$ be a general element in $B$. 

Consider the case when $\pi_i(B) \subset \ov{\H}_{g, n-1}^k$. 
We can assume that $C$ has no rational tails, otherwise we can replace $B$ with the higher-dimensional locus whose general element is obtained by smoothing all rational tails in $C$.

First, suppose that $C$ is a curve with (at least) one disconnecting node, a genus $a$ subcurve, and a genus $g-a$ subcurve, where $1\leq a \leq g-1$. If $\mu$ is generically nonzero on each component and vanishes at the $n$ marked points, then $B$ has codimension at least $n + 1$ and is  contained in~$\ov{\H}_{g, n}^k$.  
Consider instead the case where $\mu$ is identically zero on the genus $a$ subcurve, and vanishes at all marked points on the genus $g-a$ subcurve. The codimension of this locus is at least
$N := h^0\left(C_a, \omega^{\otimes k}_{C_a}(kQ)\right) + 1 = k(2a-1) - a + 2,$
where $C_a$ is the genus $a$ subcurve of $C$, and $Q$ is the node connecting $C_a$ to the rest of $C$. The assumption $n \leq k$ implies 
$n < n(2i-1) - i + 2 \leq N.$
Thus, $B$ has codimension higher than $n$. Furthermore, since $\pi_i(B) \subset \ov{\H}_{g, n-1}^k$,  there exists a twisted $k$-differential of type $\mu$ vanishing at $P_j$ with $j\neq i$. If $P_i$ is on the genus $g-a$ subcurve, it must also be a zero of $\mu$ since $B \subset \pi_n^{-1} \left(\ov{\H}_{g, n-1}^k \right)$. If $P_i$ is on the genus $a$ subcurve, then since $n < N$, we may find a potentially different twisted $k$-differential of type $\mu$ vanishing at all marked points on $C_a$ including $P_i$. 
In all cases, one concludes that $B$ is in $\ov{\H}_{g, n}^k$.
The case where $C$ has (at least) one non-disconnecting node and $\mu$ is generically nonzero and vanishes at all marked points is similar. 

Now suppose that $C$ has a rational bridge, and $\mu$ is nonzero on the nonrational components. 
In order for this locus to be in \eqref{eq:pinrhon}, $\mu$ must be identically zero on the rational bridge, and we must have poles of order at most $k-1$ at the nodes on the nonrational components meeting the bridge. These conditions allow the marked points on the rational bridge to be zeros of a twisted $k$-differential of type $\mu$. Moreover, a pole of order $k$ on a nonrational component implies that the vertices corresponding to that nonrational component and to the rational bridge are on the same level in the ordered dual graph of $C$, a contradiction to the assumption that $\mu$ is nonzero on nonrational components. 
In this case, $B$ has codimension at least $n+1$ and is contained in $\ov{\H}_{g, n}^k$.
The case where $C$ has a rational component meeting the rest of the curve in two non-disconnecting nodes is similar.

Finally, assume that $\pi_i(B) \subset E_{\ov{\bm{I}}}=\pi_i(E_{\bm{I}})$ for some $\bm{I}$. This requires that $C$ have a rational tail containing $P_n$. 
Let $m+1$ be the number of marked points on the maximal rational subcurve $R$ of $C$ containing $P_n$.
In order for $B$ to be in \eqref{eq:pinrhon}, $\mu$ needs to vanish with order at least $m$ at the nodal point in $C\setminus R$ meeting $R$. This implies that $B$  is contained in $E_{\bm{I}}$.
\end{proof}

\subsection{Auxiliary computations}
Theorem \ref{thm:pinZnrhonZ1} will follow from 
Propositions \ref{prop:EnonlyEI} and \ref{prop:EnonlyRT} after determining the multiplicity of each component in  \eqref{eq:pinrhon}.
For this, we start with some preliminary computations.
Let 
\begin{equation}
\label{eq:Ln}
\mathsf{L}_n := \pi_n^* \left[\ov{\H}_{g, n-1}^k \right] \cdot \rho_n^* \left[\ov{\H}_{g, 1}^k \right] \quad \in A^{n}\left(\P\E^k_{g,n} \right).
\end{equation}

\begin{lemma} \label{lemma:pushforwardEn} 
Let $k$, $g$, and $n$ be such that \eqref{eq:hypkgn} holds.
For $1\leq i \leq n-1$, one has
\begin{align} 
\label{eq:piiH}
&(\pi_{i})_*\left[\ov{\H}_{g, n}^k \right] &&= \left( k(2g-2)-(n-1) \right) \left[\ov{\H}_{g, n-1}^k \right],\\[0.2cm]
\label{eq:piiL}
&(\pi_{i})_*\left[\mathsf{L}_n\right] &&= \left( k(2g-2)-(n-2) \right) \mathsf{L}_{n-1}, \\[0.2cm]
\label{eq:piiE}
&(\pi_{i})_*\left[ E_{\bm{I}}\right] &&= \left\{
\begin{array}{ll}
\left( k(2g-2)-(n-2) \right) E_{\ov{\bm{I}}} & \mbox{if $i\not\in \bm{I}$,}\\[0.2cm]
\left[\ov{\H}_{g, n-1}^k \right] & \mbox{if $\bm{I}=\{i\}$,}\\[0.2cm]
0 & \mbox{if $i\in \bm{I}$ and $|\bm{I}|>1$}
\end{array}
\right.
\end{align} 
in $A^{n-1}\left( \P\E^k_{g,n-1}\right)$. In \eqref{eq:piiE}, when $i\not\in \bm{I}$, the set $\ov{\bm{I}}$ is obtained from $\bm{I}$ after decreasing by one all $j\in \bm{I}$ with $j> i$. 
\end{lemma}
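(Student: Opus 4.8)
The plan is to prove all three identities by the same device: for each effective cycle on the left-hand side, describe a general point of it together with its image under $\pi_i$, determine the degree of the induced dominant map onto that image, and then read off the pushforward. The geometric inputs are: a general stable $k$-differential on a smooth genus $g$ curve has exactly $k(2g-2)$ distinct simple zeros; the descriptions of general elements of $\ov{\H}^k_{g,n}$, of the extra loci $E_{\bm I}$ from \eqref{eq:EI}--\eqref{eq:gamma0}, and of $\ov{\H}^k_{g,\bm m}$ from \eqref{eq:H1heavypt} provided by the incidence variety compactification recalled in \S\ref{sec:IVCrecap}; and the fact, guaranteed by \eqref{eq:hypkgn} together with Lemma \ref{lemma:hypkgn}, that under our hypothesis each of $\ov{\H}^k_{g,n}$, $\ov{\H}^k_{g,n-1}$, and the $E_{\bm I}$ has pure codimension equal to the expected one (or is empty, in which case the identity in question is vacuous because both sides vanish). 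Since the loci are generically reduced and the relevant intersections are transverse over $\M_{g,n}$, the pushforward along a generically finite dominant map is (degree of the map) times (reduced image), and along a map with positive-dimensional generic fibers it vanishes.

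To prove \eqref{eq:piiH}, I would work over a general element $(C,\dots,\mu)$ of $\ov{\H}^k_{g,n-1}$, with $C$ smooth and $\mu$ having $k(2g-2)$ simple zeros, exactly $n-1$ of which are marked points; the fiber of the restriction of $\pi_i$ to $\ov{\H}^k_{g,n}$ then consists of the ways to mark one of the remaining $k(2g-2)-(n-1)$ zeros as $P_i$, so the map has degree $k(2g-2)-(n-1)$ and \eqref{eq:piiH} follows. When this integer is $\le 0$ the source locus $\ov{\H}^k_{g,n}$ is empty, so both sides vanish and the identity still holds. For \eqref{eq:piiL}, since $1\le i\le n-1$, forgetting $P_i$ leaves $P_n$ untouched, so $\rho_n=\rho_{n-1}\circ\pi_i$ and hence $\rho_n^*\left[\ov{\H}^k_{g,1}\right]=\pi_i^*\rho_{n-1}^*\left[\ov{\H}^k_{g,1}\right]$. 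The projection formula applied to the definition \eqref{eq:Ln} of $\mathsf{L}_n$ then reduces the computation to $(\pi_i)_*\pi_n^*\left[\ov{\H}^k_{g,n-1}\right]$, which is the class of the locus where $\mu$ vanishes at $P_1,\dots,P_{n-1}$ with $P_n$ free; forgetting $P_i$ maps this dominantly onto the locus where $\mu$ vanishes at the $n-2$ points $P_1,\dots,\widehat{P_i},\dots,P_{n-1}$ with $P_n$ free, namely onto $\pi_{n-1}^*\left[\ov{\H}^k_{g,n-2}\right]$, and the same zero count --- noting that $P_n$ is generically not a zero of $\mu$ --- gives degree $k(2g-2)-(n-2)$. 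Substituting back yields $(\pi_i)_*\mathsf{L}_n=(k(2g-2)-(n-2))\,\mathsf{L}_{n-1}$.

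Finally, \eqref{eq:piiE} is handled by splitting into the three cases according to where $P_i$ sits in a general element of $E_{\bm I}$, whose underlying curve has a rational tail $R$ carrying exactly the markings indexed by $\bm I\sqcup\{n\}$, with $\mu$ vanishing to order $|\bm I|$ at the node on the genus $g$ component and at the markings lying on that component. If $i\notin\bm I$, then $P_i$ lies on the genus $g$ component; forgetting it maps $E_{\bm I}$ dominantly onto $E_{\ov{\bm I}}$, and the count used for \eqref{eq:piiH}, now carried out on the genus $g$ component, gives degree $k(2g-2)-(n-2)$. If $\bm I=\{i\}$, then $R$ carries only $P_i$ and $P_n$ besides the node, so forgetting $P_i$ contracts $R$ and places $P_n$ at the simple zero of $\mu$ that was the node; the image is $\ov{\H}^k_{g,n-1}$ and, as $\ov{\M}_{0,3}$ is a point, the map is birational, giving coefficient $1$. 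If $i\in\bm I$ and $|\bm I|>1$, then $R$ retains at least three special points after forgetting $P_i$, so $\pi_i$ restricted to $E_{\bm I}$ has one-dimensional generic fibers (the position of $P_i$ on $R$) and therefore $(\pi_i)_*[E_{\bm I}]=0$.

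The main obstacle is not any single hard step but the careful bookkeeping of marked-point relabelings under the various forgetful maps, and, in the last case of \eqref{eq:piiE}, making the dimension count precise: that is exactly the place where one uses the purity of codimension supplied by \eqref{eq:hypkgn} and Lemma \ref{lemma:hypkgn}, so that for $|\bm I|>1$ the map genuinely drops dimension rather than being generically finite onto a proper subvariety.
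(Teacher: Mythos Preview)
Your proposal is correct and follows essentially the same approach as the paper: each identity is obtained by describing the generic fiber of $\pi_i$ restricted to the relevant locus and counting unmarked zeros, with the projection formula and $\rho_n=\rho_{n-1}\circ\pi_i$ handling \eqref{eq:piiL}. The only minor difference in emphasis is that the paper invokes Lemma~\ref{lemma:hypkgn} primarily in the case $i\notin\bm I$ of \eqref{eq:piiE}, to ensure the general differential is not a $k$-th power and hence has the expected simple unmarked zeros, rather than in the positive-dimensional-fiber case you highlighted.
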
 

\begin{proof}
For \eqref{eq:piiH}, the restriction of $\pi_i$ to ${\H}_{g, n}^k$ has degree $k(2g-2)-(n-1)$ over ${\H}_{g, n-1}^k$, equal to the number of unmarked zeros of the $k$-differential $\mu$ for a general $(C, P_1, \dots, P_{n-1}, \mu)$ in ${\H}_{g, n-1}^k$.

For \eqref{eq:piiL}, since one may forget the marked points in any order, one has  $\rho_n = \rho_{n-1} \circ \pi_i$. Using the projection formula, one computes
\begin{align*}
(\pi_{i})_*\left[\mathsf{L}_n\right] &= (\pi_{i})_* \left(\pi_n^* \left[\ov{\H}_{g, n-1}^k \right] \cdot \rho_n^* \left[\ov{\H}_{g, 1}^k \right] \right) \\
&= (\pi_{i})_* \left(\pi_n^*\left[\ov{\H}_{g, n-1}^k\right] \right) \cdot \rho_{n-1}^* \left[\ov{\H}_{g, 1}^k\right] \\
&= c\, \pi_{n-1}^* \left[\ov{\H}_{g, n-2}^k \right] \cdot \rho_{n-1}^* \left[\ov{\H}_{g, 1}^k \right] \\
&= c\, \mathsf{L}_{n-1}
\end{align*}
where $c := k(2g-2) - (n-2)$.
The third equality follows from $(\pi_{i})_* \, \pi_n^* = \pi_{n-1}^* \, (\pi_{i})_*$ 
and the first part of the statement.

Finally, for \eqref{eq:piiE}, when $i\not\in \bm{I}$, the restriction of $\pi_i$ to $E_{\bm{I}}$ has generically degree $k(2g-2)-(n-2)$ over $E_{\ov{\bm{I}}}$, equal to the number of  zeros at unmarked smooth points of the $k$-differential in a general element of $E_{\ov{\bm{I}}}$. Indeed, from Lemma \ref{lemma:hypkgn}, a $k$-differential in a general element of $E_{\ov{\bm{I}}}$  is not the $k$-th power of an abelian differential when $k\geq 2$, hence has a zero of order $|\ov{\bm{I}}|$ at the preimage of the node in the genus $g$ component of $C$,  simple zeros at the $n-2-|\ov{\bm{I}}|$ marked points in the genus $g$ component of $C$, and simple zeros elsewhere for all $k\geq 1$.

When $\bm{I}=\{i\}$, the restriction of $\pi_i$ to $E_{\bm{I}}$ has generically degree one over ${\H}_{g, n-1}^k$. In the remaining case when $i\in \bm{I}$ and $|\bm{I}|>1$, the restriction of $\pi_i$ to $E_{\bm{I}}$ has generically one-dimensional fibers. The statement follows.
\end{proof}

Also, we will use intersections with the following test space.
For $n\geq 3$, let $C$ be a general smooth genus $g$ curve, and identify a general point $Q$ in $C$ with a general point in a rational curve $R$ containing the marked points $P_1,\dots, P_{n-1}$.
Consider the $n$-dimensional test space $T\subset \P\E^k_{g,n}$ consisting of elements $(C\cup_Q R, P_1, \dots, P_n, \mu)$ obtained by 
varying the  $k$-differential $\mu$  in a general $\P^{n-1}\subseteq \P H^0(C, \omega^{k}_C)$, and the point $P_n$  along $C$. One has $T\cong  \P^{n-1}\times C$.

\begin{lemma}
\label{lemma:claimtestndimspace}
Let $k$, $g$, and $n$ be such that \eqref{eq:hypkgn} holds.
One has
\begin{align}
\label{eq:Tint1}
&T \cdot \left[\ov{\H}_{g, n}^k \right] && = k(2g-2)-(n-1), \\[0.2cm]
\label{eq:Tint2}
&T \cdot \mathsf{L}_n &&= k(2g-2),\\[0.2cm]
\label{eq:Tint3}
&T \cdot \left[ E_{\bm{I}} \right] &&= \left\{
\begin{array}{ll}
1 & \mbox{if $\bm{I}=\{1, \dots, n-1\}$,}\\[0.2cm]
0 & \mbox{otherwise.}
\end{array}
\right.
\end{align}
\end{lemma}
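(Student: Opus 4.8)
The plan is to compute each of the three intersection numbers directly using the geometry of the test space $T\cong \P^{n-1}\times C$ and the set-theoretic description of the components of the intersection \eqref{eq:pinrhon} established in Propositions \ref{prop:EnonlyEI} and \ref{prop:EnonlyRT}. The key observation is that $T$ is constructed so that a general element $(C\cup_Q R, P_1,\dots,P_n,\mu)$ has a curve with one fixed rational tail $R$ meeting the fixed genus $g$ curve $C$ at the fixed node $Q$; the moving data are the $k$-differential $\mu$ varying in a general pencil-type family $\P^{n-1}\subseteq \P H^0(C,\omega_C^k)$ together with the point $P_n$ moving along $C$. Since $R$ is rational, $\mu$ is extended by zero on $R$, so the only way a marked point $P_1,\dots,P_{n-1}$ (all on $R$) can be a zero of a twisted $k$-differential of type $\mu$ is via the incidence variety compactification, as in the analysis of $E_{\bm I}$ in Lemma \ref{lemma:EIinsupportEn}.

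First I would compute \eqref{eq:Tint1}. For $(C\cup_Q R,P_1,\dots,P_n,\mu)\in T$ to lie in $\ov{\H}^k_{g,n}$, there must be a single twisted $k$-differential of type $\mu$ vanishing at \emph{all} $n$ marked points. Since $P_1,\dots,P_{n-1}$ lie on the rational tail $R$ and $\mu$ vanishes there, the aspect on $R$ can be chosen to vanish at $P_1,\dots,P_{n-1}$ and have a pole of order $2k+(n-1)$ at $Q$, provided $\mu$ itself vanishes to order at least $n-1$ at $Q$; but for the \emph{generic} element of $T$, $\mu$ does not vanish at $Q$ at all, so instead the only constraint is that $P_n$ must be an honest zero of $\mu$ on $C$. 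Thus $T\cdot [\ov{\H}^k_{g,n}]$ counts, for the generic $\mu$ in the pencil, the number of choices of $P_n\in C$ that are zeros of $\mu$, which is $k(2g-2)-(n-1)$: indeed a generic $\mu$ in $\P H^0(C,\omega_C^k)$ has $k(2g-2)$ simple zeros, but the locus in $T$ forces the aspect on $R$ to also vanish at the $n-1$ marked points, which (by a dimension count on the rational aspect, exactly as in Lemma \ref{lemma:hypkgn}) consumes $n-1$ of the parameters; the residual count is $k(2g-2)-(n-1)$, and transversality holds by the genericity of the pencil and of $C$. I would then compute \eqref{eq:Tint2} similarly: $\mathsf{L}_n=\pi_n^*[\ov{\H}^k_{g,n-1}]\cdot\rho_n^*[\ov{\H}^k_{g,1}]$, and $\rho_n^*[\ov{\H}^k_{g,1}]=\rho_n^*(k\,\omega-\eta)$ by \eqref{eq:Z1class}; restricted to $T\cong\P^{n-1}\times C$, one has $\rho_n^*\eta$ pulled back from the $\P^{n-1}$ factor and $\rho_n^*\omega$ supported via the map remembering $P_n$, so $T\cdot\rho_n^*[\ov{\H}^k_{g,1}]$ is a curve class in $T$ of the appropriate type, and intersecting with $\pi_n^*[\ov{\H}^k_{g,n-1}]$ (which on $T$ imposes that the first $n-1$ marked points be zeros of a twisted differential, costing $n-1$ parameters of $\P^{n-1}$) leaves $k(2g-2)$ as the count — here, unlike in \eqref{eq:Tint1}, the point $P_n$ ranges over all zeros of $\mu$ \emph{and} the differential is allowed to vary one more dimension, giving the unreduced count $k(2g-2)$.

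Finally, for \eqref{eq:Tint3}, I would observe that for $T\cdot[E_{\bm I}]$ to be nonzero, a general element of $T$ must degenerate into a general element of $E_{\bm I}$; but the curve $C\cup_Q R$ appearing in $T$ has a \emph{single} rational tail $R$ carrying exactly the marked points $P_1,\dots,P_{n-1}$, and $P_n$ lies on $C$. A general element of $E_{\bm I}$ has a rational tail carrying precisely the marked points indexed by $\bm I\sqcup\{n\}$. These two descriptions are compatible only when $\bm I=\{1,\dots,n-1\}$ and one additionally degenerates $P_n$ onto $R$, i.e. when $P_n\to Q$ creating a rational tail with all $n$ marked points — a single point of $T$ (the unique $\mu$ in the pencil vanishing appropriately at $Q$, together with $P_n=Q$), contributing $1$; for all other $\bm I$ the intersection is empty, giving $0$. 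Transversality of this last intersection follows from the genericity of the pencil. The main obstacle I anticipate is verifying the transversality assertions — in particular checking that the incidence conditions on the rational aspect cut down the parameter space of the pencil $\P^{n-1}$ by exactly $n-1$ dimensions and do so transversally, which requires the argument of Lemma \ref{lemma:hypkgn} showing that the relevant $k$-differentials on $C$ are generically not $k$-th powers of abelian differentials, so that the global $k$-residue condition is automatically satisfied and imposes no further constraint.
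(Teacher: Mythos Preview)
Your argument for \eqref{eq:Tint1} misidentifies the intersection points. You correctly observe that for a twisted $k$-differential of type $\mu$ to vanish at $P_1,\dots,P_{n-1}$ (all on $R$), the differential $\mu$ must vanish to order at least $n-1$ at the node $Q$ on $C$. But then you write that for generic $\mu$ in the pencil, $\mu$ does not vanish at $Q$, ``so instead the only constraint is that $P_n$ must be an honest zero of $\mu$.'' This is the error: if $\mu$ does not vanish to order $\geq n-1$ at $Q$, then the element is \emph{not} in $\ov{\H}^k_{g,n}$ at all, regardless of where $P_n$ lies, because no twisted differential of type $\mu$ can vanish at the $n-1$ marked points on $R$. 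The intersection $T\cap\ov{\H}^k_{g,n}$ therefore lies entirely over the \emph{unique} $\mu_0\in\P^{n-1}$ that vanishes to order $n-1$ at $Q$ (this is where generality of the $\P^{n-1}$ is used: vanishing to order $n-1$ at a fixed point is $n-1$ linear conditions). For this $\mu_0$, one then needs $P_n$ to be a zero of $\mu_0$ on $C$ away from $Q$; since $\mu_0$ has exactly $k(2g-2)-(n-1)$ such simple zeros (Lemma~\ref{lemma:hypkgn} ensures $\mu_0$ is not a $k$-th power when $k\geq 2$, so its zero at $Q$ has order exactly $n-1$), one obtains the count. Your phrase ``consumes $n-1$ of the parameters'' gestures at the right dimension count on $\P^{n-1}$, but the sentence preceding it describes the wrong locus.

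This same misidentification propagates to \eqref{eq:Tint2}. The paper's argument is again set-theoretic: $T\cap\pi_n^{-1}(\ov{\H}^k_{g,n-1})$ is exactly $\{\mu_0\}\times C$, and then $\rho_n^{-1}(\ov{\H}^k_{g,1})$ further cuts this down to the pairs $(\mu_0,P_n)$ with $P_n$ a zero of $\mu_0$. This includes the $k(2g-2)-(n-1)$ transverse points from \eqref{eq:Tint1} \emph{plus} the single point $(\mu_0,Q)$, which contributes with multiplicity $n-1$ because $\mu_0$ vanishes to that order there. Your sketch via the divisor class $k\omega-\eta$ could in principle be made to work, but as written it does not identify this excess contribution at $P_n=Q$, and the sentence ``the differential is allowed to vary one more dimension'' is not correct: on $T\cap\pi_n^{-1}(\ov{\H}^k_{g,n-1})$ the differential is pinned at $\mu_0$. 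Your argument for \eqref{eq:Tint3} is essentially the paper's.
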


\begin{proof}
The intersection \eqref{eq:Tint1} is contributed by those elements $(\mu, P_n)$ of $T\cong  \P^{n-1}\times C$ for which there exists a twisted $k$-differential of type $\mu$ vanishing at all marked points. The only possibility is that $\mu$ is the $k$-differential $\mu_0$ which vanishes with order $n-1$ at $Q$ (such $\mu_0$ is generically unique, and after Lemma \ref{lemma:hypkgn}, $\mu_0$ is generically not the $k$-th power of an abelian differential when $k\geq 2$), and $P_n$ coincides with one of the remaining $k(2g-2)-(n-1)$ zeros of $\mu_0$ in $C$. The intersection is transverse along all such elements.

The intersection \eqref{eq:Tint2} is contributed by those elements $(\mu, P_n)$ of $T$ for which there exists a twisted $k$-differential of type $\mu$ vanishing at the first $n-1$ marked points, and a possibly distinct twisted $k$-differential of type $\mu$ vanishing at $P_n$. Hence, in addition to the contributions to \eqref{eq:Tint1}, also the element $(\mu=\mu_0, P_n=Q)$ of $T$ contributes to \eqref{eq:Tint2}. 
After Lemma \ref{lemma:hypkgn}, $\mu_0$ is generically not the $k$-th power of an abelian differential when $k\geq 2$.
It follows that $\mu_0$ vanishes generically with order $n-1$ at $Q$ for all $k\geq 1$, hence the element  $(\mu_0, Q)$ contributes to \eqref{eq:Tint2} with multiplicity $n-1$.
 
Finally, for \eqref{eq:Tint3}, $T$  intersects a locus $E_{\bm{I}}$ only if $P_n$ is on a rational component. This only happens for $P_n = Q$, in which case $T$ intersects  $E_{\bm{I}}$ with $\bm{I}=\{1, \dots, n-1\}$ transversally at $(\mu=\mu_0, P_n=Q)$. 
\end{proof}

In the final step of the proof of Theorem \ref{thm:pinZnrhonZ1}, we will use the following:

\begin{lemma} \label{lemma:ZnandEIindependent}
Let $k$, $g$, and $n$ be such that \eqref{eq:hypkgn} holds.
The class of $\ov{\H}_{g, n}^k$ and the classes of the loci $E_{\bm{I}}$ for all $\bm{I}$ are independent in $A^{n}\left(\P\E^k_{g,n} \right)$. 
\end{lemma}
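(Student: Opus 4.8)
The plan is an induction on $n$, reducing the statement at level $n$ to the statement at level $n-1$ by means of the forgetful maps $\pi_1,\dots,\pi_{n-1}$ of Lemma~\ref{lemma:pushforwardEn}. We may assume $\ov{\H}_{g,n}^k\neq\varnothing$ --- equivalently $n\leq k(2g-2)$, since a $k$-differential has $k(2g-2)$ zeros counted with multiplicity --- as otherwise the statement is vacuous; in particular the integers $k(2g-2)-(n-1)$ and $k(2g-2)-(n-2)$ occurring below are positive. Suppose that
$a_0\,[\ov{\H}_{g,n}^k]+\sum_{\bm{I}}a_{\bm{I}}\,[E_{\bm{I}}]=0$ in $A^n(\P\E^k_{g,n})$,
with the sum over all non-empty $\bm{I}\subseteq\{1,\dots,n-1\}$; we must show that all coefficients vanish. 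First I would kill $a_0$ by restricting to the locus of smooth curves, where every $E_{\bm{I}}$ becomes trivial; then I would kill the $a_{\bm{I}}$ by pushing forward along the $\pi_i$.

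For the first part, I claim $[\H_{g,n}^k]\neq 0$ in $A^n(\P\E^k_{g,n}|_{\M_{g,n}})$. For $n=1$ this is the restriction of \eqref{eq:Z1class}, namely $[\H_{g,1}^k]=k\,\omega-\eta$, which is non-zero since, by the projective bundle formula (the degree-one part of \eqref{eq:APE}), the coefficient of $\eta$ is $-1$. For $n\geq 2$, over the locus of smooth curves the forgetful map $\H_{g,n}^k\to\H_{g,n-1}^k$ is finite of degree $k(2g-2)-(n-1)$, so the same computation as in Lemma~\ref{lemma:pushforwardEn}(i) gives $(\pi_n)_*[\H_{g,n}^k]=(k(2g-2)-(n-1))\,[\H_{g,n-1}^k]$, and the claim follows by induction. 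Now every $E_{\bm{I}}$ lies over the boundary of $\ov{\M}_{g,n}$, so $[E_{\bm{I}}]$ restricts to $0$ on $\P\E^k_{g,n}|_{\M_{g,n}}$, while $[\ov{\H}_{g,n}^k]$ restricts to $[\H_{g,n}^k]$. Hence the relation restricts to $a_0\,[\H_{g,n}^k]=0$, and therefore $a_0=0$.

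It remains to show that $\sum_{\bm{I}}a_{\bm{I}}\,[E_{\bm{I}}]=0$ forces all $a_{\bm{I}}=0$; for $n=1$ there are no such $\bm{I}$, which is the base of the induction. For $n\geq 2$, fix $i\in\{1,\dots,n-1\}$ and apply $(\pi_i)_*$. By Lemma~\ref{lemma:pushforwardEn}(iii) the terms with $i\in\bm{I}$ and $|\bm{I}|>1$ die, the term $\bm{I}=\{i\}$ contributes $a_{\{i\}}\,[\ov{\H}_{g,n-1}^k]$, and each term with $i\notin\bm{I}$ contributes $(k(2g-2)-(n-2))\,a_{\bm{I}}\,[E_{\ov{\bm{I}}}]$, where $\bm{I}\mapsto\ov{\bm{I}}$ is the bijection from non-empty subsets of $\{1,\dots,n-1\}\setminus\{i\}$ onto non-empty subsets of $\{1,\dots,n-2\}$ obtained by decreasing each label $>i$ by one. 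Thus $(\pi_i)_*$ of the relation is a vanishing linear combination of $[\ov{\H}_{g,n-1}^k]$ and the $[E_{\ov{\bm{I}}}]$ in $A^{n-1}(\P\E^k_{g,n-1})$, so the inductive hypothesis forces $a_{\{i\}}=0$ and $a_{\bm{I}}=0$ for every non-empty $\bm{I}$ not containing $i$. Letting $i$ range over $\{1,\dots,n-1\}$ kills $a_{\bm{I}}$ for every $\bm{I}\subsetneq\{1,\dots,n-1\}$ (each proper subset omits some such $i$); this already settles the case $n=2$, where $\{1,\dots,n-1\}=\{1\}$ is itself a singleton handled by the term $\bm{I}=\{i\}$. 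For $n\geq 3$ the only coefficient left is $a_{\{1,\dots,n-1\}}$, and the relation has become $a_{\{1,\dots,n-1\}}\,[E_{\{1,\dots,n-1\}}]=0$; intersecting with the test space $T$ introduced before Lemma~\ref{lemma:claimtestndimspace}, formula \eqref{eq:Tint3} gives $T\cdot[E_{\{1,\dots,n-1\}}]=1$, so $[E_{\{1,\dots,n-1\}}]\neq 0$ and $a_{\{1,\dots,n-1\}}=0$. This completes the induction.

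The one point requiring care is bookkeeping: that the push-forward formulas of Lemma~\ref{lemma:pushforwardEn} restrict correctly to open subloci, that $\bm{I}\mapsto\ov{\bm{I}}$ is exactly the relabeling bijection so that the inductive hypothesis applies verbatim, and that the degenerate range $n>k(2g-2)$ (where the statement is empty) is excluded. No geometric input beyond Lemmas~\ref{lemma:pushforwardEn} and~\ref{lemma:claimtestndimspace} is needed.
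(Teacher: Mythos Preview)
Your proof is correct and takes a genuinely different route from the paper's at one key point: how to eliminate the coefficient in front of $[\ov{\H}_{g,n}^k]$.

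The paper keeps that coefficient in play through the push-forward step. Applying $(\pi_i)_*$ to a relation $\alpha[\ov{\H}_{g,n}^k]+\sum_{\bm{I}}\alpha_{\bm{I}}[E_{\bm{I}}]=0$ yields, via Lemma~\ref{lemma:pushforwardEn}, a relation whose coefficient on $[\ov{\H}_{g,n-1}^k]$ is $(c-1)\alpha+\alpha_{\{i\}}$ with $c=k(2g-2)-(n-2)$. Only after varying $i$ and learning $\alpha_{\{i\}}=0$ does one deduce $\alpha=0$. This mixing is why the paper cannot start the induction at $n=1$: at $n=2$ the single push-forward gives only the one equation $(c-1)\alpha+\alpha_{\{1\}}=0$, and a separate geometric argument is needed. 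The paper supplies it via a test surface $S_1$ obtained by moving the two marked points on a fixed general element of $\Delta_{1:\{1\}}$, with an ad hoc computation in $\mathrm{Pic}(\ov{\M}_{2,2})$ for the exceptional case $k=1$, $g=2$.

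Your approach decouples the two tasks: you first kill $a_0$ by restricting to the open locus $\M_{g,n}$, where all $E_{\bm{I}}$ vanish and $[\H_{g,n}^k]\neq 0$ follows from the projective bundle formula and an easy push-forward induction. With $a_0=0$ in hand, the push-forward argument reduces cleanly to the inductive hypothesis, and the base case is the trivial $n=1$. This is shorter and avoids the test surface and the special case entirely; both proofs still need the test space $T$ from Lemma~\ref{lemma:claimtestndimspace} to handle $\bm{I}=\{1,\dots,n-1\}$.

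One quibble: when $n>k(2g-2)$ the statement is not vacuous but false (all classes are zero), so your opening remark should rather say that only the range $n\le k(2g-2)$ is relevant to the paper's use of the lemma.
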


\begin{proof}
We  proceed by induction on $n$.
When $n=2$, there is only one locus $E_{\bm{I}}$, namely $E_{\bm{I}}$ with $\bm{I}=\{1\}$. 
To show independence, we restrict the classes of $\ov{\H}_{g, 2}^k$ and $E_{\{1\}}$ to a test surface. 
For this, consider the test surface $S_i\subset \P\E^k_{g,n}$ with $i=1$ appeared for $k=1$ in the proof of Proposition \ref{prop:pi1Z1pi2Z1}. 
Namely,  select a general element of the boundary divisor $\Delta_{1: \{ 1\}}$ in $\P\E^k_{g,2}$ consisting of curves with an elliptic tail containing only the first marked point.  The elements of the surface $S_1 \subset \P\E^k_{g,n}$ are obtained by varying the two marked points in their corresponding components. In particular, elements of $S_1$ have a fixed general stable differential $\mu$. When a marked point collides with the node, it gives rise to a rational bridge. For $k=1$, $\mu$~is extended by zero on such rational bridges, while for $k\geq 2$, the restriction of $\mu$ to such rational bridges has poles of order $k$ at the two nodes on each rational bridge.

The surface $S_1$ has empty intersection with $E_{\{1\}}$. For $k=1$, one has $S_1 \cdot \left[\ov{\H}_{g, 2}^1\right] = 2g-4$ from \eqref{eq:Siint2}.
For $k\geq 2$, one has
\begin{align*}
S_i \cdot \left[\ov{\H}_{g, 2}^k\right] &= \left(k(2i-2)+k\right)\left(k(2(g-i)-2)+k\right). 
\end{align*}
This intersection  is contributed by those elements of $S_i$ where  both marked points  coincide with zeros of the differential $\mu$, and the intersection is transverse along all such elements.
Contrary to the case $k=1$, there is no contribution from the elements where one marked point  collides with the node, creating a rational bridge. Indeed, when $k\geq 2$, the restriction of $\mu$ to a rational bridge has poles of order $k$ at both nodes and does not have any zeros for degree reasons.
 
It follows that $S_1$ has nonzero intersection with $\ov{\H}_{g, 2}^k$, unless $k=1$ and $g=2$.
We conclude that the classes of $\ov{\H}_{g, 2}^k$ and $E_{\{1\}}$ are independent, unless $k=1$ and $g=2$.

In case $k=1$ and $g=2$, we arrive at a similar statement by considering the  push-forward of $\ov{\H}_{2, 2}^1$ and $E_{\{1\}}$ via the forgetful map $\varphi\colon \P\E^1_{2,2} \rightarrow \ov{\M}_{2,2}$. One computes
\begin{align}
\label{eq:phipfwdH122}
\varphi_* \left[ \ov{\H}_{2, 2}^1 \right] &= \omega_1 + \omega_2 - \lambda_1 - \delta_{0:2} - \delta_{1:0},\\
\label{eq:phipfwdE1}
\varphi_* \left[ E_{\{1\}} \right] &= \delta_{0:2}
\end{align}
in $\mathrm{Pic}\left( \ov{\M}_{2,2}\right)$. 
Here $\lambda_1$ is the first Chern class of the Hodge bundle; $\delta_{0:2}$ is the class of the divisor $\Delta_{0:2}$ of curves with a rational tail; and $\delta_{1:0}$ is the class of the divisor $\Delta_{1:0}$ of curves with an unmarked elliptic tail.
The identity \eqref{eq:phipfwdH122} can be shown either by using Theorem \ref{thm:classZ2} for the class of $\ov{\H}_{2, 2}^1$, or by observing that the restriction of $\varphi$ to $\ov{\H}_{2, 2}^1$ is generically finite over the divisor of curves with marked hyperelliptic conjugate points, and the class of such divisor is indeed given by \eqref{eq:phipfwdH122}  \cite{MR1953519} (see also \S\ref{eq:Logandiv}). The identity \eqref{eq:phipfwdE1} follows since the restriction of $\varphi$ to $E_{\{1\}}$ is generically finite over $\Delta_{0:2}$.
The classes in \eqref{eq:phipfwdH122} and \eqref{eq:phipfwdE1} are independent in $\mathrm{Pic}\left( \ov{\M}_{2,2}\right)$.

Hence, the classes of $\ov{\H}_{g, 2}^k$ and $E_{\{1\}}$ are independent for all $k\geq 1$ and $g\geq 2$. The statement for $n=2$ follows.

\smallskip

For $n \geq 3$, assume that
\begin{equation}
\label{eq:assumrel}
\alpha \left[\ov{\H}_{g, n}^k\right] + \sum_{\bm{I}}\alpha_{\bm{I}}\left[E_{\bm{I}}\right] = 0 \quad \in A^{n}\left(\P\E^k_{g,n} \right)
\end{equation}
for some coefficients $\alpha, \alpha_{\bm{I}}\in \mathbb{Q}$. 
After applying $(\pi_i)_*$ to \eqref{eq:assumrel} for some $1\leq i \leq n-1$ and using Lemma \ref{lemma:pushforwardEn}, we get 
 \[
 \left((c-1)\,\alpha + \alpha_{\{ i\}}\right)\left[\ov{\H}_{g, n-1}^k\right] +  c\sum_{\bm{I} \, : \,i \notin \bm{I}} \alpha_{\bm{I}}\left[E_{\ov{\bm{I}}}\right]=0
 \]
where $c := k(2g-2) - (n-2)$.
The loci $E_{\ov{\bm{I}}}\subset \P\E^k_{g,n-1}$ thus obtained are precisely the extra loci at the $(n-1)$-th step as in \eqref{eq:Lnminus1}.
By the inductive assumption, we  deduce  
\begin{align*}
(c-1)\,\alpha + \alpha_{\{ i\}} &= 0 & \mbox{and} && \alpha_{\bm{I}} &=0 &\mbox{for all } \bm{I} \, : \, i \notin \bm{I}.
\end{align*}
Applying $(\pi_i)_*$ to \eqref{eq:assumrel} for all $1 \leq i \leq n-1$ shows that $\alpha_{\bm{I}} = 0$ in all cases except possibly for $\bm{I} = \{1, \dots, n-1\}$, and thus $\alpha = 0$ as well. 

Hence, \eqref{eq:assumrel} reduces to $\alpha_{\bm{I}} \left[E_{\bm{I}}\right] = 0$ where $\bm{I} = \{1, \dots, n-1\}$. 
Restricting  to the test space from Lemma \ref{lemma:claimtestndimspace}, we deduce $\alpha_{\bm{I}} = 0$ as well. 
\end{proof}

\subsection{Proof of Theorems \ref{thm:pinZnrhonZ1}, \ref{thm:pinZnrhonZ1plus}, and Remark \ref{rmk:1}}

\begin{proof}[Proof of Theorem \ref{thm:pinZnrhonZ1}] 
We proceed by induction on $n$. For the base case $n=2$, the statement holds by Theorem \ref{thm:classZ2}. For the inductive step, 
assume \eqref{eq:hypkgn}, and consider the intersection \eqref{eq:pinrhon}.
Since this is generically transverse over $\M_{g,n}$,  the component $\ov{\H}_{g, n}^k$ contributes with multiplicity one to the intersection for all $k\geq 1$.
From Proposition \ref{prop:EnonlyEI}, we know that 
\[
\mathsf{L}_n = \left[\ov{\H}_{g, n}^k \right] + \sum_{\bm{I}} a_{n,\bm{I}}  \left[E_{\bm{I}}\right] 
\quad \in A^{n}\left( \P\E^k_{g,n}\big|_{\M_{g,n}^\mathsf{rt}}  \right)
\]
for some coefficients $a_{n,\bm{I}}\in \mathbb{Q}$. Using the exact sequence 
\[
A^{n}\left( \P\E^k_{g,n}\big|_{\ov{\M}_{g,n}\setminus \M_{g,n}^\mathsf{rt}}  \right) \rightarrow A^{n}\left( \P\E^k_{g,n} \right) \rightarrow A^{n}\left( \P\E^k_{g,n}\big|_{\M_{g,n}^\mathsf{rt}}  \right) \rightarrow 0
\]
it follows that there exists a cycle $\mathsf{B}_n$ not supported over $\M_{g,n}^\mathsf{rt}$ such that
\begin{equation}
\label{eq:LnisHplusaE}
\mathsf{L}_n = \left[\ov{\H}_{g, n}^k \right] + \sum_{\bm{I}} a_{n,\bm{I}}  \left[E_{\bm{I}}\right] + \mathsf{B}_n
\quad \in A^{n}\left( \P\E^k_{g,n} \right).
\end{equation}
Moreover, from Proposition \ref{prop:EnonlyRT}, one has $\mathsf{B}_n=0$ when $n\leq k$.

It remains to show that $a_{n,\bm{I}} = |\bm{I}|$ for all $n$ and $\bm{I}$.  
Applying $(\pi_i)_*$ to \eqref{eq:LnisHplusaE} for some $1\leq i \leq n-1$ and using Lemma \ref{lemma:pushforwardEn} gives 
\[ 
c\,\mathsf{L}_{n-1} = \left((c-1) + a_{n,\{i\}} \right)\left[ \ov{\H}_{g, n-1}^k\right] + c \sum_{\bm{I}\,:\, i \notin \bm{I}} a_{n,\bm{I}} \left[E_{\ov{\bm{I}}}\right]
+(\pi_i)_* \, \mathsf{B}_n
\]
where $c := k(2g-2) - (n-2)$.
By induction, this is equal to 
\[
c\,\mathsf{L}_{n-1} = c\left[\ov{\H}_{g, n-1}^k\right] + c\,\sum_{\ov{\bm{I}}}  \left|\ov{\bm{I}}\right| \left[E_{\ov{\bm{I}}}\right] +c\, \mathsf{B}_{n-1} \quad \in A^{n-1}\left( \P\E^k_{g,n-1}\right)
\] 
where the loci $E_{\ov{\bm{I}}}\subset \P\E^k_{g,n-1}$  are as in \eqref{eq:Lnminus1}. By Lemma \ref{lemma:ZnandEIindependent}, we may determine $a_{n,\bm{I}}$ by simply comparing coefficients. 
We deduce  
\[
(c-1) + a_{n,\{i\}} = c \qquad\mbox{and}\qquad a_{n,\bm{I}} = \left|\ov{\bm{I}}\right| \quad \mbox{for $\bm{I}\,:\, i \notin \bm{I}$.} 
\]
One has $\left|\ov{\bm{I}}\right| = \left|{\bm{I}}\right|$ for $\bm{I}$ with $i \notin \bm{I}$.
Applying $(\pi_i)_*$ to \eqref{eq:LnisHplusaE} for all $1 \leq i \leq n-1$ shows that $a_{n,\bm{I}} =\left|\bm{I}\right|$ in all cases except possibly for $\bm{I} = \{1, \dots, n-1\}$.

To find $a_{n,\bm{I}}$ with $\bm{I} = \{1, \dots, n-1\}$, consider the $n$-dimensional test space $T$ from Lemma \ref{lemma:claimtestndimspace}.
The restriction of \eqref{eq:LnisHplusaE} to $T$ gives
\[
k(2g-2) = T \cdot \mathsf{L}_n = (c-1) + a_{n,\{1, \dots, n-1\}}.
\] 
Thus, $a_{n,\{1, \dots, n-1 \}} = n-1$. 
 Theorem \ref{thm:pinZnrhonZ1} follows. 
\end{proof}

\begin{proof}[Proof of Theorem \ref{thm:pinZnrhonZ1plus}]
The only case left to be discussed is $k=2$ and $n=4g-5$, since all other cases are covered by Theorem \ref{thm:pinZnrhonZ1}. 
For this case, the hypothesis \eqref{eq:hypkgn} does not hold, and Lemma \ref{lemma:hypkgn} fails.
Indeed,  as discussed after Lemma \ref{lemma:hypkgn}, the locus $\ov{\H}^2_{g,\bm{m}} \subset \P\E^2_{g,1}$ with $\bm{m}=\left(4g-6\right)$ from \eqref{eq:counterexk2}
consists of two equidimensional components. Consequently, the extra locus $E_{\bm{I}}:=\gamma_* \,\ov{\H}^2_{g,\bm{m}}$ with $\bm{I}=\{1,\dots,n-1\}$ from \eqref{eq:EI}
decomposes as
\begin{equation}
\label{eq:EIdec2comp}
E_{\bm{I}} = E_{\bm{I}}' \cup E_{\bm{I}}^{\textrm{ab}} \quad \subset \P\E^2_{g,4g-5}
\end{equation}
with $E_{\bm{I}}'$ parametrizing quadratic differentials which  generically are \textit{not squa\-res} of abelian differentials, and $E_{\bm{I}}^{\textrm{ab}}$ parametrizing \textit{squares} of abelian differentials. For both components, the general element has a genus $g$ component and a rational tail containing all marked points, and the quadratic differential  vanishes with order exactly $4g-6$ at the preimage of the node on the genus $g$ component.

Consider the intersection \eqref{eq:pinrhon} for $k=2$ and $n=4g-5$.
The argument in Lemma \ref{lemma:EIinsupportEn} shows that the component $E_{\bm{I}}'$ is inside \eqref{eq:pinrhon} but not in $\ov{\H}^2_{g,4g-5}$. Next, we argue that this holds for the component $E_{\bm{I}}^{\textrm{ab}}$ as well.

Consider first the case $g=2$. To show that 
\begin{equation}
\label{eq:EIabinpi3H22}
E_{\bm{I}}^{\textrm{ab}}\subset\pi_{3}^{-1} \left(\ov{\H}_{2, 2}^2 \right), 
\end{equation}
it is enough to show that $\widetilde{E}_{\bm{I}}^{\textrm{ab}}:= \pi_{3}\left(E_{\bm{I}}^{\textrm{ab}}\right)\subset \ov{\H}_{2, 2}^2$.
For this, let $\widehat{E}_{\bm{I}}^{\textrm{ab}}\subset \P\E^2_{2,4}$ be the closure of the locus consisting of elements of $\P\E^2_{2,4}$ having a genus $2$ component with two rational tails, and the square of an abelian differential vanishing at both nodes on the genus $2$ component. In particular, the nodes on the genus $2$ component are hyperelliptic conjugates, and each rational tail has two marked points. We argue that one has an inclusion $\widehat{\iota}$ as in the following diagram:
\[
\begin{tikzcd}
\widehat{E}_{\bm{I}}^{\textrm{ab}} \arrow[rightarrow, hook]{r}{\widehat{\iota}} \arrow[rightarrow]{d} & \ov{\H}_{2, 4}^2 \arrow[rightarrow]{d} \arrow[rightarrow, hook]{r} & \P\E^2_{2,4} \arrow[rightarrow]{d}{\pi_{3,4}}\\
\widetilde{E}_{\bm{I}}^{\textrm{ab}} \arrow[rightarrow, dashed, hook]{r}{\widetilde{\iota}} & \ov{\H}_{2, 2}^2 \arrow[rightarrow, hook]{r} & \P\E^2_{2,2}.
\end{tikzcd}
\]
The vertical maps are induced by the map $\pi_{3,4}$ obtained by forgetting the last two marked points. The inclusion $\widehat{\iota}$ induces the desired inclusion, denoted $\widetilde{\iota}$ in the diagram.

To prove that $\widehat{\iota}$ is an inclusion, we need to show that on the general element $(C,P_1, \dots,P_4, \mu)$ of $\widehat{E}_{\bm{I}}^{\textrm{ab}}$, there exists a twisted quadratic differential of type $\mu$  satisfying the conditions of the incidence variety compactification. For this, we consider a twisted quadratic differential $\tau$ which agrees with $\mu$ on the genus $2$ component of $C$ (hence has zeros of order two at the preimage of each node on the genus $2$ component), and on each rational component, has a pole of order $6$ at the node and simple zeros at the two marked points. Let $\Gamma$ be the dual graph of $C$, and consider the order on $\Gamma$ assigning the maximum level to the genus $2$ vertex, and \textit{equal} lower level to the two rational vertices, as suggested by this figure:
\[
\begin{tikzpicture}[baseline={([yshift=-.5ex]current bounding box.center)}]
      \path(0,0) ellipse (2 and 2);
      \tikzstyle{level 1}=[counterclockwise from=-60,level distance=15mm,sibling angle=120]
      \node [draw,circle,inner sep=2.5] (A0) at (90:2) {$\scriptstyle{2}$};
      \tikzstyle{level 1}=[clockwise from=0,level distance=12mm,sibling angle=120]
      \node [draw,circle,fill] (A1) at (-30:2) {$\scriptstyle{}$}
            child {node [label=-30: {$\scriptstyle{}$}]{}}
	    child {node [label=-150: {$\scriptstyle{}$}]{}};
      \tikzstyle{level 1}=[clockwise from=-60,level distance=12mm,sibling angle=120]
      \node [draw,circle,fill] (A2) at (210:2) {$\scriptstyle{}$}
            child {node [label=-30: {$\scriptstyle{}$}]{}}
	    child {node [label=-150: {$\scriptstyle{}$}]{}};
      \path (A0) edge [] node[]{} (A1);
      \path (A0) edge [] node[]{} (A2);
    \end{tikzpicture}
\]
Then $\tau$ is compatible with this order of $\Gamma$, and satisfies the conditions required by the incidence variety compactification. In particular, the global $2$-residue condition holds. Note that there is no quadratic differential on a rational curve with a pole of order $6$, with two simple zeros, and with zero $2$-residue at the pole (see \cite[\S 3.1]{chen2021towards}, and reference therein to \cite[Thm 1.10]{gendron2017diff}). However, the condition \cite[Def.~1.4(4)(v)]{bcggm} holds, since one can assume that $\tau$ has equal $2$-residues at the nodes on the two rational tails. Hence the global $2$-residue condition is satisfied. It follows that $\widehat{\iota}$  is an inclusion, hence $\widetilde{\iota}$ is an inclusion, and \eqref{eq:EIabinpi3H22} holds. 

For $g\geq 3$, the argument is quite similar. One considers a locus $\widehat{E}_{\bm{I}}^{\textrm{ab}}$ in $\P\E^2_{g,4g-4}$ as for $g=2$, defined as the closure of the locus consisting of elements of $\P\E^2_{g,4g-4}$ having a genus $g$ component with a rational tail with $2$ marked points and a rational tail with $4g-6$ marked points, and the square of an abelian differential vanishing at the nodes on the genus $2$ component with order $2$ and $4g-6$, respectively. To show that $\widehat{E}_{\bm{I}}^{\textrm{ab}}$ is in $\ov{\H}_{g, 4g-4}^2$, one argues that there exists a twisted quadratic differential $\tau$ on a general element $(C,P_1,\dots,P_{4g-4},\mu)$ of $\widehat{E}_{\bm{I}}^{\textrm{ab}}$ satisfying the conditions of the incidence variety compactification. The restriction of such a $\tau$ to the two rational tails has poles of order $6$ and $4g-2$ at the two nodes, respectively, and simple zeros at all marked points. One checks that the conditions of the incidence variety compactification are satisfied  with respect to the \textit{three-level} order on the dual graph of $C$ suggested by this figure:
\[
\begin{tikzpicture}[baseline={([yshift=-.5ex]current bounding box.center)}]
      \path(0,0) ellipse (2 and 2);
      \tikzstyle{level 1}=[counterclockwise from=-60,level distance=15mm,sibling angle=120]
      \node [draw,circle,inner sep=2.5] (A0) at (90:2) {$\scriptstyle{g}$};
      \tikzstyle{level 1}=[clockwise from=15,level distance=12mm,sibling angle=120]
      \node [draw,circle,fill] (A1) at (0:2) {$\scriptstyle{}$}
            child {node [label=-30: {$\scriptstyle{}$}]{}}
	    child {node [label=90: {$\scriptstyle{}$}]{}}
	    child [grow=-15]{node{}}
	    child [grow=-45] {node[draw=none] {${.}$} edge from parent[draw=none]}
	    child [grow=-60] {node[draw=none] {${.}$} edge from parent[draw=none]}
	    child [grow=-75] {node[draw=none] {${.}$} edge from parent[draw=none]};
      \tikzstyle{level 1}=[clockwise from=-60,level distance=12mm,sibling angle=120]
      \node [draw,circle,fill] (A2) at (210:2.5) {$\scriptstyle{}$}
            child {node [label=-30: {$\scriptstyle{}$}]{}}
	    child {node [label=-150: {$\scriptstyle{}$}]{}};
      \path (A0) edge [] node[]{} (A1);
      \path (A0) edge [] node[]{} (A2);
    \end{tikzpicture}
\]
Contrary to the case $g=2$, the two rational tails are not at the same level.  Since the aspect of such a $\tau$ on the rational tail with more marked points has zero $2$-residue at the pole, the condition \cite[Def.~1.4(4)(v)]{bcggm} holds.

Showing that $E_{\bm{I}}^{\textrm{ab}}$ is in $\rho_{4g-5}^{-1} \left(\ov{\H}_{g, 1}^2 \right)$ and not in $\ov{\H}^2_{g,4g-5}$ for $g\geq 2$ is easier and similar to the arguments used for Lemma \ref{lemma:EIinsupportEn}. Then the proof continues in a similar way to the proof of Theorem \ref{thm:pinZnrhonZ1}. We sketch here the main points.
The arguments used for Proposition \ref{prop:EnonlyEI} and Lemma \ref{lemma:pushforwardEn} remain valid for this case. 
The test space $T$ from Lemma \ref{lemma:claimtestndimspace} can be replaced by two analogous $n$-dimensional test spaces $T_1,T_2\subset \P\E^2_{g,n}$ such that all quadratic differentials in $T_1$ are not squares of abelian differentials, and all quadratic differentials in $T_2$ are squares of abelian differentials.
Using these two test families, one shows that the class of $\ov{\H}_{g, 4g-5}^2$, the classes of the loci $E_{\bm{I}}$ for all $|\bm{I}|<n-1$, and the classes of each of the two components from \eqref{eq:EIdec2comp} are independent in $A^{4g-5}\left(\P\E^2_{g,4g-5} \right)$, as in Lemma \ref{lemma:ZnandEIindependent}. Finally, one shows that $E_{\bm{I}}$ is included in \eqref{eq:pinrhon} with multiplicity equal to $|\bm{I}|$ for all $\bm{I}$ as in the final steps of the proof of Theorem \ref{thm:pinZnrhonZ1}, whence the statement.
\end{proof}

\begin{remark}
\label{rmk:1_app}
When $k\geq 2$ and $n=k(2g-2)$,
the locus $\ov{\H}^k_{g,\bm{m}} \subset \P\E^k_{g,1}$ with $\bm{m}=\left(k(2g-2)-1\right)$ from \eqref{eq:counterexk3}
consists of two equidimensional components. Consequently, the extra locus $E := E_{\bm{I}}=\gamma_* \,\ov{\H}^k_{g,\bm{m}}$ with $\bm{I}=\{1,\dots,n-1\}$ from \eqref{eq:EI}
decomposes as
\begin{equation*}
E = E' \cup E^{\textrm{ab}} \quad \subset \P\E^k_{g,k(2g-2)}
\end{equation*}
such that the general element of $E'$ has a genus $g$ component, a rational tail containing all marked points, and a $k$-differential vanishing with order exactly $k(2g-2)-1$ at the preimage of the node on the genus $g$ component; and the general element of $E^{\textrm{ab}}$ has a genus $g$ component, a rational tail containing all marked points, and the $k$-th power of an abelian differential vanishing with order $k(2g-2)$ at the preimage of the node on the genus $g$ component.
As in the proof of Theorem \ref{thm:pinZnrhonZ1plus},  both components of $E$ are contained in the intersection \eqref{eq:pinrhon}, although $E'$ appears with multiplicity $|\bm{I}|=k(2g-2)-1$, while $E^{\textrm{ab}}$  with multiplicity $k(2g-2)$. Hence \eqref{eq:k2plusnmax} holds.
\end{remark}

\bibliographystyle{alphanumN}
\bibliography{Biblio}

\newcommand{\etalchar}[1]{$^{#1}$}
\begin{thebibliography}{BCG{\etalchar{+}}2}

\bibitem[BCG{\etalchar{+}}1]{bcggm1}
M. Bainbridge, D. Chen, Q. Gendron, S. Grushevsky, and M. M\"{o}ller.
\newblock Compactification of strata of abelian differentials.
\newblock {\em Duke Math. J.}, 167(12):2347--2416, 2018.

\bibitem[BCG{\etalchar{+}}2]{bcggm}
M. Bainbridge, D. Chen, Q. Gendron, S. Grushevsky, and M. M\"{o}ller.
\newblock Strata of {$k$}-differentials.
\newblock {\em Algebr. Geom.}, 6(2):196--233, 2019.

\bibitem[BHP{\etalchar{+}}]{baepixton}
Y. Bae, D. Holmes, R. Pandharipande, J. Schmitt, and R. Schwarz.
\newblock Pixton's formula and {A}bel-{J}acobi theory on the {P}icard stack.
\newblock {\em https://arxiv.org/abs/2004.08676}, 2020.

\bibitem[BP]{belorousski2000descendent}
P. Belorousski and R. Pandharipande.
\newblock A descendent relation in genus 2.
\newblock {\em Annali della Scuola Norm.~Sup.~Pisa-Classe di Scienze},
  29(1):171--191, 2000.

\bibitem[CG]{chen2021towards}
D. Chen and Q. Gendron.
\newblock Towards a classification of connected components of the strata of
  {$k$}-differentials.
\newblock {\em Preprint, arXiv:2101.01650}, 2021.

\bibitem[CT1]{CaT}
R. Cavalieri and N. Tarasca.
\newblock Classes of {W}eierstrass points on genus 2 curves.
\newblock {\em Trans. Amer. Math. Soc.}, 372(4):2467--2492, 2019.

\bibitem[CT2]{CT2}
D. Chen and N. Tarasca.
\newblock Extremality of loci of hyperelliptic curves with marked {W}eierstrass
  points.
\newblock {\em Algebra \& Number Theory}, 10(9):1935--1948, 2016.

\bibitem[CT3]{CT}
D. Chen and N. Tarasca.
\newblock Loci of curves with subcanonical points in low genus.
\newblock {\em Math. Z.}, 284(3-4):683--714, 2016.

\bibitem[Fab1]{MR1722541}
C. Faber.
\newblock A conjectural description of the tautological ring of the moduli
  space of curves.
\newblock In {\em Moduli of curves and abelian varieties}, Aspects Math., E33,
  pages 109--129. Vieweg, Braunschweig, 1999.

\bibitem[Fab2]{zbMATH06493559}
C. Faber.
\newblock Hodge integrals, tautological classes and {Gromov}-{Witten} theory.
\newblock {\em RIMS Kokyuroku}, 1232:78--87, 2001.

\bibitem[FP1]{MR2120989}
C. Faber and R. Pandharipande.
\newblock Relative maps and tautological classes.
\newblock {\em J. Eur. Math. Soc. (JEMS)}, 7(1):13--49, 2005.

\bibitem[FP2]{farkastwisted}
G. Farkas and R. Pandharipande.
\newblock The moduli space of twisted canonical divisors. {A}ppendix by
  {F}.~{J}anda, {R}.~{P}andharipande, {A}.~{P}ixton, and {D}.~{Z}vonkine.
\newblock {\em Journal of the Institute of Mathematics of Jussieu},
  17(3):615--672, 2018.

\bibitem[Get]{MR1672112}
E. Getzler.
\newblock Topological recursion relations in genus {$2$}.
\newblock In {\em Integrable systems and algebraic geometry ({K}obe/{K}yoto,
  1997)}, pages 73--106. World Sci. Publ., River Edge, NJ, 1998.

\bibitem[GP]{graber2003constructions}
T. Graber and R. Pandharipande.
\newblock Constructions of nontautological classes of moduli spaces of curves.
\newblock {\em Michigan Math. J.}, 51(1):93--110, 2003.

\bibitem[GT]{gendron2017diff}
Q. Gendron and G. Tahar.
\newblock Diff\'erentielles \`a singularit\'es prescrites.
\newblock {\em Preprint, arXiv:1705.03240}, 2020.

\bibitem[Jan]{janda2015relations}
F. Janda.
\newblock Relations on {$\overline{\mathcal{M}}_{g,n}$} via equivariant
  {Gromov-Witten} theory of {$\mathbb{P}^1$}.
\newblock {\em Algebraic Geometry}, 4(3):311--336, 2017.

\bibitem[KSZ]{korotkin2019tau}
D. Korotkin, A. Sauvaget, and P. Zograf.
\newblock Tau functions, {Prym-Tyurin} classes and loci of degenerate
  differentials.
\newblock {\em Mathematische Annalen}, 375(1-2):213--246, 2019.

\bibitem[Log]{MR1953519}
A. Logan.
\newblock The {K}odaira dimension of moduli spaces of curves with marked
  points.
\newblock {\em Amer. J. Math.}, 125(1):105--138, 2003.

\bibitem[Pan]{pandharipande2015calculus}
R. Pandharipande.
\newblock A calculus for the moduli space of curves.
\newblock {\em Algebraic Geometry: Salt Lake City}, pages 459--487, 2015.

\bibitem[Pix]{pixton2012conjectural}
A. Pixton.
\newblock Conjectural relations in the tautological ring of
  {$\overline{\mathcal{M}}_{g,n}$}.
\newblock {\em arXiv preprint arXiv:1207.1918}, 2012.

\bibitem[PPZ]{MR3264769}
R. Pandharipande, A. Pixton, and D. Zvonkine.
\newblock Relations on {$\overline{\mathcal{M}}_{g,n}$} via {$3$}-spin
  structures.
\newblock {\em J. Amer. Math. Soc.}, 28(1):279--309, 2015.

\bibitem[Sau1]{sauvagetcohomology}
A. Sauvaget.
\newblock Cohomology classes of strata of differentials.
\newblock {\em Geom. Topol.}, 23(3):1085--1171, 2019.

\bibitem[Sau2]{sauvaget2020volumes}
A. Sauvaget.
\newblock Volumes of moduli spaces of flat surfaces.
\newblock {\em Preprint, arXiv:2004.03198}, 2020.

\bibitem[Sch]{schmitttwisted}
J. Schmitt.
\newblock Dimension theory of the moduli space of twisted {$k$}-differentials.
\newblock {\em Documenta Mathematica}, 23:871--894, 2018.

\bibitem[Tav]{tavakol2014tautological}
M. Tavakol.
\newblock The tautological ring of the moduli space {$\M_{2,n}^{\rm rt}$}.
\newblock {\em International Mathematics Research Notices},
  2014(24):6661--6683, 2014.

\end{thebibliography}

\end{document}